\font\ssc=pplrc9d at 11 truept
\newcommand\qedbox{$\rlap{$\sqcap$}\sqcup$}
\let\ceheadL\cehead
\renewcommand\cehead[1]{
\ceheadL{\textnormal{#1}}
}
\definecolor{Maroon}{cmyk}{0, 0.87, 0.68, 0.32}
\definecolor{RoyalBlue2}{cmyk}{80,100,0,0.1}
\newcommand\auths[1]{\large \textsc{\textcolor{Maroon}{#1}}\setstretch{1.2}}
\newcommand\titl[1]{\center \linespread{1.1}\color{RoyalBlue2}\Large\textbf{ #1}\color{black}\bigskip} 
\renewcommand\abstract[1]{
\begin{center}
{\textbf{Abstract}}
\end{center}
{
\linespread{1.1}\fontsize{9pt}{-10pt}\selectfont #1}}
\DeclareSymbolFont{operators}{\encodingdefault}{ppl}{m}{n}
\DeclareMathAlphabet{\mathbf}{\encodingdefault}{ppl}{bx}{n}
\DeclareMathAlphabet{\mathit}{\encodingdefault}{ppl}{m}{it}
\renewcommand{\thesection}{\arabic{section}}
\titleformat{\section}{\medskip\bigskip\normalfont\Large\bf}{\thesection}{0.5em}{}
\titleformat{\subsection}{\smallskip\bigskip\normalfont\large\bf}{\thesubsection}{0.5em}{}
\newtheoremstyle{dotless}{}{}{\itshape}{}{\bfseries}{}{1em}{}
\newtheorem*{theo*}{Theorem}
\newtheorem*{rem*}{Remark}
\theoremstyle{dotless}
\newtheorem{theo}{Theorem}
\newtheorem{lem}[theo]{Lemma}
\newtheorem{defi}[theo]{Definition}
\newtheorem{cor}[theo]{Corollary}
\newtheorem{rem}[theo]{Remark}
\newtheorem{ex}[theo]{Example}
\renewenvironment{proof}{\smallbreak\noindent {\sc Proof \;---\;}}{\hfill\qedbox}
\numberwithin{theo}{section}
\DeclareOldFontCommand{\rm}{\normalfont\rmfamily}{\mathrm}
\DeclareOldFontCommand{\sf}{\normalfont\sffamily}{\mathsf}
\DeclareOldFontCommand{\tt}{\normalfont\ttfamily}{\mathtt}
\DeclareOldFontCommand{\bf}{\normalfont\bfseries}{\mathbf}
\DeclareOldFontCommand{\it}{\normalfont\itshape}{\mathit}
\DeclareOldFontCommand{\sl}{\normalfont\slshape}{\@nomath\sl}
\DeclareOldFontCommand{\sc}{\normalfont\scshape}{\@nomath\sc}
\newcommand{\bigcdot}{\boldsymbol{\cdot}}
\DeclareSymbolFont{newfont}{OML}{cmm}{m}{it}%
\DeclareMathSymbol{\Varrho}{3}{newfont}{37}
\begin{document}

\titl{Finite skew braces of square-free order and supersolubility}\footnote{All authors are members of the non-profit association ``Advances in Group Theory and Applications'' (www.advgrouptheory.com). The third and fifth authors are supported by GNSAGA (INdAM). The third author has been supported by a research visiting grant issued by the Istituto Nazionale di Alta Matematica (INdAM).}

\auths{A. Ballester-Bolinches -- R. Esteban-Romero -- M. Ferrara\\ V.~P\'erez-Calabuig -- M. Trombetti}

\thispagestyle{empty}
\justify\noindent
\setstretch{0.3}
\abstract{The aim of this paper is to study {\it supersoluble} skew braces, a class of skew braces that encompasses all finite skew braces of square-free order. It turns out that finite supersoluble skew braces have Sylow towers, and that in an arbitrary supersoluble skew brace $B$ many relevant skew brace-theoretical properties are easier to identify: for example, a centrally nilpotent ideal of $B$ is $B$-centrally nilpotent, a fact that simplifies the computational search for the Fitting ideal; also,~$B$ has finite multipermutational level if and only if~$(B,+)$ is nilpotent.  

Given a finite presentation of the structure skew brace $G(X,r)$ associated with a finite non-degenerate solution of the Yang--Baxter Equation (YBE), there is an algorithm that decides if $G(X,r)$ is supersoluble or not. Moreover, supersoluble skew braces are examples of almost polycyclic skew braces, so they give rise to solutions of the YBE on which one can algorithmically work on.
}

\setstretch{2.1}
\noindent
{\fontsize{10pt}{-10pt}\selectfont {\it Mathematics Subject Classification \textnormal(2020\textnormal)}: 16T25, 03D40, 20F10, 20F16}\\[-0.8cm]

\noindent 
\fontsize{10pt}{-10pt}\selectfont  {\it Keywords}: 
skew brace of square-free order; supersoluble skew brace; Yang--Baxter Equation\\[-0.8cm]

\setstretch{1.1}
\fontsize{11pt}{12pt}\selectfont

\bigskip\bigskip\bigskip

\section{Introduction}

A {\it skew \textnormal(left\textnormal) brace} is a set $B$ endowed with two binary operations $+$ and $\bigcdot$ such that~$(B,+)$ and $(B,\bigcdot)$ are groups, and the {\it skew distributive law} holds, that is, $$x\bigcdot (y + z) = (x\bigcdot y)-x+(x\bigcdot z)$$ for $x, y, z\in B$. This structure has been introduced in \cite{GuarnieriVendramin17} as a generalisation of the left braces introduced by Rump in \cite{Rump07} (these are the skew left braces in which the operation $+$ is commutative). Skew left braces have been devised with the aim of attacking the problem of finding all  set-theoretical non-degenerate solutions ({\it solutions}, for short) of the Yang--Baxter~Equa\-tion (YBE), a consistency equation that plays a relevant role in quantum statistical mechanics, in the foundation of quantum groups, and that provides a multidisciplinary approach from a wide variety of  areas such as Hopf algebras, knot theory and braid theory among others (see \cite{plus2},\cite{plus1},\cite{baxter},\cite{drinfeld90},\cite{Faddev},\cite{Gateva-Ivanova18-advmath},\cite{yang}). In fact, to every solution of the~YBE, one can associate a skew left brace (which is usually called the {\it structure} skew left brace of the solution), and conversely to every skew left brace one can associate a solution of the YBE (so properties of solutions can be translated in terms of the structure skew left braces and vice-versa). 

In this way, there is a wide consensus on the need of delving into the study of skew left braces not only to enrich the theory of this algebraic structure but also to provide proper frameworks that work out well for translating and classifying properties of solutions of the YBE.
Significant examples of such a fruitful approach can be found in the analysis of two of the most studied properties of solutions since their introduction in \cite{EtingofShedlerSoloviev99}: \emph{multipermutation solutions}, i.e. those solutions that can be retracted into the trivial solution over a singleton after finitely many identification steps, and \emph{indecomposable solutions}, or those solutions that can not be decomposed in a disjoint union of two proper solutions. Concretely, nilpotency of skew left braces has been introduced to deal with the former ones (see~\cite{5,Cedo,GI-Cameron12, periodici}, among others), and simplicity of skew left braces play a key role in the study of indecomposable solutions (see \cite{CCP19, CedoOkninski21, advmathcedo,VendraminExtensions}). Moreover, in \cite{solubleref} solubility of skew left braces has been  studied as an opposite class of simplicity that allows multidecomposability of solutions. These significant examples show that the classification of (finite) skew left braces undoubtedly provides a powerful tool to classify the solutions of the YBE (to some extent, at least).


The aim of this paper is to give a further contribution to the classification of (finite) skew left braces by introducing a class of skew left braces encompassing, among others, all finite skew left braces of square-free order (see~The\-o\-rem~\ref{squarefreeorder} and~Co\-rol\-la\-ry~\ref{corsquarefreeorder}): the class of {\it supersoluble} skew left braces. One of the main properties of a finite supersoluble skew left brace $B$ is that it has a {\it Sylow tower}, that is, if $P_1,\ldots,P_n$ are a set of representatives for the Sylow subgroups of $(B,+)$, then there is a permutation \hbox{$\sigma\in\operatorname{Sym}(n)$} such that $P_{\sigma(1)}$, $P_{\sigma(1)}P_{\sigma(2)}$, \ldots, $P_{\sigma(1)}P_{\sigma(2)}\ldots P_{\sigma(n)}$ are ideals of $B$ (see~The\-o\-rem~\ref{theoincredible} and Corollary \ref{psubbrace}). This a very nice property which is also shared by the structure skew left braces associated with an indecomposable involutive solution of square-free order (see \cite{advmathcedo}). Actually, in case of a supersoluble skew left brace~$B$, we know much more: in fact, $\sigma$ orders the $P_i$'s according to decreasing magnitude of the primes they are associated with; in particular, the set of all odd-order elements of~$(B,+)$ is an ideal of~$B$ (see Corollary \ref{psubbrace}).

\medskip

As noted in \cite{periodici}, computer experiments are making more and more clear that almost every finite solution of the YBE is a multipermutation solution.
Since a solution is multipermutation if and only if the structure skew left brace has finite multipermutational level (see \cite{55}, Theorem 4.13), it is relevant to find  computationally cheap ways to understand if a skew left brace has finite multipermutational level or not. Here we prove that for a supersoluble skew left brace, having finite multipermutational level is equivalent to having a nilpotent additive group (see~The\-o\-rem~\ref{rnilsupersoluble}). 

A further nice feature of supersoluble skew left braces is that they have a largest centrally nilpotent ideal (the {\it Fitting ideal}), an ideal that does not exist even in arbitrary finite skew left braces (see \cite{tutti23}) --- in fact, a centrally nilpotent ideal $I$ of a supersoluble skew left brace $B$ has a finite chain of ideals whose factors are central in $I$ (see~The\-o\-rem~\ref{theobcentrally}). In the supersoluble environment, the Fitting ideal has finite index (see~The\-o\-rem~\ref{fittingfiniteindex}), and on some occasions its index is even a power of $2$ (see Theorem \ref{fittingpowerof2}).

In connection with nilpotency properties of supersoluble skew left braces, we also cite Theorem \ref{leftnilptheo}, which deals with left-nilpotency, and Theorem \ref{schurtheor}, which deals with an extension of \cite{Schur}, Theorem 5.4.

\medskip

Many good features of supersoluble skew left braces are inherited from those of the almost polycyclic skew left braces that are studied in \cite{tutti23-2}. Thus, supersoluble skew left braces provide a further class of solutions of the YBE on which we can algorithmically work on (given a finite presentation of the supersoluble skew left brace). But this also means that supersoluble skew left braces are residually finite, although in this context we can say something more (see Theorem \ref{resfinite}). Moreover, we note that a maximal sub-skew left brace has prime index (see Theorem \ref{maximalsubbrace}) and a sub-skew left brace of index $2$ is an ideal (see The\-o\-rem~\ref{finitecasesuperso}).

\medskip

It is proved in \cite{tutti23-2} that many natural properties of an almost polycyclic skew left brace can be checked by only looking at its finite homomorphic images. In this context, the property of being supersoluble makes no exception (see Theorem \ref{finitehomsupers}). In turns, this provides us with an algorithm that let us decide if an almost polycyclic skew left brace (and in particular the structure skew left brace of a finite solution of the~YBE) is supersoluble or not (see Theorem \ref{decide}), given only a finite presentation of it.

\medskip

Finally, it should be remarked that some of our results are proved in (and are generalized to) the larger classes of skew left braces given by the locally supersoluble skew left braces and the hypercyclic skew left braces (see Section \ref{sectls}). Among other things, in the final section we focus on chief factors, on the relation between the~Frattini and the~Fitting ideals, and on the role played by the countable sub-skew left braces.

\section{Preliminaries}

\emph{From now on, the word ``brace'' means ``skew left brace''}.

\medskip

The aim of this section is to fix notation, and to give  background concepts and results that we need in the paper. We refer the reader to \cite{tutti23-2} and \cite{GuarnieriVendramin17} for either undefined concepts or more detailed descriptions of the given ones. 

Let $(B,+,\bigcdot)$ be a brace. If $(B,+)$ is abelian (resp. satisfies some property $\mathfrak{X}$), we say that $B$ is a brace of {\it abelian type} (resp. {\it of $\mathfrak{X}$ type}). The symbol $0$ denotes the common identity element of~$(B,+)$ and $(B,\bigcdot)$, while $b^{-1}$ and $-b$ denote the multiplicative and additive inverses of~$b$, respectively. We use juxtaposition for the product of elements, and, if $n\in\mathbb{Z}$ and~\hbox{$b\in B$,} the following notation is adopted: $$nb=\begin{cases}
\underbrace{b+\ldots+b}_{\textnormal{$n$ times}} & \textnormal{if $n\geq0$}\\[0.6cm]
$\,$\;0 & \textnormal{if $n=0$}\\[0.2cm]
-((-n)b) & \textnormal{if $n<0$}
\end{cases}\quad\textnormal{and}\quad 
b^n=\begin{cases}
\underbrace{b\bigcdot\ldots\bigcdot b}_{\textnormal{$n$ times}} & \textnormal{if $n\geq0$}\\[0.6cm]
$\,$\;0 & \textnormal{if $n=0$}\\[0.2cm]
(b^{-n})^{-1} & \textnormal{if $n<0$}
\end{cases}
$$ The homomorphism $$\lambda\colon b\in (B,\bigcdot)\mapsto \lambda_b \in \operatorname{Aut}(B,+),$$ defined through the position $\lambda_a(b) = -a + ab$ for every $a,b\in B$, relates the groups $(B,\bigcdot)$ and $(B,+)$, while the \emph{star operation} measures the difference between both operations: $a\ast b = -a +ab -b = \lambda_a(b) - b$ for every $a,b\in B$. As usual,  addition follows product in the order of operations, and the star product comes first in the order of operations. The following properties are well-known: 
\[ \begin{array}{c}\label{dacitare}
(ab) \ast c  =  a \ast (b\ast c) + b\ast c + a \ast c,\\[0.2cm]
ab  =  a + a \ast b + b,\\[0.2cm]
a \ast (b+c)  = a \ast b + b + a \ast c - b, \qquad \text{for all $a,b,c\in B$.} \tag{$\dagger$}
\end{array} \] If $X$ and $Y$ are subsets of $B$, then $X \ast Y$ is the subgroup generated in $(B,+)$ by all elements $x\ast y$, for all $x\in X$ and $y\in Y$. A brace is \emph{trivial} if  both operations coincide, or equivalently, if $B \ast B = 0$. \emph{Abelian} braces are trivial braces whose additive group is abelian.

On some occasions, we need to emphasize that a subset $S$ of $B$ is considered as a subset of $(B,+)$ or $(B,\bigcdot)$. In order to do this, we add a $+$ symbol or a $\bigcdot$ symbol near the set $S$; so if $X$ and~$Y$ are subbraces of $B$, then $[X,Y]_+$ is the usual commutator subgroup in~$(B,+)$, while, if $n\in\mathbb{N}$, then $X^{n,\bigcdot}$ is the subgroup of $(B,\bigcdot)$ generated by all elements of type $x^n$ with $x\in X$.

A map between braces that preserves multiplications and sums is a \emph{homomorphism}. Two braces are \emph{isomorphic} if there exists a bijective homomorphism between them. In describing the substructural framework of an arbitrary brace $B$, we employ the following notation.
\begin{itemize}
\item A \emph{subbrace} $S$ of $B$ is a subset of $B$ that is a subgroup of both $(B,+)$ and $(B,\bigcdot)$. In this case, we write $S \leq B$. 
\item A \emph{left-ideal} $L$ of $B$ is a subbrace of $B$ such that $\lambda_b(L)\leq L$ for all $b\in B$. Moreover,~$L$ is a \emph{strong left-ideal} if $L$ is also a normal subgroup of $(B,+)$.
\item An \emph{ideal} $I$ of $B$ is a  strong left-ideal that is also a normal subgroup of~$(B,\bigcdot)$. In this case, we write $I \unlhd B$.
\end{itemize}
Ideals of braces allow us to consider quotients: given an ideal $I$ of $B$, $B/I$ has a natural brace structure and we have that $bI = b+I$ for each $b\in B$.  Note also that if $L$ is a left-ideal of~$B$ and $I\trianglelefteq B$, then~$I\ast L$ is a left-ideal of~$B$, and $I\ast B$ is an ideal of $B$; in particular, $B\ast B\trianglelefteq B$. The following \[
\operatorname{Soc}(B)  = \operatorname{Ker}\lambda \cap \operatorname{Z}(B,+)\quad\textnormal{and}\quad
\zeta(B)=\operatorname{Soc}(B)\cap Z(B,\bigcdot),\] (here, $Z(B,+)$ and $Z(B,\bigcdot)$ denote the centres of $(B,+)$ and $(B,\bigcdot)$, respectively) are further relevant examples of ideals of $B$ playing key roles in the study of nilpotency of braces (see~\cite{5} or~\cite{Cedo}).

\medskip

If $E$ is any subset of the brace $B$, then~$\langle E\rangle$ denotes the \emph{subbrace generated} by~$E$ in~$B$, i.e. the smallest subbrace of $B$ containing $E$ with respect to the inclusion. Moreover, $E$ is a {\it system of generators} for $\langle E\rangle$, and if $E$ is finite (of order $n$), we say that $\langle E\rangle$ is {\it finitely generated} (or {\it $n$-generator}). Also, $E^B$ denotes the \emph{ideal generated} by~$E$ in~$B$ (this is the smallest ideal of $B$ containing~$E$ with respect to the inclusion); if $E=\{a\}$ is a singleton, we write  $a^B$ instead of $\{a\}^B$. In a similar fashion, if $C\leq B$, then $C_B$ denotes the largest ideal of $B$ contained in $C$. In particular, $C\trianglelefteq B$ if and only if $C=C_B=C^B$.

\medskip

Braces can be defined from bijectives $1$-cocycles associated with actions of groups. Let $(C,\bigcdot)$ and $(B,+)$ be groups of the same order. Assume that $C$ acts on $B$ by means of a homomorphism $\lambda\colon c\in (C,\bigcdot) \mapsto \lambda_c\in \operatorname{Aut}(B,+)$. A \emph{bijective $1$-cocycle} associated with~$\lambda$ is a bijective map $\delta \colon C \rightarrow B$ such that $\delta(c_1c_2)= \delta(c_1) + \lambda_{c_1}\big(\delta(c_2)\big)$. In the previous situation we can consider the semidirect product $G = B\rtimes C$ defined by the action~$\lambda$ and written in multiplicative notation for the sake of uniformity. Following~\cite{BallesterEsteban22}, trifactorisations of the group $G$ give rise to bijective $1$-cocycles: if there exists~\hbox{$D \leq G$} such that $D\cap C = B \cap D = 1$ and $DC = BD = G$, then there exists a bijective $1$-cocycle $\delta \colon C \rightarrow B$ given by $D = \{\delta(c)c: c\in C\}$. At this point, observe that~$\delta(c)$ must be translated in additive notation. Then, $(B,+)$ admits a brace structure by means of~\hbox{$ab := \delta\big(\delta^{-1}(a)\delta^{-1}(b)\big)$,} for every $a,b\in B$ (see \cite[Proposition 1.11]{GuarnieriVendramin17}), for example). We shall repeatedly use this method to construct finite braces in the detailed examples of this paper.
\medskip

We now briefly recall the nilpotency concepts we need, and some of their properties; we refer the interested reader to \cite{tutti23},\cite{paper},\cite{bonatto},\cite{55},\cite{Cedo},\cite{periodici},\cite{Smok} for more information on the subject. Let $B$ be a brace, and let $C,D$ be subsets of $B$. Set $R_0(C;D)=C$, $L_0(C;D)=D$, and recursively define $$R_{n+1}(C;D)=R_{n}(C;D)\ast D\quad\textnormal{and}\quad L_{n+1}(C;D)=C\ast L_n(C;D)$$ for all $n\in\mathbb{N}$. Then $B$ is {\it right-nilpotent} (resp.~{\it left-nilpotent}) if $R_m(B;B)=\{0\}$ (resp. $L_m(B;B)=\{0\}$) for some $m\in\mathbb{N}$ (note that for each non-negative integer $i$, $R_i(B;B)$ is always an ideal of $B$, while $L_i(B;B)$ is a left-ideal of $B$). In order to deal with right-nilpotency, we need to define the {\it upper socle series} of a brace $B$. Let~\hbox{$\operatorname{Soc}_0(B)\!=\!\{0\}$} and recursively define $\operatorname{Soc}_{\alpha+1}(B)$ as $\operatorname{Soc}_{\alpha+1}(B)/\operatorname{Soc}_\alpha(B)=\operatorname{Soc}\big(B/\operatorname{Soc}_\alpha(B)\big)$ for any ordinal number $\alpha$; also, put $\operatorname{Soc}_\lambda(B)=\bigcup_{\alpha<\lambda}\operatorname{Soc}_\alpha(B)$ for any limit ordinal $\lambda$. We say that $B$ has {\it finite multipermutational level} if $B=\operatorname{Soc}_n(B)$ for some non-negative integer~$n$; in this case $B$ is right-nilpotent, and it turns out that the converse holds if~$B$ is of nilpotent type (see \cite{Cedo}, Lemma 2.16). Of course, the upper socle series stops at some point, and we let $\overline{\operatorname{Soc}}(B)$ be the last term of the upper socle series of~$B$. The link between right-nilpotency and the solutions of the YBE is marvelously expressed by the fact that a solution is multipermutation if and only if the associated structure brace has finite multipermutation level (see \cite{55}, Theorem 4.13). Concerning left-nilpotency, we only recall (see \cite{Cedo}, Theorem 4.8) that a finite brace $B$ of nilpotent type is left-nilpotent if and only if $(B,\bigcdot)$ is nilpotent.

The strongest nilpotency concept is central nilpotency. This is defined by using the {\it upper central series} $\{\zeta_\alpha(B)\}_{\alpha\in\operatorname{Ord}}$ of a brace~$B$, which can be defined similarly to the upper socle series but replacing the operator $\operatorname{Soc}(-)$ with the operator $\zeta(-)$. A brace~$B$ is {\it hypercentral} if~\hbox{$B=\zeta_\mu(B)$} for some ordinal $\mu$, and is {\it centrally nilpotent} if~\hbox{$B=\zeta_m(B)$} for some non-negative integer $m$. Obviously, central nilpotency implies both left- and right-nilpotency, and, conversely, left- and right-nilpotency imply central nilpotency if the brace is of nilpotent type (see~Co\-rol\-la\-ry 2.15 of \cite{periodici}).  Central nilpotency can also be defined through the {\it lower central series} of $B$. This is a descending chain of ideals of~$B$ that is recursively defined as follows: $\Gamma_1(B)=B$ and $$\Gamma_{n+1}(B) = \langle \Gamma_n(B) \ast B, B \ast \Gamma_n(B),[\Gamma_n(B), B]_+\rangle_+$$ for every $n\in\mathbb{N}$. Then $B=\zeta_m(B)$ if and only if $\Gamma_{m+1}(B)=\{0\}$. Although central nilpotency is the strongest nilpotency concept for braces, it has been shown in \cite{tutti23} that the sum of two centrally nilpotent ideals need not be centrally nilpotent, even in finite braces: the problem is that a centrally nilpotent ideal $I$ of a brace $B$ need not have a finite chain of ideals of~$B$ that are central in $I$. To avoid this problem, we gave the following definition in \cite{tutti23}: an ideal $I$ of a brace $B$ is {\it $B$-centrally nilpotent} if there exists a finite chain of ideals of~$B$ $$\{0\}=I_0\leq I_1\leq\ldots\leq I_n=I$$ with {\it $I$-central} factors, that is, such that $I_{i+1}/I_i\leq\zeta(I/I_i)$ for all $0\leq i<n$; clearly, every abelian ideal is $B$-centrally nilpotent. Also in case of $B$-central nilpotency there is a standard ascending chain of ideals of $B$. This is called the {\it upper $B$-central series} of $I$ and is defined as follows: put $\zeta_0(I)_B=\{0\}$, $\zeta_\lambda(I)_B=\bigcup_{\alpha<\lambda}\zeta_\alpha(I)_B$ for every limit ordinal~$\lambda$, and $\zeta_{\alpha+1}(I)_B/\zeta_{\alpha}(I)_B$ to be the largest ideal of $B/\zeta_{\alpha}(I)_B$ contained in~$\zeta\big(I/\zeta_\alpha(I)_B\big)$ for every ordinal number~$\alpha$; in particular, $\zeta(I)_B=\zeta_1(I)_B$ is the largest ideal of~$B$ contained in~$\zeta(I)$. Of course, $I$ is~\hbox{$B$-cen}\-tral\-ly nilpotent if and only if~\hbox{$\zeta_n(I)_B=I$} for some $n\in\mathbb{N}$. Now, if~$I$ and $J$ are $B$-centrally nilpotent ideals of~$B$, then $I+J$ is~\hbox{$B$-cen}\-tral\-ly nilpotent (see~\cite{tutti23},~The\-o\-rem~5.3); the sum of all~\hbox{$B$-centrally} nilpotent ideals of~$B$ is the {\it Fitting ideal}~$\operatorname{Fit}(B)$ of $B$. We recall also that if $I=\zeta_\mu(I)_B$ for some arbitrary ordinal number~$\mu$, then $I$ is said to be {\it $B$-hypercentral}; thus, $I$ is $B$-hypercentral if and only if it has an ascending chain of ideals of $B$ with $I$-central factors.  Of course, the upper central series stops at some point, and we let $\overline{\zeta}(B)$ be the last term of the upper central series of~$B$.

We also briefly recall that a brace $B$ is {\it soluble} if it has a finite chain of ideals $$\{0\}=I_0\leq I_1\leq\ldots\leq I_n=B$$ whose factors are abelian (as braces). Our interest in soluble braces essentially is in the fact that, as we shall see, every supersoluble brace is in fact soluble. For more details on this concept and its relationship with the solutions of the Yang--Baxter Equation, see~\cite{solubleref}.

All previous concepts can be localized as follows. If $\mathfrak{X}$ is any brace-theoretical property, we say that a brace $B$ is {\it locally $\mathfrak{X}$} if every finitely generated subbrace of $B$ satisfies $\mathfrak{X}$. Thus, for example, a locally centrally-nilpotent brace is a brace whose finitely generated subbraces are centrally nilpotent. It has been shown in \cite{Tr23} and \cite{tutti23} that a hypercentral brace $B$ is locally centrally-nilpotent and that the set of all periodic elements of $(B,+)$ is an ideal of $B$ coinciding with the set of all periodic elements of~$(B,\bigcdot)$.

 \medskip

Finally, we briefly recall the definition of presentation of a brace and some related concepts. We refer the reader to ~\cite{tutti23-2} and~\cite{Tr23} for more details; in particular, see~\cite{tutti23-2} for an elementary construction of a free brace inspired by~\cite{orza}. Let $B$ be a brace. A {\it presentation} of $B$ is an exact sequence of braces \[
\begin{array}{c}\label{exact}
0\rightarrow R\rightarrow F\xrightarrow{\theta} B\rightarrow 0,\tag{$\star$}
\end{array}
\] where $F$ is a free brace over some set $X$. The exact sequence \eqref{exact} is a {\it finite presentation} of~$B$  if $X$ is finite and there are finitely many elements $\rho_1,\ldots,\rho_n$ of $F$ generating $\operatorname{Ker}(\theta)$ as an ideal of~$F$; we also say that $B$ is {\it finitely presented} by the generators $a_1,\ldots,a_m$ subject to the relations $\rho_1=\ldots=\rho_n=1$. 

Let $\mathcal{S}=\{x_i\}_{i\in I}$ be symbols. A {\it $b$-word} with respect to~$\mathcal{S}$ is a sequence of symbols recursively defined as follows: the empty sequence is a $b$-word, and such is the~\hbox{$1$-element} sequence $x_i$ for each $i\in I$; if we have two $b$-words~$w_1$ and~$w_2$, then the sequences \hbox{$w_1\bigcdot w_2$,} $w_1+w_2$, $-w_1$, $w_1^{-1}$ are $b$-words.  If $b_1,\ldots, b_n,\ldots$ are elements of~$B$, then we may naturally evaluate any $b$-word in~$B$ by replacing $x_i$ by $b_i$.

\section{Supersoluble braces}\label{sectsuper}

Our definition of supersoluble brace is inspired by \cite{fuster?}, by the definition of almost polycyclic brace given in \cite{tutti23-2}, and by the well-known concept of supersoluble group. 

\begin{defi}\label{defsuper}
{\rm A brace $B$ is said to be {\it supersoluble} if there is a finite chain of ideals of~$B$ $$\{0\}=I_0\leq I_1\leq\ldots\leq I_n=B$$ such that, for all $0\leq i<n$, either $(I_{i+1}/I_i,+)$ is infinite cyclic and \hbox{$I_{i+1}/I_i\leq\operatorname{Soc}(B/I_i)$,} or $I_{i+1}/I_i$ has prime order.}
\end{defi}

\begin{rem}
{\rm Since every brace of prime order is abelian, we have that every supersoluble brace is soluble.}
\end{rem}

\begin{rem}
{\rm As we shall later see, this definition can somehow be weakened (see~The\-o\-rem~\ref{weaksupersoluble}), but for simplicity's sake we have preferred to start with this apparently stronger definition, then derive some of the main results, and then show that this definition is equivalent to an apparently weaker one.}
\end{rem}

\begin{rem}\label{remchepoiserve}
{\rm In connection with Definition \ref{defsuper}, it could be worthwhile to recall that~Pro\-po\-sition~4.14 of \cite{tutti23} states that if a brace has cyclic multiplicative and additive groups, then it is contains an element that additively and multiplicatively generates the whole brace.}
\end{rem}

\medskip

Of course, supersolubility (as essentially all other properties we deal with in this paper) is preserved with respect to the formation of subbraces and quotients. Also, the following result is clear.

\begin{lem}\label{supersolublebraceisgroup}
Let $B$ be a brace, and let $G=(B,+)\rtimes_\lambda(B,\bigcdot)$. If $B$ is supersoluble, then $G$  is supersoluble. In particular, $(B,+)$ and $(B,\bigcdot)$ are supersoluble groups.
\end{lem}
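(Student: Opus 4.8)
The plan is to convert the defining ideal chain of the supersoluble brace $B$ into a series of subgroups of $G=(B,+)\rtimes_\lambda(B,\bigcdot)$, each normal in $G$, with cyclic factors; this is exactly what supersolubility of a group requires. Writing elements of $G$ as pairs $(a,b)$ with $a\in(B,+)$ and $b\in(B,\bigcdot)$, so that $(a_1,b_1)(a_2,b_2)=(a_1+\lambda_{b_1}(a_2),b_1\bigcdot b_2)$, the first thing I would record is that every ideal $I$ of $B$ yields a subgroup $\widehat{I}:=I\rtimes_\lambda I$ of $G$ with $\widehat{I}\trianglelefteq G$. The verification is a direct conjugation computation: for $(x,y)\in G$ and $(a,b)\in\widehat I$ one finds $(x,y)(a,b)(x,y)^{-1}=(x+\lambda_y(a)-\lambda_{yby^{-1}}(x),\,yby^{-1})$, and membership in $\widehat I$ follows from the three defining features of an ideal, namely $yby^{-1}\in I$ (normality in $(B,\bigcdot)$), $\lambda_y(a)\in I$ ($\lambda$-invariance), and $\lambda_{yby^{-1}}(x)-x=(yby^{-1})\ast x\in I\ast B\subseteq I$.

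The only non-formal input here is the inclusion $I\ast B\subseteq I$, which I would prove once: using normality of $I$ in $(B,\bigcdot)$ one writes $ib=bj$ with $j=b^{-1}ib\in I$, expands $i\ast b=-i+bj-b$, and concludes via $\lambda_b(j)\in I$ together with normality of $I$ in $(B,+)$. With this in hand the chain $\{0\}=I_0\le\cdots\le I_n=B$ produces the $G$-invariant chain $1=\widehat{I_0}\trianglelefteq\widehat{I_1}\trianglelefteq\cdots\trianglelefteq\widehat{I_n}=G$. To split each factor $\widehat{I_{j+1}}/\widehat{I_j}$ into two cyclic pieces I interpose $N_j:=I_{j+1}\rtimes_\lambda I_j$. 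The same conjugation formula gives $N_j\trianglelefteq G$, and it is essential to use $I_{j+1}\rtimes_\lambda I_j$ rather than $I_j\rtimes_\lambda I_{j+1}$: in the latter the term $(yby^{-1})\ast x$ only lands in $I_{j+1}$, not in $I_j$, so that subgroup fails to be normal in $G$. One then identifies, as groups, $N_j/\widehat{I_j}\cong(I_{j+1}/I_j,+)$ and $\widehat{I_{j+1}}/N_j\cong(I_{j+1}/I_j,\bigcdot)$.

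It remains to check that these factors are cyclic, which I read off from the two cases of Definition~\ref{defsuper}. If $I_{j+1}/I_j$ has prime order $p$, both its additive and its multiplicative group have order $p$, hence are cyclic. If $(I_{j+1}/I_j,+)$ is infinite cyclic and $I_{j+1}/I_j\le\operatorname{Soc}(B/I_j)$, then every element of $I_{j+1}/I_j$ lies in $\operatorname{Ker}\lambda$, so $\bar x\bigcdot\bar y=\bar x+\bar y$ there; thus the multiplication coincides with the addition and $(I_{j+1}/I_j,\bigcdot)=(I_{j+1}/I_j,+)$ is infinite cyclic as well. Hence every factor of the refined $G$-invariant series is cyclic and $G$ is supersoluble. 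Finally, $(B,+)$ is the normal subgroup $\{(a,0):a\in B\}$ of $G$ and $(B,\bigcdot)\cong G/(B,+)$, so both inherit supersolubility, since the property passes to subgroups and quotients.

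The two places where I expect to spend care, and the real obstacles beyond bookkeeping, are the identity $I\ast B\subseteq I$ (with the conjugation computation complicated only by the possible non-commutativity of $(B,+)$) and the correct choice of the interposed term $N_j=I_{j+1}\rtimes_\lambda I_j$: getting a genuine $G$-invariant series, rather than merely a subnormal one with cyclic factors, is what makes $G$ supersoluble, and supersolubility is not closed under extensions, so an inductive argument through $G/\widehat{I_1}$ alone would not suffice.
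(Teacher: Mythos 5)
Your proof is correct and takes essentially the same approach as the paper's: the paper peels off the bottom ideal $I_1$, observes that $X=(I_1,+)$ and $Y=(I_1,+)\rtimes_\lambda(I_1,\bigcdot)$ are normal subgroups of $G$ with cyclic factors, and inducts on the length of the chain --- which, unrolled, is precisely your series $\widehat{I_j}\trianglelefteq N_j\trianglelefteq\widehat{I_{j+1}}$ with $N_j=I_{j+1}\rtimes_\lambda I_j$. Your write-up simply makes explicit the details the paper leaves to the reader (the conjugation formula, the inclusion $I\ast B\subseteq I$, and the identification of the two cyclic factors), so there is nothing to correct.
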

\begin{proof}
Let $$\{0\}=I_0\leq I=I_1\leq\ldots\leq I_n=B$$ be a chain of ideals of $B$ such that, for all $0\leq i<n$, either $(I_{i+1}/I_i,+)$ is infinite cyclic and \hbox{$I_{i+1}/I_i\leq\operatorname{Soc}(B/I_i)$,} or $I_{i+1}/I_i$ has prime order. Then $X=(I,+)$ and $Y=(I,+)\rtimes_\lambda(I,\bigcdot)$ are normal subgroups of $G$ such that both $X$ and $Y/X$ are cyclic groups. Since $G/Y\simeq (B/I,+)\rtimes_\lambda(B/I,\bigcdot)$, we are done by induction on $n$.
\end{proof}

\medskip

The converse of Lemma \ref{supersolublebraceisgroup} does not hold. In fact, we shall soon see that every finite supersoluble brace of prime power order is centrally nilpotent (see~The\-o\-rem~\ref{pbracecentrallynilp}), but there are plenty of examples of non-centrally-nilpotent braces of prime power order. In any case, the fact that many group structures connected with a supersoluble brace are in fact supersoluble makes it possible to exploit some of the good properties of supersoluble groups to our aims (see \cite{course} for the main results and properties of supersoluble groups). Moreover, every supersoluble brace $B$ is {\it almost polycyclic} (see~\cite{tutti23-2}), that is,~$B$ has a finite chain of ideals $$\{0\}=J_0\leq J_1\leq\ldots\leq J_n=B$$ such that, for all $0\leq i<n$, either $J_{i+1}/J_i\leq\operatorname{Soc}(B/J_i)$ and $J_{i+1}/J_i$ is finitely generated, or $J_{i+1}/J_i$ is finite. Thus, we can also take great advantage from the results in~\cite{tutti23-2}; in particular, supersoluble braces are finitely presented and satisfy the maximal condition on subbraces.

We now show that supersoluble braces encompass some relevant classes of braces (see~The\-o\-rems~\ref{fgnilp}, \ref{squarefreeorder} and~Co\-rol\-la\-ry~\ref{corsquarefreeorder}).

\begin{theo}\label{fgnilp}
Every finitely generated centrally nilpotent brace $B$ is supersoluble.
\end{theo}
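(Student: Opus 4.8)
The plan is to induct on the central nilpotency class $m$ of $B$, i.e.\ on the least $m$ with $B=\zeta_m(B)$. The case $m=0$ is trivial (then $B=\{0\}$), so assume $m\geq 1$ and set $Z=\zeta(B)=\zeta_1(B)$. The quotient $B/Z$ is again finitely generated and centrally nilpotent, now of class $m-1$, so by the inductive hypothesis it is supersoluble; pulling back a supersoluble chain of ideals of $B/Z$ through the canonical projection yields a chain of ideals of $B$ from $Z$ up to $B$ whose factors are of the two required types (the socle condition is preserved because $(B/Z)/\overline{J}_i\cong B/J_i$ and $\operatorname{Soc}$ is an isomorphism invariant). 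It therefore suffices to produce a finite chain $\{0\}=H_0\leq H_1\leq\dots\leq H_k=Z$ of ideals of $B$ each of whose factors is either of prime order, or infinite cyclic and contained in the socle of the corresponding quotient; prepending it to the pulled-back chain finishes the induction.

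The heart of the matter is that $Z=\zeta(B)$ is extremely rigid as a sub-brace. First I would record that $Z$ is a trivial brace: for $z,w\in Z$ we have $\lambda_z=\mathrm{id}$, hence $z\ast w=\lambda_z(w)-w=0$, so the two operations agree on $Z$ and $(Z,+)$ is abelian. Next, and crucially, $\lambda_b$ fixes $Z$ pointwise for every $b\in B$: since $z\in Z(B,\bigcdot)$ we get $bz=zb$, while $\lambda_z=\mathrm{id}$ gives $zb=z+b$, and $z\in Z(B,+)$ then yields $\lambda_b(z)=-b+bz=-b+z+b=z$. Consequently any additive subgroup $H$ of $Z$ is automatically an ideal of $B$: it is a sub-brace (additive and multiplicative subgroups of $Z$ coincide), it is normal in $(B,+)$ and in $(B,\bigcdot)$ because $H\leq Z(B,+)\cap Z(B,\bigcdot)$, and it is $\lambda_b$-invariant because $\lambda_b$ restricts to the identity on $Z$. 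Moreover $H\leq Z\leq\operatorname{Soc}(B)=\operatorname{Ker}\lambda\cap Z(B,+)$, and the same two computations show that the image of any such $H$ in a quotient $B/H_j$ again lies in the socle of $B/H_j$; this is exactly what certifies the infinite-cyclic factors below.

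Granting these facts, the construction of the chain reduces to the structure theory of finitely generated abelian groups: writing $(Z,+)\cong\mathbb{Z}^r\oplus T$ with $T$ finite, one chooses a chain of subgroups with successive quotients infinite cyclic or of prime order, and each term is an ideal of $B$ by the previous paragraph, with the infinite-cyclic factors sitting in the appropriate socle. The only missing ingredient — and the step I expect to be the real obstacle — is that $(Z,+)$ is finitely generated, which is where finite generation of $B$ as a brace must be converted into finite generation of an additive group. I would prove it as in the classical theory of finitely generated nilpotent groups: the lower central factors $\Gamma_n(B)/\Gamma_{n+1}(B)$ are generated, as abelian groups, by the finitely many star- and additive-commutator products of weight $n$ formed from a fixed finite brace-generating set, and since $B$ is centrally nilpotent the lower central series reaches $\{0\}$ in finitely many steps, whence $(B,+)$ is finitely generated. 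As $(B,+)$ is also nilpotent (the $\zeta_i(B)$ form an additive central series, each factor lying in $Z(B/\zeta_i(B),+)$), it is polycyclic, so its subgroup $(Z,+)$ is finitely generated and the refinement goes through. (Alternatively, one may invoke the corresponding finiteness results for finitely generated nilpotent braces from the references on nilpotency.)
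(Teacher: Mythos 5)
Your proposal is correct and takes essentially the same approach as the paper: both rest on the two key facts that $\zeta(B)$ is a trivial brace whose additive subgroups are automatically ideals of $B$ contained in $\operatorname{Soc}(B)$, and that $(B,+)$ is finitely generated --- which the paper simply cites from \cite{Tr23}, Theorem~3.7, precisely the finiteness result you sketch via lower central factors and offer to quote --- after which the upper central series is refined into a finite chain of ideals with cyclic factors. Your induction on the nilpotency class, pulling a supersoluble chain back from $B/\zeta(B)$, is just a repackaging of the paper's closing step that every section of the upper central series can be refined to a finite chain of ideals of $B$ whose factors have cyclic additive groups.
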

\begin{proof}
Since $B$ is centrally nilpotent, it follows from \cite{Tr23}, Theorem 3.7, that $(B,+)$ is finitely generated. Now, $(B,+)$ is finitely generated and nilpotent, so it satisfies the maximal condition on subgroups, which means that every section of $(B,+)$ is finitely generated. Let $Z=\zeta(B)$. Then $Z$ is a trivial brace whose additive subgroups are ideals of $B$. Since $Z$ is additively finitely generated, its additive group is a direct sum of finitely many cyclic groups, so $Z$ has a chain of ideals whose factors have cyclic additive groups. Similarly, every section of the upper central series of $B$ can be refined to a finite chain of ideals whose factors have cyclic additive groups, and this completes the proof.
\end{proof}

\medskip

Actually, in the finite case we can precisely identify those supersoluble braces that are centrally nilpotent. As shown by the following result, they are precisely the supersoluble braces whose additive Sylow subgroups are ideals.

\begin{theo}\label{pbracecentrallynilp}
Let $p$ be a prime and let $B$ be a finite brace whose order is a power of $p$. If $B$ is supersoluble, then $B$ is centrally nilpotent. 
\end{theo}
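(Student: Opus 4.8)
The plan is to argue by induction on the order of $B$, the case $|B|=1$ being trivial. By Lemma \ref{supersolublebraceisgroup}, both $(B,+)$ and $(B,\bigcdot)$ are supersoluble, and since $|B|$ is a power of $p$ they are in fact finite $p$-groups. The heart of the matter is to locate a nonzero \emph{central} ideal; once this is done the induction closes at once. More precisely, I would produce an ideal $N\trianglelefteq B$ with $|N|=p$ and $N\leq\zeta(B)$, then observe that $B/N$ is again a finite supersoluble brace of $p$-power order (supersolubility being inherited by quotients), hence centrally nilpotent by the inductive hypothesis, and finally deduce that $B$ itself is centrally nilpotent.

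To produce $N$, I would take $N=I_1$, the first nonzero term of a defining chain of ideals as in Definition \ref{defsuper}. Since $B$ is finite, no factor of that chain can be infinite cyclic, so every factor has prime order; as $|B|$ is a power of $p$, that prime is $p$, and in particular $|N|=p$. I then claim that $N\leq\zeta(B)=\operatorname{Ker}\lambda\cap Z(B,+)\cap Z(B,\bigcdot)$. The additive and multiplicative centrality are immediate: $N$ is a normal subgroup of order $p$ of each of the finite $p$-groups $(B,+)$ and $(B,\bigcdot)$, and a normal subgroup of order $p$ of a finite $p$-group is central (its conjugation action factors through $\operatorname{Aut}(N)\cong\mathbb{Z}/(p-1)\mathbb{Z}$, a group of order prime to $p$).

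The remaining, and genuinely central, point is that $N\leq\operatorname{Ker}\lambda$. Here I would again exploit the coprimality of $p$ and $p-1$: restricting the $\lambda$-action to $N$ yields a homomorphism $b\in(B,\bigcdot)\mapsto\lambda_b|_N\in\operatorname{Aut}(N,+)\cong\mathbb{Z}/(p-1)\mathbb{Z}$ from a $p$-group into a group of order prime to $p$, so it is trivial; hence $\lambda_b(x)=x$ and therefore $b\ast x=\lambda_b(x)-x=0$ for all $b\in B$ and $x\in N$, that is, $B\ast N=0$. Combining this with $N\leq Z(B,+)$ and $N\leq Z(B,\bigcdot)$, a short computation with the basic brace identities gives $x\ast b=0$ for all $x\in N$, $b\in B$: indeed $x\ast b=-x+xb-b$, while $xb=bx=b+b\ast x+x=b+x=x+b$ (using $b\ast x=0$ and $x\in Z(B,+)$), so $x\ast b=-x+(x+b)-b=0$. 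Thus $\lambda_x=\operatorname{id}$ for every $x\in N$, i.e. $N\leq\operatorname{Ker}\lambda$, and altogether $N\leq\zeta(B)$.

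Finally, to close the induction I would record that every $z\in\zeta(B)$ satisfies $z\ast b=b\ast z=0$ and $[z,b]_+=0$ for all $b\in B$, so in particular $N\ast B=B\ast N=[N,B]_+=0$. Since $B/N$ is centrally nilpotent, $\Gamma_{r+1}(B)\leq N$ for some $r$, and then every generator of $\Gamma_{r+2}(B)$ vanishes; hence $\Gamma_{r+2}(B)=\{0\}$ and $B=\zeta_{r+1}(B)$ is centrally nilpotent. I expect the only delicate step to be the very first one, namely the existence of the order-$p$ ideal $N$: this is exactly where supersolubility is used rather than mere $p$-power order. A general finite brace of prime-power order need not contain any nonzero central (or even order-$p$) ideal — which is precisely why non-centrally-nilpotent braces of prime-power order exist — and supersolubility is what rules such pathologies out.
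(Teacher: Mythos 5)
Your proof is correct and takes essentially the same route as the paper's own argument: extract an ideal $N$ of order $p$ from the supersoluble chain, show $B\ast N=0$ by a coprimality argument, deduce $N\ast B=0$ via the same computation with the brace identities using $N\leq Z(B,+)$ and $N\leq Z(B,\bigcdot)$, conclude $N\leq\zeta(B)$, and induct on $|B|$. The only cosmetic difference is that the paper obtains $B\ast N=0$ by viewing $(N,+)$ as a normal subgroup of order $p$ in the $p$-group $(B,+)\rtimes_\lambda(B,\bigcdot)$, whereas you map $(B,\bigcdot)$ into $\operatorname{Aut}(N,+)\cong C_{p-1}$ directly --- the same coprimality idea in different packaging.
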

\begin{proof}
Let $I$ be an ideal of $B$ such that $|I|=p$. Since $|B|=p^n$ for some non-negative integer $n$, it follows that $$(I,+)\leq C_{(B,+)}(B,+)\quad\textnormal{and}\quad (I,\bigcdot)\leq C_{(B,\bigcdot)}(B,\bigcdot).$$ Write $I=\langle b\rangle$ and let $G=(B,+)\rtimes_\lambda(B,\bigcdot)$; in particular, $|G|=p^{2n}$. Now, if $a\in (B,+)$, then $a\in C_G(b)$, so $a\ast b=0$. On the other hand, $$a\ast b=-a+ab-b=-a-b+ba-a+a=-a+b\ast a+a,$$ so $b\ast a=0$ and hence $b\in\operatorname{Ker}(\lambda)$. Therefore $I\leq\zeta(B)$ and $B$ is centrally nilpotent by induction on the order of $B$.
\end{proof}

\medskip

The following is an alternative interesting proof proposed by the referee.

\begin{proof}
Since $B$ is, by assumption, supersoluble and of order $p^n$ for some positive integer $n$, there exists an ideal, say $I$, of order $p$. In particular, $I$ is a minimal non-zero ideal and $B$ is left-nilpotent. We claim that $B\ast I=0$. Indeed, $B\ast I$ is a left-ideal of~$B$ contained in $I$. Hence $B\ast I=\{0\}$ or $B\ast I=I$. As $B$ is a left-nilpotent, the latter is not possible. Thus, indeed, $B\ast I=\{0\}$. So, $a\ast b=0$ for all $a\in B$ and $b\in I$. Since~$(I,\bigcdot)$ is normal in $(B,\bigcdot)$, $(I,+)$ is normal in $(B,+)$, and $(B,+)$ and $(B,\bigcdot)$ are nilpotent, we have that $I\leq Z(B)$. Thus, $b\bigcdot a=a\bigcdot b$. Then, $0=-a+a\bigcdot b-b=-a+b\bigcdot a-b=-a-b+b\bigcdot a=-a-b+b\bigcdot a-a+a=-a+b\ast a+a$. So, $b\ast a=0$ and therefore $B\ast I=\{0\}$. Hence $I\leq\zeta(B)$. The result now follows by induction on the order of $B$.
\end{proof}

\begin{theo}\label{squarefreeorder}
Let $B$ be a finite brace whose additive and multiplicative Sylow subgroups are cyclic. Then $B$ is supersoluble. 
\end{theo}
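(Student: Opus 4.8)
The plan is to argue by induction on $|B|$, reducing supersolubility to the production of a single ideal of prime order. The class under consideration---finite braces whose additive and multiplicative Sylow subgroups are all cyclic---is plainly closed under subbraces and quotients (subgroups and quotients of cyclic $p$-groups are cyclic, and a Sylow subgroup of a quotient has the form $PN/N\cong P/(P\cap N)$), so once I locate an ideal $I$ of prime order, the quotient $B/I$ will again lie in the class, and the inductive hypothesis will supply a chain of ideals of $B/I$ with prime-order factors; pulling this chain back and prepending $I$ yields the required chain for $B$. Note that in the finite setting the infinite-cyclic alternative in Definition \ref{defsuper} never occurs, so here ``supersoluble'' just means the existence of a chain of ideals with prime-order factors.

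To manufacture the prime-order ideal I would exploit the group theory of the two associated groups. Both $(B,+)$ and $(B,\bigcdot)$ have cyclic Sylow subgroups, hence are supersoluble groups, and a finite supersoluble group has a normal (thus unique, thus characteristic) Sylow subgroup at its largest prime $p$, by the ordered Sylow tower property (see \cite{course}). Let $P$ be the Sylow $p$-subgroup of $(B,+)$. Being characteristic in $(B,+)$, it is invariant under every $\lambda_b\in\operatorname{Aut}(B,+)$; and since $ab=a+\lambda_a(b)$, any finite $\lambda$-invariant additive subgroup is automatically closed under $\bigcdot$ and so is a subgroup of $(B,\bigcdot)$. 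Thus $P$ is a subbrace and a strong left-ideal. As $|P|$ equals the order of a Sylow $p$-subgroup of $(B,\bigcdot)$, the subgroup $P$ is itself a Sylow $p$-subgroup of $(B,\bigcdot)$; supersolubility of $(B,\bigcdot)$ forces this $p$-Sylow to be the unique normal one, so $P$ is normal in $(B,\bigcdot)$ as well, and therefore $P\unlhd B$. In particular both $(P,+)$ and $(P,\bigcdot)$ are cyclic $p$-groups of the same order.

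I would then descend to the unique subgroup $P_0$ of order $p$ in the cyclic group $(P,+)$. It is characteristic in $(P,+)$, and $(P,+)$ is characteristic in $(B,+)$, so $P_0$ is characteristic in $(B,+)$; repeating the argument above, it is $\lambda$-invariant and a subgroup of $(B,\bigcdot)$ of order $p$, hence it must coincide with the unique subgroup of order $p$ of the cyclic group $(P,\bigcdot)$ and is therefore characteristic in $(P,\bigcdot)$. Since $P$ is normal in $(B,\bigcdot)$, this makes $P_0$ normal in $(B,\bigcdot)$; together with its additive normality and $\lambda$-invariance, $P_0$ is an ideal of $B$ of order $p$, and the induction closes.

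The main obstacle is the interaction between the two group operations: the natural candidate $P$ is built purely additively, so the real work is certifying that it, and then the order-$p$ subgroup $P_0$, is simultaneously normal in $(B,\bigcdot)$. Two facts make this go through: the elementary identity $ab=a+\lambda_a(b)$, which converts $\lambda$-invariance of an additive subgroup into multiplicative closure, and the supersolubility of $(B,\bigcdot)$, which pins the additive Sylow $P$ down as the unique normal multiplicative $p$-Sylow. The only external input is the classical group-theoretic statement that finite groups with cyclic Sylow subgroups are supersoluble and possess an ordered Sylow tower.
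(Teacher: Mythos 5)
Your proposal is correct and follows essentially the same route as the paper's proof: both deduce supersolubility of $(B,+)$ and $(B,\bigcdot)$ from the cyclic Sylow hypothesis, take the additive Sylow subgroup $P$ at the largest prime (characteristic, hence a strong left-ideal, hence by order considerations the unique multiplicative Sylow $p$-subgroup and so an ideal), descend to the unique order-$p$ subgroup of the cyclic group $(P,+)$, identify it with the unique order-$p$ subgroup of $(P,\bigcdot)$ to get an ideal of prime order, and close by induction. Your write-up merely makes explicit two points the paper leaves implicit, namely the identity $ab=a+\lambda_a(b)$ converting $\lambda$-invariance into multiplicative closure, and the closure of the hypothesis class under quotients.
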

\begin{proof}
Since $(B,+)$ and $(B,\bigcdot)$ are finite groups whose Sylow subgroups are cyclic, we have that $(B,+)$ and $(B,\bigcdot)$ are supersoluble (see \cite{course}, 10.1.10). If $p$ is the largest prime dividing the order of $B$, then the set $P$ of all $p$-elements of $(B,+)$ is a characteristic subgroup of $(B,+)$ by \cite{course}, 5.4.8, which means that~$P$ is a strong left-ideal of~$B$. Therefore~$P$ is a subgroup of $(B,\bigcdot)$, and order considerations show that $P$ is the unique~Sy\-low~\hbox{$p$-sub}\-group of $(B,\bigcdot)$, so~$P$ is actually an ideal of $B$. Since~$(P,+)$ and~$(P,\bigcdot)$ are cyclic, they contain only one subgroup of order $p$; let $I$ and $J$ be the cyclic subgroup of order $p$ of $(P,+)$ and $(P,\bigcdot)$, respectively. Since $I$ is characteristic in $(P,+)$, it follows that $I=J$ is an ideal of $B$ of prime order $p$. By induction, $B/I$ is supersoluble and hence $B$ is supersoluble.
\end{proof}

\begin{cor}\label{corsquarefreeorder}
Every finite brace of square-free order is supersoluble.
\end{cor}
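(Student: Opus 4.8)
The plan is to derive this as an immediate specialisation of Theorem \ref{squarefreeorder}, whose sole hypothesis is that the additive and multiplicative Sylow subgroups of $B$ be cyclic. The whole task therefore reduces to observing that square-freeness of the order forces this cyclicity automatically.

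First I would note that $(B,+)$ and $(B,\bigcdot)$ are groups on the same underlying set $B$, so both have order $|B|$, which is square-free by assumption. Writing $|B|=p_1\cdots p_k$ with the $p_i$ pairwise distinct primes, I observe that each $p_i$ divides $|B|$ only to the first power; hence every Sylow $p_i$-subgroup of $(B,+)$, and likewise of $(B,\bigcdot)$, has order exactly $p_i$. A group of prime order is cyclic, so all additive and multiplicative Sylow subgroups of $B$ are cyclic.

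With this in hand, Theorem \ref{squarefreeorder} applies verbatim and yields that $B$ is supersoluble, which completes the argument. There is no genuine obstacle here: the corollary is a direct instance of the theorem, and the only point requiring verification — that square-free order forces prime-order, and hence cyclic, Sylow subgroups — is an elementary fact of finite group theory. If anything, the one thing worth stating explicitly is that the square-free hypothesis is imposed on $|B|$ as a set and so applies equally to the two distinct group structures $(B,+)$ and $(B,\bigcdot)$, which is exactly what lets us feed both into Theorem \ref{squarefreeorder}.
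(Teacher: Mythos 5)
Your proof is correct and is exactly the argument the paper intends: the corollary is stated without proof precisely because, as you observe, a square-free order forces every additive and multiplicative Sylow subgroup to have prime order and hence be cyclic, so Theorem \ref{squarefreeorder} applies directly.
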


\medskip

Now, we prove one of the most relevant results about finite supersoluble braces, that is, the fact that they have Sylow towers. This is an immediate consequence of the following more general result. In what follows, if $n\in\mathbb{N}$, then $C_n$ denotes the cyclic group of order~$n$, and~$\mathbb{Z}$ denotes the additive group of the integers. Also, before proving the following main result, we state a result that we will frequently use in the paper.

\begin{theo}[\cite{tutti23-2}, The\-o\-rem~3.4]\label{exth3.4}
Let $B$ be an almost polycyclic brace. Then $B$ has a finite chain of ideals $$\{0\}=B_0\leq B_1\leq\ldots\leq B_n\leq B$$ such that $B_{i+1}/B_i\leq\operatorname{Soc}(B/B_i)$, $B/B_n$ is finite, and $(B_{i+1}/B_i,+)$ is torsion-free and finitely generated.
\end{theo}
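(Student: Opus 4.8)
The plan is to reduce everything to the observation that socle-type factors are trivial braces of abelian type, and then to reorganise an almost polycyclic series by induction on the torsion-free rank $h$ of $(B,+)$ (the number of infinite cyclic factors in a polycyclic-by-finite series of $(B,+)$; this is well defined because the almost polycyclic series exhibits $(B,+)$ as polycyclic-by-finite, its additive factors being either finite or finitely generated abelian). The base case $h=0$ is immediate: then $(B,+)$, hence $B$, is finite, and the chain $\{0\}=B_0=B_n$ with $B/B_n=B$ works, all the factor conditions being vacuous.

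For the inductive step I would first record a normal form for torsion-free socle ideals. If $J\trianglelefteq B$ with $J\leq\operatorname{Soc}(B)$, then $J\subseteq\operatorname{Ker}\lambda$ forces $x\ast y=\lambda_x(y)-y=0$ for $x,y\in J$, so $J$ is a trivial brace and $(J,+)=(J,\bigcdot)$ is a finitely generated abelian group on which $B$ acts, via $\lambda$ and via multiplicative conjugation, by additive automorphisms. Writing $m$ for the order of the torsion subgroup $T$ of $(J,+)$, which is characteristic and hence $B$-invariant, the subgroup $mJ$ is $B$-invariant (multiplication by $m$ commutes with every automorphism of an abelian group), satisfies $mJ\cap T=\{0\}$, and has finite index in $J$. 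Thus $mJ\trianglelefteq B$ with $(mJ,+)$ torsion-free and finitely generated, and $mJ\leq\operatorname{Soc}(B)$ still holds since additive centrality and $\lambda$-triviality pass to subgroups. In short, any infinite socle ideal contains a genuine torsion-free socle ideal of finite index in it.

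The heart of the matter, and the step I expect to be the main obstacle, is the claim that \emph{if $B$ is infinite then $\operatorname{Soc}(B)$ has positive torsion-free rank}, so that the previous paragraph yields a nonzero torsion-free ideal $B_1\leq\operatorname{Soc}(B)$. To prove it I would run the almost polycyclic series $\{0\}=J_0\leq\ldots\leq J_k=B$ and take the least $t$ with $J_{t+1}/J_t$ infinite; then $J_t$ is finite and $J_{t+1}/J_t\leq\operatorname{Soc}(B/J_t)$ is free abelian of positive rank. The subtlety is that an element $x\in J_{t+1}$ lies in $\operatorname{Soc}(B)$ only ``modulo the finite ideal $J_t$'': the failure of $x$ to be additively central and $\lambda$-trivial in $B$ is recorded by the maps $b\mapsto[x,b]_+$ and $b\mapsto x\ast b=\lambda_x(b)-b$, and both take values in the finite group $J_t$ (because $[J_{t+1},B]_+\leq J_t$ and $J_{t+1}\ast B\leq J_t$). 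One then has to argue that these ``obstruction'' maps are determined by finitely much data — essentially by their values on a finite generating set of the polycyclic-by-finite group $(B,+)$, modulo finite ambiguity — so that their common kernel $J_{t+1}\cap\operatorname{Soc}(B)$ has finite index in $J_{t+1}$ and is therefore infinite. This finiteness-of-obstruction argument, which is where the finiteness of $J_t$ and the polycyclic-by-finite structure of $(B,+)$ really enter, is the only genuinely delicate point; everything else is bookkeeping.

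Granting the claim, the induction closes cleanly. Having extracted a nonzero torsion-free ideal $B_1\leq\operatorname{Soc}(B)$ of rank $r\geq1$, the quotient $B/B_1$ is again almost polycyclic and has torsion-free rank $h-r<h$. By the inductive hypothesis there is a chain $\{0\}=\overline{B}_1\leq\ldots\leq\overline{B}_n\leq B/B_1$ of ideals of $B/B_1$ with torsion-free socle-type factors and $(B/B_1)/\overline{B}_n$ finite. Pulling it back along $B\to B/B_1$ and prepending $B_1$ gives the desired chain for $B$: one uses $\operatorname{Soc}\big((B/B_1)/\overline{B}_i\big)=\operatorname{Soc}(B/B_i')$ for the preimages $B_i'$, and the torsion-freeness and finite generation of the factors survive this identification, while the top factor $B/B_n$ stays finite. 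In this way every finite ``noise'' coming from the original series is collected into the single finite quotient $B/B_n$, as required.
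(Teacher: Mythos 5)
A preliminary remark: the paper does not prove this statement at all --- it is imported verbatim from \cite{tutti23-2}, Theorem~3.4, as the theorem's own header indicates --- so there is no internal proof to compare yours against, and what follows judges your proposal on its own terms. Your global architecture is sound: the induction on the Hirsch length of $(B,+)$, the passage from a finitely generated socle ideal $J$ to the torsion-free finite-index ideal $mJ$, and the pull-back of the chain from $B/B_1$ are all correct (and $J_{t+1}\cap\operatorname{Soc}(B)$ is indeed an ideal of $B$, being an intersection of ideals, so your step 2 applies to it). The genuine gap is exactly where you flag it, and the one-sentence justification you offer does not fill it. Knowing that the obstruction maps $b\mapsto[x,b]_+$ and $b\mapsto x\ast b=\lambda_x(b)-b$ take values in the finite ideal $J_t$ and are determined by their values on a finite additive generating set $b_1,\ldots,b_r$ of $(B,+)$ shows only that $x\mapsto\big([x,b_i]_+,\,x\ast b_i\big)_{i}$ maps $J_{t+1}$ into a finite set, i.e.\ has finitely many fibers. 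Since $J_{t+1}$ is infinite, \emph{some} fiber is infinite; but nothing in this counting argument forces the fiber over $0$, which is $J_{t+1}\cap\operatorname{Soc}(B)$, to be the infinite one. To upgrade ``finitely many fibers'' to ``the zero fiber has finite index'' you need the obstructions to be group homomorphisms in the variable $x$, so that the fibers are cosets of the kernel.

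They are homomorphisms, but with respect to two \emph{different} operations, and this is the second point your sketch silently elides when it asserts that the common kernel ``has finite index in $J_{t+1}$'' (index in which group structure?). The map $x\mapsto\lambda_x$ is a homomorphism from $(J_{t+1},\bigcdot)$ into $\operatorname{Aut}(B,+)$ whose image is finite, because each $\lambda_x$ is determined by the values $\lambda_x(b_i)\in b_i+J_t$; hence $Q=J_{t+1}\cap\operatorname{Ker}(\lambda)$ has finite \emph{multiplicative} index. The map sending $x$ to the additive conjugation $b\mapsto -x+b+x$ is an (anti)homomorphism from $(J_{t+1},+)$ whose image is finite by the same coset constraint; hence $P=J_{t+1}\cap Z(B,+)$ has finite \emph{additive} index. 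An additive-finite-index subgroup and a multiplicative-finite-index subgroup of a brace need not intersect largely in general, so one further observation is required: $Q$ is infinite (finite index in the infinite group $(J_{t+1},\bigcdot)$), the two operations of $B$ coincide on $Q$ (if $x\in\operatorname{Ker}(\lambda)$ then $xy=x+\lambda_x(y)=x+y$), so $(Q,+)=(Q,\bigcdot)$ is a single group, and restricting the additive (anti)homomorphism to it exhibits $P\cap Q=J_{t+1}\cap\operatorname{Soc}(B)$ as a finite-index subgroup of the infinite group $Q$, hence infinite. With these two additions --- the homomorphism property in $x$, and the device reconciling the two notions of index --- your induction closes; without them, the step you yourself call the heart of the matter remains an assertion rather than an argument.
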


\begin{theo}\label{theoincredible}
Let $B$ be a supersoluble brace. Then $B$ has a chain of ideals $$\{0\}=I_{0,0}\leq I_{0,1}\leq\ldots\leq I_{0,\ell}=I_{1,0}\leq I_{1,1}\leq\ldots\leq I_{1,m}=I_{2,0}\leq I_{2,1}\leq\ldots\leq I_{2,n}=B$$ satisfying the following properties:
\begin{itemize}
    \item $|I_{0,i+1}/I_{0,i}|$ is an odd prime number for all $0\leq i<\ell$;
    \item $|I_{0,i+1}/I_{0,i}|\geq |I_{0,i+2}/I_{0,i+1}|$ for all $0\leq i<\ell-1$;
    \item $I_{1,i+1}/I_{1,i}\leq\operatorname{Soc}(B/I_{1,i})$ and $(I_{1,i+1}/I_{1,i},+)$ is infinite cyclic for all \hbox{$0\leq i<m$;}
    \item $|I_{2,i+1}/I_{2,i}|=2$ for all $0\leq i<n$. 
\end{itemize}
\end{theo}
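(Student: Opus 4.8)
My plan is to produce the three zones by an induction that rearranges a given supersoluble chain through \emph{local moves} on length-two subquotients, using the finite theory as the engine for those moves that involve genuine torsion.

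The engine is the finite case, which I would establish first: \emph{a finite supersoluble brace $\bar B$ has an ideal chain with prime-order factors in which the primes decrease}. The argument is the one in the proof of Theorem~\ref{squarefreeorder}, stripped of the cyclicity hypothesis. If $p$ is the largest prime dividing $|\bar B|$, then, since $(\bar B,+)$ is supersoluble by Lemma~\ref{supersolublebraceisgroup}, its set $P$ of $p$-elements is the (characteristic) normal Sylow $p$-subgroup; hence $P$ is a strong left-ideal, and order considerations identify $P$ with the unique Sylow $p$-subgroup of $(\bar B,\bigcdot)$, which is therefore normal. Thus $P\trianglelefteq \bar B$, and induction on $|\bar B|$ gives the decreasing tower with the Sylow $2$-ideal on top. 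Each Sylow ideal is a brace of prime-power order, hence centrally nilpotent by Theorem~\ref{pbracecentrallynilp}; intersecting a chief series of the supersoluble group $(\bar B,+)\rtimes_\lambda(\bar B,\bigcdot)$ with its sections refines the tower into one with prime-order ideal factors of $\bar B$, still in decreasing order. This proves the statement when $B$ is finite.

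For the general case I would induct on the length of a supersoluble chain $\{0\}=I_0\le\ldots\le I=I_{n-1}\le B$, applying the inductive hypothesis to $I$ to obtain a three-zone chain of $I$, and then inserting the top factor $T=B/I$ according to its type. If $T$ has order $2$, it is appended on top of the $2$-zone. If $T$ is infinite cyclic (inside $\operatorname{Soc}(B/I)$), it must be pushed below the $2$-zone of $I$: this is the single move that swaps an order-$2$ lower factor with an infinite-cyclic upper one, and here the lower factor being of order $2$ forces the additive group of the relevant subquotient to contain an involution, so it is $\mathbb{Z}\oplus C_2$ rather than $\mathbb{Z}$ and the required infinite-cyclic ideal of index $2$ exists. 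If $T$ has odd prime order $p$, it must travel below the $2$-zone (a move like the finite one: in a subquotient of order $2p$ the Sylow $p$-subgroup is the unique, hence characteristic, Sylow for the larger prime, and so yields an ideal), then below the infinite-cyclic zone, and finally into its correct decreasing slot among the odd primes (again the larger-prime Sylow move).

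The crux — and the step I expect to be the main obstacle — is pushing an odd prime factor below an infinite-cyclic one. In the relevant subquotient $\bar D$ one has a central extension of $C_p$ by an infinite-cyclic socle factor, so $(\bar D,+)$ is either $\mathbb{Z}\oplus C_p$ or $\mathbb{Z}$. In the first case $\bar D$ has a genuine subgroup of order $p$, namely its characteristic torsion subgroup, which furnishes the order-$p$ ideal needed for the swap. In the second case there is no torsion at all, the apparent order-$p$ factor is an artifact, and instead of swapping I would \emph{absorb}: replacing $\bar C\le\bar D$ by the coarser $0\le\bar D$ deletes the odd factor and shortens the chain, which is harmless. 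Making all this rigorous carries the real weight of the proof in two places. First, each move must be checked to produce genuine \emph{ideals of $B$}: this is exactly where brace-supersolubility (strictly stronger than additive supersolubility, by the remark following Lemma~\ref{supersolublebraceisgroup}) is indispensable, the point being that the characteristic additive subgroups exhibited above are automatically $\lambda$-invariant and multiplicatively normal inside an ambient ideal, and that in the absorption case the enlarged factor $\bar D\cong\mathbb{Z}$ still lies in the appropriate socle. Second, one must carry the bookkeeping that the socle conditions $I_{1,i+1}/I_{1,i}\le\operatorname{Soc}(B/I_{1,i})$ survive every move and every refinement. Termination is guaranteed because each move either preserves the chain length while reducing a suitable count of out-of-order pairs, or strictly shortens the chain.
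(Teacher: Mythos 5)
Your overall strategy---reordering a supersoluble chain by local moves on adjacent factors, with a swap/absorb dichotomy when an odd prime sits above an infinite cyclic socle factor---is essentially the paper's own proof, so the skeleton is right. But several of your moves rest on a principle that is false, and on assertions that are exactly the content the paper has to prove. The false principle is that characteristic additive subgroups are ``automatically $\lambda$-invariant \emph{and multiplicatively normal} inside an ambient ideal''. Only $\lambda$-invariance is automatic; a $\lambda$-invariant normal subgroup of $(B,+)$ is in general just a \emph{strong left-ideal}, not an ideal (this is precisely why Corollary \ref{psubbrace} is a theorem and not an observation: $U_p^+(B)$ is always a strong left-ideal, and the point is that supersolubility makes it an ideal). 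This infects your finite-case refinement: normal subgroups of $(\bar B,+)\rtimes_\lambda(\bar B,\bigcdot)$ lying inside $(\bar B,+)$ are only strong left-ideals of $\bar B$, so intersecting a chief series of the semidirect product does \emph{not} produce ideals of $\bar B$ (the correct refinement intersects the brace-theoretic supersoluble chain with the Sylow tower). It also infects the crux swap: to see that the torsion subgroup $L_1\cong C_p$ of $(\bar D,+)\cong\mathbb{Z}\oplus C_p$ is an ideal, one must argue on the multiplicative side---first pin down $(\bar D,\bigcdot)\cong C_p\times\mathbb{Z}$ (the paper does this via Corollary 4.3 of \cite{stefan}) and then identify $L_1$ with the unique order-$p$ subgroup of $(\bar D,\bigcdot)$, which is characteristic in a normal subgroup of $(B,\bigcdot)$. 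Similarly, in the absorption case, ``there is no torsion at all'' is only justified additively; to get $\bar D\le\operatorname{Soc}(B/H)$ you must still exclude multiplicative torsion (the paper's $\bigcap_{q\neq p}K^{q,\bigcdot}=\{0\}$ argument), deduce that $\bar D$ is a trivial brace, and only then run the computation $0=y^p\ast b=p(y\ast b)$. You flag these verifications as carrying ``the real weight'', which is true---but they are the theorem, and the proposal does not supply them.

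The second genuine gap is the move pushing an infinite cyclic socle factor below an order-$2$ factor. From $(\bar D,+)\cong\mathbb{Z}\oplus C_2$ you assert that ``the required infinite-cyclic ideal of index $2$ exists''. Nothing forces either index-two copy of $\mathbb{Z}$ in $\mathbb{Z}\oplus C_2$ to be $\lambda$-invariant or multiplicatively normal, let alone contained in the socle, and exact index $2$ is not achievable in general: the paper handles this case by producing, via Theorem \ref{exth3.4} together with the odd-prime machinery, an infinite cyclic socle ideal whose index in $\bar D$ is only a \emph{power} of $2$, and it explicitly warns that this swap ``could produce some new factors of order $2$ at the top''. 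Because this move can lengthen the chain, your termination argument (``each move preserves the length or shortens the chain'') also fails as stated and needs the more careful measure the paper implicitly uses (the number of order-$2$ factors below the factor being pushed down strictly decreases). A related bookkeeping point: ``applying the inductive hypothesis to $I$'' must be formulated so that the resulting chain consists of ideals of $B$ with socle conditions relative to $B$---applying the theorem to the brace $I$ itself yields ideals of $I$, which are of no use, since ideals of ideals need not be ideals.
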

\begin{proof}
Given a finite chain of ideals of the type described in Definition \ref{defsuper}, we show that it is essentially possible to swap adjacent factors that are not in the order prescribed by the statement (although in one case the swap could produce some new factors of order~$2$ at the top). Then, in order to complete the proof, we only need to keep swapping the factors until there is no further possible swap. Let $H\leq K\leq L$ be ideals of $B$. 

\medskip

Suppose first $|K/H|=p$ and $|L/K|=q$ are prime numbers such that $p<q$. Since $(L/H,+)$ and $(L/H,\bigcdot)$ are supersoluble, we have (see \cite{course}, 5.4.8) that their Sylow~\hbox{$q$-sub}\-groups are normal, so each of these groups is a direct product of a cyclic group of order $p$ and a cyclic group of order $q$; in particular, both groups are nilpotent. Thus, the Sylow $q$-subgroup $Q/H$ of $(L/H,+)$ is an ideal of $B/H$. Clearly,~\hbox{$|Q/H|=q$} and $|L/Q|=p$. Hence, in this case, one can indeed swap ideals.

\medskip

Suppose now that $K/H\leq\operatorname{Soc}(B/H)$, $(K/H,+)$ is an infinite cyclic group and~\hbox{$|L/K|=p$} is an odd prime number. For the sake of clarity, we put $H=\{0\}$. Since $L/K$ is an odd prime, we have that $(L,+)$ and $(L,\bigcdot)$ are abelian groups, and furthermore $(L,+)$ and~$(L,\bigcdot)$ are either infinite cyclic or isomorphic to $C_p\times\mathbb{Z}$. 

Suppose $(L,+)$ is infinite cyclic and $(L,\bigcdot)\simeq C_p\times\mathbb{Z}$, so there exists $0\neq x\in L$ such that $x^p=0$. Choose a prime $q\neq p$. Then $L/K^{q,\bigcdot}$ is the direct product of two ideals, one of order $p$ and the other of order $q$. Clearly, $xK^{q,\bigcdot}$ is contained in the ideal of order~$p$, so $px\in K^{q,\bigcdot}$. The arbitrariness of $q$ and the fact that $\bigcap_{p\neq q\in\mathbb{P}}K^{q,\bigcdot}=\{0\}$ show that~\hbox{$px=0$.}  Because, by assumption, $(L, +)$ is infinite cyclic, this implies $x = 0$. This contradiction shows that if $(L,+)$ is infinite cyclic, then also $(L,\bigcdot)$ is such. In this case, Corollary 4.3 of \cite{stefan} yields that $L$ is abelian (as a brace). Let $(L,+)=\langle y\rangle_+$; in particular, $(K,+)=\langle y^p\rangle_+$. Now, since $(L,+)$ is infinite cyclic and $(K,+)\leq Z(B,+)$, we have that $(L,+)\leq Z(B,+)$. Moreover, if $b$ is any element of $B$, then (by Eqs. \eqref{dacitare}) $$ 0 = y^2 \ast b =y\ast (y\ast b)+2(y\ast b)=2(y\ast b)$$
as $y \ast (y \ast b) \in L \ast L = 0$ because $L$ is an ideal which is a trivial brace. More in general,  by induction one proves $$0=y^p\ast b=p(y\ast b),$$ which means that $y\ast b=0$ and hence that $L\leq\operatorname{Soc}(B)$.

We may therefore assume that $(L,+)=L_1\times L_2$, where $L_2\subseteq(\operatorname{Soc}(B),+)$ is infinite cyclic, and $L_1$ has order $p$.  In this case, we have that $L$ is a brace of abelian type such that $(L,\bigcdot)$ is not cyclic (otherwise, it would be a trivial brace by \cite{stefan}, Corollary~4.3, with elements of order $p$, a contradiction). Thus, $(L,\bigcdot)\simeq C_p\times\mathbb{Z}$, so that $L_1$ is an ideal of $B$ such that $L/L_1$ is contained in $\operatorname{Soc}(B/L_1)$ and is $1$-generator.

\medskip

Finally, suppose $|K/H|=2$, $L/K\leq\operatorname{Soc}(B/K)$ and $(L/K,+)$ is infinite cyclic. Also in this case, both $(L/H,+)$ and $(L/H,\bigcdot)$ are abelian. It follows from The\-o\-rem~\ref{exth3.4} that there is an ideal $U/H$ of $B/H$ such that $U/H\leq\operatorname{Soc}(B/H)\cap L/H$, $(U/H,+)$ is infinite cyclic, and $L/U$ is finite. Now, $L/U$ is the direct product of an even-order ideal~$P/U$ of $B/U$, and an odd-order ideal $Q/U$ of $B/U$. An application of the previous case to the chain~\hbox{$H\leq U\leq Q$} shows that $Q/H\leq\operatorname{Soc}(B/H)$ and that $(Q/H,+)$ is infinite cyclic. Since $|L/Q|$ is a power of $2$, we can refine $L/Q$ with a finite chain of ideals of order~$2$. Thus, we have swapped the types of $K/H$ and $L/K$ with some possible addition of factors of order $2$ at the top. The statement is proved.
\end{proof}

\begin{defi}
{\rm Let $B$ be a brace and $p$ a prime. We denote by $U_p^+(B)$ (resp. $U_p^{\bigcdot}(B)$) the set of all elements of $B$ whose additive (resp. multiplicative) order is finite and is divided only by primes $q>p$. In particular, $U_2^+(B)$ is the set of all odd-order elements of~$(B,+)$.}
\end{defi}

Let $B$ be a supersoluble brace, and let $p$ be a prime. Since $U_p^+(B)$ is a characteristic subgroup of $(B,+)$, it follows that $U_p^+(B)$ is a strong left-ideal of $B$. As a consequence of Theorem \ref{theoincredible}, we can say something more. Recall first that if $G$ is a group, then~$\pi(G)$ denotes the set of all prime numbers $p$ such that $G$ has an element of order~$p$.

\begin{cor}\label{psubbrace}
Let $B$ be a supersoluble brace and $p$ a prime. Then $U_p^+(B)=U_p^{\bigcdot}(B)$ is an ideal of $B$.
\end{cor}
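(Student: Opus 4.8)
The plan is to pin $U_p^+(B)$ down explicitly as one of the ideals produced by Theorem~\ref{theoincredible}, and thereby read off both assertions at once. Fix the chain
$$\{0\}=I_{0,0}\leq\ldots\leq I_{0,\ell}=I_{1,0}\leq\ldots\leq I_{1,m}=I_{2,0}\leq\ldots\leq I_{2,n}=B$$
supplied by that theorem, and recall its three features: the bottom factors $I_{0,i+1}/I_{0,i}$ have odd prime order arranged in \emph{decreasing} magnitude; the middle factors $I_{1,i+1}/I_{1,i}$ are infinite cyclic and lie in $\operatorname{Soc}(B/I_{1,i})$; and the top factors $I_{2,i+1}/I_{2,i}$ have order $2$. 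Write $\pi$ for the set of primes $q>p$. Because the odd prime factors decrease, there is an index $j$ such that $M:=I_{0,j}$ collects exactly the bottom factors whose order is $>p$; I would claim $U_p^+(B)=M$. One inclusion is immediate: $M$ carries a series whose factors have prime order $>p$, so $(M,+)$ is finite of order a $\pi$-number, whence $M\subseteq U_p^+(B)$.

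The reverse inclusion is the crux, and it is an ``ascent'' argument through the quotient $B/M$. Take $x\in U_p^+(B)$, so that the additive order of $x$ is a $\pi$-number, and consider $\bar{x}\in B/M$. Its image in the $2$-group $B/I_{1,m}$ is simultaneously a $\pi$-element and a $2$-element, hence trivial, so $\bar{x}\in I_{1,m}/M$. Its image in $I_{1,m}/I_{0,\ell}$ is a torsion element of a poly-(infinite cyclic), hence torsion-free, group, so it too is trivial and $\bar{x}\in I_{0,\ell}/M$. Finally $I_{0,\ell}/M$ is finite of order a product of odd primes $\leq p$, so a $\pi$-element of it vanishes by coprimality (here $\gcd$ of a $\pi$-number and a $\{q\leq p\}$-number is $1$). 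Thus $\bar{x}=0$, i.e. $x\in M$, proving $U_p^+(B)=M$. In particular $U_p^+(B)$ is an ideal of $B$.

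It remains to identify $U_p^{\bigcdot}(B)$ with the same ideal $M$. As a multiplicative group $(M,\bigcdot)$ inherits from the chain a series with factors of prime order $>p$, so it is a finite $\pi$-group and hence $M\subseteq U_p^{\bigcdot}(B)$. Conversely, the very same ascent argument applies verbatim to $(B/M,\bigcdot)$, since its multiplicative series has a finite $\{$odd primes $\leq p\}$-bottom $I_{0,\ell}/M$, a torsion-free middle, and a $2$-group top: here one uses that each socle factor $I_{1,i+1}/I_{1,i}$ is in fact a \emph{trivial} brace (elements of the socle lie in $\operatorname{Ker}\lambda$, so their star products vanish), whence its multiplicative group coincides with its infinite cyclic additive group and $I_{1,m}/I_{0,\ell}$ is multiplicatively torsion-free. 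A multiplicative $\pi$-element of $B/M$ therefore dies in exactly the same three steps, giving $U_p^{\bigcdot}(B)\subseteq M$. Combining, $U_p^+(B)=M=U_p^{\bigcdot}(B)$.

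The main obstacle is the reverse inclusion of the first claim: one must rule out any nontrivial $\pi$-torsion hiding \emph{above} $M$. What makes this work — and what explains the phrase ``as a consequence of Theorem~\ref{theoincredible}'' — is precisely the ordered structure of the chain: the decreasing arrangement of the odd prime factors isolates all factors of order $>p$ at the bottom, the infinite-cyclic middle block is torsion-free, and the remaining torsion (odd primes $\leq p$ below, the prime $2$ on top) is coprime to $\pi$. Each of these three ingredients is exactly what Theorem~\ref{theoincredible} is designed to deliver.
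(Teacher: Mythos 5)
Your proof is correct and follows essentially the same route as the paper: both results are extracted from the ordered chain of Theorem~\ref{theoincredible}, which isolates the factors of order $>p$ at the bottom as an ideal of $B$. The only difference is bookkeeping --- the paper peels off one prime at a time by induction on the largest odd prime in $\pi(B,+)$, whereas you identify $U_p^+(B)=U_p^{\bigcdot}(B)$ directly with the relevant term $M$ of the chain, making explicit the coprimality and torsion-freeness steps (including the observation that the socle factors are trivial braces, so their multiplicative groups are infinite cyclic) that the paper leaves implicit.
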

\begin{proof}
Let $q$ be the largest odd prime in $\pi(B,+)$; of course, we may assume $p\leq q$. It follows from Theorem \ref{theoincredible} that $B$ has an ideal $I$ such that $|I|$ is a power of $q$ and both~$(B/I,+)$ and $(B/I,\bigcdot)$ have no $q$-elements. This means that $I=U_r^+(B)=U_r^{\bigcdot}(B)$ is an ideal of $B$, where~$r$ is the largest prime that is strictly less than $q$. Now, the largest prime in $\pi(B/I,+)$ is strictly less than $q$, so induction yields that $$U_p^+(B)/I=U_p^+(B/I)=U_p^{\bigcdot}(B/I)=U_p^{\bigcdot}(B)/I$$ is an ideal of $B/I$. Thus, $U_p^+(B)=U_p^{\bigcdot}(B)\trianglelefteq B$.
\end{proof}

\begin{rem}
{\rm As a consequence of Corollary \ref{psubbrace}, we have that every supersoluble brace can be embedded into the direct product of a finite supersoluble brace and a supersoluble brace $C$ with $U_2^+(C)=\{0\}$. In fact, let $B$ be our supersoluble brace, and let $I=U_2^+(B)$. By Theorem \ref{exth3.4}, $B$ has a finite-index ideal $J$ such that $(J,+)$ is torsion-free; thus, $B$ embeds in the direct product $B/I\times B/J$ by the map $b\mapsto (b+I,b+J)$. 

This remark justifies the additional hypothesis $U_2^+(B)=\{0\}$ on some of our theorems.}
\end{rem}

\medskip

Our next aim is to show that our definition of supersoluble brace is equivalent to the following (seemingly weaker) one.

\begin{defi}
{\rm A brace $B$ is said to be {\it weakly supersoluble} if there is a finite chain of ideals of~$B$ $$\{0\}=I_0\leq I_1\leq\ldots\leq I_n=B$$ such that, for all $0\leq i<n$, either $(I_{i+1}/I_i,+)$ is a central infinite cyclic subgroup of~$(B/I_i,+)$, or $I_{i+1}/I_i$ has prime order.}
\end{defi}

Clearly, every supersoluble brace is weakly supersoluble, and our next lemma is the key ingredient in showing that the converse also holds.

\begin{lem}\label{lemequivalenza}
Let $B$ be a supersoluble brace and let $I$ be an ideal of $B$.
\begin{itemize}
    \item[\textnormal{(1)}] If $I\leq Z(B,+)$ and $|I|$ is a prime, then $I\leq\operatorname{Soc}(B)$.
    \item[\textnormal{(2)}] If $I$ is abelian and $(I,+)$ is infinite cyclic, then $I\leq\operatorname{Soc}(B)$.
\end{itemize}
\end{lem}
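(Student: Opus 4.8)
The plan is to prove both statements by producing, for every $i\in I$ and $b\in B$, the equality $i\ast b=0$ and (in case (2)) also $[i,b]_+=0$; since $\operatorname{Soc}(B)=\operatorname{Ker}\lambda\cap Z(B,+)$, this is exactly what ``$I\le\operatorname{Soc}(B)$'' means. Throughout I will use three standing facts: for an ideal $I$ one has $I\ast B\le I$ (because $\lambda_i$ is trivial on $B/I$) and $B\ast I\le I$ (because $\lambda_b(I)\le I$), and $I\ast I=0$ (as $I$ is a trivial brace, being of prime order in (1) and abelian in (2)). In (1) the hypothesis already grants $I\le Z(B,+)$, so only $I\ast B=0$ remains; in (2) both conditions must be built.

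For (2) I would argue by induction on the length $n$ of a supersoluble series $\{0\}=J_0\le\ldots\le J_n=B$ as in Definition \ref{defsuper}, putting $M:=J_1$ and passing to $B/M$, where $\bar I=(I+M)/M$ is again an abelian ideal with cyclic additive group. When $I\cap M=0$ (in particular whenever $|M|$ is prime) the image $\bar I$ is still infinite cyclic, induction gives $\bar I\le\operatorname{Soc}(B/M)$, hence $I\ast B\le M$ and $[I,B]_+\le M$; since both $I\ast B$ and $[I,B]_+$ lie in $I$, they lie in $I\cap M=0$ and we are done. The one remaining case is $M\le\operatorname{Soc}(B)$ infinite cyclic with $I\cap M\neq0$: then $I\cap M=dI$ has finite index $d$ in $I=\langle y\rangle$ and lies in $\operatorname{Soc}(B)$, so $dy=y^d$ is additively central and satisfies $\lambda_{y^d}=\mathrm{id}$. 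I then transfer this from the multiple to $y$ using torsion-freeness: additively, $b+dy-b=d(b+y-b)$ forces $b+y-b=y$; and because $y\ast(y\ast b)\in I\ast I=0$, the identities \eqref{dacitare} give $y^d\ast b=d(y\ast b)$, so $d(y\ast b)=0$ and hence $y\ast b=0$ since $(I,+)\cong\mathbb Z$ is torsion-free. Thus $I\le\operatorname{Soc}(B)$.

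For (1) this multiple-bootstrap is useless, since $y^p\ast b=p(y\ast b)$ is automatically zero in $I\cong C_p$; so I would instead descend to a finite prime-power situation. As $B$ is almost polycyclic, hence residually finite, and $I$ is finite, choose a finite-index ideal $N$ with $I\cap N=0$: proving $\bar I\ast(B/N)=0$ yields $I\ast B\le I\cap N=0$, so assume $B$ finite. Quotienting by the ideal $U_p^+(B)$ (Corollary \ref{psubbrace}), which meets $I$ trivially, I reduce to the case where $p$ is the largest prime of $B$; then by the Sylow-tower Theorem \ref{theoincredible} the additive Sylow $p$-subgroup $P$ is an ideal, and $P$ is a centrally nilpotent $p$-brace by Theorem \ref{pbracecentrallynilp}, hence right-nilpotent. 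Now $\{b:i\ast b=0\}$ is an additive subgroup of $B$ by the identities \eqref{dacitare}; it contains every $p'$-element $b$ (here $I\le Z(B,+)$ is used: $i\ast b\in I$ is central of $p$-power order, while $\lambda_i(b)=b+(i\ast b)$ keeps the $p'$-order of $b$, forcing $i\ast b=0$), and it contains $P$ (were $I\ast P=I$, one would get $R_n(I;P)\le R_n(P;P)=0$ yet $R_n(I;P)=I\neq0$ for all $n$, a contradiction). Since $(B,+)=P\rtimes K$ for a $p'$-Hall subgroup $K$, this subgroup is all of $B$, giving $I\ast B=0$.

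The hardest point is the additive centrality in (2): it is genuinely infinite in nature, and the finite analogue of (2) is \emph{false}. For instance, the trivial brace on $S_3$ has the ideal $A_3\cong C_3$ with $\operatorname{Soc}=0$, and the trivial brace on the infinite dihedral group $D_\infty$ carries a non-central infinite cyclic trivial-brace ideal while being supersoluble as a group; what excludes the latter is precisely that it is \emph{not} supersoluble as a brace. The leverage in (2) is therefore that brace-supersolubility drives infinite cyclic factors into the socle, so the induction can catch $I$ at such a socle factor and then recover centrality of $y$ from that of a finite-index multiple via torsion-freeness; reconciling this series-level information with the fixed ideal $I$ is the delicate step. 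In (1) the corresponding difficulty concentrates on a minimal central prime ideal and is resolved only by importing finiteness together with the right-nilpotency of the top $p$-layer.
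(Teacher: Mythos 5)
Your proposal is correct; both parts go through as written. For part (1) it is essentially the paper's own argument: the paper likewise reduces to the finite case (via The\-o\-rem~\ref{exth3.4} rather than residual finiteness --- interchangeable tools from the same source), passes to $B/U_p^+(B)$ so that $p$ becomes the largest prime, and then handles an arbitrary $b=c+d$ with $c$ in the Sylow $p$-ideal $P$ and $d$ a $p'$-element via the same coprime-order argument. The only internal difference is how $I\ast P=0$ is obtained: the paper deduces $I\leq\zeta(P)$ from The\-o\-rem~\ref{pbracecentrallynilp}, while you run the minimal-ideal/right-nilpotency argument on the chain $R_n(I;P)\leq R_n(P;P)$; this amounts to the same thing, and, as a small bonus, it absorbs the case $p=2$ that the paper must treat separately (its choice of the largest prime $q<p$ requires $p>2$). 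For part (2) your route is genuinely different in organization: the paper invokes The\-o\-rem~\ref{theoincredible} to conclude that $\operatorname{Soc}(B)\cap I$ has index $2^n$ in $I$, whereas you produce a finite-index subgroup $\langle dy\rangle_+\leq\operatorname{Soc}(B)$ of $I$ by induction along a supersoluble series as in Definition~\ref{defsuper}, the only non-inductive case being when $I$ meets the infinite cyclic socle factor $M$ non-trivially. The finishing moves then coincide with the paper's (rigidity of conjugation on $\mathbb{Z}$, and $0=y^d\ast b=d(y\ast b)$ via Eqs.~\eqref{dacitare} together with torsion-freeness of $(I,+)$), but your version is more self-contained: it needs neither the Sylow-tower theorem nor The\-o\-rem~\ref{exth3.4} for this part, at the cost of a case split that the paper's single application of The\-o\-rem~\ref{theoincredible} avoids. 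Your closing examples (the ideal of order $3$ in the trivial brace on $\operatorname{Sym}(3)$, and the trivial brace on the infinite dihedral group) correctly delimit why centrality must be assumed in (1) and why brace-supersolubility, not group-supersolubility, is the operative hypothesis in (2).
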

\begin{proof}
(1)\quad It follows from Theorem \ref{exth3.4} that $B$ has an ideal $J$ such that $B/J$ is finite and $(J,+)$ is torsion-free. Now, $J\cap I=\{0\}$, so if we can prove that $I+J/J\leq\operatorname{Soc}(B/J)$, then $$[a,b]_++J=a\ast b+J=J$$ for all $a\in I$ and $b\in B$, so $[a,b]_+=a\ast b=0$ and consequently   $I\leq\operatorname{Soc}(B)$. Thus, without loss of generality we may replace~$B$ by $B/J$ and assume that $B$ is finite. If~\hbox{$p=2$,} then obviously $I\leq\zeta(B)$. Assume $p>2$, and let $q$ be the largest prime strictly less than~$p$. Put $P=U_q^+(B)$ and~\hbox{$Q=U_p^+(B)$;} in particular, $Q\leq P$ and $(P/Q,+)$ is a $p$-group. A further replacement of $B$ by $B/Q$ allows us to assume that $p$ is the biggest prime dividing the order of $B$. It follows from Theorem \ref{pbracecentrallynilp} that $I\leq\zeta(P)$. Now, if $b$ is an arbitrary element of $B$, we can write $b=c+d$, where $c\in P$ and $d$ is a $p'$-element of~$(B,+)$ of order $m$. If $I=\langle a\rangle$, then $a\in Z(B,+)$, so (by Eqs. \eqref{dacitare} and the fact that $I$ is an ideal) $$a\ast (\ell d)=\ell(a\ast d)$$ for all $\ell\in\mathbb{N}$. Thus, $$m(a\ast d)=a\ast (md)=0$$ and consequently $a\ast d=0$, since $a\ast d$ is a $p$-element of~$(B,+)$. Therefore (again by~Eqs. \eqref{dacitare} and the fact that $I$ is an ideal) $$a\ast b=a\ast(c+d)=a\ast c+a\ast d=0$$ and hence $a\in\operatorname{Soc}(B)$, as desired.

\medskip

\noindent(2)\quad It follows from Theorem \ref{theoincredible} that $|I/\operatorname{Soc}(B)\cap I|=2^n$ for some $n\in\mathbb{N}$. Since $(I,+)$ is infinite cyclic, and $\big(\operatorname{Soc}(B)\cap I,+\big)$ is central in $(B,+)$, we have that $I\leq Z(B,+)$ (recall that the only automorphism of the infinite cyclic group is the inversion). Now, let $a\in I$ and $b\in B$. Then (by Eqs. \eqref{dacitare}) $$0=\big(a^{2^n}\big)\ast b=2^n(a\ast b),$$ so $a\ast b=0$ and $a\in\operatorname{Ker}(\lambda)$. This proves that $a\in\operatorname{Soc}(B)$ and completes the proof.
\end{proof}

\medskip

Before proving the equivalence between the two definitions, we need to point out a couple of ways to exploit induction in braces $B$ whose additive group is polycyclic-by-finite; this is the case for instance of almost polycyclic braces and weakly supersoluble braces. First, one can use induction on the {\it Hirsch length}~$h$ of~$(B,+)$ (that is, the number of infinite cyclic factors appearing in a finite series of $(B,+)$ with cyclic factors). Moreover, if $n$ is the order of the largest periodic normal subgroup of $(B,+)$, then it is possible to use induction on the set of all lexicographically ordered pairs $(h,n)$ (recall that every polycyclic-by-finite group is~Hopfian). In the following, we will usually speak of ‘‘induction’’ if the context makes it clear whether we are using induction on the Hirsch length or on the pairs $(h,n)$.

\begin{theo}\label{weaksupersoluble}
Let $B$ be a brace. The following properties are equivalent:
\begin{itemize}
    \item $B$ is weakly supersoluble.
    \item $B$ is supersoluble.
\end{itemize}
\end{theo}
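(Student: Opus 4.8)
The plan is to prove the nontrivial implication, that a weakly supersoluble brace $B$ is supersoluble; the reverse implication is immediate, since $\operatorname{Soc}(B/I_i)\leq Z(B/I_i,+)$ forces every infinite cyclic socle factor of a supersoluble chain to be central, so a supersoluble chain is in particular a weakly supersoluble one. For the forward direction I would first record that the hypotheses make $(B,+)$ polycyclic-by-finite: the factors of a weakly supersoluble chain are cyclic (of prime or infinite order), so $(B,+)$ is even a supersoluble group. This puts at our disposal the induction on the pairs $(h,n)$ (Hirsch length of $(B,+)$, order of its largest periodic normal subgroup) described just before the statement.

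The inductive strategy is to strip off a bottom factor and quotient. Taking a weakly supersoluble chain $\{0\}=I_0\leq I_1\leq\dots\leq I_n=B$, if the bottom factor $I_1$ has prime order then there is nothing to do: prime-order factors carry no socle constraint in Definition~\ref{defsuper}, so $B/I_1$ is weakly supersoluble with a strictly smaller pair $(h,n)$, is supersoluble by induction, and prepending $\{0\}\leq I_1$ to a lifted supersoluble chain of $B/I_1$ settles this case. The whole difficulty is therefore concentrated in the case where $(I_1,+)$ is infinite cyclic and central: here $B/I_1$ has strictly smaller Hirsch length and so is supersoluble by induction, but to obtain a supersoluble chain of $B$ I must replace $I_1$ by a genuine socle (or prime) bottom factor, and this is exactly what Lemma~\ref{lemequivalenza} is designed to supply.

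Here lies the main obstacle: a central infinite cyclic factor need not lie in $\operatorname{Soc}(B)$, because the brace structure it carries may be nontrivial — the infinite-dihedral-type brace on $\mathbb{Z}$ (additive group $\mathbb{Z}$, with $\lambda$ acting by inversion) is central as an additive group yet has nonzero star square, so one cannot simply declare $I_1\leq\operatorname{Soc}(B)$. The remedy is the computational mechanism behind Lemma~\ref{lemequivalenza}(2): once $I_1$ is abelian, from $a^{2^m}\in\operatorname{Soc}(B)\cap I_1$ one gets $0=a^{2^m}\ast b=2^m(a\ast b)$ and hence $a\ast b=0$ by torsion-freeness, provided one knows that $\operatorname{Soc}(B)\cap I_1$ has finite (indeed $2$-power) index in $I_1$; Theorem~\ref{theoincredible}, applied to the part already known to be supersoluble, is what controls this index. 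To actually locate the replacement factor I would work inside $\operatorname{Soc}(B)$, using that for $s\in\operatorname{Soc}(B)$ conjugation in $(B,\bigcdot)$ coincides with the action of $\lambda$ (so that $\lambda$-invariant subgroups of the socle are automatically ideals), and then extract from the socle a $\lambda$-invariant cyclic subgroup by supersolubility of $(B,\bigcdot)$. The delicate point — and where I expect the real work to be — is guaranteeing that a nonzero such socle (or, in the presence of torsion, a prime-order ideal) exists at all in the infinite cyclic case, since the factor $I_1$ itself may meet $\operatorname{Soc}(B)$ trivially; this is precisely the content that Lemma~\ref{lemequivalenza} feeds into the induction, forcing the central cyclic data to resurface inside the socle of $B$.
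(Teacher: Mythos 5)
Your skeleton coincides with the paper's: the easy reverse implication, induction on the pairs $(h,n)$, immediate disposal of a prime-order bottom factor, and the recognition that everything hinges on an ideal $I_1$ with $(I_1,+)$ infinite cyclic and central. Your observation that such an $I_1$ need not be abelian as a brace (the dihedral-type brace on $\mathbb{Z}$) is sharp --- the paper silently assumes abelianness of the bottom factor. But the step you offer for the crucial case has a genuine gap, and it is the whole proof. You want to run the computation of Lemma~\ref{lemequivalenza}(2) inside $B$ itself, which requires knowing in advance that $\operatorname{Soc}(B)\cap I_1=\operatorname{Ker}(\lambda)\cap I_1$ has finite ($2$-power) index in $I_1$, and you assert that Theorem~\ref{theoincredible} ``applied to the part already known to be supersoluble'' controls this index. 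It cannot: Theorem~\ref{theoincredible} has supersolubility of the brace it is applied to as its hypothesis, so applying it to $B$ is circular, while applying it to $B/I_1$ produces a chain of ideals of $B/I_1$ carrying no information about how the elements of $I_1$ act on $B$ through $\lambda$ --- which is exactly what the index $|I_1:\operatorname{Ker}(\lambda)\cap I_1|$ measures. Indeed your own closing sentence, conceding that ``$I_1$ itself may meet $\operatorname{Soc}(B)$ trivially'', flatly contradicts the $2$-power-index claim on which your computation rests; at that point the proposal falls back on invoking Lemma~\ref{lemequivalenza} for the brace $B$, whose supersolubility is the statement being proved.

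The missing idea is the paper's quotient-and-intersect maneuver, which makes Lemma~\ref{lemequivalenza} legitimately applicable without ever estimating an index inside $B$. Take $I_1$ abelian (this can be arranged: if $I_1$ is dihedral-type, then $I_1^{2,+}$ is the translation subgroup of $(I_1,\bigcdot)\simeq\operatorname{D}_\infty$, hence preserved by multiplicative conjugation, $\lambda$-invariant and additively central, so it is an abelian ideal of $B$ through which the chain can be refined). For each prime $p$, the subgroup $I_1^{p,+}$ is then an ideal of $B$: it is additively central and $\lambda$-invariant, and since on an abelian ideal additive multiples coincide with multiplicative powers, $I_1^{p,+}=I_1^{p,\bigcdot}$ is stable under conjugation in $(B,\bigcdot)$. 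Now $B/I_1^{p,+}$ is weakly supersoluble of Hirsch length $h-1$, hence supersoluble by induction, and in it $I_1/I_1^{p,+}$ is an additively central ideal of prime order $p$; so Lemma~\ref{lemequivalenza}(1) --- the prime-order case, not the case~(2) mechanism you invoke --- yields $I_1/I_1^{p,+}\leq\operatorname{Soc}\big(B/I_1^{p,+}\big)$. Hence $a\ast b\in I_1^{p,+}$ for all $a\in I_1$, $b\in B$ and all primes $p$, and $\bigcap_{p\in\mathbb{P}}I_1^{p,+}=\{0\}$ forces $I_1\leq\operatorname{Soc}(B)$; then $I_1$ is an admissible bottom factor for Definition~\ref{defsuper}, and induction applied to $B/I_1$ finishes. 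Without this detour through the quotients $B/I_1^{p,+}$, your argument does not close.
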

\begin{proof}
Suppose $B$ is weakly supersoluble, and let $I$ be an abelian ideal of $B$ such that either $I\leq Z(B,+)$ and $(I,+)$ is infinite cyclic, or $I$ has prime order. In the latter case, $B/I$ is supersoluble by induction, and so $B$ is supersoluble. Assume $(I,+)$ is infinite cyclic and $I\leq Z(B,+)$. If $p$ is any prime, then, by induction, $B/I^{p,+}$ is supersoluble, so Lemma \ref{lemequivalenza} yields that $I/I^{p,+}\leq\operatorname{Soc}(B/I^{p,+})$. Since $\bigcap_{p\in\mathbb{P}} I^{p,+}=\{0\}$, we have that $I\leq\operatorname{Soc}(B)$. The proof is complete.
\end{proof}

\medskip

Now, we deal with properties concerning the {\it index} of subbraces in a supersoluble environment. Recall first the following definition (see \cite{tutti23} and \cite{tutti23-2}).

\begin{defi}
{\rm Let $B$ be a brace. A subbrace $C$ of $B$ is said to have {\it finite index} in $B$ if both $n_+=|(B,+):(C,+)|$ and $n_{\bigcdot}=|(B,\bigcdot):(C,\bigcdot)|$ are finite; if~\hbox{$n_+=n_{\bigcdot}=n$,} we define the {\it index $|B:C|$} of $C$ in $B$ as $n$. If $C$ has not finite index, we say that $C$ has {\it infinite index}.}
\end{defi}

As shown in \cite{tutti23-2}, Corollary 3.12, if $C$ is a subbrace of an almost polycyclic brace $B$ such that $(C,+)$ (resp. $(C,\bigcdot)$) has finite index in $(B,+)$ (resp. $(B,\bigcdot)$), then $C$ has finite index in $B$, and the index of $C$ in $B$ exists --- it obviously coincides with the index of $(B,+)$ (resp.~$(B,\bigcdot)$). Since supersoluble braces are almost polycyclic, the above remark also applies to them. It has been shown in \cite{tutti23-2}, Theorem 3.16, that if $C$ is a maximal subbrace of an almost polycyclic brace $B$; then $|B:C|$ is a prime power. For supersoluble braces, we can even say that a maximal subbraces is a maximal additive (and multiplicative) subgroup (see~\cite{course}, 5.4.7, for the corresponding statement for groups).

\begin{theo}\label{maximalsubbrace}
Let $B$ be a supersoluble brace, and let $C$ be a maximal subbrace of $B$. Then $|B:C|$ is a prime number. In particular, $(C,+)$ \textnormal(resp. $(C,\bigcdot)$\textnormal) is a maximal subgroup of~$(B,+)$ \textnormal(resp. $(B,\bigcdot)$\textnormal) of prime index.
\end{theo}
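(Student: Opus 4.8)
The plan is to induct on the length $n$ of a fixed supersoluble chain $\{0\}=I_0\leq I_1\leq\ldots\leq I_n=B$, and to analyse how the maximal subbrace $C$ meets its bottom term $I:=I_1$. By Definition \ref{defsuper}, $I$ is either of prime order, or an infinite cyclic brace contained in $\operatorname{Soc}(B)$; in the latter case $I$ is a trivial brace (so $(I,\bigcdot)=(I,+)\cong\mathbb{Z}$) that is additively central in $B$. Since $C$ is maximal it is proper, so by \cite{tutti23-2}, Corollary 3.12 and Theorem 3.16, the index $|B:C|$ is well defined; it thus suffices to prove it is a prime. Throughout I would use the elementary fact that, for an ideal $I$ and a subbrace $C$, the set $C+I$ is a subbrace equal to the multiplicative product $CI$ and to $\langle C,I\rangle$ (pull back the image of $C$ along $B\to B/I$, recalling $bI=b+I$), so that the second isomorphism theorem can be applied at once to $(B,+)$ and to $(B,\bigcdot)$.

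If $I\leq C$, then $C/I$ is a maximal subbrace of the supersoluble brace $B/I$, which carries the chain $\{0\}=I_1/I\leq\ldots\leq I_n/I=B/I$ of length $n-1$; by induction $|B/I:C/I|$ is prime, and since $I$ is an ideal contained in $C$ the additive and multiplicative indices are preserved on passing to $B/I$, whence $|B:C|=|B/I:C/I|$ is prime. If instead $I\not\leq C$, then maximality forces $C+I=B$, so the second isomorphism theorem yields $|(B,+):(C,+)|=|(I,+):(I\cap C,+)|$ and the analogous equality in $(B,\bigcdot)$, where $I\cap C$ is a subbrace of $I$. When $|I|=p$ is prime, $(I,+)\cong C_p$ has no proper nonzero subgroup and $I\cap C\neq I$, so $I\cap C=\{0\}$ and both indices equal $p$.

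The remaining subcase, with $I\cong\mathbb{Z}$ a central trivial brace, is where the real work lies. Its subbraces are exactly the subgroups $dI$, and each $dI$ is in fact an ideal of $B$: it is additively central, it satisfies $\lambda_b(dI)\leq dI$ because $\lambda_b(I)\leq I$, and it is normal in $(B,\bigcdot)$ because $b^{-1}a^{d}b=(b^{-1}ab)^{d}\in I^{d,\bigcdot}=dI$ for $a\in I$. Writing $I\cap C=cI$, I would argue that if $c$ were $0$ or composite, then a suitable divisor gives $I\cap C\lneq dI\lneq I$, and the subbrace $C+dI$ would then satisfy $C\lneq C+dI\lneq B$ --- the strict inclusions coming from the modular law $(C+dI)\cap I=(I\cap C)+dI=dI$ --- contradicting the maximality of $C$. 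Hence $c=|I:I\cap C|$ is a prime, and both the additive and multiplicative index equal $c$, so $|B:C|=c$. In every case $|B:C|$ is prime; the final assertion is then immediate, since a subgroup of prime index is maximal. The main obstacle, as indicated, is precisely the infinite cyclic socle factor: one must both verify that its finite-index subgroups are ideals of the whole brace and rule out the a priori possibility $I\cap C=\{0\}$, which is exactly what the intermediate subbrace $C+dI$ accomplishes.
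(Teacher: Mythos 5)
Your proof is correct, but it takes a genuinely different route from the paper's. The paper's proof is much shorter: it first invokes Theorem 3.10 of \cite{tutti23-2} to reduce immediately to the case where $B$ is finite; there the bottom term of a supersoluble chain is an ideal $I$ of prime order $p$, and the same dichotomy you use in your easy case --- either $I\leq C$ (conclude by induction) or $C+I=B$ (so both indices equal $p$) --- finishes the argument in a few lines. You instead induct on the length of the chain in $B$ itself, which forces you to confront the infinite factors head-on: a bottom ideal $I$ with $(I,+)$ infinite cyclic and $I\leq\operatorname{Soc}(B)$. Your treatment of that case is sound and is the real content of your argument: $I$ is a central trivial brace, each subgroup $dI$ is an ideal of $B$ (centrality gives additive normality, $\lambda_b(I)=I$ gives $\lambda$-invariance, and $b^{-1}a^db=(b^{-1}ab)^d$ together with $I^{d,\bigcdot}=dI$ gives multiplicative normality), and the Dedekind-law computation $(C+dI)\cap I=(C\cap I)+dI=dI$ shows that if $I\cap C$ were zero or of composite index in $I$, then $C< C+dI< B$ would contradict maximality. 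What the paper's approach buys is brevity, at the price of leaning on the finiteness machinery of \cite{tutti23-2}; what yours buys is self-containedness: in particular, you never need to know a priori that the index $|B:C|$ is finite or well defined, since each branch of your induction produces the equality of the additive and multiplicative indices directly, which makes your opening citation of Corollary 3.12 and Theorem 3.16 of \cite{tutti23-2} dispensable.
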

\begin{proof}
It follows from \cite{tutti23-2}, Theorem 3.10, that we may assume $B$ is finite. Let $I$ be any ideal of $B$ whose order is a prime, $p$ say. Then either $I\leq C$, so an easy induction completes the proof, or $I+C=B$ and consequently $$|B:C|=|(B,+):(C,+)|=p.$$ The statement is proved.  
\end{proof}

\medskip

Note that the statement of Theorem \ref{maximalsubbrace} also holds if we replace $C$ by a maximal (strong) left-ideal (see \cite{tutti23-2}, Remark 3.17). Thus, we have the following result.

\begin{theo}
Let $B$ be a supersoluble brace. Then every maximal \textnormal(strong\textnormal) left-ideal of $B$ is a maximal subbrace of $B$ of prime index.
\end{theo}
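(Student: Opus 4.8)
The plan is to reduce the statement almost entirely to the note that immediately precedes it, which asserts that the conclusion of Theorem~\ref{maximalsubbrace} persists when the maximal subbrace is replaced by a maximal (strong) left-ideal. Concretely, let $L$ be a maximal (strong) left-ideal of $B$. The first observation is that $L$, being a left-ideal, is in particular a subbrace of $B$ (this is built into the definition of left-ideal), so the genuinely new assertions are only two: that $L$ is maximal \emph{as a subbrace}, and that its index is prime. I would then invoke the note (via \cite{tutti23-2}, Remark~3.17), which yields $|B:L|=p$ for some prime $p$ and, crucially for what follows, that $(L,+)$ is a maximal subgroup of $(B,+)$ of index $p$. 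This additive maximality is the real leverage, because it constrains every subbrace lying above $L$, not merely every left-ideal above $L$.

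The argument then closes by a sandwich: if $C$ is any subbrace with $L\leq C\leq B$, then $(L,+)\leq (C,+)\leq (B,+)$, and the maximality of $(L,+)$ in $(B,+)$ forces $(C,+)=(L,+)$ or $(C,+)=(B,+)$. Since a subbrace is determined by its underlying set, this gives $C=L$ or $C=B$, so $L$ is indeed a maximal subbrace, and its index is the prime $p$ supplied by the note. (One could equivalently deduce the additive maximality directly from $|B:L|=p$ by the tower law for subgroups, since a subgroup of prime index admits no proper intermediate subgroup.)

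There is no substantive obstacle once the note is in hand; the single point that deserves attention is the upgrade from ``maximal left-ideal'' to ``maximal subbrace.'' A priori a maximal left-ideal could be properly contained in a larger subbrace that happens not to be $\lambda$-invariant, and it is precisely the maximality of the additive subgroup $(L,+)$—not the left-ideal maximality of $L$—that rules this possibility out. Isolating and exploiting that additive maximality is therefore the key, and everything else is routine.
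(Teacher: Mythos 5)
Your proposal is correct and follows essentially the same route as the paper, which gives no separate proof of this theorem but derives it directly from the preceding note (citing Remark~3.17 of the almost-polycyclic paper) that Theorem~\ref{maximalsubbrace} persists for maximal (strong) left-ideals. Your only addition is to spell out the routine sandwich argument upgrading additive maximality (equivalently, prime index) to maximality as a subbrace, which the paper leaves implicit.
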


We now show that if the index of the maximal subbrace is a small prime, then that subbrace is actually an ideal.

\begin{lem}\label{lemprimefiniteca}
Let $B$ be a finite supersoluble brace, and let $C$ be a subbrace of $B$. If $p$ is the largest prime dividing $|B:C|$, then $U_p^+(B)\leq C_B$.
\end{lem}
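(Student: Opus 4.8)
The plan is to prove the set-theoretic inclusion $U_p^+(B)\subseteq C$ and then upgrade it for free: since $U_p^+(B)$ is an \emph{ideal} of $B$ by Corollary~\ref{psubbrace}, and $C_B$ is by definition the largest ideal of $B$ contained in $C$, the inclusion $U_p^+(B)\subseteq C$ immediately gives $U_p^+(B)\leq C_B$. So the whole statement reduces to a coprimality computation inside the finite supersoluble group $(B,+)$, and no brace-theoretic subtlety survives the first reduction.

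Write $\pi$ for the set of primes strictly larger than $p$, and put $I=U_p^+(B)$. First I would record the two facts I need about $I$. By Corollary~\ref{psubbrace}, $I$ is an ideal of $B$; in particular $(I,+)$ is a normal subgroup of $(B,+)$. Moreover $(I,+)$ is a $\pi$-group by the definition of $U_p^+(B)$, so $|I|$ is a $\pi$-number. On the other side, $B$ is finite, hence almost polycyclic, so $|B:C|=|(B,+):(C,+)|=:n$; and by hypothesis $p$ is the largest prime dividing $n$, so $n$ is a $\pi'$-number.

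The key step is then a one-line index computation. Because $(I,+)$ is normal in $(B,+)$, the sum $(I,+)+(C,+)$ is a subgroup of $(B,+)$ lying between $(C,+)$ and $(B,+)$, and the second isomorphism theorem gives
$$\bigl|\,(I,+)+(C,+) : (C,+)\,\bigr| = \bigl|\,(I,+) : (I,+)\cap(C,+)\,\bigr|.$$
The left-hand side divides $|(B,+):(C,+)|=n$, hence is a $\pi'$-number, while the right-hand side divides $|I|$, hence is a $\pi$-number; being simultaneously a $\pi$- and a $\pi'$-number, it must equal $1$. Therefore $(I,+)=(I,+)\cap(C,+)$, that is $U_p^+(B)\subseteq C$, and the reduction of the first paragraph completes the proof.

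I do not expect a genuine obstacle here. Once Corollary~\ref{psubbrace} is invoked to guarantee that $U_p^+(B)$ is simultaneously an ideal of $B$ and a normal $\pi$-subgroup of $(B,+)$, the argument is pure finite group theory. The only points demanding care are the bookkeeping of which index is a $\pi$-number and which a $\pi'$-number, and the identity $|B:C|=|(B,+):(C,+)|$, which is legitimate because supersoluble (indeed finite) braces are almost polycyclic. In particular, no induction is needed.
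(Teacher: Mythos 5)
Your proof is correct, but it is not the paper's argument. The paper lets $D$ be the normal core of $(C,+)$ in $(B,+)$, notes that the index of $D$ in $(B,+)$ divides $n!$ where $n=|B:C|$, and from this concludes directly that $U_p^+(B)\leq D\leq C$; the final upgrade to $U_p^+(B)\leq C_B$ via the ideality of $U_p^+(B)$ (Corollary~\ref{psubbrace}) is the same in both proofs. You instead form the join $(I,+)+(C,+)$, with $I=U_p^+(B)$, which is a subgroup because $(I,+)$ is normal in $(B,+)$, and play the two divisibilities $\bigl|(I,+)+(C,+):(C,+)\bigr|\mid n$ and $\bigl|(I,+):(I,+)\cap(C,+)\bigr|\mid |I|$ against each other via the second isomorphism theorem. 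This difference matters: a bare coprimality argument against $n!$ does not run, because $n!$ may be divisible by primes greater than $p$ (for $n=6$ one has $p=3$ yet $5\mid 6!$), so the paper's one-line deduction is complete only after an additional observation --- for instance, that the orbits of the normal subgroup $(I,+)$ on the $n$ cosets of $(C,+)$ form a block system, hence have a common size dividing both $n$ and $|I|$, which forces $(I,+)$ into the kernel $D$ of the coset action. Your second-isomorphism computation is exactly this missing coprimality step in its cleanest form, run against $n$ rather than $n!$; it is complete as written, needs no induction, and the auxiliary facts you invoke (Corollary~\ref{psubbrace} for ideality, Cauchy's theorem to see that $|I|$ is divisible only by primes exceeding $p$, and the identity $|B:C|=|(B,+):(C,+)|$, which for a finite brace is immediate from $|B|/|C|$) are all legitimate.
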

\begin{proof}
Let $n=|B:C|$ and let $D$ be the normal core of $(C,+)$ in $(B,+)$. Then the index of $D$ in $B$ divides $n!$. Thus, $U_p^+(B)\leq D\leq C$, and the statement is proved.
\end{proof}

\begin{theo}\label{finitecasesuperso}
Let $B$ be a finite supersoluble brace, and let $p$ be the smallest prime in $\pi(B,+)$. If $C$ is a subbrace of $B$ such that $|B:C|=p$, then $C\trianglelefteq B$. 
\end{theo}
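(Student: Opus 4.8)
The plan is to argue by induction on $|B|$, combining the classical ``least prime index'' theorem for groups with a reduction to the prime-power case. Since $(B,+)$ and $(B,\bigcdot)$ have the same order, and hence the same set of prime divisors, $p$ is also the smallest prime dividing $|(B,\bigcdot)|$; as $(C,+)$ and $(C,\bigcdot)$ both have index $p$, the classical theorem on subgroups of least prime index (see \cite{course}) gives $(C,+)\trianglelefteq(B,+)$ and $(C,\bigcdot)\trianglelefteq(B,\bigcdot)$. Thus $C$ is already normal for both operations, and the entire difficulty is to show that $C$ is a \emph{left}-ideal, i.e. that $\lambda_b(C)\leq C$ for every $b\in B$; once this is done, $C$ is a strong left-ideal that is also multiplicatively normal, hence an ideal. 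I expect this upgrade from group-normality to the left-ideal condition to be the main obstacle.

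First I would reduce to the case in which $B$ is a $p$-brace. Since $p$ is the largest (indeed the only) prime dividing $|B:C|=p$, Lemma \ref{lemprimefiniteca} yields $U_p^+(B)\leq C_B\leq C$; by Corollary \ref{psubbrace}, $U_p^+(B)$ is an ideal of $B$, and because $p$ is the smallest prime in $\pi(B,+)$ the subgroup $U_p^+(B)$ is exactly the set of $p'$-elements of $(B,+)$, so $|B:U_p^+(B)|$ is a power of $p$. If $U_p^+(B)\neq\{0\}$, then $C/U_p^+(B)$ is a subbrace of index $p$ in the smaller brace $B/U_p^+(B)$, whose smallest additive prime is still $p$, so by induction $C/U_p^+(B)\trianglelefteq B/U_p^+(B)$, and the correspondence between ideals gives $C\trianglelefteq B$. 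Hence I may assume $U_p^+(B)=\{0\}$, that is, $B$ has order a power of $p$.

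Now $B$ is a supersoluble $p$-brace, so it is centrally nilpotent by Theorem \ref{pbracecentrallynilp}, whence $\zeta(B)\neq\{0\}$. As $\zeta(B)$ is a central (trivial) brace, one has $(\zeta(B),+)\leq Z(B,+)$, $(\zeta(B),\bigcdot)\leq Z(B,\bigcdot)$, and $\lambda_b$ fixes $\zeta(B)$ pointwise (for $a\in\zeta(B)$ and $b\in B$, multiplicative centrality together with $a\in\operatorname{Ker}\lambda$ forces $b\ast a=0$, i.e. $\lambda_b(a)=a$); consequently every subgroup of $(\zeta(B),+)$ is an ideal of $B$. I would therefore pick an ideal $I\leq\zeta(B)$ with $|I|=p$. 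If $I\leq C$, then $C/I$ has index $p$ in the smaller $p$-brace $B/I$, so by induction $C/I\trianglelefteq B/I$ and hence $C\trianglelefteq B$.

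It remains to treat the case $I\not\leq C$, where the genuinely brace-theoretic argument enters. Here $I\cap C=\{0\}$, so $B=I+C=I\bigcdot C$ (the second equality because $(I,\bigcdot)$ and $(C,\bigcdot)$ are normal in $(B,\bigcdot)$ with trivial intersection and orders multiplying to $|B|$). Given $b\in B$, write $b=i\bigcdot c$ with $i\in I$ and $c\in C$; since $I\leq\operatorname{Soc}(B)\leq\operatorname{Ker}\lambda$ we have $\lambda_i=\operatorname{id}$, so $\lambda_b=\lambda_i\lambda_c=\lambda_c$. Because $C$ is a subbrace, $\lambda_c(C)=\{-c+c\bigcdot c':c'\in C\}\leq C$, and therefore $\lambda_b(C)=\lambda_c(C)\leq C$. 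Thus $C$ is a left-ideal; being additively and multiplicatively normal as well, it is an ideal of $B$. The crux of the whole argument is precisely this final step: group-theoretic normality is free, and the point is to promote it to the left-ideal condition $\lambda_b(C)\leq C$, which the central order-$p$ ideal makes possible through the identity $\lambda_b=\lambda_c$.
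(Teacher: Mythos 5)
Your proof is correct, and its first half is exactly the paper's reduction: Lemma \ref{lemprimefiniteca} together with Corollary \ref{psubbrace} to pass to $B/U_p^+(B)$ and assume $|B|$ is a power of $p$, then Theorem \ref{pbracecentrallynilp} to conclude that $B$ is centrally nilpotent. Where you genuinely diverge is the endgame. The paper finishes in one line by citing Theorem 4.6 of \cite{tutti23}, an external result on centrally nilpotent braces which in particular forces a subbrace of prime index in such a brace to be an ideal; you instead reprove the needed special case from scratch. Your replacement argument is sound: every subgroup of $(\zeta(B),+)$ is indeed an ideal of $B$ (centrality in both groups plus the computation showing $b\ast a=0$ for $a\in\zeta(B)$), so a central ideal $I$ of order $p$ exists since $\zeta(B)\neq\{0\}$; the case $I\leq C$ is handled by induction, and when $I\not\leq C$ the factorisation $B=I\bigcdot C$ gives $\lambda_b=\lambda_i\lambda_c=\lambda_c$ for $b=i\bigcdot c$, whence $\lambda_b(C)=\lambda_c(C)\leq C$, which together with the least-prime-index normality of $(C,+)$ and $(C,\bigcdot)$ makes $C$ an ideal. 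The trade-off between the two routes: the paper's citation is shorter and inherits the full strength of the quoted theorem (valid for arbitrary centrally nilpotent braces, not just those of prime-power order arising here), while your version is self-contained within the tools developed in this paper and makes explicit the actual mechanism that the citation hides, namely that a central complement of order $p$ is what promotes mere group-theoretic normality of $C$ to the left-ideal condition $\lambda_b(C)\leq C$.
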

\begin{proof}
By Lemma \ref{lemprimefiniteca}, $U_p^+(B)\leq C$, so replacing $B$ by $B/U_p^+(B)$, we may assume that the order of $B$ is a power of $p$. Then~The\-o\-rem~\ref{pbracecentrallynilp} shows that $B$ is centrally nilpotent, and an application of~\cite{tutti23},~The\-o\-rem~4.6, yields that~\hbox{$C\trianglelefteq B$.}
\end{proof}

\begin{cor}\label{corsuperindex2}
Let $B$ be a supersoluble brace, and let $C$ be a finite-index subbrace of $B$ such that $|B:C|=2$. Then $C\trianglelefteq B$.
\end{cor}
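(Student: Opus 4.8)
The plan is to prove the slightly more general fact that any subbrace $C$ of an arbitrary brace $B$ with $n_+=|(B,+):(C,+)|=2$ and $n_{\bigcdot}=|(B,\bigcdot):(C,\bigcdot)|=2$ is automatically an ideal; the corollary then follows immediately, since by hypothesis $|B:C|=2$ forces both indices to equal $2$ (supersolubility is in fact needed only to guarantee, via \cite{tutti23-2}, Corollary 3.12, that the two indices coincide and are finite, so that $|B:C|$ is meaningfully defined). First I would record the elementary group-theoretic fact that a subgroup of index $2$ is normal, so that $(C,+)\trianglelefteq(B,+)$ and $(C,\bigcdot)\trianglelefteq(B,\bigcdot)$. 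This already makes $C$ a normal subgroup of both the additive and the multiplicative group, so the only thing left to verify is the left-ideal condition $\lambda_b(C)\subseteq C$ for all $b\in B$.

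The key observation, and the step I expect to be the main (if modest) subtlety, is that the underlying set $C$ is one and the same subset of $B$ for both structures, and that index $2$ means there are exactly two additive cosets and exactly two multiplicative cosets. Since cosets partition $B$, in either case the non-identity coset must be the set-theoretic complement $B\setminus C$. Hence the partition $\{C,\,B\setminus C\}$ is \emph{simultaneously} the additive and the multiplicative coset partition, and the indicator map $\epsilon\colon B\to\mathbb{Z}/2\mathbb{Z}$ sending $x\mapsto 0$ if $x\in C$ and $x\mapsto 1$ otherwise is at once a group homomorphism $(B,+)\to\mathbb{Z}/2\mathbb{Z}$ and a group homomorphism $(B,\bigcdot)\to\mathbb{Z}/2\mathbb{Z}$, both with kernel $C$. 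Establishing this dual-homomorphism property cleanly is the real content of the argument.

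Granting this, the left-ideal condition reduces to a one-line computation. For $c\in C$ and $b\in B$, using $\lambda_b(c)=-b+bc$ together with the additive and multiplicative homomorphism properties of $\epsilon$ (and $2\epsilon(b)=0$ in $\mathbb{Z}/2\mathbb{Z}$),
$$\epsilon\big(\lambda_b(c)\big)=\epsilon(-b)+\epsilon(bc)=\epsilon(b)+\epsilon(b)+\epsilon(c)=\epsilon(c)=0,$$
so that $\lambda_b(c)\in C$. Thus $\lambda_b(C)\subseteq C$, i.e. $C$ is a left-ideal; being additively normal it is a strong left-ideal, and being multiplicatively normal it is then an ideal, as claimed.

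For completeness I would also mention the alternative route that stays closer to the preceding results: reduce to the finite case by passing to $B/N$ for a suitable finite-index ideal $N\le C$, and then apply Theorem \ref{finitecasesuperso} with $p=2$, which is automatically the smallest prime of $(B/N,+)$. The obstacle in that approach is exhibiting a finite-index ideal $N$ contained in $C$, which is precisely why I would favour the direct argument above, noting in passing that it shows the supersolubility hypothesis is superfluous once $|B:C|=2$ is granted.
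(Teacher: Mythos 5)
Your proof is correct, and it takes a genuinely different route from the paper's. The paper's proof is two lines long given its machinery: it invokes Theorem 3.10 of \cite{tutti23-2} (every finite-index subbrace of an almost polycyclic brace contains a finite-index ideal) to reduce to the case of a finite supersoluble brace, and then applies Theorem \ref{finitecasesuperso} with $p=2$, which is automatically the smallest prime in $\pi(B,+)$ once $|B:C|=2$. This is exactly the ``alternative route'' you sketch at the end, and the obstacle you identify there --- producing a finite-index ideal $N\leq C$ --- is precisely what the cited Theorem 3.10 supplies, so that route does go through for supersoluble (indeed almost polycyclic) braces. Your direct argument is more elementary and strictly more general: since the paper defines $|B:C|=2$ to mean $|(B,+):(C,+)|=|(B,\bigcdot):(C,\bigcdot)|=2$, the partition $\{C,\,B\setminus C\}$ is simultaneously the additive and multiplicative coset partition, the indicator map $\epsilon\colon B\to\mathbb{Z}/2\mathbb{Z}$ is a homomorphism for both operations, and then $\epsilon\bigl(\lambda_b(c)\bigr)=\epsilon(-b)+\epsilon(bc)=2\epsilon(b)+\epsilon(c)=0$ shows $\lambda$-invariance; combined with normality of index-$2$ subgroups in $(B,+)$ and $(B,\bigcdot)$, this gives the ideal property with no hypothesis on $B$ whatsoever. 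What each approach buys: the paper's keeps the corollary as a formal consequence of results it needs anyway (Theorem \ref{finitecasesuperso} handles any smallest prime $p$, not just $2$, where supersolubility is genuinely used via Theorem \ref{pbracecentrallynilp}); yours isolates the clean brace-theoretic analogue of ``index-$2$ subgroups are normal'' and shows that for $p=2$ the supersolubility assumption is superfluous --- a sharpening the paper does not record, and which in fact also subsumes the later generalization to locally supersoluble braces, where the paper again argues via Corollary \ref{corsuperindex2} on two-generator subbraces.
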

\begin{proof}
It follows from \cite{tutti23-2}, Theorem 3.10, that we may assume $B$ is finite. Then~The\-o\-rem~\ref{finitecasesuperso} yields that $C$ is an ideal of~$B$.
\end{proof}

\medskip

We end our excursion on the maximal subbraces of a supersoluble brace by noting that although the property of being supersoluble can be detected in a finite group by only looking at the index of the  maximal subgroups (see \cite{course}, 9.4.4), this is not possible for a finite brace. Note that in the examples, $C_n$ denotes the cyclic group of order $n$, while $D_n$ denotes the dihedral group of order $n$.

\begin{ex}
\label{ex:prime_index}
There exists a non-supersoluble finite brace of order $8$ whose maximal subbraces have prime index.
\end{ex}
\begin{proof}
Let 
\[ 
(B,+)  = \langle a \rangle \times \langle b \rangle \simeq \operatorname{C}_4\times \operatorname{C}_2 \quad \text{and}\quad (C,\bigcdot)\!=\! \langle x, y \mid  x^4 \!= y^2 =\! 1,\, xy = yx^3\rangle \simeq \operatorname{D}_8\] 
be groups of order $8$. The multiplicative group acts on the additive by means of the map $\lambda$ defined by
\[   
\begin{array}{lllll}
\lambda_x(a) = 3a+b,  & & \lambda_y(a) = 3a +b,   \\
\lambda_x(b) = 2a+b, & & \lambda_y(b) = b.    \\
\end{array}
\]
Consider the semidirect product $G$ of $B$ and $C$ with respect to this action (we use multiplicative notation in $G$). Then $G$ turns out to be a trifactorised group as it possesses a subgroup $D=\langle ax, a^2y\rangle$ such that $D\cap C=D\cap B=\{1\}$ and $DC=BD=G$. $D$ leads to the bijective $1$-cocycle $\delta\colon C\longrightarrow B$ with respect to $\lambda$ given by  Table~\ref{tb:prime_index}. This yields a product in $B$ and we get a brace of abelian type $(B,+,\bigcdot)$ of order $8$. This brace corresponds to \texttt{SmallBrace(8,18)} in the \textsf{Yang--Baxter} library for \textsf{GAP}.

\begin{table}[h]
\[
\begin{array}{llllllll}
g    & \delta(g) & g & \delta(g)       \\\hline
1    & 0     & y    & 2a    \\
x    & a     & xy   & 3a    \\
x^2  & b     & x^2y & 2a+b  \\
x^3  & 3a+b  & x^3y & a+b   \\\hline      
\end{array}
\]
\caption{Bijective $1$-cocycle associated with Example~\ref{ex:prime_index}}
\label{tb:prime_index}
\end{table}

From  Table~\ref{tb:prime_index}, we see that $S = \{1, x^2,y,x^2y\}$ is the unique subgroup of order $4$ in~$(C,\bigcdot)$ such that $\delta(S) \leq (B,+)$. Thus, $\delta(S)$ is a maximal subbrace of $B$. Moreover,~$\delta(xy)$ and $\delta(x^3y)$ are not elements of order $2$. Therefore, all subbraces of order $2$ of~$B$ are contained in $\delta(S)$. Hence, $\delta(S)$ is the unique maximal subbrace of $B$ and it is of prime index. Finally, we note that $B$ is not supersoluble because no subbrace of order~$2$ is~\hbox{$\lambda$-in}\-variant.  
\end{proof}

\medskip

One of the most relevant properties of almost polycyclic braces is the fact that they are {\it residually finite}, that is, the intersection of their finite-index ideals is the zero subbrace. This property allows us to approximate almost polycyclic braces by finite braces, and was the starting point of many relevant results in \cite{tutti23-2}. There are types of almost polycyclic braces that have even nicer residual properties. For example, it has been proved in \cite{tutti23-2}, Theorem 3.25, that, if $\pi$ is a set of prime numbers, then a finitely generated centrally nilpotent brace $B$ is {\it residually $\pi$-finite} (meaning that the intersection of all ideals $I$ of $B$ with $B/I$ a $\pi$-number is $\{0\}$) provided that the order of the torsion-subgroup of $(B,+)$ is a $\pi$-number. We can say something similar for supersoluble braces.

\begin{theo}\label{resfinite}
Let $B$ be a supersoluble brace, and let $p$ be the largest prime in $\pi(B,+)\cup\{2\}$. Then $B$ is residually $\pi$-finite, where $\pi$ is the set of all prime numbers $q$ with $q\leq p$.
\end{theo}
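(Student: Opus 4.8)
The plan is to reduce everything to a point-separation statement: it suffices to show that for each nonzero $b\in B$ there is an ideal $I\trianglelefteq B$ with $b\notin I$ and $B/I$ a finite $\pi$-number, for then the intersection of all such $I$ is $\{0\}$. Supersoluble braces are almost polycyclic and hence residually finite, so finite quotients are always available as raw material; the task is to upgrade them to $\pi$-number quotients. The basic tool is a \emph{purification} step: given any finite-index ideal $K$, the quotient $B/K$ is finite and supersoluble, so by Corollary \ref{psubbrace} the set $U_p^+(B/K)=U_p^{\bigcdot}(B/K)$ of its elements of order divisible only by primes $q>p$ is an ideal of $B/K$; its preimage $V$ in $B$ then satisfies that $B/V$ is a finite $\pi$-number, and an element $c$ survives (that is, $c\notin V$) exactly when $c+K$ does not have order divisible only by primes $>p$.

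If $b$ has finite additive order the argument is immediate: every prime dividing that order lies in $\pi(B,+)$ and is therefore $\le p$. Picking, by residual finiteness, a finite-index ideal $K$ with $b\notin K$ and purifying to $V$ as above, the image $b+K$ is a nonzero element whose order divides that of $b$, hence a nontrivial $\pi$-element; so $b+K\notin U_p^+(B/K)$ and $b\notin V$, giving the desired witness.

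The substantial case is when $b$ has infinite additive order, and here I would induct on the Hirsch length $h$ of $(B,+)$ (or, following the discussion preceding Theorem \ref{weaksupersoluble}, on the pairs $(h,n)$). Using Theorem \ref{exth3.4} choose a nonzero ideal $N\le\operatorname{Soc}(B)$ with $(N,+)$ torsion-free and finitely generated. Since $N\le\operatorname{Soc}(B)\le Z(B,+)$ is a central trivial subbrace, each additive power $2^m N=N^{2^m,+}$ is again an ideal of $B$ and $\bigcap_m 2^m N=\{0\}$; fix $m$ with $b\notin 2^m N$ and pass to $\bar B=B/2^m N$, which has strictly smaller Hirsch length and in which $\bar b\neq 0$. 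If $b\in N$, then $\bar b$ lies in the $2$-group $N/2^m N$, so it has $2$-power order and the torsion case (applied verbatim inside $\bar B$, with the \emph{same} $p$, since $2\le p$) separates $\bar b$ by a $\pi$-number quotient; pulling this back along $B\to\bar B$ closes the subcase. If $b\notin N$, then $\bar b$ may still have infinite order and I would invoke the inductive hypothesis on $\bar B$.

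The main obstacle is \emph{prime control} in this last subcase: to feed $\bar B$ to the inductive hypothesis I must know that the largest prime attached to $\bar B$ does not exceed $p$, i.e. that $\pi(\bar B,+)\subseteq\{q:q\le p\}$. Reduction modulo $2^m N$ can only adjoin the prime $2\le p$, so the real point is to bound $\pi(B/N,+)$; this genuinely fails for a careless $N$ (for instance $N=qN'$ would create a quotient of order $q>p$), so $N$ must be chosen as pure as the structure permits. I would use the torsion-free factors produced by Theorem \ref{exth3.4} to take $N$ isolated in the torsion-free part of $(B,+)$, and Corollary \ref{psubbrace} (the Sylow tower) to keep the torsion primes of every successive quotient inside $\pi(B,+)$; with this in hand the induction closes, the base case $h=0$ being trivial since then $B$ is finite of $\pi$-order. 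A cleaner packaging would route the torsion-free finitely generated central sections directly through \cite{tutti23-2}, Theorem 3.25 (residual $\pi$-finiteness of finitely generated centrally nilpotent braces), so that only the purification step and this one residual result carry the argument.
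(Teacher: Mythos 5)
Most of your scaffolding is sound and, in fact, close to the paper's own proof: the point-separation reduction, the purification step (for a finite supersoluble quotient $B/K$, Corollary~\ref{psubbrace} makes $U_p^+(B/K)$ an ideal whose quotient has $\pi$-order), the torsion case, and the subcase $b\in N$ are all correct, and your induction on Hirsch length with quotients $B/2^mN$ is exactly the paper's mechanism (there $N=J$ is infinite cyclic and one quotients by $J^{2^n,\bigcdot}=2^nJ$ for every $n$, then intersects). The genuine gap is the step you flag yourself: the \emph{existence} of an ideal $N$ with the required prime control, which is the actual content of the theorem. Your concrete recipe, ``take $N$ isolated in the torsion-free part of $(B,+)$'', fails outright: an ideal of $B$ contained in $\operatorname{Soc}(B)$ and isolated in $(B,+)$ need not exist. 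Consider the brace of abelian type on $(B,+)=\mathbb{Z}$ given by $\lambda_a(b)=(-1)^ab$, so that $(B,\bigcdot)$ is infinite dihedral. It is supersoluble with $p=2$, $\operatorname{Soc}(B)=2\mathbb{Z}$, and every nonzero ideal contained in the socle is of the form $2m\mathbb{Z}$, whose isolator in $(B,+)$ is all of $\mathbb{Z}$; none is isolated. Moreover the worry is real: choosing $N=6\mathbb{Z}$ there creates $3$-torsion in $B/2^mN$ and destroys the induction. In general the isolator of $N$ contains the entire additive torsion of $B$ (so it is not torsion-free), and nothing forces it to be an ideal or to lie in $\operatorname{Soc}(B)$. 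Your alternative, routing through \cite{tutti23-2}, Theorem 3.25, does not close the gap either: $B$ itself is not centrally nilpotent, and applying that theorem to a centrally nilpotent section such as $N$ only produces $\pi$-finite quotients \emph{of $N$}, not finite $\pi$-index ideals of $B$; extending the former to the latter is again precisely the prime-control problem.

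What actually closes it --- and is where the paper invests its effort --- is a two-layer choice rather than a purity property of a single $N$. Put $U=U_2^+(B)$, a finite ideal of $\pi$-order. Theorem~\ref{theoincredible} yields an ideal $I$ with $I/U\leq\operatorname{Soc}(B/U)$, $(I/U,+)$ infinite cyclic, and $U_2^+(B/I)=\{0\}$; this last condition guarantees that everything \emph{above} $I$ can contribute only $2$-torsion to any further quotient. Then, inside $I$, Theorems~\ref{exth3.4} and~\ref{theoincredible} give an infinite cyclic ideal $J\leq I\cap\operatorname{Soc}(B)$ of $B$ with $U_p^+(I/J)=\{0\}$, so that the finite layer $I/J$ involves only primes $\leq p$ (by Cauchy's theorem, any prime divisor $q>p$ of $|I/J|$ would force a nonzero element of $U_p^+(I/J)$). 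With both layers in place, a prime-order element of $B/J^{2^n,\bigcdot}$ either maps onto nonzero torsion of $B/I$, hence has order $2$, or lies in $I/J^{2^n,\bigcdot}$, whose torsion primes are those of $I/J$ and of the $2$-group $J/2^nJ$; so the largest prime in $\pi(B/J^{2^n,\bigcdot},+)$ is still $p$, the inductive hypothesis applies, and $\bigcap_n J^{2^n,\bigcdot}=\{0\}$ finishes. This controlled filtration on \emph{both} sides of $J$ --- not isolation of $J$, which is generally unattainable --- is the missing idea; in the dihedral example above it amounts to choosing $N=2\mathbb{Z}$, whose index in its isolator is a $\pi$-number, which is what $U_p^+(I/J)=\{0\}$ encodes in general. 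Once this existence statement is supplied, the rest of your argument goes through.
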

\begin{proof}
We use induction on the Hirsch length $h$ of $(B,+)$. If $h=0$, then the order of $B$ is a $\pi$-number, so we are done. Assume $h>0$ and put $U=U_2^+(B)$. It follows from Theorem \ref{theoincredible} that there is an ideal $I$ of $B$ such that $I/U\leq\operatorname{Soc}(B/U)$, $(I/U,+)$ is infinite cyclic, and $U_2^+(B/I)=\{0\}$. By Theorem \ref{exth3.4}, there is an ideal $J$ of $B$ such that $J\leq I\cap\operatorname{Soc}(B)$ and $(J,+)$ is infinite cyclic, and by Theorem \ref{theoincredible} we may even assume $U_p^+(I/J)=\{0\}$. Thus, for each $n\in\mathbb{N}$, the largest prime in $\pi\big(B/J^{2^n,\bigcdot},+\big)$ is precisely $p$, so, by induction, $B/J^{2^n,\bigcdot}$ is residually $\pi$-finite. Since $\bigcap_{n\in\mathbb{N}}J^{2^n,\bigcdot}=\{0\}$, we have that $B$ is residually $\pi$-finite.
\end{proof}

\begin{cor}
Let $B$ be a supersoluble brace such that $(B,+)$ is torsion-free. Then the intersection of all ideals $I$ of $B$ with $|B/I|$ a power of $2$ is $\{0\}$.
\end{cor}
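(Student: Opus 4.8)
The plan is to obtain this as an immediate specialisation of Theorem~\ref{resfinite}; the whole point will be to observe that the torsion-freeness of $(B,+)$ collapses the relevant set of primes down to $\{2\}$. First I would record that a torsion-free group has no nontrivial elements of finite order, so that $\pi(B,+)=\emptyset$. Consequently the prime $p$ featuring in Theorem~\ref{resfinite}, namely the largest prime in $\pi(B,+)\cup\{2\}$, is forced to be $p=2$, and the associated set $\pi$ of all primes $q\leq p$ is then exactly $\{2\}$.

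With this identification in place, Theorem~\ref{resfinite} directly yields that $B$ is residually $\pi$-finite with $\pi=\{2\}$. Unwinding the definition of residual $\pi$-finiteness recalled just before Theorem~\ref{resfinite}, this asserts precisely that the intersection of all ideals $I$ of $B$ for which $B/I$ is a $\{2\}$-number---equivalently, for which $|B/I|$ is a power of $2$---is~$\{0\}$. This is exactly the desired conclusion, so no further argument is needed.

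I do not expect any genuine obstacle here: the substantive content has already been established in Theorem~\ref{resfinite}, and the only point requiring care is the elementary observation that the torsion-free hypothesis forces $p=2$ and hence $\pi=\{2\}$, so that ``$\pi$-number'' specialises to ``power of $2$''. The proof is therefore a one-line deduction from the preceding theorem.
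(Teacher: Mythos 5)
Your proposal is correct and matches the paper's intent exactly: the corollary is stated without proof precisely because it is the immediate specialisation of Theorem~\ref{resfinite} you describe, where torsion-freeness of $(B,+)$ gives $\pi(B,+)=\emptyset$, hence $p=2$ and $\pi=\{2\}$, and ``residually $\{2\}$-finite'' unwinds to the stated conclusion.
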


Some of the most interesting results for supersoluble braces are connected with nilpotency properties. For example, an obvious application of Lemma \ref{lemequivalenza} allows us to identify the supersoluble braces with a finite multipermutational level by only looking at their additive groups.

\begin{theo}\label{rnilsupersoluble}
Let $B$ be a supersoluble brace. Then $(B,+)$ is nilpotent if and only if there is~\hbox{$n\in\mathbb{N}$} such that $B=\operatorname{Soc}_n(B)$.
\end{theo}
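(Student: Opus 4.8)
The plan is to prove the two implications separately; supersolubility will only be needed for the forward direction, where all the work amounts to pushing every factor of a supersolubility chain into the socle.

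For the implication $B=\operatorname{Soc}_n(B)\Rightarrow(B,+)$ nilpotent, I would argue straight from the definition of the socle, and in fact supersolubility plays no role. Since $\operatorname{Soc}(A)=\operatorname{Ker}\lambda\cap Z(A,+)\leq Z(A,+)$ for every brace $A$, applying this with $A=B/\operatorname{Soc}_i(B)$ shows that each additive factor $\operatorname{Soc}_{i+1}(B)/\operatorname{Soc}_i(B)$ is central in $(B,+)/(\operatorname{Soc}_i(B),+)$. As the terms $\operatorname{Soc}_i(B)$ are ideals (hence additively normal), the chain $\{0\}=(\operatorname{Soc}_0(B),+)\leq\ldots\leq(\operatorname{Soc}_n(B),+)=(B,+)$ is a finite central series of $(B,+)$, so $(B,+)$ is nilpotent.

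For the converse I would start from a chain of ideals $\{0\}=I_0\leq\ldots\leq I_n=B$ as in Definition~\ref{defsuper}, and show that every factor sits inside the socle of its quotient, i.e. $I_{i+1}/I_i\leq\operatorname{Soc}(B/I_i)$ for all $i$. For the infinite cyclic factors this is built into Definition~\ref{defsuper}. For a factor $I_{i+1}/I_i$ of prime order I would use that $(B/I_i,+)$ is nilpotent, being a quotient of the nilpotent group $(B,+)$: such a factor is a minimal normal subgroup of $(B/I_i,+)$, and a standard property of nilpotent groups is that every nontrivial normal subgroup meets the centre nontrivially, so $I_{i+1}/I_i\leq Z(B/I_i,+)$; then Lemma~\ref{lemequivalenza}(1), applied to the supersoluble brace $B/I_i$, yields $I_{i+1}/I_i\leq\operatorname{Soc}(B/I_i)$. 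This is the ``obvious application'' of the lemma.

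Finally I would convert these socle containments into $B=\operatorname{Soc}_n(B)$. The one auxiliary fact I would isolate is that $\operatorname{Soc}$ is monotone under quotients: if $N\unlhd A$, then the image of $\operatorname{Soc}(A)$ in $A/N$ lies in $\operatorname{Soc}(A/N)$, which is immediate since $s\in\operatorname{Ker}\lambda\cap Z(A,+)$ forces $s+N\in\operatorname{Ker}\lambda_{A/N}\cap Z(A/N,+)$. Granting this, I would prove $I_i\leq\operatorname{Soc}_i(B)$ by induction on $i$: assuming $I_i\leq\operatorname{Soc}_i(B)$, the projection $B/I_i\to B/\operatorname{Soc}_i(B)$ carries $I_{i+1}/I_i\leq\operatorname{Soc}(B/I_i)$ into $\operatorname{Soc}(B/\operatorname{Soc}_i(B))=\operatorname{Soc}_{i+1}(B)/\operatorname{Soc}_i(B)$, whence $I_{i+1}\leq\operatorname{Soc}_{i+1}(B)$; for $i=n$ this gives $B=\operatorname{Soc}_n(B)$. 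The only genuinely non-formal step is the treatment of the prime-order factors (the centrality argument feeding into Lemma~\ref{lemequivalenza}); the socle-monotonicity and the closing induction are routine once that auxiliary fact is in hand.
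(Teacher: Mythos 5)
Your proposal is correct and follows exactly the route the paper intends: the paper offers no written proof of this theorem beyond the remark that it is an ``obvious application of Lemma~\ref{lemequivalenza}'', and your argument --- the upper socle series as an additive central series for one direction; for the converse, the prime-order factors of a supersolubility chain are additively central because $(B,+)$ is nilpotent, hence lie in the socle by Lemma~\ref{lemequivalenza}(1), after which the socle-monotonicity induction converts the chain into the containments $I_i\leq\operatorname{Soc}_i(B)$ --- supplies precisely the details that remark leaves implicit. There is nothing to correct.
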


Conversely, if we know our brace has finite multipermutational level, then we can detect supersolubility by only looking at the multiplicative group. This is the content of our next result.

\begin{theo}\label{supersolublegroup}
Let $B$ be a brace such that $B=\operatorname{Soc}_n(B)$ for some~\hbox{$n\in\mathbb{N}$.} If $(B,\bigcdot)$ is supersoluble, then $B$ is supersoluble.
\end{theo}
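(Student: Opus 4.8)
The plan is to proceed along the upper socle series and reduce everything to one task: refining $\operatorname{Soc}(B)$ by ideals of $B$ with cyclic factors. Since $B=\operatorname{Soc}_n(B)$, the series $\{0\}=\operatorname{Soc}_0(B)\leq\cdots\leq\operatorname{Soc}_n(B)=B$ consists of ideals, and for each $i$ the quotient $\bar B=B/\operatorname{Soc}_i(B)$ again has supersoluble multiplicative group (a quotient of $(B,\bigcdot)$) and satisfies $\operatorname{Soc}(\bar B)=\operatorname{Soc}_{i+1}(B)/\operatorname{Soc}_i(B)$. Hence it suffices to prove the following claim and then concatenate the refinements obtained for each $\bar B$: if $(B,\bigcdot)$ is supersoluble, then $S:=\operatorname{Soc}(B)$ admits a finite chain of ideals of $B$, $\{0\}=H_0\leq\cdots\leq H_m=S$, whose factors are either of prime order or infinite cyclic. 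Granting the claim, each prime-order factor is admissible for Definition \ref{defsuper}, while each infinite cyclic factor lies in $\operatorname{Soc}(B/H_j)$ (see below); so the concatenated chain witnesses that $B$ is weakly supersoluble, whence supersoluble by Theorem \ref{weaksupersoluble}.

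The linchpin is the identification of which subgroups of $S$ are ideals of $B$. Since $S\subseteq\operatorname{Ker}\lambda$ and $S\subseteq Z(B,+)$, the brace $S$ is trivial, so $(S,+)=(S,\bigcdot)$, and every subgroup of $S$ is central (hence normal) in $(B,+)$. I would first record the computation that, for $s\in S$ and $b\in B$, one has $b\bigcdot s\bigcdot b^{-1}=\lambda_b(s)$: indeed $s\bigcdot b^{-1}=s+\lambda_s(b^{-1})=s+b^{-1}$ because $\lambda_s=\operatorname{id}$, and then $b\bigcdot(s+b^{-1})=b+\lambda_b(s)+\lambda_b(b^{-1})=b+\lambda_b(s)-b=\lambda_b(s)$, using $\lambda_b(b^{-1})=-b$ and $\lambda_b(s)\in\lambda_b(Z(B,+))=Z(B,+)$. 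Consequently, for a subgroup $H\leq S$ the conditions ``$H$ is $\lambda$-invariant'' and ``$H$ is normal in $(B,\bigcdot)$'' coincide, and either of them (together with the automatic additive normality) makes $H$ an ideal of $B$. In other words, the ideals of $B$ contained in $S$ are exactly the subgroups of $S$ that are normal in $(B,\bigcdot)$.

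With this translation the claim becomes a statement about supersoluble groups. The multiplicative group $(S,\bigcdot)$ is a normal subgroup of the supersoluble group $(B,\bigcdot)$, and it is finitely generated abelian (being a subgroup of $(B,\bigcdot)$, which satisfies the maximal condition). Supersolubility of $(B,\bigcdot)$ then furnishes a finite $(B,\bigcdot)$-invariant series of $(S,\bigcdot)$ with cyclic factors (see \cite{course}); refining each finite cyclic factor through its characteristic subgroups, which are automatically normal in $(B,\bigcdot)$, produces factors of prime order, while the infinite cyclic factors are left untouched. By the previous paragraph every term of this series is an ideal of $B$, so the claim is proved. Finally, for an ideal $H_j\leq S$ one has $S/H_j\subseteq\operatorname{Soc}(B/H_j)$: centrality in $(B/H_j,+)$ is inherited from $S\subseteq Z(B,+)$, and $\lambda_{\bar s}=\operatorname{id}$ on $B/H_j$ since $\lambda_s=\operatorname{id}$ on $B$; thus each infinite cyclic factor $H_{j+1}/H_j$ does lie in $\operatorname{Soc}(B/H_j)$, as needed.

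The main obstacle is the bridge of the second paragraph, namely the identity $b\bigcdot s\bigcdot b^{-1}=\lambda_b(s)$ valid on the socle, since it is exactly what converts the purely group-theoretic supersolubility of $(B,\bigcdot)$ into information about ideals of the brace; once this translation is in place, the existence of the desired cyclic refinement is a standard feature of supersoluble groups, and the verification of the socle conditions for the infinite cyclic factors is routine.
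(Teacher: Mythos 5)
Your proof is correct. It rests on the same pivotal fact as the paper's proof --- a subgroup of $\operatorname{Soc}(B)$ that is normal in $(B,\bigcdot)$ is automatically an ideal of $B$ --- but the two arguments are organized quite differently. The paper obtains this bridge by citing Lemma 1.10 of \cite{Cedo}, uses supersolubility of $(B,\bigcdot)$ only to extract a single non-trivial cyclic normal subgroup $X$ of $(B,\bigcdot)$ inside $\big(\operatorname{Soc}(B),\bigcdot\big)$, concludes $X\trianglelefteq B$, and finishes by an easy induction on the Hirsch length of $(B,+)$ applied to $B/X$. You instead prove the bridge from scratch via the conjugation identity $b\bigcdot s\bigcdot b^{-1}=\lambda_b(s)$ for $s\in\operatorname{Soc}(B)$ (which is exactly the content of the cited lemma in this situation), and you avoid the induction altogether: intersecting a supersoluble series of $(B,\bigcdot)$ with the normal subgroup $\big(\operatorname{Soc}(B),\bigcdot\big)$ yields a finite $(B,\bigcdot)$-invariant series with cyclic factors, you refine the finite factors to prime order through characteristic subgroups, verify the socle condition on the infinite cyclic factors, and concatenate these refinements along the upper socle series $\{0\}=\operatorname{Soc}_0(B)\leq\ldots\leq\operatorname{Soc}_n(B)=B$. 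What your route buys is a self-contained, explicitly constructive proof that exhibits the whole supersoluble chain; what the paper's route buys is brevity, since the induction and the citation absorb all of this bookkeeping. One small economy you missed: the final appeal to Theorem \ref{weaksupersoluble} is redundant, because the chain you construct already satisfies Definition \ref{defsuper} verbatim --- you check that each infinite cyclic factor $H_{j+1}/H_j$ lies in $\operatorname{Soc}(B/H_j)$, which is the stronger condition.
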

\begin{proof}
Clearly, both the additive and multiplicative groups are polycyclic-by-finite. Let $S=\big(\operatorname{Soc}(B),\bigcdot\big)$. Since $(B,\bigcdot)$ is supersoluble, we can find a non-trivial cyclic normal subgroup $X$ of $(B,\bigcdot)$ in $S$. By Lemma 1.10 of \cite{Cedo}, we have that $X$ is an ideal of $B$, so an easy induction on the Hirsch length of~$(B,+)$ yields that $B/X$ is supersoluble. Therefore~$B$ is supersoluble and the statement is proved.
\end{proof}

\medskip

The above two results show that in the special case of a supersoluble brace of nilpotent type, the structure of the multiplicative group plays a major role. Under these circumstances, it is possible to extend many relevant results of supersoluble groups to supersoluble braces. We now give the reader a taste of this fact by showing how the following two results of Reinhold Baer and Bertram Wehrfritz, respectively, are immediately extended to braces:
\begin{itemize}
    \item Let $G$ be a group with a nilpotent derived subgroup. If $G$ is the product of two normal supersoluble subgroups, then $G$ is supersoluble (see \cite{baersuper} and \cite{infiniteminimal}).
    \item Let $G$ be a finitely generated group. If every $2$-generator subgroup of $G$ is supersoluble, then $G$ is  supersoluble (see \cite{Weh}, Lemma 3.5).
\end{itemize}

\begin{theo}\label{baersuperso}
Let $B$ be a brace with $B=\operatorname{Soc}_n(B)$ for some $n\in\mathbb{N}$.
\begin{itemize}
    \item[\textnormal{(1)}] Let $I$ and $J$ be supersoluble ideals of $B$ such that $B=I+J$. If $[I,J]_{\bigcdot}$ is nilpotent, then $B$ is supersoluble.
    \item[\textnormal{(2)}] Suppose $B$ is finitely generated. If every $2$-generator subbrace of $B$ is supersoluble, then~$B$ is supersoluble.
\end{itemize}
\end{theo}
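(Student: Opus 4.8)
The plan is to reduce both parts to the multiplicative group and then invoke the brace-theoretic reformulation in Theorem \ref{supersolublegroup}: since $B=\operatorname{Soc}_n(B)$, it suffices in each case to prove that $(B,\bigcdot)$ is a supersoluble \emph{group}. For part (1) the engine will be Baer's theorem on products of normal supersoluble subgroups, and for part (2) it will be Wehrfritz's $2$-generator criterion; the hypothesis $B=\operatorname{Soc}_n(B)$ is exactly what licenses the passage back from $(B,\bigcdot)$ to $B$.

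For (1), I first note that since $I$ and $J$ are supersoluble ideals, Lemma \ref{supersolublebraceisgroup} gives that $(I,\bigcdot)$ and $(J,\bigcdot)$ are supersoluble groups, and being ideals they are normal in $(B,\bigcdot)$; moreover both are polycyclic, so $(B,\bigcdot)$ will be polycyclic as well. Next I would show $(B,\bigcdot)=(I,\bigcdot)(J,\bigcdot)$: projecting modulo the ideal $I$ and using that additive and multiplicative cosets of an ideal coincide (so $j+I=jI$), the equality $B=I+J$ forces every multiplicative coset of $I$ to meet $J$, whence $(B,\bigcdot)=(I,\bigcdot)(J,\bigcdot)$. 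For a product of two normal subgroups one has $(B,\bigcdot)'=(I,\bigcdot)'\,(J,\bigcdot)'\,[(I,\bigcdot),(J,\bigcdot)]$; here $(I,\bigcdot)'$ and $(J,\bigcdot)'$ are nilpotent because the derived subgroup of a supersoluble group is nilpotent, while $[(I,\bigcdot),(J,\bigcdot)]=[I,J]_{\bigcdot}$ is nilpotent by hypothesis. As these three factors are normal in $(B,\bigcdot)$, Fitting's theorem yields that $(B,\bigcdot)'$ is nilpotent. Baer's theorem (in its polycyclic-by-finite form) then gives that $(B,\bigcdot)$ is supersoluble, and Theorem \ref{supersolublegroup} finishes the proof.

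For (2), the $2$-generator hypothesis transfers cleanly to the multiplicative group: for $g,h\in B$ the subgroup $\langle g,h\rangle_{\bigcdot}$ lies inside the subbrace $\langle g,h\rangle$, which is supersoluble by assumption, so $(\langle g,h\rangle,\bigcdot)$ is supersoluble by Lemma \ref{supersolublebraceisgroup} and hence so is its subgroup $\langle g,h\rangle_{\bigcdot}$. Thus every $2$-generator subgroup of $(B,\bigcdot)$ is supersoluble, and Wehrfritz's criterion will apply once I know that $(B,\bigcdot)$ is finitely generated. To obtain this I would use that $B=\operatorname{Soc}_n(B)$ makes the upper socle series a central series of $(B,+)$ (each $\operatorname{Soc}(-)$ factor lies in the additive centre), so that $(B,+)$ is nilpotent; combined with finite generation of the brace and the right-nilpotency furnished by $B=\operatorname{Soc}_n(B)$, a stratification of $B\ast B$ along the right series shows the additive abelianisation is finitely generated, whence $(B,+)$ is finitely generated. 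Finally the socle series is a finite normal series of $(B,\bigcdot)$ whose factors are trivial braces (so that $+$ and $\bigcdot$ coincide on them) and are finitely generated abelian as sections of $(B,+)$; therefore $(B,\bigcdot)$ is polycyclic, in particular finitely generated. Wehrfritz's theorem then gives $(B,\bigcdot)$ supersoluble, and Theorem \ref{supersolublegroup} again concludes.

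The main obstacle is the finite generation of $(B,\bigcdot)$ in part (2): finite generation of a brace does not in general descend to either of its group operations, so the argument must genuinely exploit $B=\operatorname{Soc}_n(B)$, first to force $(B,+)$ to be nilpotent and finitely generated and then, through the socle series, to transport finite generation to $(B,\bigcdot)$. By contrast, in part (1) the only delicate point is to ensure one appeals to a version of Baer's theorem valid for polycyclic-by-finite groups rather than merely finite ones.
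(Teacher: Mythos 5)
Your part (1) is correct, and it is exactly the argument the paper has in mind: the paper writes out only part (2), leaving the Baer-type statement to the same scheme you follow --- the coset identity $bI=b+I$ yields $(B,\bigcdot)=(I,\bigcdot)(J,\bigcdot)$, Fitting's theorem applied to $(B,\bigcdot)'=(I,\bigcdot)'(J,\bigcdot)'[I,J]_{\bigcdot}$ gives a nilpotent derived subgroup, and Baer's theorem (in the general form stated in the paper, which by \cite{infiniteminimal} is not limited to finite groups) combined with Theorem \ref{supersolublegroup} concludes.

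Part (2), however, contains a genuine gap, located exactly at the step you yourself single out as the main obstacle. You assert that finite generation of $B$ as a brace, together with $B=\operatorname{Soc}_n(B)$, already forces the abelianisation of the nilpotent group $(B,+)$ --- and hence $(B,+)$ itself --- to be finitely generated. This intermediate claim is false. Let $B=\mathbb{Z}[t,t^{-1}]$ with its usual addition and with $p\bigcdot q=p+t^{p(1)}q$, where $p(1)\in\mathbb{Z}$ denotes the value of $p$ at $t=1$. Since $\big(p+t^{p(1)}q\big)(1)=p(1)+q(1)$, the assignment $\lambda_p=$ (multiplication by $t^{p(1)}$) satisfies $\lambda_{p+\lambda_p(q)}=\lambda_p\lambda_q$, so $B$ is a brace of abelian type. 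Here $\operatorname{Soc}(B)=\operatorname{Ker}(\lambda)=\{p: p(1)=0\}$ and $B/\operatorname{Soc}(B)$ is the trivial brace on $\mathbb{Z}$, so $B=\operatorname{Soc}_2(B)$. Moreover $B$ is generated as a brace by the single element $1$: indeed $1\ast 1=t-1$ gives $t$, the identity $t^n\ast t^n=t^{n+1}-t^n$ inductively gives all $t^n$ with $n\geq 0$, and $1^{-1}=-t^{-1}$ together with $(-t^{-1})\ast t^{-n}=t^{-n-1}-t^{-n}$ gives the negative powers. Nevertheless $(B,+)$ is free abelian of countably infinite rank, so neither $(B,+)$ nor its abelianisation is finitely generated; furthermore $(B,\bigcdot)\simeq\mathbb{Z}\wr\mathbb{Z}$ is not polycyclic, so the conclusion of your last step fails as well. (This $B$ does not contradict the theorem: it is a $1$-generator brace that is not supersoluble, so the hypothesis of (2) fails for it; but your finite-generation argument invoked only brace finite generation and $B=\operatorname{Soc}_n(B)$, and the example refutes precisely that implication.)

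The repair must feed the $2$-generator supersolubility hypothesis into the finite-generation argument itself, and this is what the paper's proof does. Having established, as you do, that $(B,\bigcdot)$ is locally supersoluble, it proceeds by induction on $n$: $B/\operatorname{Soc}(B)$ is supersoluble by induction, hence finitely presented, so Corollary 3.3 of \cite{Tr23} provides a finite set $E$ generating $S=\operatorname{Soc}(B)$ as an ideal of $B$; choosing a finite set $F$ that generates $B$ modulo $S$ both additively and multiplicatively, the subgroup $X=\langle E\cup F\rangle_{\bigcdot}$ of $(B,\bigcdot)$ is supersoluble and so satisfies the maximal condition on subgroups; the normal closure $L$ of $E$ in $X$ is then a finitely generated group which turns out to be normal in all of $(B,\bigcdot)$ (elements of the trivial abelian brace $S$ centralise $L\leq S$), hence an ideal of $B$ by Lemma 1.10 of \cite{Cedo}, hence equal to $S$. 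Thus $(S,\bigcdot)$, and therefore $(B,\bigcdot)$, is finitely generated, and Wehrfritz's theorem together with Theorem \ref{supersolublegroup} completes the proof. This induction, using supersolubility of the quotient and the maximal condition inside finitely generated multiplicative subgroups, is the part of the argument your proposal cannot reach.
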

\begin{proof}
For the sake of clarity we prove (2). Since every $2$-generator multiplicative subgroup is contained in a $2$-generator subbrace, it follows from the above mentioned result that $(B,\bigcdot)$ is locally supersoluble. We claim by induction on $n$ that $B$ is supersoluble. If $n\leq 1$, there is nothing to prove. Suppose $n>1$. By induction, $B/\operatorname{Soc}(B)$ is supersoluble, so it is finitely presented. Therefore, Corollary 3.3 of \cite{Tr23} yields that $S=\operatorname{Soc}(B)$ is finitely generated as an ideal of~$B$ by a set $E$. Let $F$ be a finite subset of~$B$ that generates both $(B,\bigcdot)$ and $(B,+)$ modulo $(S,\bigcdot)$ and $(S,+)$, respectively.

Now, if $X$ is the multiplicative subgroup generated by $E$ and $F$, then $X$ is supersoluble, so every subgroup of $X$ is finitely generated. Let $L$ be the smallest normal subgroup $L$ of $X$ containing $E$. Then $(L,\bigcdot)$ is normal in $(B,\bigcdot)$, so $L$ is an ideal of $B$, being contained in $S$. Since $S$ is the smallest ideal of $B$ containing $E$, we have that $L=S$. This means that $(S,\bigcdot)$ is finitely generated, so also $(B,\bigcdot)$ is finitely generated. Therefore $(B,\bigcdot)$ is supersoluble and $B$ is supersoluble as well by Theorem \ref{supersolublegroup}.
\end{proof}

\medskip

The following example show that the hypothesis the brace has finite multipermutational level cannot be omitted in  Theorem \ref{baersuperso} (1).

\begin{ex}\label{ex:fact_superS}
There exists a finite brace $B$ of order $32$ containing two supersoluble ideals $I$ and $J$ such that:
\begin{itemize}
    \item $B=I+J$.
    \item $\partial(B):=\langle B\ast B,[B, B]_+\rangle_+$ is centrally nilpotent.
    \item $B$ is not supersoluble.
\end{itemize}
\end{ex}
\begin{proof}
Let 
\[\begin{array}{l}
(B,+)  = \langle a \rangle \times \langle b \rangle \times \langle c \rangle \times \langle d \rangle \times \langle e \rangle \simeq \operatorname{C}_2^5 \quad \text{and}\\[0.2cm]
(C,\bigcdot)\!=\! \langle x, y, z \mid x^4 \!= y^4 \!= z^2 = 1,\, xy = yx,\, zxz = x^3y^2,\,  zyz = x^2y\rangle\!\simeq\! \big(\operatorname{C}_4 \times \operatorname{C}_4\big)\!\rtimes\!\operatorname{C}_2.
\end{array}
\]
Of course, $(B,+)$ and $(C,\bigcdot)$ are groups of order~$32$. The multiplicative group acts on the additive by means of the map $\lambda$ defined as
\[   
\begin{array}{lllll}
\lambda_x(a) = c+d+e  & & \lambda_y(a) = a     & & \lambda_z(a) = a\\
\lambda_x(b) = c+e  & & \lambda_y(b) = a+c+e      & & \lambda_z(b) = b+c+d+e\\
\lambda_x(c) = a+c & & \lambda_y(c) = a+b+c+d & & \lambda_z(c) = a+b+c+d\\
\lambda_x(d) = a+b  & & \lambda_y(d) = b     & & \lambda_z(d) = c+e \\
\lambda_x(e) = a+b+c & & \lambda_y(e) = a+e  & & \lambda_z(e) = a+b+e\\
\end{array}
\]
Consider the semidirect product $G$ of $B$ and $C$ with respect to this action (we use multiplicative notation in $G$). Then $G$ turns out to be a trifactorised group as it possesses a subgroup $D=\langle acx, acey, az\rangle$ such that $D\cap C=D\cap B=\{1\}$, $DC=BD=G$. Thus, there is a bijective $1$-cocycle $\delta\colon C\longrightarrow B$ with respect to $\lambda$ given by  Table~\ref{tb:fact_superS}. This yields a product in $B$ and we get a brace of abelian type $(B,+,\bigcdot)$ of order $32$. This brace corresponds to \texttt{SmallBrace(32, 25055)} in the \textsf{Yang--Baxter} library for \textsf{GAP}.
\begin{table}[h]
\[
\begin{array}{llllllll}
g     & \delta(g) & g        & \delta(g) & g   & \delta(g)& g         & \delta(g) \\\hline
1     & 0        & y^2     & b+d      & z      & a        & y^2z     & a+b+d    \\
x     & a+c      & xy^2    & b+e      & xz     & a+d+e    & xy^2z    & b+c+d    \\
x^2  & c+d+e    & x^2y^2 & b+c+e    & x^2z  & a+c+d+e  & x^2y^2z & a+b+c+e  \\
x^3  & c        & x^3y^2 & a+b+e    & x^3z  & d+e      & x^3y^2z & a+b+c+d  \\
y     & a+c+e    & y^3     & b        & yz     & c+e      & y^3z     & a+b      \\
xy    & a+b+d+e  & xy^3    & a+e      & xyz    & a+b+c    & xy^3z    & a+c+d    \\
x^2y & a+d      & x^2y^3 & b+c+d+e  & x^2yz & d        & x^2y^3z & a+b+c+d+e    \\
x^3y & b+d+e    & x^3y^3 & e        & x^3yz & b+c      & x^3y^3z & c+d\\\hline       
\end{array}
\]
\caption{Bijective $1$-cocycle associated with Example~\ref{ex:fact_superS}}
\label{tb:fact_superS}
\end{table}

Ideals of $B$ of order $16$ are given by the $\lambda$-invariant subgroups $I = \langle a,c,b+d,b+e\rangle_+$, $J = \langle a,b,d,c+e\rangle_+$ and $K = \langle a, b+c, b+d, e\rangle_+$. In addition, $B$ has only one more proper ideal, $L = \langle a, b+d, b+c+e\rangle_+$ of order $8$. Thus, $B$ is not supersoluble.

Note also that $L$, $L_2 = \langle a+b+d, b+c+e\rangle_+$ and $L_3 = \langle b+c+e\rangle_+$ are ideals of both $I$ and $J$, so $I$ and $J$ are supersoluble braces.

Since $B$ is of abelian type, $\partial(B) = B \ast B = L$. Therefore, we also conclude that $\partial(B)$ is centrally nilpotent as it has a central series of ideals (see also Theorem \ref{pbracecentrallynilp}).
\end{proof}

\medskip

Now, we turn to left-nilpotency. As mentioned in the introduction, it has been proved in \cite{Cedo}, Theorem 4.8, that a finite brace of nilpotent type is left-nilpotent if and only if its multiplicative group is nilpotent. It has been proved in \cite{tutti23-2}, Theorem 3.30, that we can replace ‘‘finite brace’’ by ‘‘almost polycyclic brace’’ in the above result, but unfortunately we cannot replace ‘‘finite brace of nilpotent type’’ by ‘‘supersoluble brace’’. In fact, the finite brace of order $6$ corresponding to~\texttt{SmallSkewbrace}(6,3) in the~\texttt{Yang--Baxter} library \cite{VendraminKonovalov22-YangBaxter-0.10.2} for \texttt{GAP} \cite{GAP4-12-2} is an example of non-left-nilpotent supersoluble brace with a nilpotent multiplicative group. Nevertheless, it turns out that one of the implications holds with a little addition.

\begin{lem}\label{3.33}
Let $B$ be a left-nilpotent brace.

\begin{itemize}
    \item[\textnormal{(1)}] $\operatorname{Soc}(B)\leq\overline{\zeta}(B)$. Moreover, if $\operatorname{Soc}(B)$ is finitely generated, then $\operatorname{Soc}(B)\leq\zeta_m(B)$ for some $m\in\mathbb{N}$.
    \item[\textnormal{(2)}] If $I=\langle b\rangle$ is an ideal of $B$ of prime order, then $\lambda_a(b)=b$ for all $a\in B$. 
\end{itemize}
\end{lem}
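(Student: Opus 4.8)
The plan is to derive both parts from one computational core: \textbf{for any nonzero ideal $N$ of a left-nilpotent brace $B$ with $N\subseteq\operatorname{Soc}(B)$, the intersection $N\cap\zeta(B)$ is nonzero.} The basic observation I will use repeatedly is that if $I\trianglelefteq B$ then $B\ast I\subseteq I$, since $b\ast i=\lambda_b(i)-i$ lies in $I$ whenever $\lambda_b(i),i\in I$. Consequently every term of the series $L_0(B;N)=N$, $L_{n+1}(B;N)=B\ast L_n(B;N)$ stays inside $N$, while monotonicity $L_n(B;N)\subseteq L_n(B;B)$ together with left-nilpotency of $B$ forces $L_k(B;N)=\{0\}$ for some $k$. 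Taking $k$ minimal and setting $T=L_{k-1}(B;N)\neq\{0\}$, I get $T\subseteq N\subseteq\operatorname{Soc}(B)$ and $B\ast T=L_k(B;N)=\{0\}$. For $t\in T$ this yields at once $\lambda_t=\operatorname{id}$ and $t\in Z(B,+)$ (from $T\subseteq\operatorname{Soc}(B)$) and $b\ast t=0$ for all $b$ (from $B\ast T=\{0\}$); hence $tb=t+t\ast b+b=t+b=b+t=b+b\ast t+t=bt$, so $t\in Z(B,\bigcdot)$ and therefore $\{0\}\neq T\subseteq N\cap\zeta(B)$.

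I would prove (2) first, as the easy specialization. If $I=\langle b\rangle$ is an ideal of prime order $p$, then $B\ast I$ is a subgroup of $(I,+)\cong C_p$, so it equals $\{0\}$ or $I$; since $B$ is left-nilpotent the chain $L_n(B;I)$ must reach $\{0\}$, which rules out $B\ast I=I$, leaving $B\ast I=\{0\}$. Then $a\ast b=\lambda_a(b)-b=0$, i.e. $\lambda_a(b)=b$, for every $a\in B$.

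For (1), set $S=\operatorname{Soc}(B)$ and consider the ordinal-indexed filtration $S_\alpha=S\cap\zeta_\alpha(B)$. Whenever $S_\alpha\neq S$, the image $\bar S$ of $S$ in $\bar B=B/\zeta_\alpha(B)$ is a nonzero ideal contained in $\operatorname{Soc}(\bar B)$ (because $S\ast B=\{0\}$ and $S\subseteq Z(B,+)$ pass to the quotient), and $\bar B$ is again left-nilpotent; the core statement then gives $\bar S\cap\zeta(\bar B)\neq\{0\}$, that is, $S_{\alpha+1}\supsetneq S_\alpha$. Evaluating this at the last term $\overline{\zeta}(B)=\zeta_\mu(B)$, where $\zeta\big(B/\zeta_\mu(B)\big)=\{0\}$ by maximality, the only consistent possibility is $S_\mu=S$, whence $S\subseteq\overline{\zeta}(B)$. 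For the ``moreover'' clause, $\operatorname{Soc}(B)$ is a trivial brace whose additive group is abelian, so its being finitely generated just means $(S,+)$ is a finitely generated abelian group, hence Noetherian; a strictly ascending chain of its subgroups cannot be infinite, so the filtration $\{S_\alpha\}$ stabilizes at a finite stage $m$, giving $S\subseteq\zeta_m(B)$.

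The main obstacle is the core statement, and inside it the verification that the last nonzero term $T$ falls in the \emph{multiplicative} center: one must combine the two one-sided vanishing conditions, $t\ast B=0$ coming from the socle and $B\ast t=0$ coming from left-nilpotency, with additive centrality to conclude $tb=bt$. Everything else — monotonicity of the $L_\bullet$-series, the fact that $\operatorname{Soc}$ and ideals behave well under passing to quotients, and the Noetherian argument for the finite level — is routine once this core is established.
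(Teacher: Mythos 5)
Your proposal is correct and takes essentially the same approach as the paper: both exploit left-nilpotency to produce, inside a nonzero socle-contained ideal, a nonzero piece annihilated by all left stars (your last nonzero term $T=L_{k-1}(B;N)$ versus the paper's maximal-length nonzero iterated star of a single element), deduce membership in $\zeta(B)$ from the two one-sided vanishings plus additive centrality (you compute $tb=t+b=b+t=bt$ directly where the paper cites Lemma 1.10 of \cite{Cedo}), iterate transfinitely to get $\operatorname{Soc}(B)\leq\overline{\zeta}(B)$, and settle the finitely generated case via the Noetherian property of the abelian trivial brace $\operatorname{Soc}(B)$. For (2), your argument that $B\ast I$ is a subgroup of $(I,+)\simeq C_p$ which left-nilpotency forbids from equalling $I$ is a mild repackaging of the paper's element-wise iteration $a\ast(a\ast b)=n^2b\neq 0$, and in fact coincides with the referee's alternative proof of Theorem \ref{pbracecentrallynilp} reproduced in the paper.
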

\begin{proof}
(1)\quad We claim that $\operatorname{Soc}(B)/I\cap \zeta(B/I)\neq\{0\}$ for every ideal $I$ of $B$ such that $I<\operatorname{Soc}(B)$. For the sake of clarity, we prove the previous claim when $I=\{0\}$. Thus, let $0\neq b\in \operatorname{Soc}(B)$. Since $B$ is left-nilpotent, there are elements $b_1,\ldots,b_\ell$ of $B$ such that $$c=b_1\ast(b_2\ast(\ldots \ast (b_\ell\ast b)))\ldots)\neq0$$ and $a\ast c=0$ for all $a\in B$. It follows from Lemma 1.10 of \cite{Cedo} that $c\in\zeta(B)$, and the claim is proved.

The previous claim enables us to construct an ascending chain of ideals of $B$ whose factors are central in $B$. This certainly implies that $\operatorname{Soc}(B)\leq\overline{\zeta}(B)$. 

Finally, if $\operatorname{Soc}(B)$ is finitely generated, then it satisfies the maximal condition on subbraces, so the previous ascending series stops after finitely many steps and $\operatorname{Soc}(B)$ is contained in some finite term of its upper central series.

\medskip

\noindent(2)\quad If $a\ast b\neq 0$, then $I=\langle a\ast b\rangle$, so $a\ast b=nb$ for some $n$ which is prime to $p$. It follows that $a\ast (a\ast b)=n(a\ast b)\neq 0$ and hence that $I=\langle a\ast (a\ast b)\rangle$. Continuing in this way, we contradict the left-nilpotency of $B$. Thus $\lambda_a(b)=b$ for all $a\in B$. 
\end{proof}

\begin{theo}\label{leftnilptheo}
Let $B$ be a supersoluble brace. If $B$ is left-nilpotent and $\operatorname{Ker}(\lambda)\leq Z_n(B,\bigcdot)$ for some $n\in\mathbb{N}$, then $(B,\bigcdot)$ is nilpotent.
\end{theo}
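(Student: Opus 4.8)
The plan is to reduce the nilpotency of $(B,\bigcdot)$ to the nilpotency of the automorphism group $\operatorname{Im}\lambda\cong(B,\bigcdot)/\operatorname{Ker}\lambda$. The reduction rests on a standard group-theoretic fact: if $N\trianglelefteq G$ with $N\leq Z_n(G)$ and $G/N$ is nilpotent, then $G$ is nilpotent. Indeed, since $N\leq Z_n(G)$ the natural surjection $G/N\to G/Z_n(G)$ shows that $G/Z_n(G)$ is nilpotent, say of class $c$, whence $Z_{n+c}(G)=G$. Taking $G=(B,\bigcdot)$ and $N=\operatorname{Ker}\lambda$ --- which by hypothesis lies in $Z_n(B,\bigcdot)$ --- it therefore suffices to prove that $\operatorname{Im}\lambda$ is nilpotent.

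To do this I would exploit supersolubility to produce a normal series of $(B,+)$ that $\operatorname{Im}\lambda$ stabilises. Fix a chain of ideals $\{0\}=I_0\leq\ldots\leq I_k=B$ as in Definition \ref{defsuper}. Each $I_j$ is an ideal, hence normal in $(B,+)$ and invariant under every $\lambda_a$, so $\{I_j\}$ is an $\operatorname{Im}\lambda$-invariant normal series of $(B,+)$. The key claim is that $\operatorname{Im}\lambda$ acts trivially on each factor $I_{j+1}/I_j$. Passing to the (still supersoluble and left-nilpotent) quotient $B/I_j$, the factor becomes an ideal $\overline{I}$ that is either of prime order or infinite cyclic inside $\operatorname{Soc}(B/I_j)$. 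In the prime-order case, Lemma \ref{3.33}(2) applied to $B/I_j$ gives $\lambda_{\overline{a}}(\overline{b})=\overline{b}$ for a generator $\overline{b}$ and all $\overline{a}$, i.e. trivial action. In the infinite-cyclic case $\overline{I}=\langle\overline{t}\rangle_+$, the element $\overline{t}$ is additively central, so $\overline{a}\ast(-)$ is additive on $\langle\overline{t}\rangle$ and $\lambda_{\overline{a}}$ acts on $\overline{I}\cong\mathbb{Z}$ as $\pm1$; if it acted as $-1$ then $\overline{a}\ast\overline{t}=-2\overline{t}$, and the iterated left stars $(-2)^k\overline{t}$ would be non-zero members of $L_k(B/I_j;B/I_j)$ for every $k$, contradicting left-nilpotency (this is precisely the mechanism behind Lemma \ref{3.33}(2)). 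Hence the action is trivial in both cases.

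With the claim in hand, $\operatorname{Im}\lambda$ is a group of automorphisms of $(B,+)$ stabilising the normal series $\{I_j\}$ and inducing the identity on every factor; the classical Kaluzhnin--Hall theorem on the stability group of a normal series then forces $\operatorname{Im}\lambda$ to be nilpotent, and combined with the first paragraph this yields the nilpotency of $(B,\bigcdot)$. The main obstacle is exactly the triviality-on-factors claim: the prime-order factors fall to Lemma \ref{3.33}(2), but the infinite-cyclic socle factors require separately excluding the inversion automorphism, where it is \emph{left}-nilpotency (rather than supersolubility) that does the work. A secondary point deserving care is that $\{I_j\}$ must be taken to consist of genuine ideals, so that it is a normal series of the possibly non-abelian group $(B,+)$ and the stability-group theorem is applicable.
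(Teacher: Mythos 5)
Your proposal is correct and takes essentially the same route as the paper's proof: fix the supersoluble chain of ideals, show that the $\lambda$-action stabilises it (Lemma \ref{3.33} handling the prime-order factors), apply the Hall stability theorem to conclude that $(B,\bigcdot)/\operatorname{Ker}(\lambda)$ is nilpotent, and then lift nilpotency through $\operatorname{Ker}(\lambda)\leq Z_n(B,\bigcdot)$. The only difference is one of detail: where the paper compresses the whole stabilisation claim into the citation of Lemma \ref{3.33}, you treat the infinite cyclic socle factors explicitly by ruling out the inversion action via left-nilpotency --- a useful elaboration of the same mechanism rather than a different argument.
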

\begin{proof}
Let $$\{0\}=I_0\leq I_1\leq\ldots\leq I_\ell=B$$ be a finite chain of ideals of $B$ such that, for each $0\leq i<\ell$, either $I_{i+1}/I_i\leq\operatorname{Soc}(B/I_i)$ and $\big(I_{i+1}/I_i,\bigcdot\big)$ is infinite cyclic, or $I_{i+1}/I_i$ has prime order. By Lemma \ref{3.33},  $(B,\bigcdot)$ stabilizes the finite series $$\{0\}=\big(I_0,+\big)\leq \big(I_1,+\big)\leq\ldots\leq \big(I_\ell,+\big)=(B,+)$$ in the natural semidirect product $(B,+)\rtimes_\lambda(B,\bigcdot)$. Thus,~$(B,\bigcdot)/\operatorname{Ker}(\lambda)$ is nilpotent by a well-known theorem of Philip Hall (see \cite{hall}, Theorem 1, and \cite{newell}, Theorem), and consequently~$(B,\bigcdot)$ is nilpotent.
\end{proof}

\begin{cor}
Let $B$ be a supersoluble brace with $\operatorname{Ker}(\lambda)=\{0\}$. If $B$ is left-nilpotent, then~$(B,\bigcdot)$ is nilpotent.
\end{cor}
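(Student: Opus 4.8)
The plan is to obtain this as an immediate specialization of Theorem~\ref{leftnilptheo}. That theorem requires three hypotheses on $B$: that it be supersoluble, that it be left-nilpotent, and that $\operatorname{Ker}(\lambda)\leq Z_n(B,\bigcdot)$ for some $n\in\mathbb{N}$. The first two are granted verbatim in the statement of the corollary, so the only point to address is the third.

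Here the assumption $\operatorname{Ker}(\lambda)=\{0\}$ makes the containment $\operatorname{Ker}(\lambda)\leq Z_n(B,\bigcdot)$ trivially true: the zero subbrace is contained in every term of the upper central series of $(B,\bigcdot)$. In particular $\operatorname{Ker}(\lambda)=\{0\}=Z_0(B,\bigcdot)$, so one may simply take $n=0$ (or, under the convention that the series starts at $Z_1=Z(B,\bigcdot)$, then $n=1$ works just as well since $\{0\}\leq Z(B,\bigcdot)$). With all three hypotheses verified, Theorem~\ref{leftnilptheo} yields at once that $(B,\bigcdot)$ is nilpotent.

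Since the argument is a pure instantiation, there is no genuine obstacle to overcome; the entire content of the corollary resides in Theorem~\ref{leftnilptheo}, whose proof in turn rests on Lemma~\ref{3.33} and Philip Hall's stability theorem. The only conceptual remark worth making is that requiring $\operatorname{Ker}(\lambda)$ to be trivial is a clean sufficient condition guaranteeing the more technical bound $\operatorname{Ker}(\lambda)\leq Z_n(B,\bigcdot)$, and it is exactly the hypothesis under which left-nilpotency of a supersoluble brace forces nilpotency of the multiplicative group with no further control on $\operatorname{Ker}(\lambda)$ needed.
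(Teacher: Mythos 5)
Your proposal is correct and coincides with the paper's own reasoning: the paper states this corollary without any proof precisely because it is an immediate instantiation of Theorem~\ref{leftnilptheo}, the hypothesis $\operatorname{Ker}(\lambda)=\{0\}$ making the containment $\operatorname{Ker}(\lambda)\leq Z_n(B,\bigcdot)$ trivial. Nothing is missing.
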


\begin{cor}\label{corchepoidicoserve}
Let $B$ be a supersoluble brace with $\operatorname{Ker}(\lambda)\leq\operatorname{Soc}_n(B)$ for some $n\in\mathbb{N}$. If~$B$ is left-nilpotent, then $(B,\bigcdot)$ is nilpotent.
\end{cor}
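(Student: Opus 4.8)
The plan is to deduce the statement from Theorem~\ref{leftnilptheo}: it suffices to upgrade the hypothesis $\operatorname{Ker}(\lambda)\leq\operatorname{Soc}_n(B)$ to the hypothesis $\operatorname{Ker}(\lambda)\leq Z_k(B,\bigcdot)$ for some $k\in\mathbb{N}$, after which Theorem~\ref{leftnilptheo} applies verbatim (recall $(B,\bigcdot)$ is supersoluble by Lemma~\ref{supersolublebraceisgroup}). Since $\operatorname{Ker}(\lambda)\leq\operatorname{Soc}_n(B)$, it is in fact enough to show that $\operatorname{Soc}_n(B)\leq Z_k(B,\bigcdot)$ for some $k\in\mathbb{N}$.

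First I would record two auxiliary facts about the brace-theoretic upper central series. The first is that $\zeta_\alpha(B)\leq Z_\alpha(B,\bigcdot)$ for every ordinal $\alpha$; this follows by a routine transfinite induction starting from $\zeta(B)=\operatorname{Soc}(B)\cap Z(B,\bigcdot)\leq Z(B,\bigcdot)$, since the defining relation $\zeta_{\alpha+1}(B)/\zeta_\alpha(B)=\zeta(B/\zeta_\alpha(B))\leq Z(B/\zeta_\alpha(B),\bigcdot)$ forces $[\zeta_{\alpha+1}(B),B]_{\bigcdot}\leq\zeta_\alpha(B)$. The second is the usual lifting property: if $I\leq J$ are ideals of $B$ with $I\leq\zeta_{k'}(B)$ and $J/I\leq\zeta_r(B/I)$, then $J\leq\zeta_{k'+r}(B)$. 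This is obtained by pushing along the surjection $B/I\twoheadrightarrow B/\zeta_{k'}(B)$ and using that a surjective brace homomorphism maps each term of the upper central series into the corresponding term, together with the identity $\zeta_r(B/\zeta_{k'}(B))=\zeta_{k'+r}(B)/\zeta_{k'}(B)$.

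The core of the argument is then the claim that $\operatorname{Soc}_n(B)\leq\zeta_K(B)$ for some $K\in\mathbb{N}$, which I would prove by induction on $n$. The case $n=0$ is trivial. For the inductive step, set $\bar B=B/\operatorname{Soc}_{n-1}(B)$: it is again supersoluble and left-nilpotent, hence it satisfies the maximal condition on subbraces, so its ideal $\operatorname{Soc}(\bar B)$ is finitely generated. Lemma~\ref{3.33}(1) then gives $\operatorname{Soc}(\bar B)\leq\zeta_r(\bar B)$ for some $r\in\mathbb{N}$; since $\operatorname{Soc}(\bar B)=\operatorname{Soc}_n(B)/\operatorname{Soc}_{n-1}(B)$ and, by induction, $\operatorname{Soc}_{n-1}(B)\leq\zeta_{k'}(B)$ with $k'\in\mathbb{N}$, the lifting property yields $\operatorname{Soc}_n(B)\leq\zeta_{k'+r}(B)$. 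Combining the claim with the first auxiliary fact gives $\operatorname{Ker}(\lambda)\leq\operatorname{Soc}_n(B)\leq\zeta_K(B)\leq Z_K(B,\bigcdot)$, and Theorem~\ref{leftnilptheo} finishes the proof.

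The main obstacle I anticipate is precisely the passage from the socle series to a single finite term of the multiplicative upper central series. Attempting a direct induction on $n$ by passing to $B/\operatorname{Soc}(B)$ is tempting but breaks down, because $\operatorname{Ker}(\lambda_{B/\operatorname{Soc}(B)})$ need not be contained in $\operatorname{Soc}_{n-1}(B/\operatorname{Soc}(B))$, so the corollary's inductive hypothesis cannot be transferred to the quotient. This is exactly why I route the argument through the quotient-stable intermediate statement $\operatorname{Soc}_n(B)\leq\zeta_K(B)$, where finiteness of $K$ is guaranteed at each step by the maximal condition (from supersolubility) feeding the finitely-generated case of Lemma~\ref{3.33}(1); the ordinal-valued ``hypercenter'' version of the same containment would not immediately deliver the natural number $K$ that the hypothesis of Theorem~\ref{leftnilptheo} requires.
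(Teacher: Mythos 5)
Your proposal is correct and takes essentially the same route as the paper: the paper's proof is the one-line observation that Lemma \ref{3.33} places $\operatorname{Ker}(\lambda)$ in a finite term of the upper central series of $(B,\bigcdot)$, after which Theorem \ref{leftnilptheo} applies. What you have written is exactly this argument with the implicit steps made explicit --- the iteration of Lemma \ref{3.33}(1) along the socle series (using the maximal condition coming from supersolubility) and the containment $\zeta_K(B)\leq Z_K(B,\bigcdot)$.
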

\begin{proof}
By Lemma \ref{3.33}, $\operatorname{Ker}(\lambda)$ is contained in some finite term of the upper central series of $(B,\bigcdot)$, so we may apply Theorem \ref{leftnilptheo}.
\end{proof}

\medskip

The last nilpotency concept we consider is central nilpotency. In this context, we first note that a further consequence of the previous results connects left-nilpotency and central nilpotency.

\begin{cor}\label{3.35}
Let $B$ be a supersoluble brace of nilpotent type. Then $B$ is left-nilpotent if and only if $B$ is centrally nilpotent.
\end{cor}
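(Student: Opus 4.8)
The plan is to prove the two implications separately and to notice that left-nilpotency is only genuinely needed for one of them. The easy direction is immediate from the preliminaries: central nilpotency implies both left- and right-nilpotency, so if $B$ is centrally nilpotent it is in particular left-nilpotent, and nothing further is required.

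For the converse, suppose $B$ is left-nilpotent. The crucial observation I would make is that right-nilpotency comes for free in this setting. Indeed, $B$ is supersoluble of nilpotent type, so $(B,+)$ is nilpotent, and Theorem \ref{rnilsupersoluble} then gives an $n\in\mathbb{N}$ with $B=\operatorname{Soc}_n(B)$. By definition this says $B$ has finite multipermutational level, and as recalled in the preliminaries such a brace is right-nilpotent. Hence $B$ is simultaneously left- and right-nilpotent.

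To finish, I would invoke the standard equivalence for braces of nilpotent type: a brace of nilpotent type that is both left- and right-nilpotent is centrally nilpotent (Corollary 2.15 of \cite{periodici}). Applying this to $B$ yields central nilpotency, which completes the second implication and the proof.

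I do not expect a real obstacle here, since all the work is already packaged in Theorem \ref{rnilsupersoluble} together with the cited equivalence from \cite{periodici}. The only conceptual point worth flagging is the one that makes the statement clean: for a supersoluble brace of nilpotent type, right-nilpotency is automatic, so left-nilpotency is exactly the missing ingredient that upgrades the brace all the way to central nilpotency. Thus the heart of the argument is really the recognition that the nilpotent-type hypothesis, via Theorem \ref{rnilsupersoluble}, already supplies $B=\operatorname{Soc}_n(B)$, and hence right-nilpotency, at no cost.
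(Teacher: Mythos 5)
Your proposal is correct and takes essentially the same route as the paper: both directions reduce to Corollary 2.15 of \cite{periodici} (centrally nilpotent $\Leftrightarrow$ left- and right-nilpotent for nilpotent type), with the missing right-nilpotency supplied by Theorem \ref{rnilsupersoluble}, since nilpotent type gives $B=\operatorname{Soc}_n(B)$ and hence right-nilpotency, while the converse implication is immediate. The only difference is that the paper's proof additionally cites Corollary \ref{corchepoidicoserve}; your streamlined argument shows that citation is not actually needed here, because left-nilpotency is already the hypothesis rather than something to be derived.
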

\begin{proof}
By \cite{periodici}, Corollary 2.15, a brace of nilpotent type is centrally nilpotent if and only if it is left and right nilpotent. Then, the result follows from Corollary \ref{corchepoidicoserve} and Theorem \ref{rnilsupersoluble}.
\end{proof}

\medskip

However, one of the most interesting results concerns with the Fitting ideal.

\begin{theo}\label{theobcentrally}
Let $B$ be a supersoluble brace, and let $I$ be a centrally nilpotent ideal of $B$. Then $I$ is $B$-centrally nilpotent.
\end{theo}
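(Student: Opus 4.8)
The plan is to reduce the whole statement, by induction, to the single claim that $\zeta(I)_B\neq\{0\}$ whenever $I\neq\{0\}$. Since $(B,+)$ is a supersoluble (hence polycyclic) group by Lemma \ref{supersolublebraceisgroup}, the group $(I,+)$ is finitely generated, so I would argue by induction on the lexicographically ordered pair consisting of the Hirsch length of $(I,+)$ and the order of the torsion subgroup of $(I,+)$. Once I know that $Z:=\zeta(I)_B\neq\{0\}$, the quotient $I/Z$ is a centrally nilpotent ideal of the supersoluble brace $B/Z$ of strictly smaller size, so by induction it is $(B/Z)$-centrally nilpotent; since the upper $B$-central series is compatible with passing to $B/Z$ (because $Z=\zeta_1(I)_B$), this lifts to $\zeta_{n+1}(I)_B=I$ for some $n$, i.e. $I$ is $B$-centrally nilpotent. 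The base case $I=\{0\}$ is trivial.

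To produce a nonzero $B$-ideal inside $\zeta(I)$, I would first extract a small $B$-ideal from the defining chain of $B$. Fix a chain $\{0\}=J_0\leq\ldots\leq J_s=B$ of ideals of $B$ as in Definition \ref{defsuper}, and set $L_k=I\cap J_k$; each $L_k$ is an ideal of $B$ contained in $I$. Letting $k$ be least with $L_k\neq\{0\}$ and $L:=L_k$, the map $x\mapsto x+J_{k-1}$ embeds $(L,+)$ into $J_k/J_{k-1}$, so $(L,+)$ is either cyclic of prime order or infinite cyclic. Moreover $L$ is a nonzero ideal of the centrally nilpotent brace $I$, hence it meets the centre: $L\cap\zeta(I)\neq\{0\}$, by the usual minimal-index argument on the upper central series of $I$ (every nonzero ideal of a centrally nilpotent brace meets the centre).

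If $(L,+)$ has prime order, then $L\cap\zeta(I)$ is a nonzero subgroup of $(L,+)$, hence all of $L$, so $L\subseteq\zeta(I)$ and this case is finished. The delicate case — and the main obstacle — is $(L,+)\cong\mathbb{Z}$, where $B$-invariance of the centre is not automatic. Here I would put $M=L\cap\zeta(I)$; then $(M,+)\cong\mathbb{Z}$ has finite index $d$ in $(L,+)$, and $a\ast m=0$ for all $a\in I$ and $m\in M$, since elements of $\zeta(I)$ satisfy the two-sided relation $i\ast m=m\ast i=0$. Using the identity $a\ast(u+v)=a\ast u+u+a\ast v-u$, together with the fact that $a\ast\ell\in L$ (as $L$ is a $\lambda$-invariant ideal) and that $(L,+)$ is abelian, one obtains $a\ast(n\ell)=n(a\ast\ell)$ for all $n$ and all $\ell\in L$; applying this with $n=d$ gives $d(a\ast\ell)=a\ast(d\ell)=0$, whence $a\ast\ell=0$ in the torsion-free group $(L,+)$. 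Thus $I\ast L=\{0\}$.

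In particular $L\ast L=\{0\}$, so $L$ is a trivial, hence abelian, brace with $(L,+)\cong\mathbb{Z}$, and Lemma \ref{lemequivalenza}(2) then yields $L\subseteq\operatorname{Soc}(B)$; consequently $L\ast I=\{0\}$ and $L\subseteq Z(B,+)\subseteq Z(I,+)$. Combining $I\ast L=L\ast I=\{0\}$ with additive centrality gives $i\ell=i+\ell=\ell i$ for all $i\in I$ and $\ell\in L$, so $L\subseteq Z(I,\bigcdot)$, and altogether $L\subseteq\operatorname{Soc}(I)\cap Z(I,\bigcdot)=\zeta(I)$. Hence $\{0\}\neq L\subseteq\zeta(I)_B$ in this case as well, completing the reduction. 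The crux is precisely this infinite-cyclic case: everything hinges on promoting the central subgroup $M$, which need not be $B$-invariant, to the genuine $B$-ideal $L\subseteq\zeta(I)$, and it is here that supersolubility, through Lemma \ref{lemequivalenza}(2), is indispensable.
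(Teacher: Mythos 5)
Your proof is correct, and it opens with the same move as the paper's --- intersecting the defining chain of Definition \ref{defsuper} with $I$ --- but the two arguments then diverge genuinely. The paper needs no induction: writing $H_k=J_k\cap I$, it verifies in a single pass that every factor $H_{k+1}/H_k$ lies in $\zeta(I/H_k)$, which is literally the chain required by the definition of $B$-central nilpotency. For prime-order factors it quotes, as you essentially do, that minimal ideals of centrally nilpotent braces are central (\cite{tutti23}, Theorem 4.6). For the infinite cyclic factors it uses two things you never touch: first, the socle condition in Definition \ref{defsuper} is inherited for free by the intersected factor (if $J_{k+1}/J_k\leq\operatorname{Soc}(B/J_k)$, then $h\ast x$ and $[h,x]_+$ lie in $J_k\cap I=H_k$ for all $h\in H_{k+1}$ and $x\in I$, so $H_{k+1}/H_k\leq\operatorname{Soc}(I/H_k)$ costs nothing); second, the group-theoretic fact that an infinite cyclic normal subgroup of a nilpotent group is central, applied to $(I/H_k,\bigcdot)$, which settles multiplicative centrality at once. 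You instead use only the additive isomorphism type of the bottom term $L$ and rebuild centrality by hand: $M=L\cap\zeta(I)\neq\{0\}$ (your minimal-index justification that a nonzero ideal of a centrally nilpotent brace meets the centre is correct, and the general version is genuinely needed here, since $L\cong\mathbb{Z}$ contains no minimal ideal of $I$), then the computation $d(a\ast\ell)=a\ast(d\ell)=0$ forcing $I\ast L=\{0\}$ by torsion-freeness, then Lemma \ref{lemequivalenza}(2) to place $L$ inside $\operatorname{Soc}(B)$. What each route buys: the paper's is shorter and purely structural, avoiding both your induction on the pairs $(h,n)$ and the lifting of the upper $B$-central series along $B\to B/\zeta(I)_B$; yours is more computational on the brace side and proves slightly more in passing, namely $I\ast L=L\ast I=\{0\}$ rather than mere centrality of a factor. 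Two small repairs: exploiting the free socle containment above would collapse your Case 2 to proving only $L\leq Z(I,\bigcdot)$; and the chain ``$L\subseteq Z(B,+)\subseteq Z(I,+)$'' should read $L\subseteq Z(B,+)\cap I\leq Z(I,+)$, since $Z(B,+)$ is of course not contained in $I$ --- the conclusion you draw from it is unaffected.
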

\begin{proof}
Let $$\{0\}=J_0\leq J_1\leq\ldots\leq J_n=B$$ be a finite chain of ideals of $B$ such that, for each $0\leq i<n$, either $J_{i+1}/J_i\leq\operatorname{Soc}(B/I_i)$ and $(J_{i+1}/J_i,+)$ is an infinite cyclic group, or $J_{i+1}/J_i$ has prime order. Consider the induced chain $$\{0\}=J_0\cap I=:H_0\leq H_1:=J_1\cap I\leq\ldots\leq H_n:=J_n\cap I=I$$ of ideals of $B$. In order to show that $I$ is $B$-centrally nilpotent, we only need to prove that
$$H_{n-i+1}/H_{n-i}\leq\zeta(I/H_{n-i})$$ for all $1\leq i\leq n$.  Now, $H_{n-i+1}/H_{n-i}$ has either prime order, or is contained in the socle of $I/H_{n-i}$ and its additive group is infinite cyclic. In the former case, the quotient $H_{n-i+1}/H_{n-i}$ is a minimal ideal of the centrally nilpotent brace~$I/H_{n-i}$, so~$H_{n-i+1}/H_{n-i}\leq\zeta(I/H_{n-i})$ (see \cite{tutti23}, Theorem 4.6). Suppose the latter condition holds. Since $(I/H_{n-i},\bigcdot)$ is nilpotent, and $(H_{n-i+1}/H_{n-i},\bigcdot)$ is an infinite cyclic normal subgroup, we have that $$(H_{n-i+1}/H_{n-i},\bigcdot)\leq Z(I/H_{n-i},\bigcdot).$$ Thus again $H_{n-i+1}/H_{n-i}\leq\zeta(I/H_{n-i})$ and the theorem is proved.
\end{proof}

\begin{cor}\label{fittingideal}
Let $B$ be a supersoluble brace. Then the Fitting ideal of $B$ is centrally nilpotent and coincides with the sum of all ideals of $B$ that are centrally nilpotent.
\end{cor}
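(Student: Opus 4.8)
The plan is to derive the corollary directly from the just-proved Theorem \ref{theobcentrally} together with the maximal condition on subbraces that supersoluble braces enjoy (being almost polycyclic). I would begin by recording the easy converse to Theorem \ref{theobcentrally}: every $B$-centrally nilpotent ideal $I$ is centrally nilpotent, since a chain of ideals of $B$ with $I$-central factors is, a fortiori, a central chain of the brace $I$. Combined with Theorem \ref{theobcentrally}, which supplies the reverse implication in the supersoluble setting, this shows that in a supersoluble brace the centrally nilpotent ideals and the $B$-centrally nilpotent ideals are exactly the same objects. Since $\operatorname{Fit}(B)$ is by definition the sum of all $B$-centrally nilpotent ideals, it therefore equals the sum of all centrally nilpotent ideals as well, which is the second assertion of the statement.

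For the first assertion I would show that $\operatorname{Fit}(B)$ is itself $B$-centrally nilpotent, hence centrally nilpotent. Let $\mathcal{F}$ be the family of $B$-centrally nilpotent ideals of $B$; it is nonempty, as $\{0\}\in\mathcal{F}$. Because a supersoluble brace satisfies the maximal condition on subbraces, $\mathcal{F}$ has a maximal member $M$. Given any $I\in\mathcal{F}$, the sum $M+I$ is again $B$-centrally nilpotent by Theorem 5.3 of \cite{tutti23} and contains $M$, so maximality forces $M+I=M$, i.e.\ $I\le M$. Hence $M$ contains every member of $\mathcal{F}$, which means $M=\operatorname{Fit}(B)$; as $M\in\mathcal{F}$, the Fitting ideal is $B$-centrally nilpotent, and in particular centrally nilpotent.

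The substance of the argument is really Theorem \ref{theobcentrally}, which is already in hand, so there is no hard computational core here. The only genuine obstacle is the pathology flagged in \cite{tutti23}: in an arbitrary brace the sum of (even finitely many) centrally nilpotent ideals may fail to be centrally nilpotent, so a priori $\operatorname{Fit}(B)$ could be a badly behaved infinite sum with no reason to be centrally nilpotent. The maximal condition is exactly what removes this difficulty, collapsing the sum defining $\operatorname{Fit}(B)$ to a single maximal $B$-centrally nilpotent ideal and thereby ruling out any infinitary behaviour.
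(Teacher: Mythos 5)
Your argument is correct and is essentially the paper's own proof, which simply reads ``This follows from Theorem \ref{theobcentrally} and the maximal condition on subbraces'': you have filled in exactly the intended details, namely the easy converse identifying $B$-centrally nilpotent ideals with centrally nilpotent ones via Theorem \ref{theobcentrally}, and the use of the maximal condition (together with the closure of $B$-centrally nilpotent ideals under finite sums from \cite{tutti23}, Theorem 5.3) to collapse the possibly infinite sum defining $\operatorname{Fit}(B)$ to a single maximal $B$-centrally nilpotent ideal.
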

\begin{proof}
This follows from Theorem \ref{theobcentrally} and the maximal condition on subbraces.~\end{proof}

\begin{rem}
{\rm It follows from Theorem \ref{rnilsupersoluble} and the usual Fitting theorem for groups that every supersoluble brace contains a largest ideal that has finite multipermutational level.}
\end{rem}

Although the Fitting ideal of a supersoluble brace behaves well under many respects, there are a couple of issues. The first concerns with the Fitting ideal of an ideal.

\begin{ex}
\label{ex:fitting}
There exists a finite supersoluble brace $B$ of order $24$ such that: 
\begin{itemize}
    \item $B=\operatorname{Soc}_n(B)$ for some $n\in\mathbb{N}$.
    \item There is an ideal $I$ of $B$ such that $\operatorname{Fit}(I)$ is not an ideal of $B$.
\end{itemize}
\end{ex}
\begin{proof}
Let 
\[\begin{array}{l}
(B,+)  = \langle a \rangle \times \langle b \rangle  \simeq \operatorname{C}_{12}\times \operatorname{C}_2 \quad \text{and}\\[0.2cm]
(C,\bigcdot)\!=\! \big(\langle x\rangle\rtimes\langle y \rangle\big) \times \langle z\rangle \times 	\langle t\rangle \simeq \operatorname{Sym}(3) \times \operatorname{C}_2\times \operatorname{C}_2
\end{array}
\]
Of course, $(B,+)$ and $(C,\bigcdot)$ are groups of order~$24$. The multiplicative group acts on the additive by means of the map $\lambda$ defined by
\[   
\begin{array}{llllll l}
\lambda_x(a) = a,  & & \lambda_y(a) = 5a,     & & \lambda_z(a) = 7a, &  & \lambda_t(a) = 7a, \\
\lambda_x(b) = b,  & & \lambda_y(b) = b,     & & \lambda_z(b) = b, & & \lambda_t(b) = 6a + b.\\
\end{array}
\]
Consider the semidirect product $G$ of $B$ and $C$ with respect to this action (we use multiplicative notation in $G$). Then $G$ turns out to be a trifactorised group as it possesses a subgroup $D=\langle a^8x, a^6y, bz, a^3t\rangle$ such that $D\cap C=D\cap B=\{1\}$, $DC=BD=G$. Thus, there is a bijective $1$-cocycle $\delta\colon C\longrightarrow B$ with respect to $\lambda$ given by  Table~\ref{tb:fitting}. This yields a product in $B$ and we get a brace of abelian type $(B,+,\bigcdot)$ and of order $24$. This brace corresponds to \texttt{SmallBrace(24, 629)} in the \textsf{Yang--Baxter} library for \textsf{GAP}.
\begin{table}[h]\label{tb:fitting}
\[
\begin{array}{llllllll}
g     & \delta(g) & g    & \delta(g) & g   & \delta(g) & g         & \delta(g) \\\hline
1     & 0    & z     & b    & t     & 3a   & zt      & 9a+b    \\
x     & 8a   & xz    & 8a+b & xt    & 11a  & xzt     & 5a+b   \\
x^2  & 4a    & x^2z  & 4a+b & x^2t  & 7a   & x^2zt   & a+b  \\
y    & 6a    & yz    & 6a+b & yt    & 9a   & yzt     & 3a+b  \\
xy    & 2a   & xyz   & 2a+b & xyt   & 5a   & xyzt    & 11a+b      \\
x^2y  & 10a  & x^2yz & 10+b & x^2yt & a    & x^2yzt  & 7a+b    \\\hline       
\end{array}
\]
\caption{Bijective $1$-cocycle associated with Example~\ref{ex:fitting}}
\end{table}

From the $\lambda$-action described, it follows that $\operatorname{Soc}(B) = \langle 4a\rangle_+$, $\operatorname{Soc}_2(B) = \langle 2a \rangle_+$ and $\operatorname{Soc}_3(B) = B$.

Now, take $I:= \langle 2a, b\rangle_+$, which is an ideal of $B$. Observe that the multiplicative group of $I$ is $\delta^{-1}(I) = \langle x, y, z\rangle \simeq \operatorname{D}_{12}$, which is not nilpotent. Thus, $I$ is not centrally nilpotent. Moreover, $\langle 4a, b\rangle_+$ is an ideal of $I$ which is centrally nilpotent as it is a trivial brace of order $6$. Therefore, $\operatorname{Fit}(I) = \langle 4a, b \rangle_+$. But, $\operatorname{Fit}(I)$ is not an ideal of $B$, as it is not $\lambda$-invariant.\end{proof}

\medskip

It is well-known that the Fitting subgroup of a supersoluble group contains the derived subgroup (see, for example, \cite{course}, 5.4.10). Our next result shows that the {\it derived ideal} \hbox{$\partial(B)=\langle B\ast B,[B,B]_+\rangle_+$} of a supersoluble brace $B$ has always finite multipermutational level.

\begin{theo}\label{derivedisrightnilp}
Let $B$ be a supersoluble brace. Then $\partial(B)=\operatorname{Soc}_n\big(\partial(B)\big)$ for some $n\in\mathbb{N}$.
\end{theo}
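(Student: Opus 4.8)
The plan is to reduce the claim, via Theorem \ref{rnilsupersoluble}, to the purely group-theoretic statement that the additive group of $\partial(B)$ is nilpotent, and then to obtain that nilpotency by realising $\partial(B)$ additively as a subgroup of the derived subgroup of the supersoluble group $G=(B,+)\rtimes_\lambda(B,\bigcdot)$. First I would note that $\partial(B)$ is an ideal of $B$ (it is the smallest ideal whose quotient is a trivial brace of abelian type), so that, being a subbrace of the supersoluble brace $B$, it is itself supersoluble. Applying Theorem \ref{rnilsupersoluble} to the supersoluble brace $\partial(B)$, it then suffices to prove that $\big(\partial(B),+\big)$ is a nilpotent group.

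Next I would pass to $G=(B,+)\rtimes_\lambda(B,\bigcdot)$, which is supersoluble by Lemma \ref{supersolublebraceisgroup}. Writing $N=(B,+)$ for the normal factor and $H=(B,\bigcdot)$ for the complement, so that $hnh^{-1}=\lambda_h(n)$ for all $h\in H$ and $n\in N$, a direct computation with the commutator convention $[h,n]=hnh^{-1}n^{-1}$ gives $[h,n]=\lambda_h(n)-n=h\ast n$, while the commutators of elements of $N$ among themselves are exactly the elements of $[B,B]_+$. Consequently, every additive generator of $\partial(B)=\langle B\ast B,[B,B]_+\rangle_+$ lies in the derived subgroup $G'$, and therefore $\big(\partial(B),+\big)\leq G'$.

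Finally, since $G$ is supersoluble, $G'$ is contained in the Fitting subgroup of $G$ and is thus nilpotent (see \cite{course}, 5.4.10). Being a subgroup of the nilpotent group $G'$, the additive group $\big(\partial(B),+\big)$ is itself nilpotent, and Theorem \ref{rnilsupersoluble} applied to $\partial(B)$ yields $\partial(B)=\operatorname{Soc}_n\big(\partial(B)\big)$ for some $n\in\mathbb{N}$. The point deserving the most care is the opening reduction: one must be sure that $\partial(B)$ is genuinely an ideal (equivalently, a subbrace) of $B$, so that it inherits supersolubility and Theorem \ref{rnilsupersoluble} can legitimately be invoked for it; once this is in place, the commutator bookkeeping identifying $B\ast B$ and $[B,B]_+$ with subsets of $G'$ is routine, and the whole argument rests on the classical fact that the derived subgroup of a supersoluble group is nilpotent.
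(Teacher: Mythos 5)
Your proposal is correct and follows essentially the same route as the paper's proof: both pass to the supersoluble semidirect product $G=(B,+)\rtimes_\lambda(B,\bigcdot)$ via Lemma \ref{supersolublebraceisgroup}, identify $[B,B]_+$ and $B\ast B$ inside the nilpotent derived subgroup $[G,G]$ to conclude that $\big(\partial(B),+\big)$ is nilpotent, and then invoke Theorem \ref{rnilsupersoluble}. The only difference is presentational: you spell out the commutator identity $[h,n]=\lambda_h(n)-n=h\ast n$ and the fact that $\partial(B)$ inherits supersolubility as an ideal of $B$, both of which the paper leaves implicit.
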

\begin{proof}
It follows from Lemma \ref{supersolublebraceisgroup} that the semidirect product \hbox{$G=(B,+)\rtimes_\lambda(B,\bigcdot)$} is supersoluble. Thus,~$[G,G]$ is nilpotent. On the other hand, $[G,G]\cap (B,+)$ contains~$[B,B]_+$ and $B\ast B$, so the additive group of $\partial(B)$ is nilpotent. By~The\-o\-rem~\ref{rnilsupersoluble},~$\partial(B)$ has finite multipermutational level, and the statement is proved.
\end{proof}

\medskip

On the other hand, the following example shows that we cannot hope for central nilpotency in Theorem \ref{derivedisrightnilp}.

\begin{ex}\label{badbehderideal}
There exists a finite supersoluble brace $B$ of order $12$ such that: 
\begin{itemize}
    \item $B=\operatorname{Soc}_n(B)$ for some $n\in\mathbb{N}$.
    \item $(B\ast B,\bigcdot)$ is not nilpotent; in particular, $\partial(B)$ is not centrally nilpotent.
\end{itemize}
\end{ex}
\begin{proof}
Let $(B,+) = \langle a \rangle \simeq \operatorname{C}_{12}$ and $(C,\bigcdot) = \langle \sigma, \tau\mid \sigma^6 = \tau^2 = 1,\, \sigma \tau = \tau\sigma^{-1}\rangle \simeq \operatorname{D}_{12}$ be groups of order $12$. There exists an action $c \in (C,\bigcdot) \mapsto \lambda_c \in \operatorname{Aut}(B,+)$ given by
\[ \lambda_{\sigma}(a) = 7a\quad\textnormal{and}\quad \lambda_{\tau}(a) = 5a.\]
If we consider the semidirect product $G$ of $B$ and $C$ with respect to this action, then~$G$ turns out to be a trifactorised group, as it possesses a subgroup $D=\langle a^5\sigma, a^6\tau\rangle$ such that $D\cap C=D\cap B=\{1\}$, $DC=BD=G$. Thus, there is a bijective $1$-cocycle $\delta\colon C\longrightarrow B$ with respect to $\lambda$ given by  Table~\ref{tb:badbehderideal}. This yields a product in $B$ and we get a brace of abelian type $(B,+,\bigcdot)$ and of order $12$, corresponding to \texttt{SmallSkewbrace(12, 15)} in the~\textsf{Yang--Baxter} library for \textsf{GAP}.
\begin{table}[h!]
\[
\begin{array}{llll}
c   & \delta(c) & c   & \delta(c) \\\hline 
1        & 0   & \tau         & 6a    \\
\sigma   & 5a  & \sigma\tau   & 11a   \\
\sigma^2 & 4a  & \sigma^2\tau & 10a  \\
\sigma^3 & 9a  & \sigma^3\tau & 3a  \\
\sigma^4 & 8a  & \sigma^4\tau & 2a  \\
\sigma^5 & a   & \sigma^5\tau & 7a \\\hline        
\end{array}\]
\caption{Bijective $1$-cocycle associated with Example~\ref{badbehderideal}}
\label{tb:badbehderideal}
\end{table}

Since $\operatorname{Ker} \lambda = \langle \sigma^2\rangle$, it holds that $\langle 4a\rangle_+ = \operatorname{Soc}(B)$. Then, $\overline{B}:= B/\operatorname{Soc}(B)$ is a brace of order $4$. Moreover, if we consider $\overline{\lambda}\colon \big(C/\langle\sigma^2\rangle,\bigcdot\big) \rightarrow \operatorname{Aut}(B/\langle 4a \rangle, +)$, it follows that $\operatorname{Ker}\overline{\lambda} = \big\langle \tau + \langle \sigma^2\rangle \big\rangle$. Therefore, $\operatorname{Soc}_2(B) = \langle 2a \rangle_+$ and $B/\operatorname{Soc}_2(B)$ is a (trivial) brace of order $2$. Hence, $\operatorname{Soc}_3(B) = B$.

On the other hand, $B \ast B = \langle 2a \rangle_+$ and $\delta^{-1}\big(\langle 2a \rangle_+\big) = \langle \sigma^2, \tau \rangle \simeq \operatorname{Sym}(3)$, which is not nilpotent. Hence, $\partial(B) = B \ast B$ is not centrally nilpotent.
\end{proof}

\medskip

Despite the above example, it turns out that the~Fitting ideal of a supersoluble brace has always finite index (see \cite{course}, 5.4.10, for the analogue property for supersoluble groups).

\begin{theo}\label{fittingfiniteindex}
Let $B$ be a supersoluble brace. Then $B/\operatorname{Fit}(B)$ is finite.
\end{theo}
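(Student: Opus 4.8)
The plan is to replace the statement by the equivalent assertion that $\operatorname{Fit}(B)$ has the same Hirsch length as $(B,+)$. Indeed $B/\operatorname{Fit}(B)$ is again supersoluble, hence almost polycyclic; if its Hirsch length were $0$ then $(B/\operatorname{Fit}(B),+)$, being polycyclic-by-finite with no infinite cyclic factor, would be finite, and then $B/\operatorname{Fit}(B)$ itself would be finite by \cite{tutti23-2}, Corollary 3.12. I would prove $h(\operatorname{Fit}(B))=h(B)$ by induction on the Hirsch length $h=h(B,+)$, the case $h=0$ being trivial. The engine of the induction is the remark that $\operatorname{Soc}(B)$ is always an abelian ideal: it is additively central, hence of abelian additive type, and contained in $\operatorname{Ker}\lambda$, hence a trivial brace. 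Thus $\operatorname{Soc}(B)\le\operatorname{Fit}(B)$, and Theorem \ref{exth3.4} furnishes a nonzero torsion-free ideal inside $\operatorname{Soc}(B)$ as soon as $h>0$, so that $h(\operatorname{Soc}(B))>0$ in that case.

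Write $S=\operatorname{Soc}(B)$. Applying the inductive hypothesis to $B/S$, whose Hirsch length is strictly smaller, I obtain an ideal $Q$ of $B$ with $S\le Q$, with $Q/S=\operatorname{Fit}(B/S)$ centrally nilpotent, and with $B/Q$ finite. It then suffices to produce a centrally nilpotent ideal $N$ of $B$ with $S\le N\le Q$ and $N/S$ of finite index in $Q/S$: for such an $N$ lies in $\operatorname{Fit}(B)$ and satisfies $h(N)=h(S)+h(Q/S)=h(B)$. Two of the three ingredients of central nilpotency come for free here. First, $Q$ (and any subbrace containing $S$) is of nilpotent type, since $(S,+)$ is central in $(B,+)$ and $(Q/S,+)$ is nilpotent, so $(Q,+)$ is a central extension of a nilpotent group. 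Second, such a brace is right-nilpotent, because right-nilpotency is inherited by subbraces and $S\le\operatorname{Soc}(N)$ lifts a finite upper socle series of $N/S$. By \cite{periodici}, Corollary 2.15, for a nilpotent-type brace central nilpotency is equivalent to the conjunction of left- and right-nilpotency, and by \cite{tutti23-2}, Theorem 3.30, left-nilpotency is equivalent to nilpotency of the multiplicative group. So the whole problem reduces to finding a finite-index ideal $N$ of $B$, with $S\le N\le Q$, whose multiplicative group $(N,\bigcdot)$ is nilpotent.

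The natural candidate is the stabiliser of a fixed chain of ideals of $B$ with cyclic factors refining $S$: concretely, $N=\{b\in Q:\lambda_b$ induces the identity on each factor of that chain$\}$. Since every factor is cyclic, $\lambda$ sends $(B,\bigcdot)$ into the finite group $\prod\operatorname{Aut}(\text{factors})$, so $N$ has finite index and $(N,\bigcdot)$ acts unipotently on $S$; together with nilpotency of $(N/S,\bigcdot)$ this forces $(N,\bigcdot)$ to be nilpotent. Equivalently, one may work inside the supersoluble group $G=(B,+)\rtimes_\lambda(B,\bigcdot)$ of Lemma \ref{supersolublebraceisgroup} and invoke P. Hall's theorem on stabilisers of normal series to see that this stabiliser is nilpotent of finite index. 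I expect the genuine difficulty to be precisely the verification that this multiplicatively-defined set is an ideal of the brace, that is, that it is additively closed, $\lambda$-invariant and additively normal: because $\lambda$ is a homomorphism only out of $(B,\bigcdot)$, the kernel of a multiplicative action is not automatically an additive subgroup, and additive closure has to be extracted from the central, trivial-brace structure of $S$. This is also the step that confines all the orientation-reversing actions $-1\in\operatorname{Aut}(\mathbb{Z})$ and the prime-order automorphism actions to the finite quotient $B/N$, which is why the index of $\operatorname{Fit}(B)$ is merely finite and need not be a power of $2$, in contrast with Theorem \ref{fittingpowerof2}.
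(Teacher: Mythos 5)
Your reductions are individually correct --- the passage to Hirsch length, the inclusion $\operatorname{Soc}(B)\leq\operatorname{Fit}(B)$, the nilpotent type of any subbrace between $S$ and $Q$, the lifting of the finite multipermutational level of $N/S$ through $S\leq\operatorname{Soc}(N)$, and the appeals to \cite{periodici}, Corollary 2.15, and \cite{tutti23-2}, Theorem 3.30 --- but the proof founders precisely where you say you ``expect the genuine difficulty'' to be, and you never resolve it. Your set $N=\{b\in Q:\lambda_b\text{ induces the identity on each factor}\}$ is the intersection of $(Q,\bigcdot)$ with the kernel of the action homomorphism $(B,\bigcdot)\to\prod_i\operatorname{Aut}\big(S_{i+1}/S_i,+\big)$, so it is a finite-index normal subgroup of $(Q,\bigcdot)$ and nothing more. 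Since $\lambda_{b+c}=\lambda_b\lambda_{\lambda_b^{-1}(c)}$ (not $\lambda_b\lambda_c$), there is no reason for $N$ to be additively closed, $\lambda$-invariant, or additively normal; yet the conclusion $N\leq\operatorname{Fit}(B)$ requires $N$ to be an ideal of $B$, and this is the load-bearing step of the entire argument. This is not a formality to be ``extracted from the trivial-brace structure of $S$'': the failure of multiplicatively defined subgroups (kernels of actions, centralisers) to be subbraces, let alone ideals, is the central obstruction in this subject, and nothing in your construction addresses it.

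The paper closes exactly this gap by quoting Theorem 3.10 of \cite{tutti23-2}: in an almost polycyclic brace, prescribed finite-index normal subgroups of $(B,+)$ and $(B,\bigcdot)$ contain a common finite-index \emph{ideal} of $B$. With that tool the paper's proof is direct and needs no induction: each factor of the supersoluble chain of $B$ is cyclic, so the centralisers $C_{(B,\bigcdot)}(I_{i+1}/I_i,\bigcdot)$ and $C_{(B,+)}(I_{i+1}/I_i,+)$ have finite index; Theorem 3.10 yields a finite-index ideal $J$ of $B$ inside all of them; the intersected chain $\{J\cap I_i\}_i$ then has factors that are additively and multiplicatively central in $J$, and Lemma \ref{lemequivalenza} supplies the missing $\operatorname{Soc}$-condition (exactly the $\lambda$-triviality you were trying to impose by hand), so $J$ is centrally nilpotent and $J\leq\operatorname{Fit}(B)$ by Corollary \ref{fittingideal}. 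Your argument can be repaired by importing the same tool: take a finite-index ideal $J$ of $B$ with $(J,\bigcdot)$ contained in your stabiliser and replace $N$ by the ideal $J+S$, noting that $\lambda_{j+s}=\lambda_j$ for $s\in S\leq\operatorname{Ker}\lambda$. But once Theorem 3.10 is invoked, the Hirsch-length induction and the detour through right- and left-nilpotency become unnecessary; the paper's one-paragraph argument already finishes.
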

\begin{proof}
By hypothesis, $B$ has a chain of ideals $$\{0\}=I_0\leq I_1\leq\ldots\leq I_n=B$$ such that, for each $0\leq i<n$, either $I_{i+1}/I_i$ is contained in $\operatorname{Soc}(B/I_i)$ and is~\hbox{$1$-ge}\-ne\-ra\-tor, or $I_{i+1}/I_i$ has prime order. In particular, for each $0\leq i<n$, we have that~\hbox{$C_{(B,\bigcdot)}(I_{i+1}/I_i,\bigcdot)$} and $C_{(B,+)}(I_{i+1}/I_i,+)$ have finite index in $(B,\bigcdot)$ and $(B,+)$, respectively. By Theorem 3.10 of \cite{tutti23-2}, there is a finite-index ideal $J$ of $B$ that is contained in all these centralizers. Intersecting the chain of the $I_i$'s with $J$, we see that~$J$ is centrally nilpotent (one also needs to use Lemma \ref{lemequivalenza}). Thus, $J\leq\operatorname{Fit}(B)$ by~Co\-rol\-la\-ry \ref{fittingideal} and the statement is proved.
\end{proof}

\medskip

On some occasions we can say something more about the index of the Fitting ideal.

\begin{lem}\label{torsionfreesupersocatena}
Let $B$ be a supersoluble brace, and let $I$ be a centrally nilpotent ideal of $B$ such that $(I,+)$ is torsion-free. Then there is a finite chain $$\{0\}=I_0\leq I_1\leq\ldots\leq I_n=I$$ of ideals of $B$ such that $I_{i+1}/I_i\leq\operatorname{Soc}(B/I_i)\cap\zeta(I/I_i)$ and $(I_{i+1}/I_i,+)$ is infinite cyclic.
\end{lem}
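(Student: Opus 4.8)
The plan is to prove this by induction, combining the torsion-free structure of $(I,+)$ with the central nilpotency of $I$. First I would set up the right induction parameter: since $(I,+)$ is torsion-free and $I$ is centrally nilpotent (hence $(I,+)$ is nilpotent), the natural quantity to induct on is the Hirsch length $h$ of $(I,+)$. If $h=0$ then $I=\{0\}$ and there is nothing to prove, so assume $h>0$. The goal is to locate a single nonzero ideal $I_1$ of $B$, contained in $I$, with $I_1\leq\operatorname{Soc}(B)\cap\zeta(I)$ and $(I_1,+)$ infinite cyclic; once this is done, one passes to $B/I_1$ and $I/I_1$ (noting $(I/I_1,+)$ is again torsion-free, central nilpotency is inherited, and the Hirsch length drops by one) and invokes induction to get the rest of the chain.

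The heart of the argument is the construction of $I_1$. Here I would exploit Theorem~\ref{theoincredible} applied to the supersoluble brace $B$: intersecting the chain furnished by that theorem with $I$ produces ideals of $B$ inside $I$ whose successive factors are either of prime order or are central-socle infinite-cyclic factors. Because $(I,+)$ is torsion-free, I would argue that the prime-order factors must collapse (any such factor would force torsion into $(I,+)$, or else lie above a torsion-free part in a way that is eliminated by passing to a suitable term), so that near the bottom one finds a nonzero ideal $I_1$ of $B$ with $(I_1,+)$ infinite cyclic and $I_1\leq\operatorname{Soc}(B)$. Alternatively, and perhaps more cleanly, I would invoke Theorem~\ref{exth3.4}: it supplies an ideal of $B$ inside $\operatorname{Soc}(B)$ with torsion-free finitely generated additive group, and intersecting with $I$ (together with Lemma~\ref{lemequivalenza}(2)) yields a nonzero ideal $I_1$ of $B$ with $(I_1,+)$ infinite cyclic and $I_1\leq\operatorname{Soc}(B)$.

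The remaining point is to upgrade $I_1\leq\operatorname{Soc}(B)$ to the stronger $I_1\leq\operatorname{Soc}(B)\cap\zeta(I)$, i.e.\ to check the multiplicative centrality $I_1\leq Z(I,\bigcdot)$. This is where central nilpotency of $I$ enters decisively: since $(I,\bigcdot)$ is nilpotent and $(I_1,\bigcdot)$ is a nonzero normal subgroup, I can choose $I_1$ small enough (an infinite cyclic subgroup sitting inside the intersection of $\operatorname{Soc}(B)$ with the multiplicative centre of $I$) so that $(I_1,\bigcdot)\leq Z(I,\bigcdot)$; concretely, a nonzero normal infinite-cyclic subgroup of a nilpotent group meets the centre, and by replacing $I_1$ with that central portion one secures $I_1\leq\zeta(I)$. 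Combined with $I_1\leq\operatorname{Soc}(B)$, this gives the required $I_1\leq\operatorname{Soc}(B)\cap\zeta(I)$, and the induction closes.

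\textbf{Main obstacle.}
The delicate step will be guaranteeing that the bottom infinite-cyclic ideal $I_1$ can be taken simultaneously inside $\operatorname{Soc}(B)$ (a condition about the $B$-action) \emph{and} inside $\zeta(I)$ (a condition about the internal multiplicative centre of $I$): these two constraints pull in different directions, and reconciling them requires care. I expect the cleanest route is to first produce a $B$-ideal $J\leq\operatorname{Soc}(B)\cap I$ that is torsion-free finitely generated (via Theorem~\ref{exth3.4}), then work inside the nilpotent group $(J,\bigcdot)$—or more precisely use that $J\cap Z(I,\bigcdot)$ is nonzero and is still a $B$-ideal contained in $\operatorname{Soc}(B)$—to extract the infinite-cyclic central piece, appealing to Lemma~\ref{lemequivalenza}(2) to re-anchor it in $\operatorname{Soc}(B)$ after any such adjustment.
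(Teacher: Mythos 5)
The genuine gap in your proposal is the step you dispose of in a parenthesis: ``noting $(I/I_1,+)$ is again torsion-free''. This is false for an arbitrary choice of $I_1$: a quotient of a torsion-free group by an infinite cyclic subgroup can perfectly well have torsion (already $\mathbb{Z}/2\mathbb{Z}$ is such a quotient of $\mathbb{Z}$), and the infinite cyclic ideal you extract --- whether from the chain of Theorem~\ref{theoincredible} or from Theorem~\ref{exth3.4} --- may sit with finite index inside a larger rank-one subgroup of $(I,+)$. When that happens the inductive hypothesis does not apply to $I/I_1$; worse, no chain of the required kind can exist above such an $I_1$ at all, since a brace admitting a finite chain of ideals with infinite cyclic additive factors has torsion-free additive group. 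So the bottom ideal must be chosen with care, and this is precisely where the real work in the paper's proof lies. The paper first gets $J\leq I$ with $J\leq\operatorname{Soc}(B)$ and $(J,+)$ infinite cyclic (from Theorem~\ref{theoincredible}), notes $J\leq\zeta(I)$, and then \emph{replaces} $J$ by the preimage $T$ of the additive torsion subgroup of $I/J$: this $T/J$ is an ideal of $B/J$ because $I$ is centrally nilpotent (\cite{tutti23}, Theorem 4.12); $(T,+)$ is torsion-free, central-by-finite and of Hirsch length $1$, hence infinite cyclic; $T\leq\zeta(I)$ by \cite{tutti23}, Corollary 4.17; and $T\leq\operatorname{Soc}(B)$ follows from the computation $m(x\ast b)=x^{m}\ast b=0$, where $m=|T/J|$, using $x^{m}\in J\leq\operatorname{Ker}(\lambda)$, $T\ast T=\{0\}$ and the torsion-freeness of $(T,+)$. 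Only after this replacement is $(I/T,+)$ torsion-free, and only then does the induction close. Your proposal is missing this idea entirely.

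By contrast, what you single out as the ``main obstacle'' is not an obstacle: a normal infinite cyclic subgroup of a nilpotent group lies entirely in the centre (it meets the centre in some $\langle x^{k}\rangle\neq 1$, and any element inverting $x$ would invert the central element $x^{k}$, forcing $x^{2k}=1$), so $J\leq Z(I,\bigcdot)$ and hence $J\leq\zeta(I)$ with no shrinking whatsoever. This is fortunate, because your proposed repair --- replacing $I_1$ by its ``central portion'' $I_1\cap Z(I,\bigcdot)$ --- would run into the problem that this subgroup is not obviously an ideal of $B$ (compare the paper's need for the operator $\zeta(\cdot)_B$). Finally, your alternative construction via Theorem~\ref{exth3.4} only yields a free abelian ideal of finite rank inside $\operatorname{Soc}(B)\cap I$, and Lemma~\ref{lemequivalenza}(2) runs in the wrong direction to cut it down to an infinite cyclic ideal of $B$; the route through Theorem~\ref{theoincredible} is the one that actually works.
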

\begin{proof}
By Theorem \ref{theoincredible}, there is an ideal $J$ of $B$ contained in $I$ such that $J\leq\operatorname{Soc}(B)$ and~$(J,+)$ is infinite cyclic. Since~\hbox{$(I,\bigcdot)$} is nilpotent and~\hbox{$(J,\bigcdot)$} is a normal infinite cyclic subgroup of $(B,\bigcdot)$, we have that $J\leq Z(I,\bigcdot)$, so $J\leq\zeta(I)$.

In order to use induction (consequently completing the proof), we should find such an ideal $J$ with an additional property: $(I/J,+)$ must be torsion-free. Since $I$ is centrally nilpotent, the set $T/J$ of all periodic elements of $(I/J,+)$ is an ideal of $I/J$ and coincides with the set of all periodic elements of $(I/J,\bigcdot)$ (see \cite{tutti23}, Theorem 4.12). Thus, $T/J$ is also an ideal of~$B/J$. Now, since $(T,+)$ is torsion-free, central-by-finite and has Hirsch length $1$, we have that~$(T,+)$ is infinite cyclic. Moreover, $T\leq\zeta(I)$ by \cite{tutti23}, Co\-rol\-la\-ry~4.17, so, in particular,~$T$ is abelian. Also, since $(T,+)$ is infinite cyclic and normal in~$(B,+)$, we have $T\leq Z(B,+)$. Let~\hbox{$x\in T$} and $b\in B$. Then $$m(x\ast b)=(x^m)\ast b=0,$$ where $m=|T/J|$, and hence $x\ast b=0$. Therefore $x\in\operatorname{Soc}(B)$ and the statement is proved.
\end{proof}

\begin{lem}\label{extensionfitting}
Let $B$ be a supersoluble brace. If $I$ is any centrally nilpotent ideal of $B$ such that $U_2^+(I)=\{0\}$, then $I\leq\zeta_n(U)$ for some $n\in\mathbb{N}$, where $U/I=U_2^+(B/I)$. 
\end{lem}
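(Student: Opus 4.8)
The plan is to produce a finite chain of ideals of $B$ lying below $I$ whose successive factors are central in the corresponding quotient of $U$. This suffices: if $\{0\}=A_0\le A_1\le\dots\le A_k=I$ is a chain of ideals of $B$ (hence of $U$) with $A_{j+1}/A_j\le\zeta(U/A_j)$, then a routine induction on $j$ gives $A_j\le\zeta_j(U)$ — one only needs that a surjective brace homomorphism carries $\operatorname{Soc}$, $Z(-,+)$ and $Z(-,\bigcdot)$, and therefore $\zeta$, into the corresponding objects of the image. In particular $I=A_k\le\zeta_k(U)$, which is the assertion. Note at the outset that $(I,+)$ is supersoluble (Lemma \ref{supersolublebraceisgroup}), hence polycyclic-by-finite, so its torsion subgroup $T$ is finite; since $U_2^+(I)=\{0\}$ it is a finite $2$-group, and $T$ is an ideal of $B$, being the set of periodic elements of both $(I,+)$ and $(I,\bigcdot)$. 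Likewise $U/I=U_2^+(B/I)$ is a finite group of \emph{odd} order, and this coprimeness is what drives everything.

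The two computational inputs I would isolate are: \textbf{(A)} every ideal $A$ of $B$ with $|A|=2$ satisfies $A\le\zeta(B)$; and \textbf{(B)} every ideal $A$ of $B$ with $A\le\zeta(I)$ and $(A,+)$ infinite cyclic satisfies $A\le\zeta(U)$. For (A), since $\operatorname{Aut}(C_2)$ is trivial a normal subgroup of order $2$ is central, so $A\le Z(B,+)\cap Z(B,\bigcdot)$ and $\lambda_b(a)=a$ for all $b\in B$; then $ba=b+a=a+b$ and $ab=ba$ give $\lambda_a(b)=-a+ab=b$, i.e. $a\ast b=0$, whence $A\le\operatorname{Soc}(B)$ and $A\le\zeta(B)$. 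Statement (B) is the heart of the matter: the conjugation actions of $(U,+)$ and of $(U,\bigcdot)$ on $A\cong\mathbb{Z}$, together with the action $u\mapsto\lambda_u|_A$, each define a homomorphism $U\to\operatorname{Aut}(\mathbb{Z})=\{\pm1\}$. Each of these is trivial on $I$ — the first two because $A\le Z(I,+)\cap Z(I,\bigcdot)$, and the third because for $i\in I$ one computes $\lambda_i(a)=-i+ia=-i+(a+i)=a$, using $ia=ai=a+i$ (which follows from $a\ast i=0$) and $a\in Z(I,+)$. As $U/I$ has odd order, all three homomorphisms are therefore trivial, so $A\le Z(U,+)$, $A\le Z(U,\bigcdot)$ and $u\ast a=0$ for every $u\in U$. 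This last equality forces $ua=u+a=a+u$, and combined with $au=ua$ it yields $\lambda_a(u)=-a+au=u$, i.e. $a\ast u=0$; hence $A\le\operatorname{Soc}(U)\cap Z(U,\bigcdot)=\zeta(U)$.

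With (A) and (B) available I would assemble the chain in two stages. While the current quotient of $I$ still has nonzero torsion, that torsion is a centrally nilpotent ideal, hence $B$-centrally nilpotent by Theorem \ref{theobcentrally}, so its $B$-centre $\zeta(T)_B$ is a nonzero ideal of $B$; moreover $\zeta(T)_B\le\operatorname{Soc}(T)$ gives $\zeta(T)_B\ast\zeta(T)_B=\{0\}$, so it is a trivial abelian brace. Consequently, with $G=(B,+)\rtimes_\lambda(B,\bigcdot)$, the $G$-invariant subgroups of its (finite) additive group are exactly its ideals in $B$, and since $G$ is supersoluble a minimal one has prime order, necessarily $2$. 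By (A) such an ideal is central, and peeling it off reduces $|T|$ while preserving the hypothesis $U_2^+(\,\cdot\,)=\{0\}$ and leaving the odd-order quotient $U/I$ unchanged. After finitely many steps the image of $I$ is torsion-free; there I invoke Lemma \ref{torsionfreesupersocatena} to obtain a finite chain of ideals with infinite cyclic factors contained in the relevant socle-and-centre, each of which is central in the corresponding quotient of $U$ by (B). Concatenating the two chains produces the required chain from $\{0\}$ to $I$, and hence $I\le\zeta_n(U)$ for some $n\in\mathbb{N}$.

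The step I expect to be the main obstacle is part (B), and within it the triviality on $I$ of the action $u\mapsto\lambda_u|_A$: the additive and multiplicative centrality of $A$ in $I$ dispatch the two conjugation actions, but this third one requires the explicit identity $\lambda_i(a)=a$ above, after which one must carefully pass from $u\ast a=0$ back to $a\ast u=0$. A secondary but essential point is the bookkeeping: one must check at each peeling step that $U_2^+=\{0\}$ is preserved (immediate for order-$2$ factors and for torsion-free factors) and that the odd-order quotient $U/I$, on which the whole coprimeness argument rests, is not disturbed.
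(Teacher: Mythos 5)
Your overall skeleton is the same as the paper's: reduce to the case where $(I,+)$ is torsion-free, invoke Lemma \ref{torsionfreesupersocatena}, and then exploit $\operatorname{Aut}(\mathbb{Z})=\{\pm1\}$ together with the odd order of $U/I$ to make each infinite cyclic factor central in the corresponding quotient of $U$. Your step (B) is correct, and in fact does a little more than necessary: Lemma \ref{torsionfreesupersocatena} already places each factor inside $\operatorname{Soc}(B/I_i)$, so only multiplicative centrality in $U$ genuinely needs the coprimeness argument (the paper gets it by writing any $x\in U$ as $y^2z$ with $y\in U$, $z\in I$), whereas you rederive the socle condition from the weaker hypothesis $A\leq\zeta(I)$ via the three homomorphisms into $\{\pm1\}$ --- that computation is fine. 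The genuine gap is in your torsion-peeling stage.

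The problematic sentence is the claim that the $G$-invariant subgroups of $\big(\zeta(T)_B,+\big)$ ``are exactly its ideals in $B$''. $G$-invariance of a subgroup $W\leq\zeta(T)_B$ says only that $W$ is normal in $(B,+)$ and $\lambda$-invariant, i.e.\ that $W$ is a strong left-ideal; to be an ideal it must also be normal in $(B,\bigcdot)$, and that is not automatic. Indeed, for all $b,v\in B$ one has $bvb^{-1}=b+\lambda_b(v)+\lambda_b\lambda_v(b^{-1})$, so multiplicative conjugation is controlled by the $G$-action (additive conjugation composed with $\lambda_b$) precisely when $\lambda_v$ fixes $b^{-1}$; this is why Lemma 1.10 of \cite{Cedo} requires the subgroup to lie in $\operatorname{Soc}(B)$. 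But your $V=\zeta(T)_B$ is only contained in $\operatorname{Soc}(T)$: its elements are $\lambda$-trivial on $T$, not on $B$, so the term $\lambda_v(b^{-1})$ produces a correction $w\ast b^{-1}\in V$ that need not lie in $W$, and a minimal $G$-invariant subgroup of $V$ need not be an ideal of $B$. The repair is cheap and is essentially what the paper does: $T$ itself is a nonzero finite ideal of $B$ which is a $2$-group, and a minimal ideal of $B$ contained in $T$ has prime order --- either by intersecting the chain of Theorem \ref{theoincredible} with $T$ (the odd-order and infinite cyclic parts meet $T$ trivially, so the intersections with the top part yield a chain of ideals of $B$ filling $T$ with factors of order $2$), or by quoting that chief factors of supersoluble braces have prime order (Corollary \ref{corchieflocallsuper}). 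Such an ideal has order $2$, hence lies in $\zeta(B)$ by your (A); peeling these off finitely many times makes $(I,+)$ torsion-free, after which the rest of your argument goes through, and the detour through $\zeta(T)_B$ and $G$ becomes unnecessary.
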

\begin{proof}
Since every ideal of order $2$ of a brace is always contained in the centre, it follows from Theorem \ref{theoincredible} that we may assume $(I,+)$ is torsion-free. Now, by~Lem\-ma~\ref{torsionfreesupersocatena}, there is a chain $$\{0\}=I_0\leq I_1\leq\ldots\leq I_n=I$$ of ideals of $B$ such that $I_{i+1}/I_i\leq\operatorname{Soc}(B/I_i)\cap\zeta(I/I_i)$ and $(I_{i+1}/I_i,+)$ is infinite cyclic. By induction on~$n$, we only need to show that $I_1\leq\zeta(U)$. 

Now, let $x\in U$. Then there are $y\in U$ and $z\in I$ such that $x=y^2z$. Since $(I_1,\bigcdot)$ is infinite cyclic, we have $[y^2,I_1]_{\bigcdot}=\{0\}$, so also $[x,I_1]_{\bigcdot}=\{0\}$. This implies $I_1\leq Z(U,\bigcdot)$ and consequently $I_1\leq\zeta(U)$.
\end{proof}

\begin{theo}\label{fittingpowerof2}
Let $B$ be a supersoluble brace such that $U_2^+(B)=\{0\}$. Then $|B/\operatorname{Fit}(B)|$ is a power of $2$.
\end{theo}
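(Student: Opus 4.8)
The plan is to reduce everything to the odd part of the finite quotient $B/\operatorname{Fit}(B)$ and show it is trivial. By Theorem~\ref{fittingfiniteindex} the Fitting ideal $F:=\operatorname{Fit}(B)$ has finite index, so $B/F$ is a finite brace. Since $(B/F,+)$ fails to be a $2$-group exactly when it has a nonzero odd-order element, it suffices to prove $U_2^+(B/F)=\{0\}$: then $|B/F|$ is a power of $2$, as claimed.

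The key tool is Lemma~\ref{extensionfitting}. By Corollary~\ref{fittingideal} the ideal $F$ is centrally nilpotent, and $U_2^+(F)\leq U_2^+(B)=\{0\}$, so the lemma applies with $I=F$ and gives $F\leq\zeta_n(U)$ for some $n\in\mathbb{N}$, where $U/F=U_2^+(B/F)$. The decisive feature here is that $F$ lies in the $n$-th term of the \emph{ordinary} upper central series of the brace $U$ (and hence of any subbrace containing $F$): this is what will let me push central nilpotency across an extension. I would then argue by contradiction, assuming $U\neq F$. Since $B/F$ is a finite supersoluble brace and $U/F$ is a nonzero ideal of it, intersecting a defining chain of ideals of $B/F$ (as in Definition~\ref{defsuper}) with $U/F$ produces an ideal $I$ of $B/F$ of prime order contained in $U/F$; this prime is odd because the elements of $U/F$ have odd additive order. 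A brace of prime order is abelian, hence centrally nilpotent. Let $\tilde I$ be the preimage of $I$ in $B$, an ideal of $B$ with $F\leq\tilde I\leq U$ and $\tilde I/F\cong I$.

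The crucial step is to show that $\tilde I$ is centrally nilpotent. Intersecting the inclusion $F\leq\zeta_n(U)$ with the subbrace $\tilde I$, and using the routine fact that $\zeta_k(U)\cap\tilde I\leq\zeta_k(\tilde I)$ for every $k$ (a centre intersected with a $\lambda$-invariant subbrace lands in the centre of that subbrace, then induct on quotients), one obtains $F\leq\zeta_n(\tilde I)$. Consequently $\tilde I/\zeta_n(\tilde I)$ is a quotient of $\tilde I/F\cong I$ and is therefore centrally nilpotent, which forces $\tilde I=\zeta_{n+c}(\tilde I)$ for some $c\in\mathbb{N}$; that is, $\tilde I$ is centrally nilpotent. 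By Theorem~\ref{theobcentrally} it is then $B$-centrally nilpotent, so $\tilde I\leq\operatorname{Fit}(B)=F$, contradicting $\tilde I/F\cong I\neq\{0\}$. Hence $U_2^+(B/F)=\{0\}$, completing the argument.

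I expect the main obstacle to be precisely this centrally-nilpotent-by-(prime order) step. Central nilpotency is \emph{not} closed under extensions — indeed the whole notion of $B$-central nilpotency was introduced because a sum of centrally nilpotent ideals can fail to be centrally nilpotent — so one cannot simply concatenate the central series of $F$ and of $I$. What rescues the situation is the inclusion $F\leq\zeta_n(U)$ supplied by Lemma~\ref{extensionfitting}: because the quotient $U/F$ has odd order, every one of its elements is a square, which neutralises the possible inversion action on the relevant central factors and upgrades the extension to a genuinely central one, along which the upper central series does climb. Everything else (the reduction to $B/F$, the extraction of a prime-order ideal, and the final appeal to the maximality of the Fitting ideal) is routine once this central extension is in place.
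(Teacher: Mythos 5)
Your proof is correct and takes essentially the same route as the paper: reduce to the finite quotient $B/\operatorname{Fit}(B)$ via Corollary~\ref{fittingideal} and Theorem~\ref{fittingfiniteindex}, then apply Lemma~\ref{extensionfitting} with $I=\operatorname{Fit}(B)$. The only difference is explicitness: the paper's proof simply asserts that the lemma ``yields'' $U_2^+\big(B/\operatorname{Fit}(B)\big)=\{0\}$, whereas you spell out the bridging argument --- extract a prime-order ideal of $B/\operatorname{Fit}(B)$ inside $U/\operatorname{Fit}(B)$, intersect $\zeta_n(U)$ with its preimage to conclude that the preimage is centrally nilpotent, and contradict the maximality of the Fitting ideal via Theorem~\ref{theobcentrally} --- which is precisely the step the paper leaves implicit, and your filling of it is sound.
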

\begin{proof}
Let $F=\operatorname{Fit}(B)$. It follows from Corollary~\ref{fittingideal} and The\-o\-rem~\ref{fittingfiniteindex} that $F$ is a centrally nilpotent ideal of $B$ and $B/F$ is finite. Moreover,~Lem\-ma~\ref{extensionfitting} yields that $U_2^+(B/F)=\{0\}$, so $|B/F|$ is a power of $2$.
\end{proof}

\begin{cor}
Let $B$ be a supersoluble brace such that $(B,+)$ is torsion-free. Then $|B/\operatorname{Fit}(B)|$ is a power of $2$.
\end{cor}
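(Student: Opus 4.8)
The plan is to derive this corollary directly from Theorem~\ref{fittingpowerof2}, whose hypothesis is that $U_2^+(B)=\{0\}$. The only thing to verify is that torsion-freeness of $(B,+)$ forces this hypothesis to hold automatically.

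First I would recall the defining property of $U_2^+(B)$: by Definition just after Theorem~\ref{theoincredible}, $U_2^+(B)$ is the set of all elements of $B$ whose additive order is finite and divisible only by primes $q>2$, that is, the set of all odd-order elements of $(B,+)$. Now, if $(B,+)$ is torsion-free, then its only element of finite additive order is $0$. Consequently every element of $U_2^+(B)$ must be $0$, so $U_2^+(B)=\{0\}$.

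With this observation in hand, the hypothesis of Theorem~\ref{fittingpowerof2} is satisfied, and that theorem immediately yields that $|B/\operatorname{Fit}(B)|$ is a power of $2$, completing the proof. There is no genuine obstacle here: the corollary is simply the specialization of Theorem~\ref{fittingpowerof2} to the torsion-free case, in which the assumption $U_2^+(B)=\{0\}$ comes for free from the absence of additive torsion.
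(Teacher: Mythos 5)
Your proof is correct and is exactly the intended argument: the paper states this corollary without proof precisely because it is the immediate specialization of Theorem~\ref{fittingpowerof2}, since torsion-freeness of $(B,+)$ forces $U_2^+(B)=\{0\}$. Nothing is missing.
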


\medskip

So far, we have shown that the Fitting ideal has always finite index in a supersoluble brace, and that the index can sometimes be a power of $2$. Our last main result in the context of central nilpotency deals with the special case in which the supersoluble brace is finite over some term of its upper central series. Recall first that a well-known theorem of Issai Schur states that the commutator subgroup of a group is finite provided that the group is finite over the centre. This theorem has been generalized to braces in \cite{Schur}, Theorem 5.4. Schur's theorem admits a very relevant generalization: in fact, it has been proved by Reinhold Baer that the $(i+1)$th term of the lower central series of a group is finite provided that the group is finite over the $i$th term of the upper central series (see \cite{Rob72}, Corollary 2 to Theorem 4.21). It is not clear if such a more general result holds for arbitrary braces, but in some special circumstances this is true (see for example \cite{colazzo}, Theorem 3.37). Here, we provide a further example of a class of braces in which the theorem of Baer holds.

\begin{theo}\label{schurtheor}
Let $B$ be a supersoluble brace such that $B/\zeta_n(B)$ is finite for some $n\in\mathbb{N}$. Then $\Gamma_{n+1}(B)$ is finite.
\end{theo}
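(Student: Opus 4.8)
The plan is to argue by induction on $n$, taking as the base case $n=1$ the brace-theoretic Schur theorem (\cite{Schur}, Theorem 5.4), which already gives that $\Gamma_2(B)=\partial(B)$ is finite whenever $B/\zeta(B)$ is finite. For the inductive step I would pass to $\bar B=B/\zeta(B)$. Since the upper central series is compatible with this quotient, $\bar B/\zeta_{n-1}(\bar B)\cong B/\zeta_n(B)$ is finite, and $\bar B$ is again supersoluble, so by induction $\Gamma_n(\bar B)=(\Gamma_n(B)+\zeta(B))/\zeta(B)$ is finite. Writing $Z_0=\Gamma_n(B)\cap\zeta(B)$, this means $\Gamma_n(B)/Z_0$ is finite with $Z_0\le\zeta(B)$; in particular $Z_0\le\operatorname{Soc}(B)\le\operatorname{Ker}\lambda$ and $Z_0\le Z(B,+)$, so every $z\in Z_0$ satisfies $z\ast b=b\ast z=0$ and $[z,b]_+=0$ for all $b\in B$.

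The heart of the matter is to deduce that $\Gamma_{n+1}(B)=\langle \Gamma_n(B)\ast B,\ B\ast\Gamma_n(B),\ [\Gamma_n(B),B]_+\rangle_+$ is finite. First I would reduce the three generating families to finitely many representatives. Choosing coset representatives $c_1,\dots,c_k$ of the finite quotient $\Gamma_n(B)/Z_0$ and writing an arbitrary $c\in\Gamma_n(B)$ as $c=c_i\bigcdot z$ with $z\in Z_0$, the identity $(ab)\ast c=a\ast(b\ast c)+b\ast c+a\ast c$ from \eqref{dacitare} together with $z\ast b=0$ gives $c\ast b=c_i\ast b$; using the remaining identities of \eqref{dacitare} and $b\ast z=0$, $[z,b]_+=0$, one checks likewise that $b\ast c$ and $[c,b]_+$ depend only on $c_i$. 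Hence $\Gamma_{n+1}(B)$ is generated additively by the finitely many subsets $c_i\ast B$, $B\ast c_i$ and $[c_i,B]_+$, and it suffices to show each generates a finite additive subgroup.

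This last point is where supersolubility and the hypothesis on $\zeta_n(B)$ enter. By the maximal condition $\Gamma_{n+1}(B)$ is finitely generated, so it is finite as soon as its additive group is torsion, i.e. of Hirsch length $0$. By Theorem \ref{theoincredible} (together with Theorem \ref{exth3.4} and Lemma \ref{lemequivalenza}) every infinite cyclic additive factor of a supersoluble brace may be taken central in $(B,+)$ and inside the socle, where the star product against it vanishes; moreover, since $B/\zeta_n(B)$ is finite and $\zeta_n(B)$ is centrally nilpotent, the torsion-free part of $(B,+)$ sits, up to finite index, inside $\zeta_n(B)$, on which the action is forced to be trivial. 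Consequently each $c_i$ centralises the infinite cyclic factors, so $c_i\ast B$, $B\ast c_i$ and $[c_i,B]_+$ take values in the (finite) torsion subgroup of $(B,+)$; being finitely many finitely generated torsion subsets, together they generate a finite group. Thus $\Gamma_{n+1}(B)$ has Hirsch length $0$ and is finite, completing the induction.

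The step I expect to be the main obstacle is precisely this triviality of the action on the torsion-free part: one must rule out the ``inversion'' phenomenon, i.e. a non-trivial finite-order action on an infinite cyclic factor (as in a Klein-bottle-type supersoluble group), for otherwise $c_i\ast B$ or $[c_i,B]_+$ could be infinite. I would resolve it by reducing, via the finite-index torsion-free ideal of Theorem \ref{exth3.4}, to the central socle factors of Theorem \ref{theoincredible} and invoking that $B/\zeta_n(B)$ finite confines the torsion-free directions to the centrally nilpotent ideal $\zeta_n(B)$, where Lemma \ref{lemequivalenza} forces them into $\operatorname{Soc}(B)$. It is worth stressing that a naive transfer of the whole problem to the supersoluble group $G=(B,+)\rtimes_\lambda(B,\bigcdot)$ (Lemma \ref{supersolublebraceisgroup}) followed by a direct appeal to Baer's theorem for groups does \emph{not} work cleanly: the generators $\Gamma_n(B)\ast B$ are realised in $G$ only through the diagonal copy of $(B,\bigcdot)$ and need not lie in $\gamma_{n+1}(G)$, and it is this additive--multiplicative asymmetry that obstructs the index matching and makes the hands-on argument above preferable.
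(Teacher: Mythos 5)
Your inductive skeleton is fine: the base case via \cite{Schur}, Theorem 5.4, the passage to $B/\zeta(B)$, and the reduction of $\Gamma_{n+1}(B)$ to the finitely many families $c_i\ast B$, $B\ast c_i$, $[c_i,B]_+$ are all correct (the last because all three products are unchanged when $c_i$ is perturbed by an element of $Z_0\leq\zeta(B)$). The gap is exactly the step you flag, and your proposed resolution does not close it, because it rests on a conflation that already surfaces earlier in your write-up: you deduce $b\ast z=0$ from $Z_0\leq\operatorname{Soc}(B)\leq\operatorname{Ker}\lambda$, but socle membership only kills $z\ast b$; it is $Z_0\leq\zeta(B)\leq Z(B,\bigcdot)$ that rescues $b\ast z=0$ there. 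Later the same conflation is fatal: Lemma \ref{lemequivalenza} and Theorem \ref{theoincredible} place the infinite cyclic factors in $\operatorname{Soc}(B/I_{1,i})$, which says nothing about $b\ast x=\lambda_b(x)-x$ for $x$ in such a factor (the inversion phenomenon is not ruled out by socle membership), and the action of $B$ on $\zeta_n(B)$ is not trivial but only trivial on the factors $\zeta_{j+1}(B)/\zeta_j(B)$. Centralising every factor of a chain with torsion-free factors does not force products into torsion: it only pushes $c\ast z$, $z\ast c$, $[c,z]_+$ one step down the chain. A concrete illustration: the trivial brace on a free nilpotent group of class $2$ is finitely generated centrally nilpotent, hence supersoluble by Theorem \ref{fgnilp}; every element centralises the (torsion-free) factors of the upper central series, yet $[c,B]_+$ is infinite cyclic for a free generator $c$. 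So your inference needs, in an essential way, the position of $c_i$ in $\Gamma_n(B)$ played off against the position of $b$ in $\zeta_n(B)$, i.e.\ a brace analogue of the group-theoretic calculus $[\gamma_i(G),\zeta_j(G)]\leq\zeta_{j-i}(G)$, say $\Gamma_i(B)\ast\zeta_j(B)$, $\zeta_j(B)\ast\Gamma_i(B)$ and $[\Gamma_i(B),\zeta_j(B)]_+$ all landing in $\zeta_{j-i}(B)$. No such statement is available, and its absence is precisely why the paper remarks that Baer's theorem is unclear for arbitrary braces; indeed, if your argument worked as written, it would essentially prove Baer for every brace with the maximal condition, supersolubility entering only through the flawed step. (A secondary slip: finitely many finite additive subgroups need not generate a finite subgroup, so you also need that the torsion elements of $(B,+)$ form a finite subgroup, which itself requires proof.)

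The paper avoids this calculus altogether by a structural argument: it factors out the largest finite ideal $T$ (which exists by the maximal condition), observes that all upper central factors of $B/T$ are then torsion-free, uses Theorem \ref{theoincredible} and a swapping argument to show that $B/T$ has $2$-power order over a term of its upper central series, deduces from Theorem \ref{pbracecentrallynilp} that $B/T$ is centrally nilpotent, and concludes from torsion-freeness of the upper central factors that $B/T=\zeta_n(B/T)$; hence $\Gamma_{n+1}(B)\leq T$ is finite. To salvage your scheme you would first have to establish the missing $\Gamma_i$--$\zeta_j$ calculus for supersoluble braces, which is likely no easier than the theorem itself.
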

\begin{proof}
Let $T$ be the largest finite ideal of $B$ (it exists because $B$ satisfies the maximal condition on subbraces). Replacing $B$ by $B/T$, we may assume that $T=\{0\}$. In particular, the additive group of $\zeta(B)$ is torsion-free, and consequently the additive group of every factor of the upper central series of $B$ is torsion-free (see~\cite{tutti23},~The\-o\-rem~4.16). Then, also using Theorem \ref{theoincredible}, we see that~$B$ has a finite chain of ideals $$\{0\}=I_0\leq I_1\leq \ldots\leq I_\ell=J_0\leq J_1\leq\ldots\leq J_m=U\leq B,$$ where 
\begin{itemize}
    \item[(a)] $I_{i+1}/I_i\leq\zeta(B/I_i)$ and $(I_{i+1}/I_i,+)$ is infinite cyclic for all $0\leq i<\ell$,
    \item[(b)] $|J_{i+1}/J_i|$ is an odd prime for all $0\leq i<m$, and
    \item[(c)] $|B/U|$ is a power of $2$.
\end{itemize}

We now claim that there is a finite chain of ideals of $B$ contained in $J_1$ whose factors satisfy (a). Write  $p=|J_1/J_0|$. It follows from Theorem \ref{theoincredible} that only one of the following two possibilities holds:
\begin{itemize}
    \item[(1)] $J_1/I_{\ell-1}\leq\operatorname{Soc}(B/I_{\ell-1})$ and $\big(J_1/I_{\ell-1},+\big)$ is infinite cyclic.
    \item[(2)] $J_1/I_{\ell-1}=I_\ell/I_{\ell-1}\times P/I_{\ell-1}$, where $P/I_{\ell-1}$ is an ideal of $B/I_{\ell-1}$ having order $p$.
\end{itemize}

In case (2), the claim follows by induction on $\ell$ (note that the base case $\ell=0$ implies that $B=\{0\}$, so we may assume $\ell\geq1$). In case (1), $(J_1/I_{\ell-1},\bigcdot)$ is infinite cyclic, and $(I_\ell/I_{\ell-1},\bigcdot)\leq Z(B/I_{\ell-1},\bigcdot)$, so also $(J_1/I_{\ell-1},\bigcdot)\leq Z(B/I_{\ell-1},\bigcdot)$ and hence $J_1/I_{\ell-1}\leq\zeta(B/I_{\ell-1})$. The claim is proved.

\medskip

Now, repeated applications of the previous claim allow us to assume~\hbox{$m=0$} (compare with the proof of Theorem \ref{bellooo}); in particular, $|B/\zeta_\ell(B)|$ is a power of $2$. It follows from Theorem \ref{pbracecentrallynilp} that $B/\zeta_\ell(B)$ is centrally nilpotent, so $B$ is centrally nilpotent. Since the additive groups of the factors of the upper central series of $B$ are torsion-free and~$B/\zeta_n(B)$ is finite by hypothesis, we have that $B=\zeta_n(B)$, so $\Gamma_{n+1}(B)$ is finite.~\end{proof}

\medskip

Actually, the converse of the previous statement holds for every almost polycyclic brace. We need the following result which is deduced from \cite{tutti23-2}, Lemma 3.3 and Remark 3.5.

\begin{lem}\label{bellooo}
Let $B$ be a brace. If $N\leq M$ are ideals of $B$ such that \begin{itemize}
    \item[\textnormal{(a)}] $N$ is finite,
    \item[\textnormal{(b)}] $M/N\leq\zeta(B/N)$, and
\end{itemize}
then $M$ contains an ideal $L$ of $B$ such that: \begin{itemize}
    \item[\textnormal{(1)}] $L\cap N=\{0\}$;
    \item[\textnormal{(2)}] $L\leq\zeta(B)$;
    \item[\textnormal{(3)}] $M/L$ is finite;
\end{itemize}

\noindent In particular, $M/M\cap\zeta(B)$ is finite.
\end{lem}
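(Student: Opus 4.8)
The plan is to first pin down the structural feature of $\zeta(B)$ that makes conditions (1) and (2) almost automatic, and then to extract the two genuine finiteness statements — that $M$ is central-by-finite in $B$, and that the finite part $N$ can be split off inside $\zeta(B)$ — from \cite{tutti23-2}, Lemma 3.3 and Remark 3.5, together with Theorem~\ref{exth3.4}.

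First I would record a structural fact about the centre. If $z \in \zeta(B) = \operatorname{Soc}(B) \cap Z(B,\bigcdot)$, then $\lambda_z = \operatorname{id}$, $z \in Z(B,+)$ and $z \in Z(B,\bigcdot)$; using $z \in Z(B,\bigcdot)$ and $zb = z + z\ast b + b = z + b$ (as $\lambda_z = \operatorname{id}$) one finds $b \ast z = -b + bz - z = -b + zb - z = -b + (z+b) - z = 0$, so $\lambda_b(z) = z$ for every $b \in B$. Hence each $\lambda_b$ fixes $\zeta(B)$ pointwise; and since $a \ast b = 0$ for $a,b \in \zeta(B)$, the identity $ab = a + a\ast b + b$ from \eqref{dacitare} shows that $+$ and $\bigcdot$ coincide on $\zeta(B)$. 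Consequently every additive subgroup of $(\zeta(B),+)$ is at once $\lambda$-invariant, normal in $(B,+)$, a subgroup of $(B,\bigcdot)$, and normal in $(B,\bigcdot)$ — that is, an ideal of $B$ contained in $\zeta(B)$. This reduces the construction of $L$ to the purely additive problem of choosing a suitable subgroup of $C := M \cap \zeta(B)$.

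Next comes the crux. Unfolding (b) gives $[m,b]_+,\,[m,b]_{\bigcdot},\,m\ast b,\,b\ast m \in N$ for all $m\in M$ and $b\in B$, so every map measuring the failure of $m$ to be central takes values in the finite set $N$. The assertion that this forces $C = M\cap\zeta(B)$ to have finite index in $M$ is a Baer/Schur-type finiteness statement for braces, and is exactly the content drawn from \cite{tutti23-2}, Lemma 3.3; it already establishes the ``in particular'' clause. I expect this to be the main obstacle: as with the classical theorems of Schur and Baer it is not a formal consequence of the defect maps landing in a finite set, and it truly requires the finiteness of $N$ to be played against the ambient (almost polycyclic) structure in which the lemma is applied. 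That some such hypothesis is indispensable is shown by the trivial brace on a Pr\"ufer $2$-group, where $M=B$ and a nonzero finite $N$ admit no $L$ at all.

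Finally I would assemble $L$. By Theorem~\ref{exth3.4} the additive group $(B,+)$ is polycyclic-by-finite, so $(C,+)$ — which is abelian because $C\le Z(B,+)$ — is a finitely generated abelian group; write $(C,+) = F \oplus T$ with $F$ free and $T$ the finite torsion subgroup, and set $L = F$. Since $N \cap C$ is finite it lies in $T$, whence $L \cap N = L\cap (N\cap C) \le F\cap T = \{0\}$, giving (1); by the first paragraph $L$ is an ideal of $B$ with $L\le\zeta(B)$, giving (2); and $[M:L] = [M:C]\,[C:L] = [M:C]\,|T|$ is finite by central-by-finiteness, giving (3). This torsion-free, finite-index, $N$-avoiding choice is precisely what \cite{tutti23-2}, Remark 3.5 furnishes, and the ``in particular'' is just the finite-index output of the central-by-finite step.
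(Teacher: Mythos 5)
The paper offers no proof of this lemma at all: it is stated as being ``deduced from \cite{tutti23-2}, Lemma 3.3 and Remark 3.5'', and the matter is left there. Your proposal rests its crux on exactly the same external citation, so at the core the two routes coincide; what you add is the scaffolding that the paper leaves implicit, and that scaffolding is correct. The opening observation that every additive subgroup of $\zeta(B)$ is an ideal of $B$ is proved correctly (it is the same fact the paper itself uses in the proof of Theorem \ref{fgnilp}), the unfolding of hypothesis (b) into defect maps with values in $N$ is right, and the final assembly --- write $(C,+)=F\oplus T$ with $T$ the finite torsion part, take $L=F$ --- does yield (1), (2), (3) once one knows that $[M:C]$ is finite and that $(C,+)$ is finitely generated. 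The only unverifiable point is your identification of \cite{tutti23-2}, Lemma 3.3 with the precise central-by-finite assertion you attribute to it; that is a plausible reconstruction of a reference this paper does not reproduce, and it is no worse than what the paper does.

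Your most valuable contribution is the one you make in passing: the statement as printed is false for arbitrary braces, so some hypothesis must have been dropped. Your Pr\"ufer example is valid (every non-zero subgroup of $C_{2^{\infty}}$ contains the unique subgroup of order $2$, so no ideal $L$ with $L\cap N=\{0\}$ and $M/L$ finite can exist), and a trivial brace on an infinite extraspecial $p$-group, with $N=M\cap\zeta(B)=Z(G)$ and $M=B$, even refutes the ``in particular'' clause. The dangling ``and'' after (b) and the empty item after (3) in the statement corroborate that a hypothesis and a conclusion were lost in editing, and indeed the paper only ever applies the lemma to almost polycyclic braces. Be aware, then, that your appeal to Theorem \ref{exth3.4} silently imports that missing hypothesis: it is not available from the literal statement, and it is exactly what rules out your own counterexample by forcing $(C,+)$ to be finitely generated. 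What you have proved is the corrected statement --- which is the only sensible reading, and the only one the paper ever uses.
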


\begin{theo}
Let $B$ be an almost polycyclic brace such that $\Gamma_{n+1}(B)$ is finite for some~\hbox{$n\in\mathbb{N}$.} Then $B/\zeta_n(B)$ is finite.
\end{theo}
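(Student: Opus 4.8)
The plan is to reduce to a centrally nilpotent quotient and then repeatedly apply Lemma~\ref{bellooo} to lift a central chain from below. First I would put $T=\Gamma_{n+1}(B)$, which is a finite ideal of $B$ by hypothesis. Since the lower central series commutes with the formation of quotients, $\Gamma_{n+1}(B/T)=(\Gamma_{n+1}(B)+T)/T=\{0\}$, so $B/T$ is centrally nilpotent of class at most $n$, that is, $B/T=\zeta_n(B/T)$. Writing $Z_i/T=\zeta_i(B/T)$, I thus obtain a finite chain of ideals of $B$
$$T=Z_0\leq Z_1\leq\ldots\leq Z_n=B$$
with $Z_{i+1}/Z_i\leq\zeta(B/Z_i)$ for all $0\leq i<n$ (using $(B/T)/(Z_i/T)\cong B/Z_i$).

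Next I would construct by induction on $i$ a chain of ideals $\{0\}=L_0\leq L_1\leq\ldots\leq L_n$ of $B$ enjoying the two invariants $L_i\leq\zeta_i(B)$ and $Z_i/L_i$ finite. The base case $i=0$ is immediate, since $L_0=\{0\}$ and $Z_0=T$ is finite. For the inductive step I would work inside the brace $\overline{B}=B/L_i$, which is again almost polycyclic as a quotient of $B$, so that Lemma~\ref{bellooo} is available there. Put $N=Z_i/L_i$ and $M=Z_{i+1}/L_i$: then $N$ is finite by the inductive hypothesis, and $M/N\cong Z_{i+1}/Z_i\leq\zeta(B/Z_i)=\zeta(\overline{B}/N)$. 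Lemma~\ref{bellooo} then yields an ideal of $\overline{B}$, which I write as $L_{i+1}/L_i$ with $L_{i+1}\trianglelefteq B$ and $L_i\leq L_{i+1}\leq Z_{i+1}$, such that $L_{i+1}/L_i\leq\zeta(B/L_i)$ and $Z_{i+1}/L_{i+1}\cong M/(L_{i+1}/L_i)$ is finite. This is precisely the second invariant at stage $i+1$.

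To close the induction I must still check that the first invariant propagates, namely that $L_i\leq\zeta_i(B)$ forces $L_{i+1}\leq\zeta_{i+1}(B)$; this is the one point deserving care, and it is the place where the two centrality conditions are combined. Since $L_i\leq\zeta_i(B)$, the canonical projection $\pi\colon B/L_i\to B/\zeta_i(B)$ is a well-defined surjective homomorphism of braces, and surjective homomorphisms carry $\operatorname{Soc}$, $Z(-,+)$ and $Z(-,\bigcdot)$, hence the centre $\zeta(-)$, into the corresponding objects of the image. Applying $\pi$ to $L_{i+1}/L_i\leq\zeta(B/L_i)$ therefore gives $(L_{i+1}+\zeta_i(B))/\zeta_i(B)\leq\zeta(B/\zeta_i(B))=\zeta_{i+1}(B)/\zeta_i(B)$, whence $L_{i+1}\leq\zeta_{i+1}(B)$. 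Once the induction is complete I have $L_n\leq\zeta_n(B)$ while $B/L_n=Z_n/L_n$ is finite; as $B/\zeta_n(B)$ is a quotient of $B/L_n$, it is finite as well, which is the assertion. The main obstacle is exactly this centrality-lifting step together with verifying that Lemma~\ref{bellooo} genuinely applies in each successive quotient $B/L_i$ (so that its output $L_{i+1}$ lands inside the true centre of $B/L_i$, not merely a centre-modulo-finite); everything else is bookkeeping with the two invariants.
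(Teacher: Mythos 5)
Your argument is correct and is in substance the paper's own proof: both hinge on applying Lemma \ref{bellooo} a total of $n$ times to convert the finite-by-central structure coming from $\Gamma_{n+1}(B)$ into a central-by-finite one, combined with the standard lifting observation that surjective brace homomorphisms carry $\operatorname{Soc}(-)$, $Z(-,+)$, $Z(-,\bigcdot)$, hence $\zeta(-)$, into the corresponding objects of the image, so that ideals central modulo $L_i\leq\zeta_i(B)$ land in $\zeta_{i+1}(B)$. The only divergence is bookkeeping: the paper inducts on $n$, applying the lemma once to $N=\Gamma_{n+1}(B)\leq M=\Gamma_n(B)$ and then passing to the quotient by the central ideal $I\leq\zeta(B)$ it produces, whereas you ascend the preimage of the upper central series of $B/\Gamma_{n+1}(B)$ carrying your two invariants explicitly --- the same engine either way.
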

\begin{proof}
By Lemma \ref{bellooo}, $\Gamma_n(B)$ contains a finite-index subbrace $I$ of $B$ such that $I\leq\zeta(B)$. By induction, $U/I=\zeta_{n-1}(B/I)$ has finite-index in $B$, so also $\zeta_n(B)\geq U$ has finite index in $B$.
\end{proof}

\begin{cor}
Let $B$ be a supersoluble brace. Then $\Gamma_{n+1}(B)$ is finite if and only if $B/\zeta_n(B)$ is finite.
\end{cor}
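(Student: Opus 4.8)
The plan is to obtain this equivalence directly from the two results that immediately precede it, so that essentially no new computation is required. The statement is a biconditional, so I would treat the two implications separately and then combine them.

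For the implication asserting that finiteness of $B/\zeta_n(B)$ forces $\Gamma_{n+1}(B)$ to be finite, I would simply invoke Theorem~\ref{schurtheor}: that result was proved for an arbitrary supersoluble brace $B$ under exactly the hypothesis that $B/\zeta_n(B)$ is finite, and its conclusion is precisely that $\Gamma_{n+1}(B)$ is finite. Thus one direction is already in hand verbatim.

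For the converse implication, I would first recall the observation made in Section~\ref{sectsuper} that every supersoluble brace is almost polycyclic. This is the one point that genuinely needs to be flagged, because the converse direction was established in the theorem stated just above the corollary only under the (formally weaker) hypothesis that the brace is almost polycyclic; so I must verify that this hypothesis is met in order to apply it. Granting that $B$ is almost polycyclic, if $\Gamma_{n+1}(B)$ is finite then that preceding theorem yields at once that $B/\zeta_n(B)$ is finite.

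Combining the two implications gives the asserted equivalence. The only subtlety — and it is really just bookkeeping rather than a true obstacle — is the reminder that supersolubility entails almost polycyclicity, which is what licenses the use of the theorem on almost polycyclic braces; beyond this the corollary is a formal consequence of Theorem~\ref{schurtheor} together with its almost polycyclic counterpart.
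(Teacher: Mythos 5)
Your proof is correct and is exactly the route the paper intends: the corollary is stated without proof precisely because it follows by combining Theorem~\ref{schurtheor} (for the direction assuming $B/\zeta_n(B)$ finite) with the preceding theorem on almost polycyclic braces (for the converse), using the fact noted in Section~\ref{sectsuper} that every supersoluble brace is almost polycyclic. Your explicit flagging of that last point is the only non-trivial observation, and you handled it properly.
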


In the final part of this section, we show how to recognize supersolubility in an almost polycyclic brace. One of the  ways is to look at the finite homomorphic images.

\begin{theo}\label{finitehomsupers}
Let $B$ be an almost polycyclic brace. Then $B$ is supersoluble if and only if all its finite homomorphic images are supersoluble.
\end{theo}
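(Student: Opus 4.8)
The easy direction is immediate: supersolubility passes to quotients (as remarked after Definition \ref{defsuper}), so every finite homomorphic image of a supersoluble brace is supersoluble. For the converse I would assume that every finite homomorphic image of the almost polycyclic brace $B$ is supersoluble, and prove that $B$ is supersoluble by induction on the Hirsch length $h$ of $(B,+)$. If $h=0$, then $(B,+)$ is finite, so $B$ is finite and hence supersoluble, being (isomorphic to) one of its own finite homomorphic images. Suppose $h>0$. By Theorem \ref{exth3.4}, $B$ has a chain of ideals $\{0\}=B_0\leq B_1\leq\ldots\leq B_r\leq B$ with $B/B_r$ finite and each $(B_{i+1}/B_i,+)$ torsion-free, finitely generated, and contained in $\operatorname{Soc}(B/B_i)$. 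As $(B,+)$ is infinite, $B_r\neq\{0\}$, so the first nonzero term $V:=B_1$ is an ideal of $B$ with $V\leq\operatorname{Soc}(B)$ and $(V,+)\cong\mathbb{Z}^k$ for some $k\geq1$. The plan is to locate inside $V$ a nonzero ideal $N$ of $B$ with $(N,+)$ infinite cyclic. Granting this, $B/N$ is almost polycyclic, all of its finite homomorphic images are finite homomorphic images of $B$ (hence supersoluble), and $(B/N,+)$ has Hirsch length $h-1$, so $B/N$ is supersoluble by induction; since $N\leq\operatorname{Soc}(B)$ and $(N,+)$ is infinite cyclic, prepending $N$ to a supersoluble chain of $B/N$ exhibits $B$ as supersoluble.

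The decisive observation is that inside the socle, multiplicative conjugation agrees with the $\lambda$-action. Indeed, for $v\in\operatorname{Soc}(B)$ and $b\in B$ one has $b\bigcdot v=b+b\ast v+v=b+\lambda_b(v)$ because $b\ast v=\lambda_b(v)-v$; writing $v'=b\bigcdot v\bigcdot b^{-1}\in V\subseteq\operatorname{Soc}(B)$ and using $v'\ast b=0$ together with $v'\in Z(B,+)$, one gets $b+\lambda_b(v)=v'\bigcdot b=v'+b=b+v'$, whence
\[ b\bigcdot v\bigcdot b^{-1}=\lambda_b(v)\qquad\text{for all }b\in B,\ v\in\operatorname{Soc}(B). \]
Consequently, an additive subgroup $W$ of $V$ is an ideal of $B$ if and only if it is $\lambda_b$-invariant for every $b\in B$: it is automatically a subbrace (on $V\subseteq\operatorname{Soc}(B)$ the two operations coincide), additive normality is automatic since $W\leq V\leq Z(B,+)$, $\lambda$-invariance makes $W$ a strong left-ideal, and the displayed identity turns $\lambda$-invariance into multiplicative normality; finally $W\leq V\leq\operatorname{Soc}(B)$ with no extra condition. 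Thus the task reduces to producing a $\lambda(B)$-invariant pure rank-$1$ subgroup of $(V,+)$.

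To produce it, consider the polycyclic-by-finite group $\mathcal{G}=(B,+)\rtimes_\lambda(B,\bigcdot)$, in which $(V,+)$ is a normal subgroup (it is $\lambda$-invariant and central in $(B,+)$, hence normalised by $(B,\bigcdot)$). I claim every finite quotient of $\mathcal{G}$ is supersoluble. Since $B$ is residually finite through its finite-index ideals, the subgroups $(K,+)\rtimes_\lambda(K,\bigcdot)$, with $K$ a finite-index ideal of $B$, form a cofinal family among the finite-index normal subgroups of $\mathcal{G}$; for such $K$ the quotient $\mathcal{G}/\bigl((K,+)\rtimes_\lambda(K,\bigcdot)\bigr)\cong (B/K,+)\rtimes_\lambda(B/K,\bigcdot)$ is supersoluble by Lemma \ref{supersolublebraceisgroup}, because $B/K$ is a finite homomorphic image of $B$. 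As supersolubility of finite groups is inherited by quotients, every finite quotient of $\mathcal{G}$ is supersoluble, and by the standard fact that a polycyclic-by-finite group is supersoluble precisely when all of its finite quotients are supersoluble (see, e.g.,~\cite{course}), $\mathcal{G}$ is itself supersoluble. Taking a normal series $1=\mathcal{G}_0\leq\ldots\leq\mathcal{G}_m=\mathcal{G}$ with cyclic factors and letting $j$ be minimal with $N_+:=(V,+)\cap\mathcal{G}_j\neq\{0\}$, we have $N_+\cap\mathcal{G}_{j-1}=\{0\}$, so $N_+$ embeds into the cyclic factor $\mathcal{G}_j/\mathcal{G}_{j-1}$ and, being a nonzero subgroup of the torsion-free group $(V,+)$, is infinite cyclic; moreover $N_+\trianglelefteq\mathcal{G}$. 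Since conjugation by $(B,\bigcdot)$ on $(V,+)\subseteq\operatorname{Soc}(B)$ is the $\lambda$-action, $N_+$ is $\lambda(B)$-invariant, so $N:=N_+$ is the sought infinite cyclic ideal of $B$.

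The main obstacle is the assertion that $\mathcal{G}$ is supersoluble, which rests on two points demanding care. The first is the cofinality of the family $\{(K,+)\rtimes_\lambda(K,\bigcdot)\}$ among the finite-index normal subgroups of $\mathcal{G}$: this is where the residual finiteness of $B$ by ideals, in the precise form established in \cite{tutti23-2}, is used to reduce an arbitrary finite quotient of $\mathcal{G}$ to one arising from a finite-index ideal of $B$ (a given finite-index normal $R\trianglelefteq\mathcal{G}$ meets $(B,+)$ in a finite-index $\lambda$-invariant subgroup, whose ideal core has finite index and supplies the required $K$). The second is the purely group-theoretic input that supersolubility of a polycyclic-by-finite group is detected on its finite quotients. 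Everything else — the socle computation, the dictionary ``$\lambda$-invariant subgroup of $V$ $=$ ideal of $B$'', and the descent of the induction on the Hirsch length — is routine once these two facts are in place.
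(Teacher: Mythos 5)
Your proof is correct and follows essentially the same route as the paper's: both pass to the polycyclic-by-finite group $\mathcal{G}=(B,+)\rtimes_\lambda(B,\bigcdot)$, show that all of its finite quotients are supersoluble by combining the finite-index-ideal result of \cite{tutti23-2} (Theorem 3.10) with Lemma \ref{supersolublebraceisgroup}, invoke Baer's criterion that a polycyclic-by-finite group with supersoluble finite quotients is supersoluble, extract an infinite cyclic normal subgroup of $\mathcal{G}$ lying inside $(\operatorname{Soc}(B),+)$, observe it is an ideal of $B$, and conclude by induction on the Hirsch length. The only cosmetic differences are that you verify by hand, via the identity $b\bigcdot v\bigcdot b^{-1}=\lambda_b(v)$ on the socle, what the paper obtains by citing Lemma 1.10 of \cite{Cedo}, and that the group-theoretic criterion you attribute to \cite{course} is more accurately credited to \cite{baer}, Satz 3.1, as the paper does.
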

\begin{proof}
If $B$ is supersoluble, then obviously such are also its (finite) homomorphic images. Conversely, suppose $B$ is infinite and that all its finite homomorphic images are supersoluble. Consider the natural semidirect product $G=N\rtimes_\lambda X$, where~\hbox{$N=(B,+)$} and $X=(B,\bigcdot)$. Then, $G$ is polycyclic-by-finite, since from the definition of almost polycyclic brace it follows that both $(B,+)$ and $(B,\bigcdot)$ are polycyclic-by-finite, and this class is closed with respect to forming extensions. If~$H$ is any finite-index normal subgroup of $G$, then there is a finite-index ideal $I$ of~$B$ such that $M=(I,+)\leq H\cap N$ and $Y=(I,\bigcdot)\leq H\cap X$ (see~\cite{tutti23-2},~The\-o\-rem~3.10). Since $G/MY\simeq (B/I,+)\rtimes_\lambda (B/I,\bigcdot)$, we have that $G/H$ is supersoluble by~Lem\-ma~\ref{supersolublebraceisgroup}. The arbitrariness of~$H$ shows that $G$ is supersoluble (see \cite{baer}, Satz 3.1).

Now, let $I$ be a non-trivial normal cyclic subgroup of $G$ contained in~\hbox{$(\operatorname{Soc}(B),+)\leq N$} (this is possible by~The\-o\-rem~\ref{exth3.4}). Then~Lem\-ma~1.10 of~\cite{Cedo} yields that $I\trianglelefteq B$. By induction, $B/I$ is supersoluble, and consequently $B$ is supersoluble.
\end{proof}

\medskip

The other way is through finite presentations of almost polycyclic braces.

\begin{theo}\label{decide}
There is an algorithm which, given a presentation of an almost polycyclic brace $B$, decides if~$B$ is supersoluble.
\end{theo}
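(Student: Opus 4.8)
The plan is to combine two earlier results of the paper: the characterization of supersolubility via finite homomorphic images (Theorem~\ref{finitehomsupers}), and the algorithmic machinery for almost polycyclic braces developed in~\cite{tutti23-2}. The key observation is that Theorem~\ref{finitehomsupers} reduces the infinite, semidecidable-looking property ``$B$ is supersoluble'' to a property of the finite quotients of $B$. I would first recall that, since $B$ is given by a finite presentation of an almost polycyclic brace, the associated semidirect-product group $G=(B,+)\rtimes_\lambda(B,\bigcdot)$ is polycyclic-by-finite and finitely presented, so by the results of~\cite{tutti23-2} one has effective (algorithmic) access to the finite-index ideals of $B$ and to their factor braces; in particular one can enumerate the finite homomorphic images of $B$ and compute inside each of them.

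\textbf{Setting up the two semi-decision procedures.} The strategy is to run two algorithms in parallel. The \emph{first} algorithm searches for a \emph{certificate of supersolubility}: by the definition of almost polycyclic brace, $B$ has a finite chain of ideals with the prescribed socle/cyclic-or-finite factors, and such a chain can be effectively searched for, refined, and checked against Definition~\ref{defsuper} (using that membership in $\operatorname{Soc}(B/I_i)$ and primality of $|I_{i+1}/I_i|$ are decidable in the finitely presented setting). If $B$ is supersoluble, this search terminates with an explicit supersoluble chain. The \emph{second} algorithm searches for a \emph{certificate of non-supersolubility}: by Theorem~\ref{finitehomsupers}, $B$ fails to be supersoluble if and only if \emph{some} finite homomorphic image $B/I$ fails to be supersoluble. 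Since $B$ is residually finite and its finite quotients can be enumerated, and since supersolubility of a \emph{finite} brace is decidable (one checks all chains of ideals of a finite object, a finite computation), the second algorithm enumerates the finite images $B/I$, tests each for supersolubility, and halts as soon as it finds a non-supersoluble one.

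\textbf{Correctness and termination.} If $B$ is supersoluble, the first algorithm halts (and the second never does); if $B$ is not supersoluble, Theorem~\ref{finitehomsupers} guarantees a non-supersoluble finite image, which the second algorithm will eventually produce, so it halts. Thus exactly one of the two searches terminates, and the combined procedure always halts with the correct answer. The only point requiring care is to argue that each individual step is genuinely effective: enumerating finite-index ideals of $B$, forming the finite quotient braces, and testing finite braces for supersolubility. All of this rests on the algorithmic framework of~\cite{tutti23-2}, which provides the decidability of membership, of equality, and of the relevant substructure computations for almost polycyclic braces given by a finite presentation.

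\textbf{The main obstacle} I expect is not the logical skeleton — which is the standard ``two semi-decision procedures, one for each answer'' argument — but verifying that the ingredients imported from~\cite{tutti23-2} are strong enough to make every step constructive: specifically, that one can \emph{effectively} enumerate the finite homomorphic images (equivalently, the finite-index ideals) of $B$ and \emph{effectively} perform the brace-theoretic computations (socle, ideal generation, order of factors) inside each image. Once the relevant effectiveness statements from~\cite{tutti23-2} are in hand, the proof is a short dovetailing argument; the real work lies in citing or establishing that the finite-quotient computations are algorithmic, after which Theorem~\ref{finitehomsupers} does the conceptual heavy lifting.
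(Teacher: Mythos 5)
Your proposal follows essentially the same route as the paper: two parallel semi-decision procedures, one enumerating finite braces and homomorphic images of $B$ into them and testing these images for supersolubility (halting precisely when $B$ is not supersoluble, by Theorem~\ref{finitehomsupers}), the other searching for an explicit supersoluble chain of ideals (halting precisely when $B$ is supersoluble). The only difference is that the paper carries out in detail the effectiveness of the certificate search --- enumerating $b$-words and invoking Theorems 4.1, 4.4, 4.7, 4.9 and 4.13 of \cite{tutti23-2} to test socle membership, ideal generation and finiteness of quotients --- which is exactly the obstacle you identify but defer to the framework of \cite{tutti23-2}.
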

\begin{proof}
Two recursive procedures are set in motion. The first constructs all finite braces in increasing orders and finds finitely many homomorphisms from $B$ into each of these braces. Then it tests if the image of $B$ is supersoluble. By Theorem \ref{finitehomsupers}, this procedure stops if $B$ is not supersoluble.

In order to complete the proof of the statement we need a procedure stopping if~$B$ is supersoluble. By Theorem 4.7 of \cite{tutti23-2}, we find a set $\{b_1,\ldots,b_n\}$ of additive and multiplicative generators of $B$, and, by Theorem 4.13 of \cite{tutti23-2}, we find~\hbox{$n\in\mathbb{N}$} and $I\trianglelefteq B$ such that $I\leq\operatorname{Soc}_n(B)$ and~$B/I$ is finite; in particular, we find a set $\{a_1,\ldots,a_m\}$ of additive and multiplicative generators of $I$. Suppose $I\neq\{0\}$ (see \cite{tutti23-2}, Theorem 4.4). Then we recursively enumerate all $b$-words with symbols $x_1,\ldots,x_m$, and for each of these $b$-words, say $\theta(x_1,\ldots,x_m)$, we  check if $[a,b_i]_+=a\ast b_i=a\ast a=0\neq a$ for all~\hbox{$1\leq i\leq n$,} where $a=\theta(a_1,\ldots,a_m)$ (see \cite{tutti23-2}, Theorem 4.1). If the result is negative, we move to the next $b$-word. If the result is positive, it means that $\langle a\rangle_+\leq\operatorname{Soc}(B)$; then we also check if $\langle a\rangle_+$ is an ideal of $B$ by using Theorems 4.4 and 4.9 of \cite{tutti23-2}. If so, we check if $I=\langle a\rangle_+$ (see \cite{tutti23-2}, Theorem 4.4); otherwise, we move to the next~\hbox{$b$-word.} If~\hbox{$I=\langle a\rangle_+$,} then $B/\langle a\rangle_+$ is finite; if not, we move to $B/\langle a\rangle_+$ and we repeat the previous procedure until we find, if possible, a finite chain of ideals of $B$ $$\{0\}=I_0\leq I_1\leq\ldots\leq I_\ell=I$$ such that, for each $0\leq i<\ell$, $I_{i+1}/I_i\leq\operatorname{Soc}(B/I_i)$ and $(I_{i+1}/I_i,+)$ is cyclic. Then we move to $B/I$, which is finite, so without loss of generality we may assume $B$ is finite. As before, we recursively enumerate all elements of $B$, and for each of them, say $x$, we check if $\langle x\rangle_+$ is an ideal of $B$ such that $\langle x\rangle_{\bigcdot}=\langle x\rangle_+$. If $x$ is such an element, then we check if $B=\langle x\rangle_+$. If so, the algorithm stops. If not, we repeat the same procedure to the quotient $B/\langle x\rangle_+$, until we find, if possible, a finite chain of ideals of $B$ $$\{0\}=J_0\leq J_1\leq\ldots\leq J_r=I$$ such that, for each $0\leq i<r$, $(I_{i+1}/I_i,+)$ and $(I_{i+1}/I_i,\bigcdot)$ are cyclic groups (see also~Re\-mark~\ref{remchepoiserve}). It is clear that if $B$ is supersoluble, then the algorithm stops after finitely many steps.~\end{proof}


\section{Hypercyclic and locally supersoluble  braces}\label{sectls}

The aim of this section is to provide a broader context to some of the results on supersoluble braces in a fashion which is similar to that in which some of the main results for central nilpotency are proved in (and generalized to) the broader contexts of locally centrally-nilpotent braces and hypercentral braces (see for example \cite{tutti23}). Here, the two key definitions are those of hypercyclic brace and locally supersoluble brace. A brace $B$ is {\it locally supersoluble} if its finitely generated subbraces are supersoluble, while it is {\it hypercyclic} if it has an ascending chain of ideals $$\{0\}=I_0\leq I_1\leq\ldots I_\alpha\leq I_{\alpha+1}\leq\ldots I_\lambda=B$$ \textnormal($\alpha$ and $\lambda$ are ordinal numbers\textnormal) such that, for each ordinal $\beta<\lambda$, either $(I_{\beta+1}/I_\beta,+)$ is an infinite cyclic group and $I_{\beta+1}/I_\beta\leq\operatorname{Soc}(B/I_\beta)$, or $I_{\beta+1}/I_\beta$ has prime order.

Clearly, every supersoluble brace is hypercyclic, and every hypercyclic brace is locally supersoluble by Corollary 3.3 of \cite{Tr23}, but the converses do not hold. Moreover, as for supersoluble braces (see Lemma \ref{lsupersolublebraceisgroup}), it turns out that all the relevant groups connected with a locally supersoluble brace have nice generalized supersoluble properties (we refer to \cite{Rob72} for the definitions and properties of hypercyclic and locally supersoluble groups).

\begin{lem}\label{lsupersolublebraceisgroup}
Let $B$ be a brace, and let $G=(B,+)\rtimes_\lambda(B,\bigcdot)$. If $B$ is locally supersoluble \textnormal(resp. hypercyclic\textnormal), then $G$  is supersoluble \textnormal(resp. hypercyclic\textnormal). In particular, $(B,+)$ and $(B,\bigcdot)$ are locally supersoluble \textnormal(resp. hypercyclic\textnormal) groups.
\end{lem}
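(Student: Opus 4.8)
The plan is to mirror the proof of Lemma~\ref{supersolublebraceisgroup}, but now transfinitely (for the hypercyclic case) and locally (for the locally supersoluble case). Recall from the earlier construction that if $I$ is an ideal of $B$, then $X=(I,+)$ is a normal subgroup of $G$ and $Y=(I,+)\rtimes_\lambda(I,\bigcdot)$ is also normal in $G$, with $G/Y\simeq(B/I,+)\rtimes_\lambda(B/I,\bigcdot)$; moreover $Y/X\simeq(I,\bigcdot)$. This is exactly the device that converts a single factor $I_{\beta+1}/I_\beta$ of the brace series into \emph{two} consecutive cyclic normal factors of $G$, one coming from the additive and one from the multiplicative group.

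First I would treat the hypercyclic case. Given an ascending chain $\{0\}=I_0\leq I_1\leq\ldots\leq I_\lambda=B$ of ideals witnessing hypercyclicity, I would refine the associated chain of normal subgroups of $G$
\[
\{1\}=X_0\leq Y_0\leq X_1\leq Y_1\leq\ldots\leq G,
\]
where $X_\beta=(I_\beta,+)$ and $Y_\beta=(I_\beta,+)\rtimes_\lambda(I_\beta,\bigcdot)$. For a successor step, $Y_\beta/X_\beta\simeq(I_{\beta+1}/I_\beta,\ldots)$ — careful, one must check the factors directly: $X_{\beta+1}/Y_\beta$ and $Y_\beta/X_\beta$ must each be shown cyclic. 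When $I_{\beta+1}/I_\beta$ has prime order, both its additive and multiplicative groups are cyclic of that prime order, so both factors are cyclic. When $(I_{\beta+1}/I_\beta,+)$ is infinite cyclic and lies in $\operatorname{Soc}(B/I_\beta)$, the additive factor is infinite cyclic, and membership in the socle forces $\lambda$ to act trivially, so the multiplicative group is also infinite cyclic — hence that factor is cyclic too. At limit ordinals one takes unions, which stays within the class of normal subgroups. This produces an ascending normal series of $G$ with cyclic factors, i.e. $G$ is hypercyclic.

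For the locally supersoluble case I would argue by finitely generated subgroups: a group is locally supersoluble precisely when every finitely generated subgroup is supersoluble. Let $H$ be a finitely generated subgroup of $G$. Since $G=(B,+)(B,\bigcdot)$ as a product, the finitely many generators of $H$ involve only finitely many elements of $B$ (via their additive and multiplicative components), so they lie in $(S,+)\rtimes_\lambda(S,\bigcdot)$ for some finitely generated subbrace $S$ of $B$. By hypothesis $S$ is supersoluble, so Lemma~\ref{supersolublebraceisgroup} gives that $(S,+)\rtimes_\lambda(S,\bigcdot)$ is supersoluble; hence its finitely generated subgroup $H$ is supersoluble, and $G$ is locally supersoluble. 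The ``in particular'' statements then follow because both $(B,+)$ and $(B,\bigcdot)$ embed in $G$ as (in the hypercyclic case, normal; in the locally supersoluble case, arbitrary) subgroups, and hypercyclicity and local supersolubility are inherited by subgroups.

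The main obstacle I anticipate is the bookkeeping in the hypercyclic case: one must verify that the refined series really is a \emph{normal} series of $G$ (not merely subnormal) and that every factor is cyclic, paying attention to the socle condition to secure triviality of the $\lambda$-action on the infinite-cyclic factors. Both the additive factor $X_{\beta+1}/Y_\beta$ — which is isomorphic to $(I_{\beta+1}/I_\beta,+)$ — and the multiplicative factor $Y_\beta/X_\beta\simeq(I_{\beta+1}/I_\beta,\bigcdot)$ need separate checks, and it is exactly the hypothesis $I_{\beta+1}/I_\beta\leq\operatorname{Soc}(B/I_\beta)$ in the infinite-cyclic case that guarantees the multiplicative factor is again infinite cyclic rather than merely polycyclic. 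Everything else is a routine transcription of the finite-length argument.
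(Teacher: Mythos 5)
Your locally supersoluble half is correct, and it in fact supplies the part of the argument the paper leaves implicit (the paper's proof explicitly treats only the hypercyclic case): every finitely generated subgroup of $G$ lies in $(S,+)\rtimes_\lambda(S,\bigcdot)$ for some finitely generated subbrace $S$ of $B$ (this is a genuine subgroup, since a subbrace $S$ satisfies $\lambda_c(S)\subseteq S$ for all $c\in S$), and then Lemma \ref{supersolublebraceisgroup} together with the subgroup-closure of supersolubility finishes the argument. Your reading of the conclusion as ``$G$ is \emph{locally} supersoluble'' is also the right one, consistent with the ``in particular'' clause.

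The hypercyclic half, however, contains a genuine error: the series you write down is not a series. With $X_\beta=(I_\beta,+)$ and $Y_\beta=(I_\beta,+)\rtimes_\lambda(I_\beta,\bigcdot)$, the containment $Y_\beta\leq X_{\beta+1}$ fails whenever $I_\beta\neq\{0\}$, because inside $G$ the multiplicative copy $(I_\beta,\bigcdot)$ sits in the complement $(B,\bigcdot)$ and meets $(B,+)$ trivially, so it cannot lie in $(I_{\beta+1},+)$. Hence the ``factor'' $X_{\beta+1}/Y_\beta$ does not exist; moreover the factor that does exist, $Y_\beta/X_\beta$, is isomorphic to $(I_\beta,\bigcdot)$ --- the multiplicative group of the \emph{whole} accumulated ideal $I_\beta$, not of $I_{\beta+1}/I_\beta$ --- and this is in general very far from cyclic. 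The repair is to interleave differently: between $Y_\beta$ and $Y_{\beta+1}$ insert $Z_\beta=(I_{\beta+1},+)\rtimes_\lambda(I_\beta,\bigcdot)$, which is the preimage in $G$ of the normal subgroup $(I_{\beta+1}/I_\beta,+)$ of $G/Y_\beta\simeq(B/I_\beta,+)\rtimes_\lambda(B/I_\beta,\bigcdot)$ and is therefore normal in $G$; then $Z_\beta/Y_\beta\simeq(I_{\beta+1}/I_\beta,+)$ and $Y_{\beta+1}/Z_\beta\simeq(I_{\beta+1}/I_\beta,\bigcdot)$, and your case analysis of the factors (prime order, or infinite cyclic inside the socle, where both operations coincide so the multiplicative factor is again infinite cyclic) applies verbatim, with unions at limit ordinals. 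This corrected interleaving is exactly the paper's argument, packaged there as a transfinite recursion: at each step one takes a bottom ideal $I$ of the current quotient, forms $M=(I,+)\leq N=(I,+)\rtimes_\lambda(I,\bigcdot)$ with cyclic factors $M$ and $N/M\simeq(I,\bigcdot)$, and passes to $G/N\simeq(B/I,+)\rtimes_\lambda(B/I,\bigcdot)$ --- note that there the multiplicative factor is always that of a single minimal piece, never of an accumulated ideal, which is precisely what your indexing loses.
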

\begin{proof}
We only prove the result for a hypercyclic brace $B$. By definition, $B$ has a non-zero ideal $I$ such that either $I\leq\operatorname{Soc}(B)$ and $(I,+)$ is infinite cyclic, or $|I|$ is a prime. Then $M=(I,+)$ and $N=(I,+)\rtimes_\lambda(I,\bigcdot)$ are normal subgroups of $G$ such that $M$ and~$N/M$ are cyclic groups, and $G/N\simeq (B/I,+)\rtimes_\lambda(B/I,\bigcdot)$. Iterating this procedure by transfinite recursion, we may define an ascending series of normal subgroups of $G$ with cyclic factors that must reach $G$ at some point.
\end{proof}

\medskip

In Section \ref{sectsuper}, we showed that a brace is supersoluble if and only if it is weakly supersoluble. This immediately implies that a brace is locally supersoluble if and only if it is locally weakly-supersoluble. Taking inspiration from the definition of a weakly supersoluble brace, we could define weakly hypercyclic braces in the obvious way (requiring the infinite factors to just be contained in the additive centre). Although we do not wish to bore the reader with a precise definition, we now show that again these concepts turn out to be equivalent (so there is actually no need to give a definition).

\begin{lem}\label{lemequivalenza2}
Let $B$ be a locally supersoluble brace and let $I$ be an ideal of $B$.
\begin{itemize}
    \item[\textnormal{(1)}] If $I\leq Z(B,+)$ and $|I|$ is a prime, then $I\leq\operatorname{Soc}(B)$.
    \item[\textnormal{(2)}] If $I$ is abelian and $(I,+)$ is infinite cyclic, then $I\leq\operatorname{Soc}(B)$.
\end{itemize}
\end{lem}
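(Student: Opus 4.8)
The plan is to reduce both statements to the already-established supersoluble case, Lemma \ref{lemequivalenza}, via the standard localisation principle: each of the two target conclusions — $a\ast b=0$ and $a+b=b+a$ — involves only finitely many elements, so it can be tested inside a suitable finitely generated (hence supersoluble) subbrace. The only structural fact I need is that if $C\leq B$ is any subbrace and $I\trianglelefteq B$, then $I\cap C\trianglelefteq C$. This is routine: $I\cap C$ is normal in both $(C,+)$ and $(C,\bigcdot)$, and it is $\lambda_C$-invariant because a subbrace is closed under the $\lambda$-action of its own elements (since $\lambda_d(e)=-d+d\bigcdot e\in C$ whenever $d,e\in C$) while $I$ is $\lambda$-invariant.

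For (1), I would fix a nonzero $a\in I$ and an arbitrary $b\in B$, and set $D=\langle a,b\rangle$, a $2$-generator subbrace, hence supersoluble. Then $I\cap D$ is an ideal of $D$ containing $a\neq 0$; since $(I,+)\cong C_p$, every nonzero subgroup of $I$ equals $I$, so $I\cap D=I$ has prime order $p$, and the hypothesis $I\leq Z(B,+)$ forces $I\leq Z(D,+)$. Applying Lemma \ref{lemequivalenza}(1) to the supersoluble brace $D$ gives $I\leq\operatorname{Soc}(D)\leq\operatorname{Ker}(\lambda_D)$, whence $a\ast b=\lambda_a(b)-b=0$. As $b$ was arbitrary, $a\in\operatorname{Ker}\lambda$; together with $a\in I\leq Z(B,+)$ this yields $a\in\operatorname{Ker}\lambda\cap Z(B,+)=\operatorname{Soc}(B)$. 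Since $\operatorname{Soc}(B)$ is a subbrace and $I=\langle a\rangle_+$, we conclude $I\leq\operatorname{Soc}(B)$.

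For (2), I would again fix a generator $a$ of the infinite cyclic group $(I,+)=\langle a\rangle_+$ and an arbitrary $b\in B$, and form $D=\langle a,b\rangle$. Now $I\cap D$ is an ideal of $D$ that is abelian, being a subbrace of the trivial brace $I$, and whose additive group is a nonzero subgroup of $\mathbb{Z}$, hence infinite cyclic. Lemma \ref{lemequivalenza}(2) applied to $D$ gives $I\cap D\leq\operatorname{Soc}(D)$, so $a\in\operatorname{Soc}(D)=\operatorname{Ker}(\lambda_D)\cap Z(D,+)$; this delivers both $a\ast b=0$ and $[a,b]_+=0$. Letting $b$ range over $B$, I obtain $a\in\operatorname{Ker}\lambda\cap Z(B,+)=\operatorname{Soc}(B)$, and since $\operatorname{Soc}(B)$ is an additive subgroup, $I=\langle a\rangle_+\leq\operatorname{Soc}(B)$.

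The substance of the argument is entirely carried by Lemma \ref{lemequivalenza}; the genuine work here is only to check that the hypotheses transfer to the finitely generated subbrace $D$. I expect the mildest friction point to be confirming that $I\cap D$ inherits exactly the required shape — prime order in (1), infinite cyclic and abelian in (2) — and lies in $Z(D,+)$ where needed; no new difficulty beyond this bookkeeping is anticipated, since passing to $\langle a,b\rangle$ converts the global hypotheses into their local counterparts automatically.
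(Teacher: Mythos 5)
Your proposal is correct and follows essentially the same route as the paper: the paper's proof also reduces to Lemma \ref{lemequivalenza} by observing that any finitely generated subbrace $F$ of $B$ containing $I$ is supersoluble, applying that lemma inside $F$, and letting $F$ vary to conclude $I\leq\operatorname{Soc}(B)$. Your version merely makes explicit the bookkeeping the paper leaves implicit (that $I\cap D$ is an ideal of $D$ of the required shape and that centrality transfers), which is fine.
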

\begin{proof}
If $F$ is any finitely generated subbrace of $B$ containing $I$, then $I\leq\operatorname{Soc}(B)$ by~Lem\-ma~\ref{lemequivalenza}. The arbitrariness of $F$ yields $I\leq\operatorname{Soc}(B)$ and completes the proof.
\end{proof}

\begin{theo}
Let $B$ be a brace having an ascending chain of ideals $$\{0\}=I_0\leq I_1\leq\ldots I_\alpha\leq I_{\alpha+1}\leq\ldots I_\lambda=B$$ \textnormal(here $\alpha$ and $\lambda$ are ordinal numbers\textnormal) such that, for each ordinal $\beta<\lambda$, either $I_{\beta+1}/I_\beta$ is abelian and $(I_{\beta+1}/I_\beta,+)$ is a central cyclic subgroup of $(B/I_\beta,+)$, or $I_{\beta+1}/I_\beta$ has prime order. Then $B$ is hypercyclic.
\end{theo}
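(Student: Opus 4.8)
The plan is to mimic the proof of Theorem~\ref{weaksupersoluble}, but transfinitely, refining the given ascending chain factor by factor into one of hypercyclic type; the single new ingredient is that, to upgrade an infinite cyclic factor to a socle factor, one invokes Lemma~\ref{lemequivalenza2} in place of Lemma~\ref{lemequivalenza}, which is why the first task is to check that $B$ is locally supersoluble. Granting this for the moment, I would argue by transfinite induction along the chain: it suffices to show that, for each $\beta<\lambda$, the factor $A=I_{\beta+1}/I_\beta$, regarded as an ideal of $\bar B=B/I_\beta$, can be refined by a finite chain of ideals of $\bar B$ whose factors are either of prime order or infinite cyclic and contained in $\operatorname{Soc}(\bar B)$. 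At limit ordinals one simply takes the union of the chains produced so far, which again forms an ascending chain of ideals of $B$ of hypercyclic type; assembling everything yields the desired series from $\{0\}$ to $B$.

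For the successor step there are three cases. If $A$ has prime order there is nothing to do. If $A$ is abelian with $(A,+)$ infinite cyclic, then Lemma~\ref{lemequivalenza2}(2) gives at once $A\leq\operatorname{Soc}(\bar B)$, which is exactly a hypercyclic factor. The remaining case is $A$ abelian with $(A,+)$ finite cyclic and central in $(\bar B,+)$; here I would refine $A$ into factors of prime order. Since $A$ is an abelian brace it is trivial, so $(A,\bigcdot)=(A,+)$ is a finite cyclic group and every subgroup $C$ of $A$ is characteristic in it. Centrality makes $C$ normal in $(\bar B,+)$; moreover $\lambda_b|_A\in\operatorname{Aut}(A,+)$ (as $A$ is $\lambda$-invariant) and conjugation by any $b\in\bar B$ induces an automorphism of $(A,\bigcdot)$ (as $A\trianglelefteq(\bar B,\bigcdot)$), so $C$ is $\lambda$-invariant and normal in $(\bar B,\bigcdot)$ as well. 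Hence every subgroup of $A$ is an ideal of $\bar B$, and a composition series of the cyclic group $A$ provides the required refinement with prime-order factors.

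It remains to establish that $B$ is locally supersoluble. As in the transfinite analogue of Lemma~\ref{lsupersolublebraceisgroup}, the given chain turns $G=(B,+)\rtimes_\lambda(B,\bigcdot)$ into a hypercyclic group: each brace factor, being either of prime order or a trivial brace with cyclic additive group, contributes both a cyclic additive and a cyclic multiplicative factor, so $G$ carries an ascending normal series with cyclic factors. A hypercyclic group is locally supersoluble (see~\cite{Rob72}), and this transfers to $B$: a finitely generated subbrace $F$ determines a finitely generated subgroup of $G$ (generated by the additive and diagonal copies of a generating set of $F$), whence $(F,+)$ and $(F,\bigcdot)$ are supersoluble, in particular polycyclic. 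The induced ascending chain $\{F\cap I_\beta\}$ of ideals of $F$ then has only finitely many distinct terms, the additive group being Noetherian, and its factors are central cyclic abelian or of prime order; refining the finite cyclic ones as above and applying Theorem~\ref{weaksupersoluble} shows that $F$ is supersoluble.

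I expect the genuine obstacle to be precisely this transfer step: relating a finitely generated subbrace $F$ to a finitely generated subgroup of $G$ so as to conclude that $(F,+)$ is Noetherian, and hence that the induced chain on $F$ is finite. Once local supersolubility is secured, the rest is just the bookkeeping of the transfinite induction together with the two routine facts above. An alternative that sidesteps the explicit group-theoretic transfer would be to derive local supersolubility directly from the ascending-chain structure, exactly as in the proof that hypercyclic braces are locally supersoluble (Corollary~3.3 of~\cite{Tr23}), since that mechanism only uses that the factors are cyclic or finite, a property our chain already enjoys.
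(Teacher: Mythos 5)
Your handling of the successor step is exactly the paper's argument: an infinite abelian factor with infinite cyclic additive group is placed in the socle via Lemma~\ref{lemequivalenza2}, and a finite central cyclic abelian factor is refined into prime-order ideals --- the paper disposes of this case by pointing to the proof of Theorem~\ref{squarefreeorder}, and your direct verification (every subgroup of such a factor is characteristic in it, hence $\lambda$-invariant and normal in both groups, hence an ideal) is the same computation, made even cleaner by the fact that an abelian factor is a trivial brace. The only real divergence is how local supersolubility is secured, and here the paper does precisely what you describe as the ``alternative'': it applies Corollary~3.3 of \cite{Tr23} to the given ascending chain to conclude that $B$ is locally \emph{weakly} supersoluble, and then upgrades to locally supersoluble via Theorem~\ref{weaksupersoluble}. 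Your primary route --- making $G=(B,+)\rtimes_\lambda(B,\bigcdot)$ hypercyclic as a group and transferring back to the brace --- stumbles exactly where you predicted: for a finitely generated subbrace $F$, the subgroup of $G$ generated by the additive and diagonal copies of a generating set of $F$ is polycyclic, but there is no obvious reason it should contain the whole of $(F,+)\times\{0\}$; keeping the first coordinate closed under sums, products and the maps $\lambda_a$ forces the second coordinate to grow in step, and controlling that is precisely the finiteness transfer that Corollary~3.3 of \cite{Tr23} is designed to provide, so the group-theoretic detour gains nothing. In short: adopt your alternative for the first step and your proof coincides with the paper's; retain the primary route and you have a genuine gap at the transfer step.
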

\begin{proof}
It follows from Corollary 3.3 of \cite{Tr23} that $B$ is locally weakly-supersoluble, so also locally supersoluble by Theorem \ref{weaksupersoluble}. Now, let $J\leq I$ be ideals of $B$ such that~$I/J$ is abelian and $(I/J,+)$ is a central cyclic subgroup of $(B/J,+)$. If~$I/J$ is infinite, then, since~$B/J$ is locally supersoluble, it follows from Lemma \ref{lemequivalenza2} that $I/J\leq\operatorname{Soc}(B/J)$. Suppose $I/J$ is finite. Since $(I/J,+)$ and $(I/J,\bigcdot)$ are finite cyclic groups, there is a finite chain of ideals of $B$ $$I=J_0\leq J_1\leq\ldots\leq J_\ell=J$$ whose factors have prime order (see the proof of Theorem \ref{squarefreeorder} for more details). Therefore $B$ is hypercyclic.
\end{proof}

\medskip

There are a couple of ways to detect the property of being hypercyclic starting from some sub-structural entities of a brace. For example, the following characterization of a hypercyclic brace (an analog of which can be given for hypercentral braces) shows that we need only look for small ideals in every quotient of the brace.

\begin{theo}\label{charhypercyclic}
Let $B$ be a brace. Then $B$ is hypercyclic if and only if every non-zero quotient~$B/I$ of $B$ has a non-zero ideal $J/I$ such that either $J/I\leq\operatorname{Soc}(B/I)$ and $(J/I,+)$ is an infinite cyclic group, or $|J/I|$ is prime.
\end{theo}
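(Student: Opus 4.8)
The plan is to prove both implications, handling the ``only if'' part by extracting a suitable ideal from a hypercyclic chain, and the ``if'' part by a transfinite recursion. For the forward direction, suppose $B$ is hypercyclic with an ascending chain of ideals $\{I_\beta\}_{\beta\leq\lambda}$ of the type in the definition, and let $I$ be a proper ideal of $B$ (so that $B/I\neq\{0\}$). I would let $\pi\colon B\to B/I$ be the canonical projection and pick the least ordinal $\beta$ with $I_\beta\not\leq I$; since $I_\lambda=B$ such an ordinal exists, and it cannot be a limit ordinal (at a limit the term is the union of the previous ones), so $\beta=\gamma+1$ with $I_\gamma\leq I$ and $I_{\gamma+1}\not\leq I$. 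I then set $J/I:=\pi(I_{\gamma+1})$, which is a non-zero ideal of $B/I$ (a surjective brace homomorphism sends ideals to ideals) and, because $I_\gamma\leq I_{\gamma+1}\cap I$, a homomorphic image of the factor $I_{\gamma+1}/I_\gamma$. If $|I_{\gamma+1}/I_\gamma|$ is prime, then its non-zero image $J/I$ has prime order and we are done. If instead $(I_{\gamma+1}/I_\gamma,+)$ is infinite cyclic and $I_{\gamma+1}/I_\gamma\leq\operatorname{Soc}(B/I_\gamma)$, then $(J/I,+)$ is cyclic (a quotient of $\mathbb{Z}$) and $J/I\leq\operatorname{Soc}(B/I)$, the last containment holding because the defining relations of the socle are preserved by the brace epimorphism $B/I_\gamma\to B/I$. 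If $(J/I,+)$ is infinite cyclic we are again done.

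The only remaining case is when $J/I$ is a non-zero finite cyclic brace contained in $\operatorname{Soc}(B/I)$. Here I would note that $J/I$ is a trivial brace, so $+$ and $\bigcdot$ coincide on it, pick a prime $p$ dividing $|J/I|$, and take $P/I$ to be the unique subgroup of order $p$ of the cyclic group $(J/I,\bigcdot)=(J/I,+)$. Since $P/I$ is characteristic both in $(J/I,\bigcdot)\trianglelefteq(B/I,\bigcdot)$ and in $(J/I,+)\leq Z(B/I,+)$, it is normal in both $(B/I,+)$ and $(B/I,\bigcdot)$; moreover $J/I\leq\operatorname{Ker}\lambda$ forces every subgroup of $J/I$ to be $\lambda$-invariant. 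Hence $P/I$ is an ideal of $B/I$ of prime order, which completes the forward direction.

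For the converse I would build the required chain by transfinite recursion. Set $I_0=\{0\}$; given an ideal $I_\alpha\neq B$, apply the hypothesis to the non-zero quotient $B/I_\alpha$ to obtain a non-zero ideal $I_{\alpha+1}/I_\alpha$ of the prescribed type (and let $I_{\alpha+1}$ be its preimage in $B$), and at limit ordinals take unions $I_\lambda=\bigcup_{\alpha<\lambda}I_\alpha$, which is an ideal since an ascending union of ideals is an ideal. As long as $I_\alpha\neq B$ the chain strictly increases, so it cannot continue through all ordinals; at the ordinal where it stabilises we must have $I_\mu=B$, and the resulting chain witnesses that $B$ is hypercyclic.

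I expect the main obstacle to be the bookkeeping in the forward direction, specifically the finite-cyclic subcase: one must verify that a quotient of an infinite-cyclic socle factor genuinely lands in $\operatorname{Soc}(B/I)$, and that, when it becomes finite, a prime-order subgroup can be isolated as a full ideal of $B/I$ rather than merely of $J/I$ (which is where the identities $\operatorname{Soc}\leq\operatorname{Ker}\lambda\cap Z(-,+)$ and the triviality of socle subbraces do the work). The converse, by contrast, is a routine transfinite recursion whose only non-formal point is the termination-by-cardinality argument.
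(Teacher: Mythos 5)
The paper states this theorem without any proof at all (it is offered as the hypercyclic analogue of a known characterisation of hypercentrality), so there is no official argument to compare yours against; your proposal supplies the missing routine argument, and its architecture is the natural one: for the forward direction, locate the first term $I_{\gamma+1}$ of the hypercyclic chain not contained in $I$ (correctly noting that, with the standard convention that ascending series are continuous at limit ordinals, this first term must be a successor) and push its factor into $B/I$; for the converse, run a transfinite recursion with unions at limits and a cardinality bound for termination. The two genuinely non-formal points --- that the image of a socle factor lands in $\operatorname{Soc}(B/I)$, and the finite-cyclic degeneration of an infinite cyclic factor --- are both identified and handled, and the converse is indeed routine.

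There is, however, one justification that is a non sequitur and must be repaired, namely the claim that ``$J/I\leq\operatorname{Ker}\lambda$ forces every subgroup of $J/I$ to be $\lambda$-invariant''. That $J/I$ lies in $\operatorname{Ker}\lambda$ is a statement about the maps $\lambda_a$ for $a\in J/I$; it says nothing by itself about where the maps $\lambda_b$, for $b\in B/I$, send the elements of $P/I$, which is what $\lambda$-invariance means. In fact, for an ideal contained in $\operatorname{Soc}(B/I)$ one has $\lambda_b(x)=b\bigcdot x\bigcdot b^{-1}$, so a subgroup of such an ideal is $\lambda$-invariant precisely when it is normal in $(B/I,\bigcdot)$; kernel membership alone never yields this. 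The conclusion you need is nonetheless immediate from facts already in your proof: since $J/I$ is an ideal, each $\lambda_b$ maps $J/I$ into itself, hence (being injective on the finite group $(J/I,+)$) restricts to an automorphism of $(J/I,+)$, and $P/I$, being the unique subgroup of order $p$ of this finite cyclic group, is characteristic in $(J/I,+)$ and therefore fixed by every such restriction. Alternatively, one can quote Lemma 1.10 of \cite{Cedo}, used repeatedly in the paper: a subgroup of the socle that is normal in the multiplicative group --- which you have already shown $P/I$ to be, via characteristicity in $(J/I,\bigcdot)\trianglelefteq(B/I,\bigcdot)$ --- is an ideal. With this one-line correction the proof is complete and correct.
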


\medskip

By definition, local supersolubility can be detected from the behaviour of the finitely generated subbraces. It is not possible to state the same for the property of being hypercyclic, but if we replace ‘‘finitely generated’’ by ‘‘countable’’, then this can be achieved (note that every finitely generated brace is countable). In fact, it has been shown in \cite{tutti23} that the countable subbraces of a brace $B$ play a relevant role in determining which properties $B$ satisfies; in particular, Theorem 4.4 of \cite{tutti23} states that the property of being centrally nilpotent and that of being hypercentral can be both detected from the analysis of the countable subbraces. Our next result is a further example of this fact.

\begin{theo}
Let $B$ be a brace. Then $B$ is hypercyclic if and only if all its countable subbraces are hypercyclic.
\end{theo}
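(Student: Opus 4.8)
The plan is to establish both implications, leaning on the characterisation in Theorem~\ref{charhypercyclic}. Throughout, call a non-zero ideal $J$ of a brace \emph{small} if either $J\leq\operatorname{Soc}$ and $(J,+)$ is infinite cyclic, or $|J|$ is prime; note that a non-zero hypercyclic brace always possesses a small ideal, namely the least non-zero term of any witnessing ascending series.

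For the easy direction I would show that every subbrace $S$ of a hypercyclic brace $B$ is hypercyclic (in particular every countable subbrace is). Given a witnessing chain $\{I_\alpha\}$, intersect it with $S$. Since each $I_\alpha$ is normal in $(B,+)$ and $(B,\bigcdot)$ and is $\lambda$-invariant, while $S$ is closed under $+$, $\bigcdot$ and under $\lambda_s$ for $s\in S$, each $I_\alpha\cap S$ is an ideal of $S$; the chain is ascending, continuous at limits, and reaches $S$. The factor $(I_{\alpha+1}\cap S)/(I_\alpha\cap S)$ embeds into $I_{\alpha+1}/I_\alpha$, so its additive group is a subgroup of a group that is either of prime order or infinite cyclic, hence cyclic; and when $I_{\alpha+1}/I_\alpha\leq\operatorname{Soc}(B/I_\alpha)$, the memberships $\lambda_s(s')-s'\in I_\alpha$ and $[s,s']_+\in I_\alpha$ (valid for $s\in I_{\alpha+1}\cap S$ and all $s'\in B$, hence for $s'\in S$) together with the closure of $S$ give $(I_{\alpha+1}\cap S)/(I_\alpha\cap S)\leq\operatorname{Soc}\big(S/(I_\alpha\cap S)\big)$. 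Discarding trivial factors, $S$ is hypercyclic; the same intersection-and-image argument shows hypercyclicity also passes to quotients.

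For the converse, assume every countable subbrace of $B$ is hypercyclic and suppose, for a contradiction, that $B$ is not. By Theorem~\ref{charhypercyclic} there is an ideal $I$ with $\bar B:=B/I\neq\{0\}$ having no small ideal. First observe that $\bar B$ inherits the hypothesis: any countable subbrace of $\bar B$ is the image of the (countable) subbrace of $B$ generated by a set of lifts of its generators, and that subbrace is hypercyclic by hypothesis, so its image is hypercyclic by the previous paragraph. The aim is then to produce a non-zero countable subbrace $C$ of $\bar B$ with \emph{no} small ideal; being countable it is hypercyclic, which contradicts the fact that a non-zero hypercyclic brace always has a small ideal.

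I would construct $C=\bigcup_{n\in\mathbb{N}}C_n$ as an increasing union of countable subbraces, with $C_0$ containing a fixed non-zero element. At stage $n$, for each $x\in C_n$ the additive subgroup $\langle x\rangle_+$ is cyclic, and since $\bar B$ has no small ideal, either $\langle x\rangle_+$ fails to be an ideal of $\bar B$ (so some $c\in\bar B$ witnesses the failure of additive normality, of $\lambda$-invariance, or of multiplicative normality), or it is an ideal of $\bar B$ that is not small, which forces $(\langle x\rangle_+,+)$ to be infinite cyclic with $\langle x\rangle_+\not\leq\operatorname{Soc}(\bar B)$, so some $b\in\bar B$ satisfies $x\ast b\neq0$ or $[x,b]_+\neq0$. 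Throwing these countably many witnesses into $C_{n+1}$ and closing under the brace operations keeps $C_{n+1}$ a countable subbrace. In the limit every $x$ occurs at a finite stage together with its witness, and since all relevant products are computed inside $\bar B$ and are unchanged in $C$, the witness still certifies that $\langle x\rangle_+$ is not a small ideal of $C$. As any small ideal of $C$ is additively cyclic, hence of the form $\langle x\rangle_+$ for a single $x\in C$, the brace $C$ has no small ideal, the sought contradiction; therefore $B$ is hypercyclic. The hard part will be exactly this countable-closure construction: because an ideal of a subbrace need not be an ideal of the ambient brace, one must be careful to include witnesses for \emph{all} three failure modes of ideality as well as the socle failure, so that every potential small ideal of $C$ is genuinely ruled out while $C$ stays countable.
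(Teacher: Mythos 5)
Your overall strategy coincides with the paper's: both proofs reduce the converse direction to producing, in any brace all of whose countable subbraces are hypercyclic, a "good" cyclic ideal, and both do so by a countable witness-closure construction ($C_0\leq C_1\leq\ldots$, adjoining witnesses at each stage) followed by a contradiction with the hypothesis applied to the countable union. Your easy direction and the transfer of the hypothesis to quotients are correct (the paper leaves both implicit). There is, however, one step that is wrong as written: the claim that if $\langle x\rangle_+$ is an ideal of $\bar B$ that is not small, this "forces $(\langle x\rangle_+,+)$ to be infinite cyclic". That is not the negation of smallness for a cyclic ideal: the third possibility is that $\langle x\rangle_+$ is finite of \emph{composite} order, and nothing you have said excludes $\bar B$ possessing such an ideal even though it has no small one (ruling this out would require showing that every finite additively-cyclic ideal contains a prime-order ideal of $\bar B$, which is a genuine argument --- multiplicative normality of the additive subgroup of order $p$ is not automatic when the ideal is not a trivial brace). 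For such an $x$ neither of your two cases applies and no witness of either kind need exist, so your dichotomy is not exhaustive. Fortunately the slip is shallow: a cyclic group of composite finite order can never be a small ideal of \emph{any} brace, in particular not of $C$, so such $x$ simply require no witness at all; adding this third, witness-free case to the dichotomy leaves the limit argument and the final contradiction intact.

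It is instructive to see how the paper's own proof avoids this corner. Instead of smallness it works, inside $B$ itself, with the weaker property $(\star)$: "abelian ideal which is finite or contained in $\operatorname{Soc}(B)$". A finite instance of $(\star)$ does refine to a prime-order ideal, precisely because an abelian ideal is a trivial brace, so its additive and multiplicative structures coincide and a characteristic additive subgroup is automatically multiplicatively normal (this is the point of the reference to the proof of Theorem \ref{squarefreeorder}); failure of abelianness is then witnessed internally by $y\ast y\neq 0$, with no ambient element needed. In exchange, your treatment of the socle failure for infinite cyclic ideals is actually more careful than the paper's: you correctly allow the witness $[x,b]_+\neq 0$ alongside $x\ast b\neq 0$, whereas the paper's proof lists only $y\ast b_y\neq 0$ and thereby overlooks an infinite cyclic ideal lying in $\operatorname{Ker}(\lambda)$ but not in $Z(B,+)$. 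So: same method, dual oversights --- yours in the finite case, the paper's in the infinite one; both are repairable by one additional line.
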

\begin{proof}
Of course, we only need to show that if all countable subbraces of $B$ are hypercyclic, then $B$ is hypercyclic. Moreover, by Theorem \ref{charhypercyclic} (see also the proof of~The\-o\-rem~\ref{squarefreeorder}), we only need to find a non-zero $1$-generator subgroup $I$ of $(B,+)$ that has the following property with respect to $B$: 
\begin{itemize}
    \item[($\star$)] $I$ is an abelian ideal of $B$, and either $|I|$ is finite or $I\leq\operatorname{Soc}(B)$.
\end{itemize}
Suppose there is no such a subgroup. Let $0\neq x\in B$ and put $X_0=\langle x\rangle$; in particular,~$X_0$ is a countable subbrace of $B$. Assume now we have defined a countable subbrace~$X_n$ of $B$ for some non-negative integer $n$. For each $y\in X_n\setminus\{0\}$, we have that $\langle y\rangle_+$ does not satisfy ($\star$), so either $y\ast y\neq 0$ (in this case, we let $b_y=0$), or there is an element~\hbox{$b_y\in B$} such that one of the following conditions holds:
\begin{itemize}
    \item $\langle y\rangle_+$ is not an ideal of~$\langle X_n,b_y\rangle$.
    \item $\langle y\rangle_+$ is an infinite ideal of $\langle X_n,b_y\rangle$ and $y\ast b_y\neq0$.
\end{itemize}
Define $X_{n+1}=\langle X_n,b_y\,:\, y\in X_n\setminus\{0\}\rangle$. Finally, put $$X=\bigcup_{n\geq0}X_n.$$ Since $X_i\leq X_{i+1}$ for all $i$, we have that $X$ is a countable subbrace of $B$. Thus, $(X,+)$ contains a non-zero $1$-generator subgroup $U=\langle a\rangle_+$ satisfying  ($\star$) with respect to~$X$. Therefore, there is $\ell\geq0$ such that $a\in X_\ell$, and this means that $U=\langle a\rangle_+$ does not satisfy property ($\star$) with respect to $\langle X_{\ell},b_a\rangle\leq X_{\ell+1}\leq X$, an obvious contradiction.
\end{proof}

\medskip

Now, we deal with some structural properties of locally supersoluble (resp. hypercyclic) braces. The first thing we note is that one can immediately generalize~Co\-rol\-la\-ry~\ref{psubbrace} to arbitrary locally supersoluble braces.

\begin{theo}\label{oddprime}
Let $B$ be a locally supersoluble brace and let $p$ be a prime number. Then $U_p^+(B)=U_p^{\bigcdot}(B)$ is an ideal of $B$.
\end{theo}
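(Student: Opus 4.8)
The plan is to prove every assertion by \emph{localising} to finitely generated subbraces, where Corollary~\ref{psubbrace} is available, exploiting the elementary fact that the additive and the multiplicative order of an element are intrinsic and hence unchanged on passing to any subbrace containing it.

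First I would establish the set-theoretic equality $U_p^+(B)=U_p^{\bigcdot}(B)$. Fix $x\in B$ and consider the $1$-generator (so finitely generated, hence supersoluble) subbrace $\langle x\rangle$. Whether $x$ belongs to $U_p^+(-)$ (resp. $U_p^{\bigcdot}(-)$) depends only on the additive (resp. multiplicative) order of $x$, and these orders coincide whether computed in $\langle x\rangle$ or in $B$; thus $x\in U_p^+(B)\Leftrightarrow x\in U_p^+(\langle x\rangle)$, and likewise for $U_p^{\bigcdot}$. Applying Corollary~\ref{psubbrace} to $\langle x\rangle$ gives $U_p^+(\langle x\rangle)=U_p^{\bigcdot}(\langle x\rangle)$, and chaining these equivalences yields $x\in U_p^+(B)\Leftrightarrow x\in U_p^{\bigcdot}(B)$. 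Write $U$ for this common set; note $0\in U$.

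Next I would show that $U$ is a subbrace. Given finitely many $x_1,\ldots,x_k\in U$, any sum, difference, product, or inverse of them lies in the finitely generated subbrace $F=\langle x_1,\ldots,x_k\rangle$, which is supersoluble. By Corollary~\ref{psubbrace}, $U_p^+(F)=U_p^{\bigcdot}(F)$ is an ideal of $F$, and each $x_i$ lies in it because its orders are unchanged in $F$. As an ideal is a subgroup of both $(F,+)$ and $(F,\bigcdot)$, the outcome of the operation again lies in $U_p^+(F)$, whence (orders being intrinsic) in $U$. Taking the operation to be $+$ and additive inversion shows $U\leq(B,+)$, and taking $\bigcdot$ and multiplicative inversion shows $U\leq(B,\bigcdot)$; so $U$ is a subbrace.

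Finally, $U$ is an ideal. Since $U$ is a subgroup of $(B,+)$ whose membership is determined by additive order, every automorphism of $(B,+)$---in particular every $\lambda_b$---maps $U$ onto itself; thus $U$ is a characteristic subgroup of $(B,+)$, and hence a strong left-ideal of $B$ exactly as in the discussion preceding Corollary~\ref{psubbrace}. Symmetrically, $U=U_p^{\bigcdot}(B)$ is a characteristic, hence normal, subgroup of $(B,\bigcdot)$. A strong left-ideal that is normal in $(B,\bigcdot)$ is an ideal, so $U\trianglelefteq B$. The only point needing care throughout is the \emph{locality} of each claim---that membership, closure, $\lambda$-invariance and normality may all be tested inside a suitable finitely generated subbrace---but in every instance this reduces to the invariance of additive and multiplicative orders under passage to subbraces, so no genuine obstacle arises.
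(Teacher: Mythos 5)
Your proof is correct and follows essentially the same route as the paper's: localise to finitely generated subbraces (which are supersoluble by hypothesis), apply Corollary~\ref{psubbrace} there, and use the fact that additive and multiplicative orders of an element are unchanged on passing to subbraces. The paper compresses all of this by working in the two-generator subbrace $X=\langle x,b\rangle$ and noting $\langle x\rangle^X\leq U_p^+(X)=U_p^{\bigcdot}(X)\leq U_p^{\bigcdot}(B)$, which yields the set equality, closure, $\lambda$-invariance and normality in one stroke, whereas you separate these into three steps and settle the ideal property by the (equally valid) observation that an order-defined subgroup of $(B,+)$, respectively of $(B,\bigcdot)$, is characteristic.
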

\begin{proof}
Let $x\in U_p^{+}(B)$, $b\in B$, and put $X=\langle x,b\rangle$. Then $$\langle x\rangle^X\leq U_p^+(X)=U_p^{\bigcdot}(X)\leq U_p^{\bigcdot}(B).$$ A symmetric argument shows that $\langle y\rangle^Y\leq U_p^+(B)$, where $y\in U_p^{\bigcdot}$ and $Y=\langle y,b\rangle$. This is clearly enough to prove the statement.
\end{proof}

\begin{rem}
{\rm Actually, the conclusions of Theorem \ref{oddprime} are satisfied by any brace that locally satisfy them (the proof being the same).}
\end{rem}

\medskip

As shown in \cite{5}, a great deal of structural information  is provided by the chief factors of a brace. Recall that if $B$ is brace and $J<I$ are ideals of $B$ such that $I/J$ is a minimal ideal of~$B/J$, then $I/J$ is said to be a {\it chief factor} of $B$. It is obvious from the definition that chief factors of supersoluble braces have prime order, and our next aim is to show that this is still true for locally supersoluble braces (see Corollary \ref{corchieflocallsuper}). As a by-product, we will also describe chief factors of locally soluble (see Corollary \ref{corchieffacls}) and locally polycyclic braces (see Corollary \ref{chieffactorLP}) --- recall that a brace $B$ is {\it polycyclic} if it has a finite chain of ideals $$\{0\}=I_0\leq I_1\leq\ldots\leq I_n=B$$ such that $I_{i+1}/I_i\leq\operatorname{Soc}(B/I_i)$ for all $0\leq i<n$ (see \cite{tutti23-2}). In order to do so, we need to introduce some further soluble-related concepts. Let $I$ be an ideal of a brace $B$. Then~$I$ is {\it $B$-hypoabelian} if there is a descending chain of ideals of $B$ $$\{0\}=I_\lambda\ldots\leq I_{\alpha+1}\leq I_\alpha\ldots \leq I_1\leq I_0=I$$ (here $\alpha$ and $\lambda$ are ordinal numbers) such that, for all $0\leq\beta<\lambda$, the factor $I_{\beta}/I_{\beta+1}$ is abelian; if $\lambda$ is a finite ordinal number, then $I$ is also said to be {\it $B$-soluble}. If $B$ is~\hbox{$B$-hypo}\-abelian, we also say that $B$ is {\it hypoabelian} and, clearly, $B$ is soluble if and only if it is~\hbox{$B$-soluble}. Note that hypoabelianity is not preserved with respect to forming quotients, and that every ideal of a hypoabelian (resp. soluble) brace $B$ is~\hbox{$B$-hypo}\-abe\-lian (resp. $B$-soluble).

\begin{theo}\label{chiefsoluble}
Let $I$ be a minimal ideal of a brace $B$. If, for every finitely generated subbrace~$F$ of $B$, the intersection $F\cap I$ is $F$-hypoabelian, then $I$ is an abelian brace which, as a group, can be described as either an elementary abelian $p$-group for some prime $p$, or a restricted direct product of copies of the additive group of the rational numbers.
\end{theo}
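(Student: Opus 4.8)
The plan is to prove the statement in two stages: first that $I$ is an abelian brace (equivalently $I\ast I=\{0\}$ and $(I,+)$ is abelian), and only then, using the minimality of $I$, to read off the group structure of $(I,+)$.

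For the first stage I would exploit minimality through a single dichotomy. Since $I\ast I\subseteq I\ast B\subseteq I$ and $[I,I]_+\subseteq I$, the ideal $D=\langle I\ast I,[I,I]_+\rangle^B$ of $B$ generated by these elements is contained in $I$, whence $D=\{0\}$ or $D=I$ by minimality. If $D=\{0\}$ we are done, so the whole point is to exclude $D=I$. Assume $D=I$. Recalling that every element of an ideal generated by a set $E$ is a finite expression in finitely many members of $E$, finitely many elements of $B$, and the brace operations together with $\lambda$ and the two conjugations, I would run the successive-enlargement construction already used for countable subbraces earlier in this section: starting from $\langle a\rangle$ with $0\neq a\in I$, at each step I adjoin, for every nonzero $y$ in the current (countable) subbrace intersected with $I$, the finitely many witnesses expressing $y$ inside the relative derived ideal $\langle(F\cap I)\ast(F\cap I),[F\cap I,F\cap I]_+\rangle^{F}$ of a finitely generated $F$. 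The union $C$ of these subbraces is a countable subbrace with $C\cap I\neq\{0\}$ and $C\cap I=\langle(C\cap I)\ast(C\cap I),[C\cap I,C\cap I]_+\rangle^{C}$; that is, $C\cap I$ coincides with its own relative derived ideal. A nonzero ideal equal to its own relative derived ideal cannot sit at the top of a descending chain of ideals with abelian factors, so $C\cap I$ is not $C$-hypoabelian, contradicting the hypothesis. Hence $D=\{0\}$ and $I$ is abelian. It is convenient to split this into first showing $I\ast B=\{0\}$, so that $I$ is a trivial brace and $(I,\bigcdot)=(I,+)$, and then showing $[I,I]_+=\{0\}$; each half is the same minimality dichotomy, with the corresponding ``perfect'' alternative excluded exactly as above.

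The main obstacle is precisely this last contradiction. The hypothesis is stated for \emph{finitely generated} subbraces, whereas the $B$-level perfectness of $I$ only becomes visible over the \emph{countable} subbrace $C$: at finite stages no contradiction appears, since the relative derived ideal of each $F\cap I$ is genuinely smaller than $F\cap I$. Bridging the two therefore requires the principle that the hypoabelian hypothesis is detected by countable subbraces — in the same spirit as the countable-subbrace detection results for central nilpotency and hypercyclicity recalled in this section — and establishing (or invoking) this propagation from finitely generated to countable subbraces is the delicate point of the argument.

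For the second stage, suppose $I$ is abelian, so that $I$ is a trivial brace and $(I,\bigcdot)=(I,+)$ is an abelian group. I claim every characteristic subgroup $C$ of $(I,+)$ is an ideal of $B$: each $\lambda_b|_I$, each additive conjugation $x\mapsto b+x-b$, and each multiplicative conjugation $x\mapsto bxb^{-1}$ restricts to an automorphism of $(I,+)$ — the last because $I$ is trivial — so all of them fix $C$; consequently $C$ is $\lambda$-invariant and normal in both $(B,+)$ and $(B,\bigcdot)$, and it is a subbrace. By minimality, $(I,+)$ has no proper nonzero characteristic subgroup, i.e. it is characteristically simple, and a routine case split finishes the proof. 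If $(I,+)$ has nonzero torsion, its torsion subgroup and its primary components are characteristic, forcing $(I,+)$ to be a $p$-group with $p(I,+)\in\{\{0\},(I,+)\}$; the divisible alternative is ruled out by the characteristic subgroup of elements of order $p$, leaving $(I,+)$ elementary abelian. If $(I,+)$ is torsion-free, then $n(I,+)$ is characteristic and nonzero for every $n$, so $(I,+)$ is divisible and torsion-free, hence a restricted direct product of copies of $(\mathbb{Q},+)$.
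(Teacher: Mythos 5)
Your second stage is fine; the problem is the first, and it is exactly the gap you flagged yourself — but it is worse than ``delicate'': as stated, it is fatal. Your contradiction only materialises over the countable subbrace $C$ (at every finite stage nothing contradictory has happened, as you note), so you must pass from ``$F\cap I$ is $F$-hypoabelian for every finitely generated $F$'' to the same statement for countable subbraces. That passage is not merely unestablished; as a standalone principle it is false, because hypoabelianity is not a local property. Let $M$ be McLain's group $M(\mathbb{Q},\mathbb{F}_p)$, a countable, non-abelian, characteristically simple, locally finite $p$-group, and let $B=I$ be the trivial brace on $M$. Every finitely generated subbrace $F$ is a finite $p$-group whose derived series is a chain of ideals of $F$ with abelian factors, so $F\cap I=F$ is $F$-hypoabelian; yet $M$ is perfect, so taking $C=B$ (which is countable) the ideal $C\cap I=C$ is not $C$-hypoabelian. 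The countable-detection theorems you invoke for support go in the opposite direction (from all countable subbraces up to $B$ itself); what you need is a local-to-global passage of precisely the kind that fails here — compare the paper's own remark that locally supersoluble does not imply hypercyclic. Nor can minimality of $I$ rescue the propagation in your scheme, since you spend minimality entirely on the dichotomy $D\in\{\{0\},I\}$ and it plays no role afterwards.

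The paper closes this hole by never leaving the finitely generated world: minimality is used to localise around a single witness rather than to set up a global dichotomy. If $I$ is not abelian, choose $a,b\in I$ and $0\neq c\in\{[a,b]_+,\,a\ast b\}$. Minimality gives $I=\langle c\rangle^{B}$, and since generation of an ideal is a finitary process there is a finitely generated subbrace $K$ with $c\in K$ and $a,b\in H:=\langle c\rangle^{K}$. Then $H\leq K\cap I$ is a nonzero $K$-hypoabelian ideal of $K$, so there is an ideal $L$ of $K$ with $L<H$ and $H/L$ abelian; but $a,b\in H$ force $c\in L$, whence $H=\langle c\rangle^{K}\leq L<H$, a contradiction. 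In effect, minimality lets the ``perfectness'' loop close in one step inside a single finitely generated subbrace, which is what your $\omega$-step closure was attempting without the means to finish. Your second stage, by contrast, is correct and essentially identical to the paper's: once $I$ is abelian, characteristic subgroups of $(I,+)$ such as $I[p]$ and $pI$ are ideals of $B$, so minimality makes $(I,+)$ characteristically simple, hence elementary abelian for some prime $p$ or a restricted direct product of copies of the additive group of $\mathbb{Q}$.
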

\begin{proof}
Suppose $I$ is non-abelian. Then there exist elements $a,b\in I$ such that \hbox{$S=\{[a,b]_+,a\ast b\}\neq\{0\}$.} Let $c\in S\setminus\{0\}$. The minimality of $I$ in $B$ implies that~\hbox{$I=\langle c\rangle^B$,} so there is a finite subset $E$ of $B$ such that $c\in E$ and $\langle a,b\rangle\leq H=\langle c\rangle^{K}$, where $K=\langle E\rangle$. Since $H\leq K\cap I$, we have that $H$ is $K$-hypoabelian and hence there is an ideal $L$ of $K$ which is strictly contained in $H$ and such that $H/L$ is abelian.  Thus, $c\in S\subseteq L$ and consequently $H=\langle c\rangle^K\leq L<H$, a contradiction. Therefore $I$ is abelian.

Now, let $p$ be any prime. Clearly, $I[p]=\{x\in I\,:\, x^p=0\}$ is an ideal of $B$, so either $I[p]=I$ or $I[p]=\{0\}$. Similarly, $I^{p,+}=I^{p,\bigcdot}$ is an ideal of $B$. This implies that either~$(I,+)$ is an elementary abelian $q$-group for some prime $q$, or it is torsion-free and divisible, which means it is a restricted direct product of copies of the additive group of rational numbers.
\end{proof}

\begin{cor}\label{corchieffacls}
Let $B$ be a locally soluble brace. Then every chief factor of $B$ is abelian and, as a group, can be described as either an elementary abelian $p$-group for some prime $p$, or a restricted direct product of copies of the additive group of the rational numbers.
\end{cor}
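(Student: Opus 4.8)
The plan is to obtain the corollary as a direct application of Theorem~\ref{chiefsoluble}, the real analytic content being already contained in that theorem. Recall that a chief factor of $B$ is, by definition, a minimal ideal $I/J$ of a quotient $\bar B = B/J$. So the first step is a reduction to the case $J=\{0\}$: I would observe that the class of locally soluble braces is closed under quotients, since any finitely generated subbrace $\bar F$ of $B/J$ is the image $\pi(F)$ of a finitely generated subbrace $F$ of $B$ (lift a finite generating set of $\bar F$ to $B$ and take the subbrace they generate, whose image under the canonical projection $\pi$ is exactly $\bar F$), and solubility is inherited by homomorphic images. Hence $\bar B$ is again locally soluble and $I/J$ is a minimal ideal of it, so it suffices to prove that every minimal ideal $I$ of a locally soluble brace $B$ is abelian with the stated additive structure.

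The second step is to check the hypothesis of Theorem~\ref{chiefsoluble} for such a minimal ideal $I$. Let $F$ be any finitely generated subbrace of $B$. By local solubility $F$ is soluble, and $F\cap I$ is an ideal of $F$ (the intersection of an ideal of $B$ with a subbrace of $B$ is always an ideal of that subbrace). Now I invoke the fact, recalled just before Theorem~\ref{chiefsoluble}, that every ideal of a soluble brace is $F$-soluble; in particular $F\cap I$ is $F$-soluble, hence $F$-hypoabelian. Since $F$ was arbitrary, this is precisely the hypothesis of Theorem~\ref{chiefsoluble}, which therefore applies and tells us that $I$ is an abelian brace whose additive group is either an elementary abelian $p$-group for some prime $p$, or a restricted direct product of copies of $(\mathbb{Q},+)$. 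Transporting this back through the reduction gives the statement for the chief factor $I/J$.

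I do not expect a genuine obstacle here, as the corollary is essentially a formal consequence of the theorem together with the closure of local solubility under quotients. The only points that require care are bookkeeping ones: confirming that finitely generated subbraces of a quotient really do lift to finitely generated subbraces (so that local solubility descends to $\bar B$), and confirming that $F\cap I$ is an \emph{ideal} of $F$ — and hence $F$-soluble — rather than merely a subbrace. Both are immediate from the definitions, so the whole argument amounts to verifying that the hypoabelian-intersection condition of Theorem~\ref{chiefsoluble} is automatic in the locally soluble setting.
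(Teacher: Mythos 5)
Your proposal is correct and follows essentially the same route as the paper's own proof: reduce to a minimal ideal of $B$ via the closure of local solubility under quotients, observe that $F\cap I$ is an ideal of the soluble brace $F$ and hence $F$-soluble (so $F$-hypoabelian), and then apply Theorem~\ref{chiefsoluble}. The extra bookkeeping you carry out --- lifting finitely generated subbraces of a quotient and verifying that $F\cap I$ is an ideal of $F$ --- is precisely what the paper leaves implicit, so there is nothing to add.
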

\begin{proof}
Since local solubility is preserved with respect to forming  quotients, we only need to prove the statement for a minimal ideal $I$ of $B$. Now, let~$F$ be any finitely generated subbrace of $B$. Then $F$ is soluble, so $F\cap I$ is $F$-soluble and hence~\hbox{$F$-hypo}\-abelian. An application of Theorem \ref{chiefsoluble} completes the proof.
\end{proof}

\begin{theo}\label{chiefpoly}
Let $I$ be a minimal ideal of a brace $B$. If for every finitely generated subbrace~$F$ of $B$ the intersection $F\cap I$ is finitely generated and $F$-hypoabelian, then $I$ is an abelian brace which, as a group, can be described as an elementary abelian $p$-group for some prime $p$.
\end{theo}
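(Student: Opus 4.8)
The plan is to deduce the result from Theorem~\ref{chiefsoluble} together with the extra finiteness hypothesis. By Theorem~\ref{chiefsoluble}, the minimal ideal $I$ is an abelian brace whose additive group is either an elementary abelian $p$-group (which is exactly the desired conclusion) or a restricted direct product of copies of the additive group of the rational numbers; so the whole problem reduces to \emph{excluding} the second possibility. Assume for contradiction that $(I,+)$ is torsion-free and divisible, and write $V=(I,+)$, a non-zero $\mathbb{Q}$-vector space. Since $I$ is abelian, it is a trivial brace, and every additive automorphism of $V$ induced by the brace structure of $B$ is in fact $\mathbb{Q}$-linear (in a torsion-free divisible group division by each integer is unique, hence preserved by any automorphism). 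Fix $0\neq a\in I$; minimality gives $I=\langle a\rangle^B$.

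Next I record what finite generation buys us. Let $F$ be any finitely generated subbrace of $B$. By \cite{tutti23-2}, Theorem~4.7, both $(F,+)$ and $(F,\bigcdot)$ are finitely generated groups. The intersection $F\cap I$ is an ideal of $F$ contained in the trivial brace $I$, so by hypothesis it is a finitely generated torsion-free abelian group, say $F\cap I\cong\mathbb{Z}^{r}$. In particular $\langle a\rangle^{F}\leq F\cap I$ is again finitely generated, and, being an ideal of $F$, it is invariant under every operator attached to an element of $F$: the maps $\lambda_b$ and $\lambda_b^{-1}$ for $b\in F$, the additive and multiplicative conjugations by elements of $F$, and the star maps $x\mapsto x\ast b$. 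The invertible ones among these embed into $\operatorname{Aut}(\langle a\rangle^{F})\cong GL_{r}(\mathbb{Z})$; that is, on the finitely generated lattice $\langle a\rangle^{F}$ they act integrally, with integral inverse, and therefore introduce no denominators.

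The heart of the argument, and the point I expect to be the main obstacle, is to turn this local integrality into a global contradiction with divisibility. The template is the rank-one case $V\cong\mathbb{Q}$, where $\lambda$ and the conjugations give homomorphisms into $\operatorname{Aut}(\mathbb{Q})=\mathbb{Q}^{\times}$: if some element $b$ acted by a scalar $\theta\neq\pm1$, then $\langle a\rangle^{\langle a,b\rangle}$ would contain $\mathbb{Z}[\theta,\theta^{-1}]\,a$, which is not finitely generated, contradicting the previous step; hence every operator acts as $\pm1$ and $\langle a\rangle^{B}=\mathbb{Z}a\neq V$, contradicting divisibility. In general the same mechanism is available: if, for some prime $p$, an invertible operator $O$ coming from a single element sent $a$ to $p^{-1}a$, then $\mathbb{Q}$-linearity would force $O^{k}(a)=p^{-k}a$ to lie in the single finitely generated lattice $\langle a\rangle^{\langle a,O\rangle}$ for all $k$, an impossibility. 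The crux is therefore to show that the divisibility of the cyclic, and (by minimality) simple, operator-module $V=\langle a\rangle^{B}$ cannot be spread out over infinitely many subbraces but must already be witnessed, for some prime $p$, by a single element scaling $a$ by a non-unit; here one must exploit the simplicity of $V$ as a module over the operator ring and keep careful track of the non-invertible star maps (which, being ideal operations, also preserve each lattice $\langle a\rangle^{F}$). Once the divisible alternative is ruled out, Theorem~\ref{chiefsoluble} leaves only the elementary abelian $p$-group, which is precisely the assertion.
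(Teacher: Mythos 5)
Your first step is sound and matches the paper: by Theorem~\ref{chiefsoluble} the minimal ideal $I$ is abelian and $(I,+)$ is either elementary abelian $p$ or torsion-free divisible, so everything hinges on excluding the second case; your observation that a finitely generated subbrace $F$ traps $\langle a\rangle^F$ inside the finitely generated lattice $F\cap I$ is also correct. But the proof then stops exactly where it needed to start. You yourself label as ``the crux'' the claim that divisibility of $\langle a\rangle^B$ must be witnessed by a \emph{single} element of $B$ scaling $a$ by a non-unit, and this claim is never proved --- nor is it clearly true. Minimality makes $I$ a simple module over the ring generated by all the operators ($\lambda$-maps, additive and multiplicative conjugations, star maps), and the element $a/p$ is then the image of $a$ under some \emph{sum of products} of such operators, drawn from arbitrarily large subbraces; nothing forces any single operator to act as $p^{-1}$, so the rank-one ``template'' does not propagate. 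As written, the argument is a correct reduction followed by an unproved (and possibly unprovable, in the form stated) central lemma, so it is not a proof.

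The paper closes the gap with a much more elementary device that sidesteps the operator ring entirely, and it is worth comparing: assume only that $(I,+)$ is torsion-free, pick $0\neq x\in I$ and set $y=x^2=x+x$ (legitimate since $I$ is a trivial brace). Minimality gives $I=\langle y\rangle^B$, and since $\langle y\rangle^B$ is the directed union of the ideals $\langle y\rangle^K$ over finitely generated subbraces $K$ containing $y$, there is such a $K$ with $x\in\langle y\rangle^K$. Now put $X=\langle x\rangle^K\leq K\cap I$; by hypothesis $X$ is a non-zero subgroup of a finitely generated torsion-free abelian group, hence free abelian of finite rank. Since all operators of $K$ act on the ideal $X$ as additive endomorphisms, $X^{2,+}=X^{2,\bigcdot}$ is again an ideal of $K$, and it contains $y$; therefore $x\in\langle y\rangle^K\leq X^{2,\bigcdot}$, which forces $X\leq X^{2,+}$, i.e.\ $X=2X$ --- impossible for a non-zero finitely generated free abelian group. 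Note that this argument refutes ``torsion-free'' outright, needing neither divisibility, nor $\mathbb{Q}$-linearity, nor any analysis of which operators act integrally; the only inputs are minimality (applied to $2x$ rather than to $x$), the local nature of ideal generation, and your lattice observation. If you want to salvage your write-up, replace the entire ``crux'' discussion with this doubling trick.
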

\begin{proof}
By Theorem \ref{chiefsoluble}, $I$ is abelian. Suppose by contradiction that $(I,+)$ is torsion-free, and let $0\neq x\in I$. Put $y=x^2$. The minimality of $I$ shows that $I=\langle y\rangle^B$. Thus, there is a finite subset~$E$ of $B$ such that $y\in E$ and $x\in \langle y\rangle^K$, where $K=\langle E\rangle$. By assumption,~\hbox{$K\cap I$} is finitely generated and abelian, so $(K\cap I,+)$ is finitely generated, and hence it is a direct product of finitely many infinite cyclic groups. Put $X=\langle x\rangle^K$; in particular, \hbox{$X\leq K\cap I$.} Now, $y=x^2\in X^{2,\bigcdot}=X^{2,+}\trianglelefteq K$, so $$x\in \langle y\rangle^K\leq X^{2,\bigcdot}\trianglelefteq K$$ and hence $X=X^{2,\bigcdot}$, a contradiction. Thus, $(I,+)$ is an elementary abelian $p$-group for some prime $p$ and we are done.
\end{proof}

\begin{cor}\label{chieffactorLP}
Let $B$ be a locally polycyclic brace. Then every chief factor of $B$ is abelian and, as a group, can be described as an elementary abelian~\hbox{$p$-group} for some prime $p$ \textnormal(depending on the chief factor\textnormal).
\end{cor}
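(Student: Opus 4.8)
The plan is to deduce this corollary from Theorem~\ref{chiefpoly} in exactly the same way that Corollary~\ref{corchieffacls} is deduced from Theorem~\ref{chiefsoluble}, i.e.\ by reducing to a minimal ideal and then checking the two hypotheses of the relevant structure theorem. First I would observe that local polycyclicity is inherited by quotients: any finitely generated subbrace of $B/J$ is the homomorphic image of a finitely generated (hence polycyclic) subbrace of $B$, and polycyclicity is preserved under homomorphic images, so $B/J$ is again locally polycyclic. Consequently a general chief factor $I/J$ is a minimal ideal of the locally polycyclic brace $B/J$, and it suffices to prove the conclusion for a single minimal ideal $I$ of a locally polycyclic brace $B$.

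So fix a minimal ideal $I$ of $B$, and let $F$ be an arbitrary finitely generated subbrace of $B$; the task is to verify that $F\cap I$ is finitely generated and $F$-hypoabelian, which are precisely the hypotheses of Theorem~\ref{chiefpoly}. Since $B$ is locally polycyclic, $F$ is polycyclic. For the first point, a finitely generated polycyclic brace is almost polycyclic, hence satisfies the maximal condition on subbraces (see~\cite{tutti23-2}); as $F\cap I$ is a subbrace (indeed an ideal) of $F$, it is finitely generated. For the second point, each factor of the defining chain of the polycyclic brace $F$ lies in a socle and is therefore an abelian brace (the socle of any brace is a trivial brace of abelian type), so $F$ is soluble; since every ideal of a soluble brace is $B$-soluble, the ideal $F\cap I$ of $F$ is $F$-soluble, and in particular $F$-hypoabelian.

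With both hypotheses verified, an application of Theorem~\ref{chiefpoly} yields immediately that $I$ is abelian and that, as a group, $(I,+)$ is an elementary abelian $p$-group for some prime $p$, which is the desired conclusion. I do not expect any genuine obstacle here: the entire substance of the result is already carried by Theorem~\ref{chiefpoly}, and the corollary is essentially a hypothesis-checking argument. The only points requiring a little care are the two verifications above --- the finite generation of $F\cap I$, which rests on the maximal condition for almost polycyclic braces, and the solubility of $F$, which rests on socle factors being abelian braces --- and both are routine given the earlier results.
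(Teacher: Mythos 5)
Your proposal is correct and follows essentially the same route as the paper: reduce to a minimal ideal using quotient-closure of local polycyclicity, check that $F\cap I$ is finitely generated (the paper invokes the fact that every subbrace of a polycyclic brace is finitely generated, which is what your detour through almost polycyclicity and the maximal condition establishes) and $F$-hypoabelian (via solubility of polycyclic braces, exactly as in Corollary~\ref{corchieffacls}), then apply Theorem~\ref{chiefpoly}. The additional details you supply --- quotient-closure of local polycyclicity and the fact that socle factors are abelian braces --- merely make explicit what the paper's proof leaves implicit.
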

\begin{proof}
Since the property of being locally polycyclic is preserved with respect to forming quotients, we only need to prove the statement for a minimal ideal $I$ of~$B$. Now, let~$F$ be any finitely generated subbrace of $B$. Then $F$ is polycyclic, so $F\cap I$ is~\hbox{$F$-hypo}\-abelian. Moreover, since every subbraces of a polycyclic brace is finitely generated, we have that~\hbox{$F\cap I$} is finitely generated. An application of~The\-o\-rem~\ref{chiefpoly} completes the proof.
\end{proof}

\medskip

In order to deal with the chief factors of locally supersoluble braces, we introduce the following definition. Let $n\in\mathbb{N}$. A brace $B$ is said to be {\it $n$-chief} if the following two conditions hold:
\begin{itemize}
    \item[(1)] $B$ satisfies the maximal condition on subbraces.
    \item[(2)] If $I/J$ is any chief factor of $B$, then $I/J$ is abelian and there is a prime $p$ (depending on $I/J$) such that~$(I/J,+)$ is an elementary abelian $p$-group of rank at most $n$ (in particular, $|I/J|\leq p^n$).
\end{itemize}

Of course, any $n$-chief brace is hypoabelian. Note also that although the property of being $n$-chief is inherited by forming quotients, it is not clear if it is inherited by subbraces. However, in our next result, we deal with locally~\hbox{$n$-chief} braces, a class of braces that is (by definition) closed with respect to forming subbraces, and is of course also closed with respect to forming quotients.

\begin{theo}\label{theosuperchiefpre}
Let $n\in\mathbb{N}$ and let $B$ be a locally $n$-chief brace. Then every chief factor of $B$ is abelian and, as a group, can be described as an elementary abelian~\hbox{$p$-group} of rank at most~$n$ for some prime~$p$ \textnormal(depending on the chief factor\textnormal).
\end{theo}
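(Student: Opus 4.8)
The plan is to first reduce to the case of a minimal ideal and then pass to a module-theoretic reformulation. Since the class of locally $n$-chief braces is closed under quotients, it suffices to prove that a minimal ideal $I$ of a locally $n$-chief brace $B$ has $(I,+)$ elementary abelian of rank at most $n$; a general chief factor $I/J$ is then handled by replacing $B$ with $B/J$. To identify $(I,+)$ as an elementary abelian $p$-group I would invoke Theorem~\ref{chiefpoly}: for every finitely generated subbrace $F$ of $B$, the brace $F$ is $n$-chief, hence satisfies the maximal condition on subbraces (so $F\cap I$ is finitely generated) and is hypoabelian (so the ideal $F\cap I$ of $F$ is $F$-hypoabelian). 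Thus the hypotheses of Theorem~\ref{chiefpoly} are met, and $(I,+)$ is an elementary abelian $p$-group for some prime $p$.

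The remaining point, the rank bound, I would phrase through the associated group $G=(B,+)\rtimes_\lambda(B,\bigcdot)$. Writing $V=(I,+)$, the sub-ideals of $B$ contained in $I$ are exactly the $\mathbb{F}_p[G]$-submodules of $V$ (as in Lemma~1.10 of \cite{Cedo} and the module viewpoint of \cite{5}), so $V$ is an irreducible $\mathbb{F}_p[G]$-module. Moreover $G=\bigcup_F G_F$ with $G_F=(F,+)\rtimes_\lambda(F,\bigcdot)$, and $V=\bigcup_F V_F$ with $V_F=(F\cap I,+)$, both directed unions over the finitely generated subbraces $F$, and each $V_F$ is a finite $G_F$-submodule. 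Since $F$ is $n$-chief, every $G_F$-composition factor of $V_F$ has $\mathbb{F}_p$-dimension at most $n$; because restricting an irreducible module to a subgroup only refines its composition factors, the same bound $n$ holds for every $G_F$-composition factor of the whole module $V$.

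Granting that $V$ is finite-dimensional, the rank bound follows from a stabilization argument. For $F\subseteq F'$ every $G_{F'}$-submodule of $V$ is a $G_F$-submodule, so the finite lattices $\mathcal{L}_F$ of $G_F$-submodules of $V$ form a directed, reverse-inclusion family inside the finite lattice of all $\mathbb{F}_p$-subspaces of $V$; such a family attains a least member, which must coincide with $\bigcap_F\mathcal{L}_F=\{0,V\}$ by irreducibility of $V$ over $G$. Hence $V$ is $G_{F_0}$-irreducible for some $F_0$, and, after enlarging $F_0$ so that $V_{F_0}\neq 0$, this forces $V_{F_0}=V$, i.e.\ $I\subseteq F_0$ and $V=(F_0\cap I,+)$ is a chief factor of the $n$-chief brace $F_0$. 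Therefore $\dim_{\mathbb{F}_p}V\le n$, which is the desired conclusion.

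The main obstacle is precisely the hypothesis just granted, namely that $V$ is finite-dimensional (equivalently, that $I$ is finite): a priori an irreducible module with uniformly bounded local composition factors could be infinite-dimensional. I would attack this by a socle-descent argument. Let $d_F$ be the least dimension of a nonzero $G_F$-submodule of $V$; then $d_F\le n$ and $d_F$ is non-decreasing along the directed set, hence stabilizes to some value $d$. For large $F$ the sum $\Sigma_F$ of all $d$-dimensional (necessarily minimal) $G_F$-submodules is $G_F$-invariant and decreasing in $F$, and one checks that $\bigcap_F\Sigma_F$ is $G$-invariant; irreducibility then forces it to be $0$ or $V$, the latter meaning that $V$ is $G_F$-semisimple with $d$-dimensional constituents for all large $F$. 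Reducing to this homogeneous situation and then exploiting the maximal condition available in each $n$-chief $F$ (to prevent an infinite descent through genuinely distinct minimal constituents) should pin down $\dim_{\mathbb{F}_p}V=d\le n$ and thereby complete the proof; making this last descent fully rigorous is the delicate part of the argument.
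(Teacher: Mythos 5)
Your first half coincides with the paper's argument: reduce to a minimal ideal $I$, observe that every finitely generated subbrace $F$ of $B$ is $n$-chief, hence satisfies the maximal condition and is hypoabelian (so $F\cap I$ is finitely generated and $F$-hypoabelian), and apply Theorem~\ref{chiefpoly}. That part is fine. The rank bound is where there is a genuine gap, and it is the one you flag yourself: the whole stabilization argument is conditional on $V=(I,+)$ being finite-dimensional, and this is never established. Your socle-descent sketch does not close it: you only pursue the case $\bigcap_F\Sigma_F=V$, whereas a decreasing directed family of nonzero subspaces of an infinite-dimensional space can perfectly well have intersection $\{0\}$, and the final descent in the homogeneous case is left admittedly non-rigorous. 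There is also a technical flaw upstream: under the natural action of $G=(B,+)\rtimes_\lambda(B,\bigcdot)$ on $V$ (additive conjugation together with $\lambda$), the $G$-invariant subgroups of $V$ are the \emph{strong left-ideals} of $B$ contained in $I$, not the ideals; normality in $(B,\bigcdot)$ is an extra condition, since $bxb^{-1}=b+\lambda_b(x)+\lambda_b(x\ast b^{-1})-b$ and $x\ast b^{-1}$ lies in $I$ but need not lie in the given subgroup. (Lemma 1.10 of \cite{Cedo} only covers subgroups of the socle, and $I$ need not be in the socle.) So minimality of $I$ does not give irreducibility of $V$ over $G$; you would have to enlarge the acting group by the multiplicative conjugations, which is legitimate because $I$ is a trivial brace, but it must be done.

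The paper's proof of the rank bound avoids finiteness of $I$ altogether, and this is the idea your proposal is missing. Suppose the rank is at least $n+1$ and pick additively independent $x_1,\ldots,x_{n+1}\in I$; put $V=\langle x_1,\ldots,x_{n+1}\rangle$, a finite set. Minimality gives $I=\langle c\rangle^B$ for every $0\neq c\in V$, and since the ideal generated by $c$ in $B$ is the directed union of the ideals $\langle c\rangle^{W}$ over finitely generated subbraces $W$ containing $c$, finitely many elements $b_1,\ldots,b_\ell$ of $B$ suffice to guarantee $\langle c\rangle^U\geq V$ for \emph{all} $0\neq c\in V$, where $U=\langle x_1,\ldots,x_{n+1},b_1,\ldots,b_\ell\rangle$. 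Now $U$ is $n$-chief, so it satisfies the maximal condition, and one may choose an ideal $M$ of $U$ maximal subject to $x_1\notin M$. Every ideal of $U$ properly containing $M$ contains $x_1$, hence contains $\langle x_1\rangle^U\geq V$, hence contains $V^UM$; so $V^UM/M$ is a chief factor of $U$. Moreover $V\cap M=\{0\}$ (a nonzero $c\in V\cap M$ would force $V\leq\langle c\rangle^U\leq M$), so this chief factor contains a copy of $V$ and has rank at least $n+1$, contradicting that $U$ is $n$-chief. This combination of compactness of ideal generation with the maximal condition inside a single finitely generated subbrace is what replaces your unproved finite-dimensionality claim; finiteness of $I$ then comes out as a consequence of the theorem rather than being an input.
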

\begin{proof}
It is clearly enough to prove the statement for a minimal ideal $I$ of $B$. Let~$F$ be a non-zero, finitely generated subbrace of~$B$, and let $J$ be any non-zero ideal of~$F$. Since~$F$ satisfies the maximal condition on subbraces, there is an ideal $L$ of~$F$ that is maximal with respect to the property of being strictly contained in $J$. Thus~$J/L$ is a chief factor of~$F$ and is consequently abelian. The arbitrariness of $J$ in $F$ shows that~\hbox{$F\cap I$} is~\hbox{$F$-hypo}\-abe\-lian. Since $F\cap I$ is finitely generated, it follows from~The\-o\-rem~\ref{chiefpoly} that~$I$ is abelian, and that~$(I,+)$ is an elementary abelian $p$-group for some prime $p$. Suppose the rank of $I$ is at least $n+1$, and let $x_1,\ldots,x_{n+1}$ be additively independent elements of $I$. There are elements $b_1,\ldots,b_\ell$ of $B$ such that if $c\in\langle x_1,\ldots,x_{n+1}\rangle=V$, then $\langle c\rangle^U\geq V$, where $U=\langle x_1,\ldots,x_{n+1},b_1,\ldots,b_\ell\rangle$. Let $M$ be an ideal of $U$ which is maximal with respect to the property of not containing $x_1$. Then $V^UM/M$ is a non-zero chief factor of $U$ whose additive group is an elementary abelian $p$-group of rank at least $n+1$. This contradiction completes the proof.
\end{proof}

\begin{rem}\label{remnosubclosed}
{\rm Since we do not know if the property of being $n$-chief is subbrace-closed, a most reasonable definition for a local $n$-chief brace is that of a brace in which every finitely generated subbrace is contained in a subbrace that is $n$-chief. It is easy to see that the statement of Theorem \ref{theosuperchiefpre} still holds if we employ this definition, the proof being essentially the same.}
\end{rem}

Since every supersoluble brace is $1$-chief, the following result is an immediate consequence of Theorem \ref{theosuperchiefpre}.

\begin{cor}\label{corchieflocallsuper}
Let $B$ be a locally supersoluble brace. Then every chief factor of $B$ has prime order.
\end{cor}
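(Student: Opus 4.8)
The plan is to reduce the statement to Theorem~\ref{theosuperchiefpre} by showing that every supersoluble brace is $1$-chief; local supersolubility will then give local $1$-chiefness, and the case $n=1$ of Theorem~\ref{theosuperchiefpre} is precisely the assertion that every chief factor has prime order.

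First I would verify that an arbitrary supersoluble brace $S$ is $1$-chief, i.e. that it satisfies both clauses of the definition of $n$-chief with $n=1$. Clause (1), the maximal condition on subbraces, holds for supersoluble braces, since these are almost polycyclic (as recalled after Lemma~\ref{supersolublebraceisgroup}). For clause (2), I would invoke the observation that every chief factor $I/J$ of a supersoluble brace has prime order $p$. Such a factor is a brace of prime order, hence abelian, and its additive group $(I/J,+)\simeq C_p$ is an elementary abelian $p$-group of rank $1$; in particular $|I/J|\leq p^1$. Thus both clauses hold with $n=1$, and $S$ is $1$-chief.

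Next, since the class of supersoluble braces is closed under forming subbraces, every finitely generated subbrace of a locally supersoluble brace $B$ is supersoluble, hence $1$-chief by the previous step. Therefore $B$ is locally $1$-chief. Applying Theorem~\ref{theosuperchiefpre} with $n=1$, every chief factor of $B$ is abelian with additive group an elementary abelian $p$-group of rank at most $1$. A non-zero elementary abelian $p$-group of rank at most $1$ has order exactly $p$, so every chief factor of $B$ has prime order, as required.

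The only point demanding a little care --- the main, rather minor, obstacle --- is the definitional check in the first step: confirming that a supersoluble brace meets both conditions of the $n$-chief definition for $n=1$, and in particular that \emph{prime order} of chief factors matches the \emph{rank at most $1$} requirement on the additive group. Once this is in place, the result is a direct invocation of Theorem~\ref{theosuperchiefpre}, with the subbrace-closedness of supersolubility doing the work of passing from $B$ to its finitely generated subbraces (so that the concerns of Remark~\ref{remnosubclosed} do not intervene).
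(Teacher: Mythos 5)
Your proposal is correct and follows exactly the paper's route: the paper also derives this corollary by observing that every supersoluble brace is $1$-chief (using the maximal condition from almost polycyclicity and the fact, noted as obvious from the definition, that chief factors of supersoluble braces have prime order) and then invoking Theorem~\ref{theosuperchiefpre} with $n=1$. Your spelled-out verification of the $1$-chief conditions and of the passage to finitely generated subbraces is just an expansion of what the paper leaves implicit.
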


\medskip

Another relevant structural piece of information concerns with the indices of the subbraces. In order to simplify the notation, we introduce the following definition. We say that a brace $B$ has the {\it index-property} if the following two assertions are equivalent for every subbrace $X$ of $B$:
\begin{itemize}
    \item[\textnormal{(1)}] $(X,+)$ has finite index $n$ in $(B,+)$.
    \item[\textnormal{(2)}] $(X,\bigcdot)$ has finite index $n$ in $(B,\bigcdot)$.
\end{itemize}

As we already remarked, every almost polycyclic brace (and consequently every supersoluble brace) has the index-property. As a consequence of the following more general result, it turns out that every locally supersoluble brace has the index-property.

\begin{theo}\label{locallyindexproperty}
Let $B$ be a brace that locally satisfies the index-property. Then $B$ has the index-property.
\end{theo}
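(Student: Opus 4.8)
The plan is to prove a single one-directional statement and then invoke its mirror image: \emph{if $(X,+)$ has finite index $n$ in $(B,+)$, then $(X,\bigcdot)$ has finite index at most $n$ in $(B,\bigcdot)$.} Since both the hypothesis (local index-property) and this conclusion are symmetric under interchanging $+$ and $\bigcdot$, the very same argument yields the reverse bound. Combining the two bounds then gives simultaneously the equivalence of (1) and (2) \emph{and} the equality of the two indices, which is exactly the index-property for $B$.

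The heart of the argument is a reduction to finitely generated subbraces. Assuming $(X,+)$ has index $n$ in $(B,+)$, I would fix an additive transversal $b_1,\dots,b_n$, so that $B=\bigsqcup_i(b_i+X)$. For any finitely generated subbrace $F$ of $B$ \emph{containing all the} $b_i$, the coset identity $F\cap(b_i+X)=b_i+(F\cap X)$ is immediate from $b_i\in F$, and hence $F=\bigsqcup_i\big(b_i+(F\cap X)\big)$. Because the cosets $b_i+X$ are pairwise distinct in $B$, these $n$ additive cosets of $F\cap X$ in $F$ remain distinct, so $(F\cap X,+)$ has index \emph{exactly} $n$ in $(F,+)$. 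As $F$ is finitely generated it satisfies the index-property, whence $(F\cap X,\bigcdot)$ has index exactly $n$ in $(F,\bigcdot)$.

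To bound the multiplicative index of $X$ in $B$ I would argue by contradiction. If this index exceeded $n$, there would be elements $g_1,\dots,g_{n+1}$ in pairwise distinct multiplicative cosets of $X$, that is, $g_i^{-1}g_j\notin X$ for $i\neq j$. Taking $F=\langle g_1,\dots,g_{n+1},b_1,\dots,b_n\rangle$, the previous step shows $(F\cap X,\bigcdot)$ has index $n$ in $(F,\bigcdot)$; but $g_1,\dots,g_{n+1}\in F$ lie in pairwise distinct multiplicative cosets of $F\cap X$ (since $g_i^{-1}g_j\in F\setminus X$), forcing the index to be at least $n+1$, a contradiction. Thus the multiplicative index of $X$ in $B$ is at most $n$, proving the one-directional statement.

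The main point to get right is the choice of $F$: it must simultaneously contain the full additive transversal (so that $F\cap X$ has additive index \emph{exactly} $n$, rather than merely at most $n$) and the finitely many witnesses $g_j$ (so that the local index-property may be applied to one finitely generated subbrace at a time). Beyond this bookkeeping there is no genuine obstacle. I would emphasize that the skew distributive law is never used directly---the entire argument takes place at the level of the two underlying groups and their subbrace intersections---which is precisely why the mirror argument, obtained by swapping the roles of $+$ and $\bigcdot$, is legitimate and supplies the reverse inequality needed to conclude.
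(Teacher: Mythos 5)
Your proposal is correct and takes essentially the same approach as the paper: the paper likewise fixes an additive transversal $b_1,\dots,b_n$, generates a finitely generated subbrace from it, transfers the index count through the local index-property via the bijection between cosets of $X$ meeting a subbrace $F$ and cosets of $F\cap X$ in $F$, and then derives a contradiction from $n+1$ elements in pairwise distinct multiplicative cosets, invoking symmetry for the remaining direction. The only cosmetic difference is that you absorb the transversal and the witnesses into a single finitely generated subbrace, whereas the paper uses two separate ones (one for the lower bound $\geq n$, one for the contradiction ruling out $>n$).
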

\begin{proof}
Let $X$ be any subbrace of $B$ such that $|(B,+):(X,+)|$ is finite and is equal to~$n$. Let $b_1,\ldots,b_n$ be a  transversal to $(X,+)$ in $(B,+)$, and put $C=\langle b_1,\ldots,b_n\rangle$. Then $|(C,+):(C\cap X,+)|=n$. On the other hand, since $C$ is finitely generated, we have that $$|(C,+):(C\cap X,+)|=|(C,\bigcdot):(C\cap X,\bigcdot)|,$$ so $|(B,\bigcdot):(X,\bigcdot)|\geq  n$. If $|(B,\bigcdot):(X,\bigcdot)|>n$, then we can find $c_1,\ldots,c_{n+1}\in B$ such that $c_iX\neq c_jX$ for all $1\leq i,j\leq n+1$ with $i\neq j$. However, this means that $$|(D,+):(D\cap X,+)|=n<|(D,\bigcdot):(D\cap X,\bigcdot)|,$$ where $D=\langle c_1,\ldots,c_{n+1}\rangle$, a contradiction. Thus, $|(B,+):(X,+)|=|(B,\bigcdot):(X,\bigcdot)|$ and~$B$ has the index-property. We argue in a similar way in case $|(B,\bigcdot):(X,\bigcdot)|$ is finite.
\end{proof}

\begin{rem}
{\rm Similarly to Remark \ref{remnosubclosed}, we note that Theorem \ref{locallyindexproperty} holds if we replace the condition that the brace ‘‘locally satisfies the index-property’’ with the condition that ‘‘every finitely generated subbrace is contained in a subbrace satisfying the index-property’’.}
\end{rem}

\begin{cor}
Every brace that is locally almost polycyclic has the index-property. In particular, every locally supersoluble brace has the index-property.
\end{cor}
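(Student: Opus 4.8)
The plan is to reduce the statement directly to Theorem \ref{locallyindexproperty}, which asserts that a brace locally satisfying the index-property has the index-property itself. Thus it suffices to verify that a locally almost polycyclic brace locally satisfies the index-property; that is, that each of its finitely generated subbraces has the index-property.

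First I would observe that, by the very definition of being locally almost polycyclic, every finitely generated subbrace $F$ of $B$ is almost polycyclic. Next I would invoke the fact --- already recorded in the discussion preceding Theorem \ref{locallyindexproperty}, and originally established in \cite{tutti23-2}, Corollary 3.12 --- that every almost polycyclic brace has the index-property. Combining these two observations shows that every finitely generated subbrace of $B$ has the index-property, so that $B$ locally satisfies the index-property. An application of Theorem \ref{locallyindexproperty} then yields that $B$ itself has the index-property, proving the first assertion.

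For the ``in particular'' clause, I would recall that every supersoluble brace is almost polycyclic, as noted in the remarks following Lemma \ref{supersolublebraceisgroup}. Hence a locally supersoluble brace is locally almost polycyclic, and the conclusion follows immediately from the first part.

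There is no genuine obstacle here: the corollary is a purely formal consequence of Theorem \ref{locallyindexproperty} together with the two cited facts. The only point requiring any care is to match the local hypothesis correctly with the hypothesis of that theorem --- namely, to note that ``locally almost polycyclic'' precisely supplies ``locally satisfies the index-property'' once one knows that almost polycyclic braces individually enjoy the index-property.
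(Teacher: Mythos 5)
Your proof is correct and is precisely the argument the paper intends: the corollary is stated without proof because it follows immediately from Theorem \ref{locallyindexproperty} once one recalls that almost polycyclic braces have the index-property (\cite{tutti23-2}, Corollary 3.12) and that supersoluble braces are almost polycyclic. Your careful matching of ``locally almost polycyclic'' with ``locally satisfies the index-property'' is exactly the right (and only) point to check.
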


We could give many other analogs of the statements in Section \ref{sectsuper} concerning the index-structural properties of a supersoluble brace, but for the sake of conciseness we only concern ourselves with a couple of them. The first result generalizes~Co\-rol\-la\-ry~\ref{corsuperindex2}.

\begin{theo}
Let $B$ be a locally supersoluble brace. If $C$ is a subbrace of $B$ such that $|B:C|=2$, then $C\trianglelefteq B$.
\end{theo}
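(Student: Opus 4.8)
The plan is to verify the three conditions that together make $C$ an ideal: that $C$ is $\lambda$-invariant, that $(C,+)\trianglelefteq(B,+)$, and that $(C,\bigcdot)\trianglelefteq(B,\bigcdot)$. Two of these are immediate. By the very definition of the index, the hypothesis $|B:C|=2$ means $|(B,+):(C,+)|=2$ and $|(B,\bigcdot):(C,\bigcdot)|=2$ (and local supersolubility guarantees, via the index-property just established, that these two indices must agree in the first place). Since a subgroup of index $2$ is normal, both $(C,+)\trianglelefteq(B,+)$ and $(C,\bigcdot)\trianglelefteq(B,\bigcdot)$ hold automatically, so the only real content is the $\lambda$-invariance of $C$.

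To show $\lambda_b(C)\leq C$ for every $b\in B$, I would argue by cases on whether $b$ lies in $C$. If $b\in C$, then for any $c\in C$ the element $\lambda_b(c)=-b+bc$ is in $C$, because $C$ is a subbrace and is therefore closed under $\bigcdot$, additive inversion, and $+$. The substantive case is $b\notin C$. Fix $c\in C$. Then $bc\notin C$, for otherwise $b=(bc)c^{-1}\in C$; and $-b\notin C$ since $C$ is an additive subgroup. Hence $-b$ and $bc$ both represent the non-trivial coset of the index-$2$ subgroup $(C,+)$, so in the quotient $(B,+)/(C,+)\cong C_2$ their sum is trivial. This says exactly that $\lambda_b(c)=-b+bc\in C$. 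As $b$ and $c$ were arbitrary, $C$ is a left-ideal.

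Putting the pieces together, $C$ is a $\lambda$-invariant subbrace that is normal in $(B,+)$, hence a strong left-ideal, and it is also normal in $(B,\bigcdot)$, so $C\trianglelefteq B$. The only point requiring care is the coset computation in the case $b\notin C$: everything hinges on the additive quotient having order exactly $2$, which is what forces the sum of the two non-trivial representatives $-b$ and $bc$ back into $C$. I expect no genuine obstacle here --- this is precisely the phenomenon that distinguishes index $2$ from a general prime index, for which the analogous sum need not return to $C$ and one is instead forced into the heavier machinery underlying Theorem~\ref{finitecasesuperso}. Accordingly I would present this direct argument rather than reducing to the finite supersoluble case as in Corollary~\ref{corsuperindex2}.
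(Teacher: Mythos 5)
Your proof is correct, and it takes a genuinely different and more elementary route than the paper's. The paper argues by contradiction and localizes: choosing $b\in B$, $c\in C$ with one of $\lambda_b(c)$, $c^{b,\bigcdot}$, $c^{b,+}$ outside $C$, it forms the $2$-generator subbrace $D=\langle b,c\rangle$, which is supersoluble because $B$ is locally supersoluble; then $C\cap D$ has index $2$ in $D$, so Corollary~\ref{corsuperindex2} (which itself reduces to finite braces via \cite{tutti23-2} and rests on Theorem~\ref{finitecasesuperso}, i.e.\ on the centrally nilpotent machinery for finite supersoluble braces of prime power order) forces $C\cap D\trianglelefteq D$, contradicting the choice of $b$ and $c$. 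You instead verify the ideal axioms directly: normality of $(C,+)$ in $(B,+)$ and of $(C,\bigcdot)$ in $(B,\bigcdot)$ is automatic for index-$2$ subgroups, and $\lambda$-invariance follows from the coset computation showing $\lambda_b(c)=-b+bc\in C$ when $b\notin C$, since then $-b$ and $bc$ both lie in the unique nontrivial additive coset. Two observations are worth making. First, your argument never actually uses local supersolubility: since $|B:C|=2$ by the paper's definition already means that both the additive and the multiplicative index equal $2$, your proof establishes that a subbrace of index $2$ of an \emph{arbitrary} brace is an ideal, which is strictly stronger than the stated theorem (and renders your parenthetical appeal to the index-property unnecessary). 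Second, what the paper's route buys is methodological uniformity: the reduction to a finitely generated subbrace generated by the offending elements is the template used throughout Section~\ref{sectls}, and it delegates all computation to the already-proved supersoluble case; your coset trick, as you yourself note, is special to the prime $2$ and does not extend to the setting of Theorem~\ref{finitecasesuperso}, where the heavier tools are genuinely needed.
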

\begin{proof}
Suppose $C$ is not an ideal of $B$, and choose~\hbox{$b\in B$} and $c\in C$ such that one of the elements $\lambda_b(c)$, $c^{b,\bigcdot}$, $c^{b,+}$ is not contained in $C$. Now, the subbrace $D=\langle b,c\rangle$ is supersoluble and $C\cap D$ has index $2$ in $D$, so $C\cap D\trianglelefteq D$ by Co\-rol\-la\-ry~\ref{corsuperindex2}. However, $c\in C\cap D$ and $b\in D$, although one of the elements $\lambda_b(c)$, $c^{b,\bigcdot}$, $c^{b,+}$ does not belong to $C\cap D$. This contradiction completes the proof.
\end{proof}

\medskip

The other result we would like to generalize is Theorem \ref{maximalsubbrace}, which states that a maximal subbrace of a supersoluble brace has prime index. Unfortunately, we are not able to prove that every maximal subbrace of a locally supersoluble must have prime index, so we leave this as an open question --- as we shall see, the problem relies on the bad nilpotency-behaviour of the derived ideal of a supersoluble brace (see Example~\ref{badbehderideal} and Theorem~\ref{locallynilpointerse}). On the other hand, we can prove that at least in some special circumstances (such as the hypercyclic case) the result is true, but the proof is not a trivial consequence of Theorem \ref{maximalsubbrace}. It is in fact based on the relation between maximal subbraces and a locally nilpotent concept for ideals that has been introduced in \cite{tutti23}. Thus, recall first that an ideal $I$ of a brace $B$ is {\it locally $B$-nilpotent} if the following condition holds:
\begin{itemize}
    \item For every finitely generated subbrace $F$ of $B$, the finitely generated subbraces of~\hbox{$I\cap F$} are contained in $F$-centrally nilpotent ideals of~$F$.
\end{itemize}

Of course, every $B$-centrally nilpotent ideal is locally $B$-nilpotent. Moreover, every locally $B$-nilpotent ideal of a brace $B$ is locally nilpotent, and it follows from~The\-o\-rem~5.3 of \cite{tutti23} that the sum of arbitrarily many locally~\hbox{$B$-nil}\-potent ideals of a brace~$B$ is locally $B$-nilpotent. Thus,~$B$ has a largest locally~\hbox{$B$-nil}\-potent ideal, which we call the~{\it Hirsch--Plotkin ideal} of $B$. As for $B$-central nilpotency in supersoluble braces (see~The\-o\-rem~\ref{theobcentrally}), local $B$-nilpotency coincides with its ‘‘non-relative’’ analog in the locally supersoluble environment.

\begin{theo}\label{blocally}
Let $B$ be a locally supersoluble brace, and let $I$ be a locally centrally-nilpotent ideal of $B$. Then $I$ is locally $B$-nilpotent.
\end{theo}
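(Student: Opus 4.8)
The plan is to verify the definition of local $B$-nilpotency directly, pushing everything down to the finitely generated supersoluble case where Theorem \ref{theobcentrally} becomes available. So I would fix a finitely generated subbrace $F$ of $B$ together with a finitely generated subbrace $H$ of $I\cap F$, and my goal would be to exhibit an $F$-centrally nilpotent ideal of $F$ that contains $H$.

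First I would record that $F$ is supersoluble, since $B$ is locally supersoluble. The natural candidate for the required ideal is $J=H^F$, the ideal of $F$ generated by $H$. Because $I$ is an ideal of $B$ and $F$ is a subbrace, the intersection $I\cap F$ is an ideal of $F$ that contains $H$; hence $J\leq I\cap F\leq I$, placing $J$ inside the locally centrally-nilpotent ideal $I$. The decisive step is to upgrade $J$ from merely being a subbrace of $I$ to being a \emph{finitely generated} subbrace of $I$. Here I would invoke the fact, recalled in Section \ref{sectsuper}, that every supersoluble brace satisfies the maximal condition on subbraces; applied to $F$, this forces every subbrace of $F$—in particular $J=H^F$—to be finitely generated. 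Once this is in hand, the local central nilpotency of $I$ yields that $J$ is centrally nilpotent. Finally, $J$ is then a centrally nilpotent ideal of the supersoluble brace $F$, so Theorem \ref{theobcentrally} gives that $J$ is $F$-centrally nilpotent. Since $H\leq J$, this is exactly the conclusion required by the definition of local $B$-nilpotency.

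The only genuine obstacle is the finite generation of $H^F$: without it one cannot pass from ``$I$ is locally centrally-nilpotent'' to ``$J$ is centrally nilpotent,'' and the argument would collapse. This is precisely why it matters that $F$ itself be supersoluble—rather than merely locally supersoluble—so that the maximal condition on subbraces is at our disposal. Everything else is a routine unwinding of the three definitions involved, combined with a single appeal to Theorem \ref{theobcentrally}.
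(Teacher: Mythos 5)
Your proposal is correct and takes essentially the same route as the paper: reduce to a finitely generated subbrace $F$ (which is supersoluble, hence satisfies the maximal condition on subbraces), use finite generation plus local central nilpotency of $I$ to get a centrally nilpotent ideal of $F$, and finish with Theorem \ref{theobcentrally}. The only difference is that the paper uses $I\cap F$ itself as the required ideal (it is finitely generated by the maximal condition, hence centrally nilpotent, hence $F$-centrally nilpotent), so your detour through $J=H^F$ is unnecessary but harmless.
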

\begin{proof}
Let $F$ be any finitely generated subbrace of $B$; in particular, $F$ is supersoluble and all the subbraces of $F$  are finitely generated. Being a finitely generated subbrace of the locally centrally-nilpotent ideal $I$, we have that $I\cap F$ is centrally nilpotent. Then~\hbox{$I\cap F$} is an $F$-centrally nilpotent ideal of $F$ by Theorem \ref{theobcentrally}. The arbitrariness of $F$ proves that $I$ is locally~\hbox{$B$-nil}\-po\-tent.~\end{proof}

\begin{cor}\label{locallycentrallynilp}
Let $B$ be a locally supersoluble brace. Then the join of all locally centrally-nilpotent ideals of $B$ is locally centrally-nilpotent and coincides with the Hirsch--Plotkin ideal of $B$.
\end{cor}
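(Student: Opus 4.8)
The plan is to prove the two assertions by the usual largest-object argument, comparing the join $J$ of all locally centrally-nilpotent ideals of $B$ with the Hirsch--Plotkin ideal $H$ of $B$ (the largest locally $B$-nilpotent ideal, whose existence was recorded just before Theorem \ref{blocally}). First I would establish the inclusion $J\leq H$, then the reverse inclusion $H\leq J$ by showing that $H$ is itself locally centrally-nilpotent; the first assertion of the corollary then drops out for free, since it identifies $J$ with the locally centrally-nilpotent ideal $H$.

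For $J\leq H$, I would invoke Theorem \ref{blocally}: in the locally supersoluble brace $B$, every locally centrally-nilpotent ideal $I$ is locally $B$-nilpotent, hence is contained in the largest such ideal $H$. As $J$ is the join of all these ideals and $H$ is an ideal containing each of them, we get $J\leq H$. This is the only place where local supersolubility is used.

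The reverse inclusion rests on the single genuine idea of the proof: showing that $H$ is locally centrally-nilpotent, so that $H$ appears among the ideals whose join is $J$ and therefore $H\leq J$. Let $G$ be any finitely generated subbrace of $H$. I would apply the definition of local $B$-nilpotency of $H$ with the test subbrace chosen \emph{self-referentially} as $F=G$ (a finitely generated subbrace of $B$ since $G\leq H\leq B$). Then $H\cap F=G$, and the definition yields an $F$-centrally nilpotent ideal $K$ of $F=G$ with $G\leq K$; but an ideal $K$ of $G$ satisfies $K\leq G$, so $G\leq K\leq G$ forces $K=G$. Thus $G$ is $G$-centrally nilpotent, which is precisely central nilpotency of $G$. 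As $G$ was arbitrary, $H$ is locally centrally-nilpotent. (This step is the general fact, noted before Theorem \ref{blocally}, that a locally $B$-nilpotent ideal is locally centrally-nilpotent; it needs no supersolubility hypothesis.)

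Combining the two inclusions gives $J=H$, and since $H$ has been shown to be locally centrally-nilpotent, so is $J$. I expect the only point requiring care to be the correct unwinding of the universal quantifier ``for every finitely generated subbrace $F$'' in the definition of local $B$-nilpotency by the choice $F=G$; the rest is the formal largest-ideal bookkeeping, exactly parallel to the proof of Corollary \ref{fittingideal} in the finite-index setting (there Theorem \ref{theobcentrally} plus the maximal condition on subbraces; here Theorem \ref{blocally} plus the Hirsch--Plotkin ideal).
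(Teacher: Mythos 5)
Your proposal is correct and is exactly the argument the paper intends (the corollary is stated without proof as an immediate consequence of Theorem \ref{blocally} together with the remarks preceding it): Theorem \ref{blocally} gives the inclusion of the join in the Hirsch--Plotkin ideal, and the self-referential application of the definition of local $B$-nilpotency with $F=G$ is precisely the general fact, recorded before Theorem \ref{blocally}, that a locally $B$-nilpotent ideal is locally (centrally) nilpotent, giving the reverse inclusion. No gaps; your unwinding of the quantifier, including $K\leq G\leq K$ forcing $K=G$ and the identification of $G$-central nilpotency of $G$ with central nilpotency, is sound.
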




The key result that let us handle maximal subbraces of a hypercyclic brace is the following one. Recall that if $I$ is an ideal of a brace $B$, then $\partial_B(I)$ is the smallest ideal~$J$ of $B$ such that $J\leq I$ and $I/J$ is abelian --- clearly, $\partial_B(B)=\partial(B)$.

\begin{theo}\label{locallynilpointerse}
Let $B$ be a brace that locally satisfies the maximal condition on subbraces, and let~$I$ be a locally $B$-nilpotent ideal of $B$. If $M$ is any maximal subbrace of $B$, then \hbox{$\partial_B(I)\leq I\cap M\trianglelefteq B$.}
\end{theo}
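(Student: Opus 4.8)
The plan is to first dispose of the trivial case $I\le M$, in which $I\cap M=I\trianglelefteq B$ and $\partial_B(I)\le I=I\cap M$. So assume $I\not\le M$. Since $I$ is an ideal, $I+M$ is a subbrace properly containing $M$, and maximality forces $B=I+M=IM$; put $K=I\cap M$. Because $I\trianglelefteq B$, the intersection $K$ is automatically an ideal of the subbrace $M$. The whole statement will then follow once I establish two facts: (I) $K\trianglelefteq I$, which in particular makes $I/K$ a quotient brace and, combined with $K\trianglelefteq M$ and $B=IM=I+M$, yields $K\trianglelefteq B$ (one checks each ideal condition by factoring $b=im$, using $\lambda_{im}=\lambda_i\lambda_m$ and the two‑sided normality); and (II) the brace $I/K$ is abelian, whence $\partial_B(I)\le K$ because $K$ is then a $B$-ideal contained in $I$ with abelian quotient and $\partial_B(I)$ is the smallest such ideal.

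The core of the argument is fact (I), and it is here that local $B$-nilpotency enters. Since $I$ is locally centrally nilpotent, both $(I,+)$ and $(I,\bigcdot)$ are locally nilpotent, and $I$ satisfies the brace analogue of the \emph{normalizer condition}: a proper subbrace cannot coincide with its own idealizer. The mechanism I would make precise is the production, after factoring out the $I$-core of $K$, of a nonzero \emph{genuinely central} ideal $A$ of $I$ lying outside $K$ — one with $A\ast I=I\ast A=[A,I]_+=[A,I]_{\bigcdot}=\{0\}$, as holds for $A=\zeta(I)$ once $K$ is core-free (here one uses that $z\in\zeta(I)=\operatorname{Soc}(I)\cap Z(I,\bigcdot)$ forces $I\ast z=\{0\}$ as well, via $bz=zb$ and $z\in Z(I,+)$). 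Writing a general element of the subbrace $K+A=KA$ multiplicatively as $k_0a_0$ (resp. additively as $k_0+a_0$) and using $\lambda_{k_0a_0}=\lambda_{k_0}\lambda_{a_0}$ together with the centrality of $A$ and the self-invariance $\lambda_{k_0}(K)\le K$ (valid since $\lambda_{k_0}(k)=-k_0+k_0k\in K$), a direct check shows $K\trianglelefteq K+A$; climbing this way one reaches $K\trianglelefteq I$. \textbf{The main obstacle is precisely this normality step in the genuinely infinite case}, where $I$ is only locally, not globally, centrally nilpotent and so need not be hypercentral: I would handle it by passing to $I/K_I$ with $K_I$ the $I$-core of $K$, using the local maximal condition to keep the relevant configuration finitely generated, and extracting the nonzero central ideal from local central nilpotency, which cannot be swallowed by the core-free image of $K$.

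Granting (I), fact (II) is the clean part. Set $\bar B=B/K$, $\bar I=I/K$ and $\bar M=M/K$; then $\bar M$ is a maximal subbrace, $\bar I$ is an ideal with $\bar I\cap\bar M=\{0\}$ and $\bar I+\bar M=\bar B$. If $\bar J$ is any nonzero $\bar B$-ideal with $\bar J\le\bar I$, then $\bar J\not\le\bar M$ (else $\bar J\le\bar I\cap\bar M=\{0\}$), so $\bar J+\bar M$ is a subbrace strictly above $\bar M$ and hence equals $\bar B$; the modular law then gives $\bar I=\bar I\cap(\bar J+\bar M)=\bar J+(\bar I\cap\bar M)=\bar J$, so $\bar I$ is a minimal ideal of $\bar B$. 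Since $\bar I$ is locally $\bar B$-nilpotent, for every finitely generated subbrace $F\le\bar B$ the intersection $\bar I\cap F$ is finitely generated (by the local maximal condition) and $F$-centrally nilpotent (by local $\bar B$-nilpotency), hence $F$-hypoabelian. Theorem~\ref{chiefsoluble} now applies to the minimal ideal $\bar I$ and shows that $\bar I=I/K$ is abelian, completing fact (II) and therefore the proof.
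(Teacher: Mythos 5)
Your overall architecture (reduce to facts (I) and (II)) is reasonable, and your fact (II) is essentially correct and genuinely different from the paper's second half (the paper instead reduces to the $B$-centrally nilpotent case and derives a contradiction from $I=\partial_B(I)(I\cap M)$ via the Dedekind identity; your minimal-ideal-plus-Theorem~\ref{chiefsoluble} route works too, modulo checking that local $B$-nilpotency and the local maximal condition pass to the quotient $B/K$). The genuine gap is fact (I), which you yourself flag as ``the main obstacle'', and the mechanism you sketch for it is not repairable as stated, because after passing to the $I$-core quotient it never again uses the maximality of $M$ in $B$: the only property of $K$ it retains is that $K$ is a core-free subbrace of a (locally) centrally nilpotent brace $I$. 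That hypothesis does not imply $K\trianglelefteq I$. Take $I$ to be the trivial brace on the dihedral group $\langle r,s \mid r^4=s^2=1,\ srs=r^{-1}\rangle$ of order $8$ and $K=\langle s\rangle$: then $K$ is core-free, $A=\zeta(I)=\langle r^2\rangle\not\leq K$, and indeed $K\trianglelefteq K+A$ exactly as in your ``direct check'' --- yet $K$ is not an ideal of $I$, since $s^{r,\bigcdot}=sr^2\notin K$. One can also see abstractly where the climb dies: ideal-ness is not transitive, and an element $a$ of the \emph{second} term of the central series only satisfies $a\ast k,\ [a,k]_+,\ [a,k]_{\bigcdot}\in\zeta(I)$, and in your core-free setting $\zeta(I)\not\leq K$, so such $a$ need not idealize $K$. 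There is a second, prior gap: the nonzero central ideal you start from need not exist, since a locally centrally nilpotent brace can have $\zeta(I)=\{0\}$; producing a usable central object requires first reducing to a finitely generated configuration, and that reduction --- which is the technical heart of the theorem --- is only gestured at in your proposal.

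For comparison, the paper's proof of the normality statement keeps the maximality of $M$ in play at both stages. First, a witness of non-idealness ($a\in I\cap M$, $x\in I$, and $b\in\{a^{x,\bigcdot},a^{x,+},\lambda_x(a)\}\setminus M$) is trapped inside a finitely generated subbrace $C=\langle b,a,F\rangle$ with $F\subseteq M$ finite and $x\in\langle b,F\rangle$, together with a maximal subbrace $D$ of $C$ containing $\langle a,F\rangle$ but not $b$; this transports the whole configuration to one where $I\cap C$ is $C$-centrally nilpotent. Second, and crucially, it works with the upper \emph{$B$-central} series $\zeta_i(I)_B$, whose terms (unlike those of $\zeta_i(I)$) are ideals of $B$: taking the largest $i$ with $\zeta_i(I)_B\leq I\cap M$, maximality of $M$ gives $B=\zeta_{i+1}(I)_B\,M$, the inclusion $\zeta_i(I)_B\leq I\cap M$ is exactly what makes every element of $\zeta_{i+1}(I)_B$ idealize $I\cap M$, and the factorization of $B$ then promotes this to $I\cap M\trianglelefteq B$ in a single step --- no climbing through $I$ is needed. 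Your proposal is missing precisely this interplay between the $B$-ideal structure of the central series and the maximality of $M$, and without it fact (I), and hence the theorem, is not proved.
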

\begin{proof}
Suppose $I\cap M$ is not an ideal of $B$; in particular, $I\not\leq M$. Since $M$ is maximal in $B$, we have that $B=IM$, so there are elements $x\in I$ and $a\in I\cap M$ such that $\mathcal{A}:=\big\{a^{x,\bigcdot},a^{x,+},\lambda_x(a)\big\}\not\subseteq M$. Let $b\in\mathcal{A}\setminus M$ and let $F$ be a finite subset of~$M$ such that $x\in\langle b,F\rangle$ (this subset exists because $M$ is maximal). Put $C=\langle b,a,F\rangle$ and let $D$ be a subbrace of $C$ that is maximal with respect to containing~\hbox{$\langle a,F\rangle$} but not $b$; in particular,~$D$ is a maximal subbrace of~$C$. Since $C$ satisfies the maximal condition on subbraces, we have that $I\cap C$ is a $C$-centrally nilpotent ideal of~$C$. Moreover,~\hbox{$a\in (I\cap C)\cap D$} and $x\in C$ but~\hbox{$b\not\in (I\cap C)\cap D$}, so without any loss of generality we may assume that~$I$ is $B$-centrally nilpotent.

Now, let $i$ be the largest non-negative integer such that $\zeta_i(I)_B\leq I\cap M$. Then $Z:=\zeta_{i+1}(I)_B\not\leq I\cap M$, so $B=ZM$. Since $I\cap M\trianglelefteq M$ and $$[z,I\cap M]_{\bigcdot}\cup [z,I\cap M]_+\cup\lambda_z(I\cap M)\subseteq I\cap M$$ for all $z\in Z$, we have that $I\cap M\trianglelefteq B$, thus completing the proof of the first part of the statement.

\medskip

Finally, suppose that $\partial_B(I)\not\leq I\cap M$. Since $I\cap M\trianglelefteq B$, we have $\partial(I)\not\leq I\cap M$. Let~\hbox{$a\in \partial(I)\setminus (I\cap M)$;} in particular, $a$ does not belong to $M$. Then there are finitely many elements $x_1,\ldots,x_n$ of $I$ such that $a\in \partial\big(\langle x_1,\ldots,x_n\rangle\big)$. For each $i\in\{1,\ldots,n\}$, let~$F_i$ be a finite subset of $M$ such that $x_i\in \langle a,F_i\rangle$. Put $C=\langle a,F_1,\ldots,F_n\rangle$. Since~\hbox{$x_i\in C$} for all $1\leq i\leq n$, we also have that $a\in\partial(I\cap C)$. Since $C$ satisfies the maximal condition on subbraces, we have that $I\cap C$ is a $C$-centrally nilpotent ideal of~$C$. Let $D$ be a subbrace of $C$ that is maximal with respect to containing~\hbox{$F_1,\ldots,F_n$} but not $a$. It follows that $D$ is actually a maximal subbraces of $C$, and that \hbox{$\partial(I\cap C)\not\leq (I\cap C)\cap D$.} Thus, without loss of generality we may assume $I$ is~\hbox{$B$-cen}\-tral\-ly nilpotent.
 Now, $B = \partial_B(I)M$, and hence
\[ I = I \cap \partial_B(I)M = \partial_B(I)(I \cap M).\] Since $I/(I\cap M)$ is non-zero and~\hbox{$\big(B/(I\cap M)\big)$-cen}\-tral\-ly nilpotent, we have that $$\partial_B(I)(I\cap M)/(I\cap M)\leq \partial_{B/(I\cap M)}\big(I/(I\cap M)\big)<I/(I\cap M),$$ a contradiction. Thus, $\partial_B(I)\leq I\cap M$ and the statement is proved.
\end{proof}

\begin{rem}
{\rm A statement similar to that of Theorem \ref{locallynilpointerse} also holds if the concept of subbrace is replaced by that of (strong) left-ideal; we leave the details to the reader but we just note that, for example, we need to replace the fact that the brace $B$ locally satisfies the maximal condition on subbraces by the request that every finite subset of~$B$ is contained in a (strong) left-ideal that satisfies the maximal condition on (strong) left-ideals. All these statements generalize Lemma 5.10 of~\cite{tutti23}.}
\end{rem}

Now, we delve into some of the main consequences of Theorem \ref{locallynilpointerse}. First, we note that if we define the {\it Frattini ideal} $\operatorname{Frat}(B)$ of a brace~$B$ as the intersection of all maximal subbraces of $B$, then the following analog of \cite{tutti23}, The\-o\-rem~5.12, holds (the proof being exactly the same).

\begin{theo}
Let $B$ be a finite brace such that $\operatorname{Fit}(B)\cap \operatorname{Frat}(B)=\{0\}$. Then $\operatorname{Fit}(B)$ is the product of all the minimal ideals of $B$ that are abelian. In particular, $\operatorname{Fit}(B)$ is abelian.
\end{theo}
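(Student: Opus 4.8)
The plan is to write $F=\operatorname{Fit}(B)$ and $\Phi=\operatorname{Frat}(B)$, and to split the statement into two halves: first that $F$ is abelian, and then that it coincides with the sum $S$ of all minimal abelian ideals of $B$. The whole argument rests on the fact that $F$ is locally $B$-nilpotent: indeed $F$ is the sum of all $B$-centrally nilpotent ideals of $B$, and since $B$ is finite, Theorem~5.3 of~\cite{tutti23} together with the maximal condition shows that $F$ is itself $B$-centrally nilpotent, hence locally $B$-nilpotent. This is exactly the hypothesis needed to apply Theorem~\ref{locallynilpointerse} with $I=F$.

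First I would prove that $F$ is abelian. For every maximal subbrace $M$ of $B$, Theorem~\ref{locallynilpointerse} gives $\partial_B(F)\leq F\cap M\trianglelefteq B$. Intersecting over all maximal subbraces and using the hypothesis, $\partial_B(F)\leq F\cap\Phi=\{0\}$; since $\partial_B(F)$ is by definition the smallest ideal $J\leq F$ with $F/J$ abelian, this forces $F$ to be abelian. In particular every minimal ideal contained in $F$ is abelian, and conversely every minimal abelian ideal is $B$-centrally nilpotent, hence lies in $F$; so $S\leq F$ and $S$ is precisely the sum of the minimal ideals of $B$ contained in $F$.

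The heart of the matter is then to show $F\leq S$. Here I would again exploit that, by Theorem~\ref{locallynilpointerse}, $F\cap M\trianglelefteq B$ for every maximal subbrace $M$. Since $B$ is finite there are finitely many maximal subbraces, and the intersection of all the ideals $F\cap M$ equals $F\cap\Phi=\{0\}$; so I can choose a subfamily $M_1,\dots,M_r$ minimal with respect to $N_1\cap\cdots\cap N_r=\{0\}$, where $N_i=F\cap M_i$. Minimality forces each $N_i<F$ (equivalently $F\not\leq M_i$) and $K_i:=\bigcap_{j\neq i}N_j\neq\{0\}$. Two facts then drive the conclusion: each quotient $F/N_i$ is a chief factor of $B$, and each $K_i$ is a minimal ideal. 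For the first, if $N_i<L<F$ were an intermediate ideal then $L\not\leq M_i$, so maximality gives $L+M_i=B$, whence by the Dedekind modular law $F=F\cap(L+M_i)=L+(F\cap M_i)=L+N_i=L$, a contradiction. For the second, $K_i\cap N_i=\{0\}$ embeds the nonzero ideal $K_i$ into the minimal ideal $F/N_i$, so $K_i\cong F/N_i$ is itself minimal. Finally, since $K_j\leq N_i$ for $j\neq i$, the sum $\sum_iK_i$ is direct, and the diagonal map $F\to\prod_iF/N_i$ (injective because $\bigcap_iN_i=\{0\}$) restricts to an isomorphism on $\bigoplus_iK_i$; comparing images forces $F=\bigoplus_iK_i\leq S$, so $F=S$ is a sum of minimal abelian ideals, and in particular abelian.

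The main obstacle, and the step I would be most careful about, is the passage ``$F$ embeds subdirectly into $\prod_iF/N_i$, therefore $F$ is a direct sum of minimal ideals.'' This is the brace analog of the semisimplicity of a module that is a subdirect product of simple modules, and it is not automatic in a general non-abelian setting; it works here precisely because the reduction of the second paragraph has already made $F$ abelian, so that the lattice of $B$-ideals inside $F$ behaves like a lattice of submodules and the Dedekind modular law is available. Establishing that $F/N_i$ is genuinely a chief factor (rather than merely $N_i$ being maximal inside $M_i$) through the modular-law computation, and verifying that $K_i$ is minimal, are the technical points that make the subdirect product split; everything else is formal.
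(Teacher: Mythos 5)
Your proof is correct and is essentially the argument the paper intends: the paper states this theorem as a direct analog of Theorem 5.12 of \cite{tutti23} ``the proof being exactly the same'', namely apply Theorem \ref{locallynilpointerse} to $\operatorname{Fit}(B)$ (which is $B$-centrally nilpotent, hence locally $B$-nilpotent, since $B$ is finite) to get $\partial_B\big(\operatorname{Fit}(B)\big)\leq\operatorname{Fit}(B)\cap\operatorname{Frat}(B)=\{0\}$, and then run the classical Gasch\"utz-style decomposition into minimal ideals exactly as you do. Your only inessential inaccuracy is the closing remark that abelianness of $\operatorname{Fit}(B)$ is what makes the Dedekind law available --- the modular law holds for arbitrary subgroups of $(B,+)$ and your chief-factor and minimality computations never actually use abelianness; it is needed only to conclude that the minimal ideals $K_i$ are abelian.
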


\begin{cor}
Let $B$ be a finite brace. If $\operatorname{Frat}(B)=\{0\}$, then every $B$-centrally nilpotent ideal of~$B$ is abelian.
\end{cor}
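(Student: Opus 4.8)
The plan is to deduce this immediately from the preceding theorem together with the definition of the Fitting ideal. First I would observe that the hypothesis $\operatorname{Frat}(B)=\{0\}$ trivially forces $\operatorname{Fit}(B)\cap\operatorname{Frat}(B)=\{0\}$, so the previous theorem applies verbatim and already yields that $\operatorname{Fit}(B)$ is abelian.

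Next I would recall that, by the very definition given in the Preliminaries, $\operatorname{Fit}(B)$ is the sum of all $B$-centrally nilpotent ideals of $B$; in particular, if $I$ is any $B$-centrally nilpotent ideal of $B$, then $I\leq\operatorname{Fit}(B)$. It then remains only to transfer abelianity from $\operatorname{Fit}(B)$ to its subbrace $I$. Here I would use that a subbrace of an abelian brace is again abelian: an abelian brace is a trivial brace (so $B\ast B=0$) whose additive group is abelian, and for any subbrace $S$ both the inclusion $S\ast S\subseteq B\ast B=0$ and the commutativity of $(S,+)$ are inherited. Applying this to $S=I\leq\operatorname{Fit}(B)$ shows that $I$ is abelian, which is exactly the claim.

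Since the whole argument is a direct consequence of an already-proved theorem, there is no genuine obstacle; the only point needing a moment's care is the last one, namely that being abelian (i.e.\ trivial of abelian type) passes to subbraces. This, however, is immediate from the definition of an abelian brace recalled earlier, so the corollary follows without further work.
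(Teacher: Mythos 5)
Your proof is correct and takes exactly the route the paper intends: the corollary is stated without a written proof as an immediate consequence of the preceding theorem, and your chain of steps --- $\operatorname{Frat}(B)=\{0\}$ trivially gives $\operatorname{Fit}(B)\cap\operatorname{Frat}(B)=\{0\}$, the theorem then makes $\operatorname{Fit}(B)$ abelian, every $B$-centrally nilpotent ideal lies in $\operatorname{Fit}(B)$ by the definition of the Fitting ideal, and abelianity (triviality plus commutativity of the additive group) visibly passes to subbraces --- fills in precisely the intended details. No gaps.
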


A seemingly stronger result holds in the supersoluble case, by virtue of~The\-o\-rem~\ref{blocally} and the second half of~The\-o\-rem~\ref{locallynilpointerse}.

\begin{cor}
Let $B$ be a \textnormal(locally\textnormal) supersoluble brace. If $\operatorname{Frat}(B)=\{0\}$, then every \hbox{\textnormal(locally\textnormal)} centrally nilpotent ideal of~$B$ is abelian.
\end{cor}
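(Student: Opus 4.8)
The plan is to reduce both cases at once to a single application of Theorems \ref{blocally} and \ref{locallynilpointerse}. Since a supersoluble brace is in particular locally supersoluble, and a centrally nilpotent ideal is in particular locally centrally-nilpotent, it suffices to treat the locally supersoluble version. So let $B$ be locally supersoluble with $\operatorname{Frat}(B)=\{0\}$, and let $I$ be a locally centrally-nilpotent ideal of $B$; the goal is to show $I$ is abelian as a brace, i.e.\ that $\partial_B(I)=\{0\}$.

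First I would check the hypothesis needed to invoke Theorem \ref{locallynilpointerse}, namely that $B$ locally satisfies the maximal condition on subbraces. This is immediate: every finitely generated subbrace $F$ of $B$ is supersoluble, hence almost polycyclic, and therefore satisfies the maximal condition on subbraces, as recalled in Section \ref{sectsuper}. With this in hand, Theorem \ref{blocally} applies and tells us that the locally centrally-nilpotent ideal $I$ is in fact locally $B$-nilpotent.

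Now the second half of Theorem \ref{locallynilpointerse} does the real work: for \emph{every} maximal subbrace $M$ of $B$ it yields $\partial_B(I)\leq I\cap M\leq M$. Intersecting over all maximal subbraces $M$ of $B$, the right-hand sides intersect to $\operatorname{Frat}(B)$, so that $\partial_B(I)\leq\operatorname{Frat}(B)=\{0\}$, whence $\partial_B(I)=\{0\}$. By the very definition of $\partial_B(I)$ as the smallest ideal $J\leq I$ of $B$ with $I/J$ abelian, this forces $I$ itself to be abelian, which is exactly the claim.

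There is no genuine obstacle here — essentially all the content has been pre-packaged into Theorem \ref{locallynilpointerse}, whose second half is precisely the mechanism that pushes the derived ideal of a locally $B$-nilpotent ideal into every maximal subbrace. The only point requiring a word of care is the degenerate case in which $B$ has no maximal subbraces: then the empty intersection defining $\operatorname{Frat}(B)$ is all of $B$, so the hypothesis $\operatorname{Frat}(B)=\{0\}$ forces $B=\{0\}$ and the statement is vacuous. In every other case the intersection step above is exactly what converts the hypothesis $\operatorname{Frat}(B)=\{0\}$ into the vanishing of $\partial_B(I)$.
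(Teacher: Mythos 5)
Your proof is correct and follows exactly the route the paper intends: Theorem \ref{blocally} upgrades the (locally) centrally nilpotent ideal to a locally $B$-nilpotent one, and the second half of Theorem \ref{locallynilpointerse} places $\partial_B(I)$ inside every maximal subbrace, hence inside $\operatorname{Frat}(B)=\{0\}$. Your preliminary reductions (supersoluble implies locally supersoluble, centrally nilpotent implies locally centrally-nilpotent, and the check that finitely generated subbraces satisfy the maximal condition) are all valid and consistent with the paper's framework.
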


Now, we deal with the problem of the index of a maximal subbrace. The first result in this context concerns with those braces that are {\it locally finite-supersoluble}, that is, those braces in which every finite subset is contained in a finite supersoluble subbrace.

\begin{cor}
Let $B$ be a locally finite-supersoluble brace such that $\pi(B,+)$ is finite. If $M$ is any maximal subbrace of $B$, then $|B:M|$ is a prime.
\end{cor}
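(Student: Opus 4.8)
The plan is to reduce to the case of a core-free maximal subbrace and then supplement it by a small ideal extracted from a locally $B$-nilpotent ideal. First I would record the standing reductions. Since $B$ is locally finite-supersoluble, every finitely generated subbrace of $B$ lies in a finite supersoluble subbrace, hence is itself finite and supersoluble; in particular $B$ is locally supersoluble and locally satisfies the maximal condition on subbraces, so Theorem \ref{locallynilpointerse} is applicable. Replacing $B$ by $B/M_B$ and $M$ by $M/M_B$, where $M_B$ is the normal core of $M$, preserves all the hypotheses and leaves $|B:M|$ unchanged, so I may assume $M$ is core-free, i.e. $M_B=\{0\}$; equivalently, no nonzero ideal of $B$ is contained in $M$.

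Next I would produce a nonzero locally $B$-nilpotent ideal, and this is exactly where the finiteness of $\pi(B,+)$ enters. Assuming $B\neq\{0\}$, let $p$ be the largest prime in $\pi(B,+)$ and let $P$ be the set of elements of $(B,+)$ of $p$-power order. Taking $q$ to be the largest prime strictly below $p$ (or arguing directly if $p$ is the unique prime), Theorem \ref{oddprime} shows that $P=U_q^+(B)=U_q^{\bigcdot}(B)$ is an ideal of $B$; moreover every finitely generated subbrace of $P$ is a finite supersoluble brace of $p$-power order, hence centrally nilpotent by Theorem \ref{pbracecentrallynilp}, so $P$ is locally centrally-nilpotent. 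By Corollary \ref{locallycentrallynilp} together with Theorem \ref{blocally}, the Hirsch--Plotkin ideal $H$ of $B$ is locally $B$-nilpotent and contains $P$, whence $H\neq\{0\}$.

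Finally I would apply the key intersection result. Since $H$ is locally $B$-nilpotent and $M$ is maximal, Theorem \ref{locallynilpointerse} yields $\partial_B(H)\leq H\cap M\trianglelefteq B$. Core-freeness forces $H\cap M=\{0\}$, so $H$ is abelian, and as $H\neq\{0\}$ it is not contained in $M$; maximality then gives $B=HM$. I would next check that $H$ is a minimal ideal: if $\{0\}\neq K\trianglelefteq B$ with $K\leq H$, then $K\not\leq M$ by core-freeness, so $KM=B$, and Dedekind's modular law in $(B,\bigcdot)$ gives $H=H\cap KM=K(H\cap M)=K$. Thus $H$ is a chief factor of the locally supersoluble brace $B$, so $|H|$ is a prime by Corollary \ref{corchieflocallsuper}. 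Since $B=HM$, $H\cap M=\{0\}$ and $H\trianglelefteq B$, the subgroup $(M,\bigcdot)$ is a complement of the normal subgroup $(H,\bigcdot)$ in $(B,\bigcdot)$, so $|(B,\bigcdot):(M,\bigcdot)|=|H|$; as $B$ is locally supersoluble it has the index-property, and therefore $|B:M|=|H|$, a prime.

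The main obstacle is the second step. Theorem \ref{locallynilpointerse} only has content for a locally $B$-nilpotent ideal, so the whole argument collapses unless one guarantees such an ideal is nonzero after passing to the core-free quotient. The hypothesis that $\pi(B,+)$ is finite is precisely what furnishes one, through the top Sylow ideal $P$ of $p$-elements; without a largest prime this Sylow-tower mechanism is unavailable, which is why the conclusion cannot be claimed for arbitrary locally supersoluble braces.
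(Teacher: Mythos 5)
Your proof is correct and follows essentially the same route as the paper's: reduce modulo the core of $M$, use Theorem~\ref{oddprime} and Theorem~\ref{pbracecentrallynilp} to produce a non-zero locally centrally-nilpotent Sylow ideal for the largest prime, upgrade it to a locally $B$-nilpotent ideal via Theorem~\ref{blocally}, apply Theorem~\ref{locallynilpointerse} to force its intersection with $M$ to be an ideal (hence zero), and conclude that the resulting minimal ideal is a chief factor of prime order by Corollary~\ref{corchieflocallsuper}. The only immaterial deviations are that you route through the Hirsch--Plotkin ideal rather than working with the Sylow ideal $P$ directly, and that you invoke the $\partial_B$-part of Theorem~\ref{locallynilpointerse}, which is not actually needed for the conclusion.
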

\begin{proof}
Let $V=M_B$ be the largest ideal of $B$ contained in $M$ (note that $B/V$ is not necessarily finite) and let $p$ be the largest prime in $\pi(B/V,+)$. It follows from The\-o\-rem~\ref{oddprime} that the set $P/V$ of all $p$-elements  of $(B/V,+)$ is an ideal of $B/V$. Moreover, since $P/V$ is locally finite, it follows from The\-o\-rem~\ref{pbracecentrallynilp} that $P/V$ is locally centrally-nilpotent. Then~Lem\-ma~\ref{blocally} shows that $P/V$ is locally $(B/V)$-nilpotent, so $P\cap M$ is an ideal of $B$ by~The\-o\-rem~\ref{locallynilpointerse}, and hence $P\cap M=V$. Now, $M/V$ is maximal in $B/V$, so $P/V$ does not contain any proper non-zero ideal of $B/V$, and hence $P/V$ is a chief factor of $B$.~Corollary~\ref{corchieflocallsuper} yields that $P/V$ has order $p$. Therefore $|B:M|=p$ and the statement is proved.
\end{proof}

\medskip

The other result concern with hypercyclic braces.

\begin{cor}
Let $B$ be a hypercyclic brace. If $M$ is a maximal subbrace of $B$, then $|B:M|$ is a prime number.
\end{cor}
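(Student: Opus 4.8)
The plan is to pass to the quotient by the core of $M$ and then exploit the bottom of the hypercyclic series. Set $V=M_B$, the largest ideal of $B$ contained in $M$, and work in $\overline B=B/V$, writing $\overline M=M/V$. Then $\overline M$ is a maximal subbrace of $\overline B$ whose core is trivial, and $\overline B$ is again hypercyclic (hypercyclicity passes to quotients by Theorem \ref{charhypercyclic}). Since $M\neq B$ we have $\overline B\neq\{0\}$, so Theorem \ref{charhypercyclic}, applied to the quotient $\overline B/\{0\}$, produces a non-zero ideal $\overline J$ of $\overline B$ which is either of prime order $p$, or satisfies $\overline J\leq\operatorname{Soc}(\overline B)$ with $(\overline J,+)$ infinite cyclic. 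As $\overline J$ is a non-zero ideal and $\overline M$ has trivial core, $\overline J\not\leq\overline M$; consequently $\langle\overline M,\overline J\rangle=\overline B$ by maximality, and this join equals the product $\overline J\,\overline M$ because $\overline J$ is an ideal.

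First I would dispose of the finite case. If $|\overline J|=p$, then the only subbraces of $\overline J$ are $\{0\}$ and $\overline J$, so $\overline J\not\leq\overline M$ forces $\overline J\cap\overline M=\{0\}$. From $\overline B=\overline J\,\overline M$ together with $\overline J\cap\overline M=\{0\}$ (an equality of underlying sets, hence valid for both $+$ and $\bigcdot$) one reads off $|(\overline B,+):(\overline M,+)|=|(\overline B,\bigcdot):(\overline M,\bigcdot)|=|\overline J|=p$, whence $|B:M|=|\overline B:\overline M|=p$, as wanted. (Alternatively one invokes the index-property, which holds here since hypercyclic braces are locally supersoluble.)

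The main obstacle is to rule out the remaining possibility, namely $\overline J\leq\operatorname{Soc}(\overline B)$ with $(\overline J,+)$ infinite cyclic; I expect this to be the crux. The key point is that each additive power $\overline J^{\,n,+}$ is again an ideal of $\overline B$: indeed $\overline J$ is a trivial brace, central in $(\overline B,+)$ and contained in $\operatorname{Ker}\lambda$, so $\overline J^{\,n,+}$ is additively central (hence additively normal) and $\lambda$-invariant, while $\overline J^{\,n,+}=\overline J^{\,n,\bigcdot}$ is characteristic in the infinite cyclic normal subgroup $(\overline J,\bigcdot)$ and thus normal in $(\overline B,\bigcdot)$. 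Since any non-zero such ideal lying inside $\overline M$ would be absorbed by the trivial core of $\overline M$, I first deduce $\overline J\cap\overline M=\{0\}$. Because $\overline J$ is additively central with $(\overline B,+)=(\overline M,+)(\overline J,+)$ and trivial intersection, $(\overline M,+)$ turns out to be normal and $(\overline B,+)=(\overline M,+)\times(\overline J,+)$. Now $I:=\overline J^{\,2,+}$ is a non-zero ideal with $I\cap\overline M\subseteq\overline J\cap\overline M=\{0\}$, so $\langle\overline M,I\rangle=\overline M\,I$ strictly contains $\overline M$; yet its additive group is $(\overline M,+)\oplus\overline J^{\,2,+}\subsetneq(\overline M,+)\oplus(\overline J,+)=(\overline B,+)$, so $\langle\overline M,I\rangle$ is a proper subbrace. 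This contradicts the maximality of $\overline M$, eliminating the infinite case and completing the proof.
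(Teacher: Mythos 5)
Your proof is correct, but it takes a genuinely different route from the paper. The paper's proof also passes to $B/V$ with $V=M_B$ and takes the bottom ideal $U/V$ of the hypercyclic structure, but then it invokes the heavy machinery of Section 4: it observes that $U/V$ is locally $(B/V)$-nilpotent, applies Theorem \ref{locallynilpointerse} to conclude $M\cap U\trianglelefteq B$, hence $M\cap U=V$, deduces that $U/V$ is a chief factor of $B/V$, and finally cites Corollary \ref{corchieflocallsuper} (chief factors of locally supersoluble braces have prime order) to get $|U/V|=p$ and so $|B:M|=p$. You replace both of these ingredients by an elementary two-case analysis of the bottom ideal $\overline J$ itself: in the prime-order case a direct index computation from $\overline B=\overline J\,\overline M$ and $\overline J\cap\overline M=\{0\}$; in the infinite-cyclic case a contradiction obtained by checking that $\overline J^{\,2,+}$ is again an ideal of $\overline B$ (trivial brace in the socle, $\lambda$-invariant, characteristic in the infinite cyclic normal subgroup $(\overline J,\bigcdot)$), so that $\overline M+\overline J^{\,2,+}$ is a subbrace lying strictly between $\overline M$ and $\overline B$, violating maximality. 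In effect you reprove, in this special situation, exactly the fact that the chief-factor theorem encodes --- a minimal ideal of a (locally) supersoluble brace cannot be infinite cyclic --- but by a direct construction rather than by citation. The paper's approach buys uniformity (both cases are handled at once by minimality plus the chief-factor theorem) and reuses machinery that also powers the preceding corollary on locally finite-supersoluble braces, besides yielding the extra structural facts $M\cap U\trianglelefteq B$ and that $U/V$ is a chief factor; your approach buys self-containedness, requiring only Theorem \ref{charhypercyclic} and first principles about socle ideals and cosets of ideals, and makes transparent the precise obstruction in the infinite case. All the individual steps you use (that $\overline J$ is a trivial brace, that $\overline J^{\,n,+}=\overline J^{\,n,\bigcdot}$ is an ideal of $\overline B$, that $C+I=CI$ is a subbrace when $I$ is an ideal, and the index computation in both groups) are sound.
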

\begin{proof}
Let $V=M_B$ be the largest ideal of $B$ contained in $M$, and let $U/V$ be a non-zero ideal of $B/V$ such that either $U/V\leq\operatorname{Soc}(B/V)$ and $U/V$ is $1$-generator, or~$|U/V|$ is a prime. Clearly, $U/V$ is locally $(B/V)$-nilpotent, so Theorem \ref{locallynilpointerse} yields that $M\cap U\trianglelefteq B$ and hence $M\cap U=V$. It follows that $U/V$ is a chief factor of $B/V$, so $|U/V|$ is a prime number, $p$ say, by Corollary \ref{corchieflocallsuper}.  Therefore $|B:M|=p$ and the statement is proved.
\end{proof}

\medskip

In the final part of the paper we deal more in detail with nilpotency properties of hypercyclic and locally supersoluble  braces. Most of the results here generalize corresponding results for supersoluble braces we saw in Section \ref{sectsuper}; for example, the following one generalizes~The\-o\-rem~\ref{schurtheor}.

\begin{theo}\label{lsschur}
Let $B$ be a locally supersoluble brace such that $B/\zeta_n(B)$ is locally finite for some $n\in\mathbb{N}$. Then $\Gamma_{n+1}(B)$ is locally finite.
\end{theo}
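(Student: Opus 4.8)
The plan is to reduce the statement to the already-established supersoluble case, Theorem~\ref{schurtheor}, by a localisation argument. To prove that $\Gamma_{n+1}(B)$ is locally finite, I would take an arbitrary finitely generated subbrace $X\leq\Gamma_{n+1}(B)$ and manufacture a single finitely generated subbrace $H\leq B$ with $X\leq\Gamma_{n+1}(H)$ and such that $H$ satisfies the hypotheses of Theorem~\ref{schurtheor}; that theorem then makes $\Gamma_{n+1}(H)$ finite, and hence $X$ finite.

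First I would construct $H$. Each of the finitely many generators of $X$ lies in $\Gamma_{n+1}(B)$, hence is a $\mathbb{Z}$-linear combination in $(B,+)$ of finitely many fundamental elements $u\ast b$, $b\ast u$, $[u,b]_+$ with $u\in\Gamma_n(B)$ and $b\in B$. Unwinding the recursive definition $\Gamma_{k+1}(B)=\langle\Gamma_k(B)\ast B,\,B\ast\Gamma_k(B),\,[\Gamma_k(B),B]_+\rangle_+$ down to $\Gamma_1(B)=B$, each such element $u$ is itself an additive combination of lower-level star/additive-commutator expressions, and this finite tree of expressions bottoms out at finitely many elements of $B$. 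Collecting all the elements of $B$ occurring in these finitely many trees into a finite set $S$ and setting $H=\langle S\rangle$, an induction on the level shows that each element $u\in\Gamma_k(B)$ that was used actually lies in $\Gamma_k(H)$ (the base case $\Gamma_1(H)=H$ holding because the leaves are placed in $S$); consequently every fundamental element lies in $\Gamma_{n+1}(H)$, and since $\Gamma_{n+1}(H)$ is an additive subgroup, the generators of $X$ lie in it, so $X\leq\Gamma_{n+1}(H)$.

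The key step is to verify that $H$ inherits the hypothesis, i.e. that $H/\zeta_n(H)$ is finite. For this I would use the monotonicity $\zeta_k(B)\cap H\leq\zeta_k(H)$ for all $k$, proved by induction on $k$: the case $k=1$ is immediate from the description of $\zeta(B)$ as the set of $x$ with $[x,b]_+=x\ast b=b\ast x=0$ for all $b\in B$, and in the inductive step, for $x\in\zeta_{k+1}(B)\cap H$ and $h\in H$ the elements $[x,h]_+$, $x\ast h$, $h\ast x$ all lie in $\zeta_k(B)\cap H\leq\zeta_k(H)$, so $x\in\zeta_{k+1}(H)$. Since $H$ is finitely generated and $B/\zeta_n(B)$ is locally finite, the image $H\zeta_n(B)/\zeta_n(B)\cong H/(H\cap\zeta_n(B))$ is a finitely generated subbrace of a locally finite brace, hence finite; together with $H\cap\zeta_n(B)\leq\zeta_n(H)$ this gives that $H/\zeta_n(H)$ is finite.

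Finally, $H$ is supersoluble because $B$ is locally supersoluble, so Theorem~\ref{schurtheor} applies to $H$ and yields that $\Gamma_{n+1}(H)$ is finite. As $X\leq\Gamma_{n+1}(H)$, the subbrace $X$ is finite, and the arbitrariness of $X$ proves that $\Gamma_{n+1}(B)$ is locally finite. I expect the main obstacle to be the bookkeeping in the localisation step, namely checking that the finitely many generators of $X$ can all be captured inside $\Gamma_{n+1}(H)$ for a \emph{single} finitely generated $H$ (which is where the non-additivity of $\ast$ in its first argument must be handled by tracking the specific elements rather than distributing), rather than any genuinely new idea, since the Schur--Baer content is entirely supplied by Theorem~\ref{schurtheor}.
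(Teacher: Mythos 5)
Your proposal is correct and takes essentially the same route as the paper: localise to a finitely generated subbrace $H$ with the given finitely generated piece of $\Gamma_{n+1}(B)$ trapped inside $\Gamma_{n+1}(H)$, observe that $H$ is supersoluble with $H/\zeta_n(H)$ finite, and invoke Theorem~\ref{schurtheor}. The only difference is one of detail: you spell out the two facts the paper leaves implicit, namely that $\Gamma_{n+1}(B)=\bigcup_{F}\Gamma_{n+1}(F)$ over finitely generated subbraces $F$ (your unwinding argument) and the monotonicity $\zeta_k(B)\cap H\leq\zeta_k(H)$, which is what converts the local finiteness of $B/\zeta_n(B)$ into finiteness of $H/\zeta_n(H)$.
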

\begin{proof}
Let $\mathcal{F}$ be the set of all finitely generated subbraces of $B$, and let $F\in\mathcal{F}$. Then~$F$ is supersoluble and so $F/\zeta_n(F)$ is finite. Now, Theorem \ref{schurtheor} yields that $\Gamma_{n+1}(F)$ is finite. On the other hand, $$\Gamma_{n+1}(B)=\bigcup_{F\in\mathcal{F}}\Gamma_{n+1}(F),$$ and $\Gamma_{n+1}(F)\leq\Gamma_{n+1}(E)$ whenever $F\leq E$ are in $\mathcal{F}$. Thus, every finitely generated subbrace of $\Gamma_{n+1}(B)$ is contained in some $\Gamma_{n+1}(E)$, $E\in\mathcal{F}$, and is consequently finite. Therefore $\Gamma_{n+1}(B)$ is locally finite and the statement is proved.
\end{proof}

\begin{rem}
{\rm Note that the converse of Theorem \ref{lsschur} does not hold even for groups.}
\end{rem}

In Theorem \ref{rnilsupersoluble}, we characterized supersoluble braces of finite multipermutational level in terms of the nilpotency of their additive group. In the more general contexts of hypercyclic and locally supersoluble braces, something similar can be achieved. In what follows, a brace $B$ is {\it locally hypersocle} if for every finitely generated subbrace $F$ of $B$, we have $F=\overline{\operatorname{Soc}}(F)$. Examples of locally hypersocle braces are those braces that are (locally of) finite multipermutational level. 

\begin{rem}
{\rm We do not know if every locally hypersocle brace need to be locally of finite multipermutational level. This would certainly be the case if one could prove that a finitely generated brace $B$ with $B=\overline{\operatorname{Soc}}(B)$ is of finite multipermutational level. We actually believe this is not the case, but if it were, the condition ‘‘locally hypersocle’’ could  be rephrased simply as  ‘‘locally of finite multipermutational level’’.}
\end{rem}

\begin{theo}\label{lsupersofinimulti}
Let $B$ be a locally supersoluble brace. Then the following conditions are equivalent:
\begin{itemize}
    \item[\textnormal{(1)}] $(B,+)$ is locally nilpotent.
    \item[\textnormal{(2)}] $B$ is locally hypersocle.
    \item[\textnormal{(3)}] $B$ is locally of finite multipermutational level.
\end{itemize}
\end{theo}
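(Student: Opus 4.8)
The plan is to reduce everything to the supersoluble case and invoke Theorem \ref{rnilsupersoluble}, exploiting that all three conditions are \emph{local} (they are phrased through finitely generated subbraces) and that every finitely generated subbrace of $B$ is supersoluble. The backbone is the pair of equivalences (1)$\Leftrightarrow$(3) and (2)$\Leftrightarrow$(3), which together yield the statement.

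First I would establish (1)$\Leftrightarrow$(3). For (3)$\Rightarrow$(1), take any finitely generated subgroup $H=\langle h_1,\ldots,h_k\rangle_+$ of $(B,+)$ and enclose it in the finitely generated subbrace $F=\langle h_1,\ldots,h_k\rangle$, which is supersoluble by local supersolubility; by (3) it has finite multipermutational level, so Theorem \ref{rnilsupersoluble} gives that $(F,+)$ is nilpotent, whence $H\leq(F,+)$ is nilpotent. Conversely, for (1)$\Rightarrow$(3), let $F$ be any finitely generated subbrace; since $F$ is supersoluble, hence almost polycyclic, its additive group $(F,+)$ is polycyclic-by-finite and in particular finitely generated as a group, so (1) forces $(F,+)$ to be nilpotent, and Theorem \ref{rnilsupersoluble} then yields $F=\operatorname{Soc}_n(F)$ for some $n$.

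Next I would establish (2)$\Leftrightarrow$(3). The implication (3)$\Rightarrow$(2) is immediate: if $F=\operatorname{Soc}_n(F)$, then the upper socle series of $F$ stops at $F$, so $\overline{\operatorname{Soc}}(F)=F$. For (2)$\Rightarrow$(3), I would use that a finitely generated subbrace $F$ is supersoluble, hence satisfies the maximal condition on subbraces; the upper socle series of $F$ is an ascending chain of ideals, so by the maximal condition it stabilizes after finitely many steps, giving $\overline{\operatorname{Soc}}(F)=\operatorname{Soc}_n(F)$ for some finite $n$. Combining this with (2) yields $F=\operatorname{Soc}_n(F)$, so $F$ is of finite multipermutational level, and the arbitrariness of $F$ gives (3).

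The only genuinely delicate point is the passage from subgroups of $(B,+)$ to subbraces in (3)$\Rightarrow$(1): one must remember that an arbitrary finitely generated additive subgroup need not itself be the additive group of a subbrace, so the argument has to embed it in the subbrace generated by the same finite set and then invoke that nilpotency of the additive group of a supersoluble brace is inherited by subgroups. Everything else is a bookkeeping application of the chain ``finitely generated subbrace $\Rightarrow$ supersoluble $\Rightarrow$ almost polycyclic'' (yielding both the polycyclic-by-finite additive group and the maximal condition on subbraces) together with Theorem \ref{rnilsupersoluble}.
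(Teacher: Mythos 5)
Your proposal is correct and follows essentially the same route as the paper: both reduce to finitely generated subbraces, which are supersoluble, and then combine Theorem \ref{rnilsupersoluble} with the maximal condition on subbraces (to truncate the upper socle series) and the observation that finitely generated additive subgroups embed into finitely generated subbraces. The only difference is organizational — you prove (1)$\Leftrightarrow$(3) and (2)$\Leftrightarrow$(3) separately, while the paper runs the cycle (1)$\Rightarrow$(3)$\Rightarrow$(2)$\Rightarrow$(1) — and your write-up makes explicit a couple of details (e.g.\ that $(F,+)$ is finitely generated because $F$ is almost polycyclic) that the paper leaves implicit.
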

\begin{proof}
If $(B,+)$ is locally nilpotent, then $B$ is locally of finite multipermutational level by~The\-o\-rem~\ref{rnilsupersoluble}, and so even locally hypersocle. Assume conversely that $B$ is locally hypersocle, and let $F$ be any finitely generated subbrace of $B$. Then $F$ is supersoluble, so it satisfies the maximal condition on subbraces, and consequently $F$ is of  finite multipermutational level. In particular, $(F,+)$ is nilpotent. The arbitrariness of $F$ shows that $(B,+)$ is locally nilpotent.
\end{proof}

\begin{theo}\label{incrediblenotcited}
Let $B$ be a hypercyclic brace. The following conditions are equivalent:
\begin{itemize}
    \item[\textnormal{(1)}] $(B,+)$ is hypercentral.
    \item[\textnormal{(2)}] $(B,+)$ is locally nilpotent.
    \item[\textnormal{(3)}] $B=\overline{\operatorname{Soc}}(B)$.
    \item[\textnormal{(4)}] $B$ is locally hypersocle.
    \item[\textnormal{(5)}] $B$ is locally of finite multipermutational level.
\end{itemize}
\end{theo}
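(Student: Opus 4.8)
The plan is to treat conditions (2), (4), (5) as already linked and to weave in (1) and (3). Since a hypercyclic brace is locally supersoluble, Theorem~\ref{lsupersofinimulti} gives at once the equivalence of (2), (4) and (5). Hence it suffices to prove the cycle of implications $(3)\Rightarrow(1)\Rightarrow(2)\Rightarrow(3)$.

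For $(3)\Rightarrow(1)$ I would read off the upper socle series directly. If $B=\overline{\operatorname{Soc}}(B)$, the ideals $\operatorname{Soc}_\alpha(B)$ form an ascending chain reaching $B$, and by definition each factor satisfies $\operatorname{Soc}_{\alpha+1}(B)/\operatorname{Soc}_\alpha(B)=\operatorname{Soc}(B/\operatorname{Soc}_\alpha(B))\leq Z(B/\operatorname{Soc}_\alpha(B),+)$. Passing to additive groups, $\{(\operatorname{Soc}_\alpha(B),+)\}$ is then an ascending central series of $(B,+)$ terminating at $(B,+)$, so $(B,+)$ is hypercentral. The implication $(1)\Rightarrow(2)$ is then the classical fact that a hypercentral group is locally nilpotent (see \cite{Rob72}), applied to $(B,+)$.

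The heart of the argument is $(2)\Rightarrow(3)$, and the key step is the following claim: a non-zero hypercyclic brace $B$ with $(B,+)$ locally nilpotent has non-zero socle. To prove it, use the definition of hypercyclicity to pick a non-zero ideal $I$ that is either contained in $\operatorname{Soc}(B)$ (in which case we are done at once) or has prime order $p$. In the latter case $(I,+)=\langle b\rangle$ is a normal subgroup of $(B,+)$ of order $p$; for every $g\in(B,+)$ the finitely generated subgroup $\langle b,g\rangle$ is nilpotent, and a normal subgroup of prime order in a nilpotent group is central, so $b$ commutes additively with $g$. Hence $I\leq Z(B,+)$, and since $B$ is locally supersoluble, Lemma~\ref{lemequivalenza2}(1) yields $I\leq\operatorname{Soc}(B)$, proving the claim. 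With the claim in hand, $(2)\Rightarrow(3)$ follows by a transfinite argument: both hypercyclicity and local nilpotency of the additive group pass to quotients (for the former use Theorem~\ref{charhypercyclic}), so if $\operatorname{Soc}_\mu(B)=\overline{\operatorname{Soc}}(B)$ were properly contained in $B$, the non-zero quotient $B/\operatorname{Soc}_\mu(B)$ would again be hypercyclic with locally nilpotent additive group and hence, by the claim, would have non-zero socle, contradicting that $\operatorname{Soc}_\mu(B)$ is the last term of the upper socle series. Therefore $B=\overline{\operatorname{Soc}}(B)$, which closes the cycle.

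The main obstacle I anticipate is the key claim, and within it the verification that the prime-order ideal lands in the additive centre: this is where local nilpotency of $(B,+)$ is genuinely used, through the elementary but essential fact that a prime-order normal subgroup of a (locally) nilpotent group is central. The remaining delicacy is purely bookkeeping, namely confirming that hypercyclicity is quotient-closed so that the transfinite induction on the socle series runs without obstruction.
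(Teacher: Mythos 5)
Your proof is correct, but it is organized differently from the paper's. The paper also gets the equivalence of (2), (4), (5) from Theorem~\ref{lsupersofinimulti}, but it then disposes of (1)$\Leftrightarrow$(2) by citing the known group-theoretic fact that a hypercyclic group (which $(B,+)$ is, by Lemma~\ref{lsupersolublebraceisgroup}) is hypercentral if and only if it is locally nilpotent, and makes (1)$\Rightarrow$(3) the hard implication: given the prime-order ideal $I$ supplied by hypercyclicity, it takes any finitely generated subbrace $F\supseteq I$, notes that $F$ is supersoluble with $(F,+)$ nilpotent, invokes Theorem~\ref{rnilsupersoluble} to get $F=\operatorname{Soc}_n(F)$, concludes that the minimal ideal $I$ of $F$ lies in $\operatorname{Soc}(F)$, and lets the arbitrariness of $F$ give $I\leq\operatorname{Soc}(B)$. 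You instead prove the cycle (3)$\Rightarrow$(1)$\Rightarrow$(2)$\Rightarrow$(3), so your hard implication (2)$\Rightarrow$(3) is formally stronger than the paper's, and you only need the easy classical direction (hypercentral $\Rightarrow$ locally nilpotent); the nontrivial converse for hypercyclic groups, which the paper imports from group theory, comes out of your argument for free. Your crux is also resolved by different means: an elementary two-generator centrality argument shows the prime-order ideal is additively central, and then Lemma~\ref{lemequivalenza2}(1) places it in the socle --- whereas the paper routes through Theorem~\ref{rnilsupersoluble} and the (implicitly used) fact that a minimal ideal of a brace of finite multipermutational level lies in its socle. Both proofs share the same reduction via Theorem~\ref{charhypercyclic} and transfinite induction on the socle series. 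What your version buys is self-containedness (only elementary nilpotent-group facts plus a lemma already proved in this section); what the paper's buys is brevity, at the price of citing heavier structure theory of hypercyclic groups.
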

\begin{proof}
Since $B$ is locally supersoluble, conditions (4) and (5) are clearly equivalent. Moreover, since $(B,+)$ is hypercyclic, conditions (1) and (2) are known to be equivalent. Moreover, (2) and (4) are equivalent by Theorem \ref{lsupersofinimulti}, because every hypercyclic brace is locally supersoluble. Finally, since clearly (3) implies (1), we only need to show that (1) implies (3). Assume therefore that $(B,+)$ is hypercentral, and let $I$ be any non-zero ideal of $B$ such that either $I\leq\operatorname{Soc}(B)$ and $(I,+)$ is infinite cyclic, or $|I|$ is a prime (this ideal exists by definition of hypercyclic brace). By Theorem \ref{charhypercyclic}, we only need to prove that $I\leq\operatorname{Soc}(B)$, so we may assume that $I$ has prime order. If~$F$ is any finitely generated subbrace of~$B$ containing $I$, then $I$ is a minimal ideal of $F$. Since $F$ is supersoluble, then $(F,+)$ is finitely generated and so also nilpotent. Thus, $F=\operatorname{Soc}_{n}(F)$ for some $n\in\mathbb{N}$ by~The\-o\-rem~\ref{rnilsupersoluble} and consequently $I\leq\operatorname{Soc}(F)$. The arbitrariness of $F$ shows that $I\leq\operatorname{Soc}(B)$ and completes the proof of the statement.~\end{proof}

\medskip

The following lemma enables us to prove ‘‘converses’’ of the above results.

\begin{lem}\label{lemunpostrange}
Let $B$ a locally hypersocle brace. The following statements are equivalent:
\begin{itemize}
    \item[\textnormal{(1)}] $B$ is finitely generated, and $(B,\bigcdot)$ locally satisfies the maximal condition on subgroups.
    \item[\textnormal{(2)}] $(B,\bigcdot)$ satisfies the maximal condition on subgroups.
    \item[\textnormal{(3)}] $(B,+)$ is finitely generated.
\end{itemize}
\end{lem}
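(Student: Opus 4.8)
The plan is to pivot everything on the socle series $\{\operatorname{Soc}_\alpha(B)\}$. Since $\operatorname{Soc}(B)=\operatorname{Ker}\lambda\cap Z(B,+)$, on $\operatorname{Soc}(B)$ the two operations coincide (indeed $a\bigcdot b=a+a\ast b+b=a+b$ whenever $a\in\operatorname{Ker}\lambda$) and the outcome is abelian; hence every factor $\operatorname{Soc}_{\alpha+1}(B)/\operatorname{Soc}_\alpha(B)=\operatorname{Soc}(B/\operatorname{Soc}_\alpha(B))$ is a trivial brace of abelian type. Consequently the socle series is at once an ascending chain of ideals, an ascending \emph{central} series of the group $(B,+)$, and an ascending chain of $(B,\bigcdot)$-normal subgroups whose factors are finitely generated abelian once the ambient group has the maximal condition. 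I would first dispose of $(2)\Rightarrow(1)$: if $(B,\bigcdot)$ has max then it is finitely generated, so $B$ is finitely generated as a brace, and since every subgroup of a group with max again has max, $(B,\bigcdot)$ in particular locally has max.

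The heart of the symmetric part is $(2)\Leftrightarrow(3)$. Assuming $(2)$, the group $(B,\bigcdot)$ is finitely generated, so $B$ is a finitely generated brace and the locally hypersocle hypothesis forces $B=\overline{\operatorname{Soc}}(B)$. As the socle series is an ascending chain of normal subgroups of $(B,\bigcdot)$, the maximal condition makes it stabilise after finitely many steps; since its union is $B$, this yields $B=\operatorname{Soc}_n(B)$ for some $n\in\mathbb{N}$. Each factor is now a subquotient of $(B,\bigcdot)$, hence finitely generated, and being a trivial brace of abelian type it is a finitely generated abelian group on which both operations agree; therefore $(B,+)$ is polycyclic, in particular finitely generated, which is $(3)$. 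For $(3)\Rightarrow(2)$ the argument is completely parallel, except that one must first upgrade finite generation of $(B,+)$ to nilpotency: $(B,+)$ finitely generated again makes $B$ a finitely generated brace, so $B=\overline{\operatorname{Soc}}(B)$, and the socle series then exhibits $(B,+)$ as a hypercentral group; a finitely generated hypercentral group is nilpotent, so $(B,+)$ has max, the same stabilisation gives $B=\operatorname{Soc}_n(B)$ with finitely generated abelian factors, and $(B,\bigcdot)$ is polycyclic and has max.

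It remains to prove $(1)\Rightarrow(2)$, which is where the hypotheses genuinely interact and which I expect to be the main obstacle. From $(1)$, $B$ is a finitely generated brace, so once more $B=\overline{\operatorname{Soc}}(B)$. A first observation is that the socle series cannot reach $B$ at a limit ordinal: a finite brace-generating set $E$ lies in a single term $\operatorname{Soc}_\gamma(B)$ (that term is a subbrace, and at limit stages the series is the union of the finitely many terms containing the generators), so in fact $B=\operatorname{Soc}_{\beta+1}(B)$ for some ordinal $\beta$, i.e.\ $B/\operatorname{Soc}_\beta(B)$ is a trivial brace of abelian type, finitely generated as such. Since in a trivial brace multiplicative, additive and brace generation all coincide, the finitely generated multiplicative subgroup $M=\langle E\rangle_{\bigcdot}$ already surjects onto $(B/\operatorname{Soc}_\beta(B),\bigcdot)$, so that $(B,\bigcdot)=M\,(\operatorname{Soc}_\beta(B),\bigcdot)$ with $M$ finitely generated; by the local maximal condition $M$ has max. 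The task is then to upgrade this to finite generation of the whole of $(B,\bigcdot)$, after which ``finitely generated $+$ locally max $\Rightarrow$ max'' closes the proof. The difficulty—and the step I expect to demand the most care—is that an infinite strictly ascending socle chain $\operatorname{Soc}_0(B)<\operatorname{Soc}_1(B)<\cdots$ of multiplicative subgroups does not by itself contradict the \emph{merely local} maximal condition, since it need not sit inside a single finitely generated subgroup. I would handle this by transfinite descent along the socle series: using that each factor is a finitely generated trivial brace and that $M$ has max, one shows layer by layer that the chain $\{M\cap\operatorname{Soc}_\alpha(B)\}$ stabilises after finitely many jumps, which together with $(B,\bigcdot)=M\,(\operatorname{Soc}_\beta(B),\bigcdot)$ propagates finite generation through the factors and forces $\beta$ to be finite. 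Finite multipermutational level $B=\operatorname{Soc}_n(B)$ then makes $(B,\bigcdot)$ polycyclic, hence finitely generated and endowed with max, giving $(2)$.
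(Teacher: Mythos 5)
Your arguments for (2)$\Rightarrow$(1), (2)$\Rightarrow$(3) and (3)$\Rightarrow$(2) are correct, and your inline treatment of the transfinite part of the socle series (stabilisation under the maximal condition, hypercentrality of $(B,+)$ together with ``finitely generated hypercentral implies nilpotent'') is sound and even somewhat more self-contained than the paper's, which defers all transfinite issues to a final contradiction argument. The genuine gap is in (1)$\Rightarrow$(2), which you correctly identify as the heart of the lemma but do not actually prove. Your sketch rests on the assertion that ``each factor is a finitely generated trivial brace'', and this is precisely what is not known: finite generation of $B$ as a brace passes to quotients, hence to the top factor $B/\operatorname{Soc}_\beta(B)$, but it gives no control whatsoever over the lower layers $\operatorname{Soc}_{\alpha+1}(B)/\operatorname{Soc}_\alpha(B)$ with $\alpha<\beta$ (compare with groups, where a normal subgroup of a finitely generated group with finitely generated quotient need not be finitely generated, not even as a normal subgroup, unless the quotient is finitely \emph{presented}). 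Moreover, the stabilisation of the chain $\{M\cap\operatorname{Soc}_\alpha(B)\}$ is a statement about the fixed finitely generated subgroup $M$ alone: since $M$ covers $B$ only modulo $\operatorname{Soc}_\beta(B)$, and $M\,(\operatorname{Soc}_\alpha(B),\bigcdot)$ may well be proper in $(B,\bigcdot)$ for every $\alpha<\beta$, this chain carries no information about the socle layers themselves, so it can neither ``propagate finite generation through the factors'' nor force $\beta$ to be finite. Even the finite-length case $B=\operatorname{Soc}_n(B)$ of (1)$\Rightarrow$(2) is therefore not established by your argument.

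The paper closes exactly this gap with tools your proposal never invokes. For $B=\operatorname{Soc}_n(B)$ it argues by induction on $n$: the quotient $B/\operatorname{Soc}(B)$ has multiplicative group with the maximal condition by induction, and then Lemma 3.1 and Theorem 3.2 of \cite{Tr23} yield that $\operatorname{Soc}(B)$ is finitely generated \emph{as an ideal} of $B$, say by $x_1,\ldots,x_\ell$. The decisive trick is to take the normal closure $S$ of $\langle x_1,\ldots,x_\ell\rangle_{\bigcdot}$ in $(B,\bigcdot)$: since $S\leq\operatorname{Soc}(B)$, the subgroup $S$ is central, hence normal, in $(B,+)$, and it is $\lambda$-invariant by Lemma 1.10 of \cite{Cedo}, so $S$ is an ideal and therefore $S=\operatorname{Soc}(B)$. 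This converts ideal generation into multiplicative generation: $(B,\bigcdot)=\langle x_1,\ldots,x_\ell,b_1,\ldots,b_m\rangle_{\bigcdot}$ is finitely generated (with $b_1,\ldots,b_m$ generating $(B,\bigcdot)$ modulo $\operatorname{Soc}(B)$), and only at this point does the local maximal condition upgrade to the full maximal condition. The transfinite case is then settled by a separate limit-ordinal contradiction, again via \cite{Tr23}, Lemma 3.1. Without a substitute for these two ingredients --- a reason why $\operatorname{Soc}(B)$ is finitely generated as an ideal, and a mechanism turning ideal generation inside the socle into multiplicative normal closure --- your descent cannot be completed.
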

\begin{proof}
Of course, we may assume $B=\overline{\operatorname{Soc}}(B)$. We first deal with the case $B=\operatorname{Soc}_n(B)$ for some $n\in\mathbb{N}$. It is clear that if either $(B,\bigcdot)$ or $(B,+)$ are finitely generated, then also $B$ is finitely generated; in particular, (2)$\implies$(1) and also (3)$\implies$(1). 
Moreover, if the nilpotent group is $(B,+)$ is finitely generated, then it satisfies the maximal condition on subgroups, so the multiplicative group of every abelian factor $\operatorname{Soc}_{i+1}(B)/\operatorname{Soc}_i(B)$ is finitely generated; consequently, $(B,\bigcdot)$ satisfies the maximal condition on subgroups and (3)$\implies$(2). Similarly, we have that (2)$\implies$(3).

Finally, assume that $B$ is finitely generated and that $(B,\bigcdot)$ locally satisfies the maximal condition on subgroups. By induction on $n$, we have that~$\big(B/\operatorname{Soc}(B),\bigcdot\big)$ is finitely generated. Thus, since $\big(B/\operatorname{Soc}(B),\bigcdot\big)$ satisfies the maximal condition on subgroups, we may employ a combination of Lemma 3.1 and Theorem 3.2 of \cite{Tr23} to shows that~$\operatorname{Soc}(B)$ is finitely generated as an ideal of $B$ by certain elements $x_1,\ldots,x_\ell$. Now, let $b_1,\ldots,b_m$ be elements of $B$ generating~$(B,\bigcdot)$ modulo $\operatorname{Soc}(B)$, and let~$S$ be the normal closure of~\hbox{$\langle x_1,\ldots,x_\ell\rangle_{\bigcdot}$} in $(B,\bigcdot)$. Then $S\leq\operatorname{Soc}(B)$, so~$S$ is also a normal subgroup of~$(B,+)$, and, by Lemma 1.10 of \cite{Cedo}, we have that $S$ is~\hbox{$\lambda$-in}\-va\-riant. Thus, $S\trianglelefteq B$ and consequently $S=\operatorname{Soc}(B)$. This clearly implies that $(B,\bigcdot)=\langle x_1,\ldots,x_\ell,b_1,\ldots,b_m\rangle_{\bigcdot}$ is finitely generated. Since $(B,\bigcdot)$ locally satisfies the maximal condition on subgroups, we have that (1)$\implies$(2), thus completing the proof in the special case $B=\operatorname{Soc}_n(B)$.

\medskip

Now, we turn to the general situation. Suppose that one of the conditions (1), (2), or~(3) holds, and let $\mu$ be the smallest ordinal number such that $B=\operatorname{Soc}_\mu(B)$. We assume by way of contradiction that $\mu\geq\omega$. Since, $B$, $(B,\bigcdot)$, or $(B,+)$ are finitely generated, it turns out that $\mu$ is a successor. Let $\lambda$ be the largest limit ordinal that is strictly less than $\mu$. By the first part of the proof, we have that $B/\operatorname{Soc}_\lambda(B)$ satisfies the maximal condition on subbraces (and also on additive/multiplicative subgroups). Then $\operatorname{Soc}_\lambda(B)$ is finitely generated either as an ideal of $B$ (see also \cite{Tr23}, Lemma 3.1), a normal subgroup of $(B,+)$, or a normal subgroup of $(B,\bigcdot)$. Since $\lambda$ is limit, we have that $\lambda=0$, contradicting the fact that~\hbox{$\lambda\geq\omega$.}
\end{proof}

\begin{theo}\label{allafineserve}
Let $B$ be a locally hypersocle brace. If $(B,\bigcdot)$ is locally supersoluble, then $B$ is locally supersoluble.
\end{theo}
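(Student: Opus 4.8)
The plan is to localize Theorem~\ref{supersolublegroup}. Fix a finitely generated subbrace $F$ of $B$; the goal is to prove that $F$ is supersoluble, after which the arbitrariness of $F$ finishes the proof. First I would record that $F$ is itself locally hypersocle: every finitely generated subbrace of $F$ is a finitely generated subbrace of $B$, hence coincides with its hypersocle, and in particular $F=\overline{\operatorname{Soc}}(F)$.

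Next I would pin down the finite generation of the two group structures attached to $F$. Since $(B,\bigcdot)$ is locally supersoluble, the subgroup $(F,\bigcdot)$ is a locally supersoluble group, so it locally satisfies the maximal condition on subgroups (supersoluble groups being polycyclic). Thus $F$ meets condition~(1) of Lemma~\ref{lemunpostrange}, and that lemma returns both that $(F,+)$ is finitely generated and that $(F,\bigcdot)$ satisfies the maximal condition on subgroups; in particular $(F,\bigcdot)$ is finitely generated, and being a finitely generated subgroup of the locally supersoluble group $(B,\bigcdot)$, it is supersoluble.

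The key remaining step is to upgrade $F=\overline{\operatorname{Soc}}(F)$ to \emph{finite} multipermutational level. Here I would use that the additive images of the upper socle series form an ascending central series of $(F,+)$, because $\operatorname{Soc}\big(F/\operatorname{Soc}_\alpha(F)\big)\leq Z\big(F/\operatorname{Soc}_\alpha(F),+\big)$ for every ordinal $\alpha$; as $F=\overline{\operatorname{Soc}}(F)$, this series reaches $(F,+)$, so $(F,+)$ is hypercentral, and being finitely generated it is nilpotent and hence satisfies the maximal condition on additive subgroups. The upper socle series is therefore an ascending chain of additive subgroups that strictly increases until it meets $F$; by the maximal condition it must stabilize after finitely many steps, giving $F=\operatorname{Soc}_n(F)$ for some $n\in\mathbb{N}$. (This is, in fact, exactly what the proof of Lemma~\ref{lemunpostrange} establishes.)

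Finally, with $F=\operatorname{Soc}_n(F)$ and $(F,\bigcdot)$ supersoluble in hand, Theorem~\ref{supersolublegroup} yields at once that $F$ is supersoluble. I expect the main obstacle to be precisely the transfinite-to-finite passage of the third paragraph: one must rule out an upper socle series of limit length, and the cleanest route is to extract finite generation of $(F,+)$ from Lemma~\ref{lemunpostrange} and then exploit the additive centrality of the socle factors, rather than attempting to bound the socle length directly.
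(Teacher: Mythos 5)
Your proposal is correct and follows essentially the same route as the paper's proof, which likewise fixes a finitely generated subbrace $F$, applies Lemma~\ref{lemunpostrange} (via its condition (1), noting $F$ is itself locally hypersocle and $(F,\bigcdot)$ is a locally supersoluble group) to obtain the maximal condition on subgroups of $(F,\bigcdot)$, deduces $F=\operatorname{Soc}_n(F)$ for some $n\in\mathbb{N}$, and concludes by Theorem~\ref{supersolublegroup}. The only divergence is cosmetic: the paper terminates the upper socle series directly from the maximal condition on $(F,\bigcdot)$, since its terms are multiplicative subgroups, whereas you reach the same termination through the finite generation of $(F,+)$, hypercentrality, nilpotency and the additive maximal condition --- a valid, if more roundabout, justification of that single step.
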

\begin{proof}
Let $F$ be a finitely generated subbrace of $B$. Then $(F,\bigcdot)$ satisfies the maximal condition on subgroups by Lemma \ref{lemunpostrange}, so $F=\operatorname{Soc}_n(F)$ for some $n\in\mathbb{N}$. It follows from Theorem \ref{supersolublegroup} that $F$ is supersoluble. Then $B$ is locally supersoluble and the statement is proved.
\end{proof}

\begin{theo}\label{hypercyclicgroup}
Let $B$ be a brace such that $B=\overline{\operatorname{Soc}}(B)$. If $(B,\bigcdot)$ is hypercyclic, then $B$ is hypercyclic.
\end{theo}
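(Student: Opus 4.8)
The plan is to deduce the statement from the internal characterization of hypercyclicity in Theorem~\ref{charhypercyclic}, which says that $B$ is hypercyclic exactly when every non-zero quotient $B/I$ contains a non-zero ideal $J/I$ that is either of prime order or infinite cyclic and contained in $\operatorname{Soc}(B/I)$. Thus it suffices to produce such a small ideal in every non-zero quotient of $B$.

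First I would check that both hypotheses pass to quotients. A quotient of a hypercyclic group is hypercyclic, so $(B/I,\bigcdot)$ is again hypercyclic. For the condition $B=\overline{\operatorname{Soc}}(B)$, the key observation is that the image of $\operatorname{Soc}(B)$ under any quotient map lands inside the socle of the quotient: since $\operatorname{Soc}(B)=\operatorname{Ker}\lambda\cap Z(B,+)$, an element $a$ with $a\ast b=0$ for all $b$ satisfies $\bar a\ast\bar b=0$ in $B/I$, and additive centrality is clearly preserved, so $(\operatorname{Soc}(B)+I)/I\leq\operatorname{Soc}(B/I)$. A transfinite induction then yields that the image of $\operatorname{Soc}_\alpha(B)$ lies in $\operatorname{Soc}_\alpha(B/I)$ for every ordinal $\alpha$; hence $B=\overline{\operatorname{Soc}}(B)$ forces $B/I=\overline{\operatorname{Soc}}(B/I)$. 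Consequently every non-zero quotient again satisfies both hypotheses, and it only remains to find the required small ideal in an arbitrary non-zero brace $B$ with $B=\overline{\operatorname{Soc}}(B)$ and $(B,\bigcdot)$ hypercyclic.

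Now the heart of the argument. Since $B\neq\{0\}$ and $B=\overline{\operatorname{Soc}}(B)$, the first term $S=\operatorname{Soc}(B)$ is non-zero (otherwise the whole upper socle series collapses to $\{0\}$). As $S\leq\operatorname{Ker}\lambda$ we have $S\ast S\leq S\ast B=\{0\}$, so $S$ is a trivial brace with $(S,\bigcdot)=(S,+)$, and $(S,\bigcdot)$ is a non-trivial normal subgroup of the hypercyclic group $(B,\bigcdot)$. A standard feature of hypercyclic groups is that every non-trivial normal subgroup contains a non-trivial cyclic subgroup that is normal in the whole group: intersecting the ascending $G$-normal series with cyclic factors against $S$, and taking the first step at which the intersection becomes non-zero, produces such an $X\leq S$ that is cyclic and normal in $(B,\bigcdot)$. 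Since $X\leq(\operatorname{Soc}(B),+)$ is normal in $(B,\bigcdot)$, Lemma~1.10 of~\cite{Cedo} (exactly as in the proof of Theorem~\ref{supersolublegroup}) gives $X\trianglelefteq B$.

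Finally I would split on the order of $X$. If $(X,+)$ is infinite cyclic, then $X$ itself is the desired ideal, as $X\leq\operatorname{Soc}(B)$. If $X$ is finite, I take its unique subgroup $Y$ of prime order $p$; being characteristic in $X$, it is normal in $(B,\bigcdot)$ and still contained in $\operatorname{Soc}(B)$, so Lemma~1.10 of~\cite{Cedo} again makes $Y$ an ideal of $B$, now of prime order. In either case the required small ideal exists, and Theorem~\ref{charhypercyclic} concludes the proof. The main obstacle, and essentially the only non-routine point, is the quotient-closure of the condition $B=\overline{\operatorname{Soc}}(B)$; once the socle is shown to map into the socle under every quotient map, the rest reduces to the group-theoretic behaviour of hypercyclic groups together with the Ced\'o lemma already exploited in the finite-level case.
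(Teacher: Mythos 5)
Your proposal is correct and follows essentially the same route as the paper's own proof: take the non-zero trivial brace $S=\operatorname{Soc}(B)$, extract a non-trivial cyclic subgroup of $(S,\bigcdot)$ normal in the hypercyclic group $(B,\bigcdot)$, upgrade it to an ideal via Lemma~1.10 of \cite{Cedo}, observe that the hypotheses pass to quotients, and conclude by Theorem~\ref{charhypercyclic}. In fact you are slightly more careful than the paper: you spell out the quotient-stability of $B=\overline{\operatorname{Soc}}(B)$ and, in the finite cyclic case, pass to the unique prime-order subgroup so that the ideal literally satisfies the dichotomy in Theorem~\ref{charhypercyclic}, a refinement the paper's proof leaves implicit.
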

\begin{proof}
Put $S=\operatorname{Soc}(B)$. Then $(S,\bigcdot)$ is a non-trivial normal subgroup of $(B,\bigcdot)$, so~$S$ has a non-trivial cyclic normal subgroup, $M$, say. Of course, $M$ is a strong left-ideal of~$B$, and an application of Lemma 1.10 of \cite{Cedo} shows that $M\trianglelefteq B$. Since the hypotheses are inherited by quotients, we have thus shown that every non-zero homomorphic image of $B$ has a non-zero ideal that is contained in the socle and is generated (as a subbrace) by only one element. Therefore, $B$ is hypercyclic by~The\-o\-rem~\ref{charhypercyclic}.
\end{proof}

\medskip

The above results show that in the special case of a hypercyclic (resp. locally supersoluble) brace of hypercentral (resp. locally nilpotent) type, the structure of the multiplicative group plays a major role. For example, we can prove the following analogs of Theorem \ref{baersuperso} (1).

\begin{theo}
Let $B$ be a locally hypersocle brace, and let $I$ and $J$ be locally supersoluble ideals of $B$ such that $B=I+J$. If $[I,J]_{\bigcdot}$ is locally nilpotent, then $B$ is locally supersoluble.
\end{theo}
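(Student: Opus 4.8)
The plan is to reduce the statement to a purely group-theoretic fact about the multiplicative group $(B,\bigcdot)$ and then close with Theorem~\ref{allafineserve}. Since $B$ is already assumed to be locally hypersocle, it suffices to prove that $(B,\bigcdot)$ is locally supersoluble; the conclusion then follows immediately from Theorem~\ref{allafineserve}.

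First I would translate the additive decomposition $B=I+J$ into a multiplicative one. For ideals the inclusion $I+J\subseteq IJ$ holds: given $i\in I$ and $j\in J$, the element $j'=\lambda_{i^{-1}}(j)$ lies in $J$ because $J$ is $\lambda$-invariant, and since $i\bigcdot j'=i+\lambda_i(j')$ and $\lambda_i(j')=\lambda_{ii^{-1}}(j)=j$, we get $i+j=i\bigcdot j'\in IJ$. (One also has the reverse inclusion, using $i\bigcdot j=i+i\ast j+j$ with $i\ast j\in I\cap J$ as $I\ast B\subseteq I$ and $B\ast J\subseteq J$, but it is not needed here since $IJ\subseteq B=I+J$ anyway.) Hence $B=IJ$, so $(B,\bigcdot)=(I,\bigcdot)(J,\bigcdot)$ is a product of two subgroups which, because $I,J\trianglelefteq B$, are normal in $(B,\bigcdot)$. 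Applying Lemma~\ref{lsupersolublebraceisgroup} to $I$ and to $J$ shows that both $(I,\bigcdot)$ and $(J,\bigcdot)$ are locally supersoluble groups.

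Next I would verify that the derived subgroup of $(B,\bigcdot)$ is locally nilpotent. Writing $N=(I,\bigcdot)$ and $M=(J,\bigcdot)$, normality gives $[NM,NM]=[N,N]\,[M,M]\,[N,M]$. The factor $[N,M]=[I,J]_{\bigcdot}$ is locally nilpotent by hypothesis, while $[N,N]$ and $[M,M]$ are locally nilpotent because the derived subgroup of a locally supersoluble group is locally nilpotent: any finitely generated subgroup of $[N,N]$ is contained in some $[F,F]$ with $F$ finitely generated, hence supersoluble, so that $[F,F]$ is nilpotent. These three subgroups are all normal in $(B,\bigcdot)$, so by the Hirsch--Plotkin theorem their join $[(B,\bigcdot),(B,\bigcdot)]$ is locally nilpotent.

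At this point $(B,\bigcdot)$ is the product of two normal locally supersoluble subgroups and has locally nilpotent derived subgroup, so the locally supersoluble analogue of Baer's theorem (the local counterpart of the group result recalled just before Theorem~\ref{baersuperso}, in the spirit of its infinite version \cite{infiniteminimal}) yields that $(B,\bigcdot)$ is locally supersoluble; Theorem~\ref{allafineserve} then gives that $B$ is locally supersoluble. The main obstacle is precisely this group-theoretic input, since Baer's theorem is classically phrased for supersoluble groups with \emph{nilpotent} derived subgroup and a genuinely local version is required. I would establish it by passing to an arbitrary finitely generated $H\le(B,\bigcdot)$, enlarging $H$ to $L=\langle N_1,M_1\rangle$, where $N_1$ and $M_1$ are the finitely generated (hence supersoluble) subgroups generated by the $N$- and $M$-components of a generating set of $H$, and writing $L$ as the product of the normal subgroups $\langle N_1^{L}\rangle\le N$ and $\langle M_1^{L}\rangle\le M$; the delicate point, and the crux of the whole proof, is to control these normal closures tightly enough to bring the finitely generated group $L$ within reach of the finite form of Baer's theorem.
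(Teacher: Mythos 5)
Your route coincides with the paper's own: pass to the multiplicative group, get that $(I,\bigcdot)$ and $(J,\bigcdot)$ are normal locally supersoluble subgroups via Lemma~\ref{lsupersolublebraceisgroup}, show $[B,B]_{\bigcdot}$ is locally nilpotent, invoke a Baer-type product theorem for groups, and close with Theorem~\ref{allafineserve}. The supporting details you supply are correct and in fact more complete than the paper's text, which asserts them without comment: the multiplicative factorisation $B=IJ$ (via $j'=\lambda_{i^{-1}}(j)$), the local nilpotency of $[I,I]_{\bigcdot}$ and $[J,J]_{\bigcdot}$ (every finitely generated subgroup lies in some $[F,F]$ with $F$ finitely generated and supersoluble, hence nilpotent), and the Hirsch--Plotkin argument giving local nilpotency of the join $[B,B]_{\bigcdot}=[I,I]_{\bigcdot}[J,J]_{\bigcdot}[I,J]_{\bigcdot}$.

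The gap is exactly the step you flag as the crux: the locally supersoluble analogue of Baer's theorem. This is not something the paper proves either --- it is quoted as Lemma 2.2 of \cite{infiniteminimal}, which states precisely that a group which is the product of two normal locally supersoluble subgroups and has locally nilpotent derived subgroup is locally supersoluble; so the clean fix is simply to cite that lemma. Your sketched reduction does not close it: in $L=\langle N_1,M_1\rangle$ the normal closures $\langle N_1^{L}\rangle\leq N$ and $\langle M_1^{L}\rangle\leq M$ are in general \emph{not} finitely generated, hence only locally supersoluble, so inside $L$ you have reproduced the original problem rather than reduced it, and the finite (or finitely generated) form of Baer's theorem cannot be applied to these factors. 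Nor can one finish via Baer's criterion ({\cite{baer}}, Satz 3.1, as used in Theorem~\ref{finitehomsupers}) that a polycyclic group all of whose finite quotients are supersoluble is supersoluble: the finite quotients of $L$ are indeed supersoluble (finite quotients of locally supersoluble, resp.\ locally nilpotent, groups are supersoluble, resp.\ nilpotent, so the finite Baer theorem applies to them), but establishing that $L$ is polycyclic is precisely the hard part --- a finitely generated group with locally nilpotent derived subgroup need not be polycyclic (the lamplighter group $C_2\wr\mathbb{Z}$ is an example), so the product structure must be exploited in a way your sketch does not provide. As written, the proof is therefore incomplete at its decisive step.
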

\begin{proof}
By~Lem\-ma~\ref{lsupersolublebraceisgroup}, the multiplicative groups of $I$ and $J$ are locally supersoluble. Moreover, $[B,B]_{\bigcdot}=[I,I]_{\bigcdot}[J,J]_{\bigcdot}[I,J]_{\bigcdot}$ is locally nilpotent. Then Lemma 2.2 of \cite{infiniteminimal} shows that $(B,\bigcdot)$ is locally supersoluble. Finally, an application of Theorem \ref{allafineserve} yields that $B$ is locally supersoluble.
\end{proof}

\begin{theo}
Let $B$ be a brace such that $B=\overline{\operatorname{Soc}}(B)$, and let $I$ and $J$ be hypercyclic ideals of~$B$ such that $B=I+J$. If $[I,J]_{\bigcdot}$ is locally nilpotent, then $B$ is hypercyclic.
\end{theo}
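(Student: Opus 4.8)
The plan is to push the whole argument onto the multiplicative side and then invoke Theorem~\ref{hypercyclicgroup}. Since $B=\overline{\operatorname{Soc}}(B)$ is part of the hypothesis, that theorem reduces the statement to showing that the group $(B,\bigcdot)$ is hypercyclic, in complete analogy with the locally supersoluble case treated just above. First I would collect the relevant group data. By Lemma~\ref{lsupersolublebraceisgroup}, the hypercyclicity of the ideals $I$ and $J$ forces $(I,\bigcdot)$ and $(J,\bigcdot)$ to be hypercyclic groups; being ideals, they are normal subgroups of $(B,\bigcdot)$, and from $B=I+J$ one obtains the factorisation $(B,\bigcdot)=(I,\bigcdot)(J,\bigcdot)$.

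Next I would verify that $(B,\bigcdot)$ has a locally nilpotent derived subgroup, exactly as in the previous proof. Using the commutator formula for a product of two normal subgroups,
$$[(B,\bigcdot),(B,\bigcdot)]=[(I,\bigcdot),(I,\bigcdot)]\,[(J,\bigcdot),(J,\bigcdot)]\,[(I,\bigcdot),(J,\bigcdot)].$$
A hypercyclic group is locally supersoluble, hence its derived subgroup is locally nilpotent, so the first two factors are locally nilpotent; the third factor is $[I,J]_{\bigcdot}$, which is locally nilpotent by hypothesis. As all three are normal in $(B,\bigcdot)$, the Hirsch--Plotkin theorem shows their product is locally nilpotent. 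With $(B,\bigcdot)$ now exhibited as a product of two normal hypercyclic subgroups whose derived subgroup is locally nilpotent, I would invoke the hypercyclic analogue of Baer's product theorem to conclude that $(B,\bigcdot)$ is hypercyclic. Applying Theorem~\ref{hypercyclicgroup} to $B=\overline{\operatorname{Soc}}(B)$ then finishes the proof.

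The main obstacle is precisely this last group-theoretic input: one needs the product of two normal \emph{hypercyclic} subgroups with locally nilpotent derived subgroup to be again hypercyclic, which is the hypercyclic counterpart of the Baer product theorem used (in its locally supersoluble form, \cite{infiniteminimal}, Lemma~2.2) in the preceding proof. Everything else is a faithful transcription of the locally supersoluble argument. If such a statement is not directly citable for the hypercyclic class, it would have to be established separately --- for instance, by passing to an arbitrary non-zero homomorphic image (the hypotheses being inherited) and producing a non-trivial cyclic normal subgroup there, exploiting the locally nilpotent, and hence richly normal-subgroup-bearing, structure of $[(B,\bigcdot),(B,\bigcdot)]$ --- and this is where the genuine work would lie.
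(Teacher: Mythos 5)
Your proposal is correct and takes essentially the same approach as the paper: reduce to the multiplicative group via Theorem~\ref{hypercyclicgroup}, write $[B,B]_{\bigcdot}=[I,I]_{\bigcdot}[J,J]_{\bigcdot}[I,J]_{\bigcdot}$ and note it is locally nilpotent, then apply the Baer-type product theorem for hypercyclic groups. The one ingredient you were unsure of is in fact directly citable: it is Corollary~2.3 of \cite{infiniteminimal} --- the same reference whose Lemma~2.2 covers the locally supersoluble case --- so the separate argument you anticipated is unnecessary.
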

\begin{proof}
Of course, $[B,B]_{\bigcdot}=[I,I]_{\bigcdot}[J,J]_{\bigcdot}[I,J]_{\bigcdot}$ is locally nilpotent. By Lemma \ref{lsupersolublebraceisgroup}, $(B,\bigcdot)$ is the product of the two hypercyclic groups $(I,\bigcdot)$ and $(J,\bigcdot)$, so it is hypercyclic by~Co\-rol\-la\-ry~2.3 of \cite{infiniteminimal}. Then Theorem \ref{hypercyclicgroup} shows that $B$ is hypercyclic and completes the proof.~\end{proof}

\medskip

The last thing we remark about right-nilpotency of locally supersoluble braces concerns with the derived ideal and generalizes Theorem \ref{derivedisrightnilp}.

\begin{theo}
Let $B$ be a brace. 
\begin{itemize}
    \item[\textnormal{(1)}] If $B$ is locally supersoluble, then $\partial(B)$ is locally of finite multipermutational level.
    \item[\textnormal{(2)}] If $B$ is hypercyclic, then $\partial(B)=\overline{\operatorname{Soc}}\big(\partial(B)\big)$. 
\end{itemize}
\end{theo}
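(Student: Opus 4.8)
The plan is to run the argument of Theorem~\ref{derivedisrightnilp} ``locally'', transferring its group-theoretic input to the generalized-supersoluble setting. For part~(1), I would first apply Lemma~\ref{lsupersolublebraceisgroup} to obtain that $G=(B,+)\rtimes_\lambda(B,\bigcdot)$ is locally supersoluble. The key group fact is that the derived subgroup of a locally supersoluble group is locally nilpotent: any finitely generated subgroup of $[G,G]$ is generated by finitely many commutators and hence lies in $[F,F]$ for some finitely generated, and therefore supersoluble, $F\leq G$, and $[F,F]$ is nilpotent by \cite{course}, 5.4.10. Exactly as in Theorem~\ref{derivedisrightnilp}, $[G,G]\cap(B,+)$ contains both $[B,B]_+$ and $B\ast B$, so $(\partial(B),+)\leq[G,G]$ and is therefore locally nilpotent (local nilpotency being inherited by subgroups). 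Since $\partial(B)$ is an ideal, hence a subbrace, of the locally supersoluble brace $B$, it is itself locally supersoluble, and the implication (1)$\Rightarrow$(3) of Theorem~\ref{lsupersofinimulti} gives that $\partial(B)$ is locally of finite multipermutational level.

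For part~(2), the idea is to reduce everything to Theorem~\ref{incrediblenotcited}. Since every hypercyclic brace is locally supersoluble, part~(1) already shows that $\partial(B)$ is locally of finite multipermutational level, which is precisely condition~(5) of Theorem~\ref{incrediblenotcited}. Thus, once we know that $\partial(B)$ is itself hypercyclic, the implication (5)$\Rightarrow$(3) of that theorem yields $\partial(B)=\overline{\operatorname{Soc}}\big(\partial(B)\big)$, as desired. So the whole of part~(2) rests on establishing that the ideal $\partial(B)$ of the hypercyclic brace $B$ is again hypercyclic.

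The hard part is therefore proving that hypercyclicity is inherited by subbraces, which I would do by intersecting a defining chain. Let $\{I_\alpha\}$ be a hypercyclic chain of ideals of $B$ and let $S\leq B$; set $J_\alpha:=I_\alpha\cap S$, an ascending chain of ideals of $S$ with $J_0=\{0\}$, reaching $S$ at the top, and with $J_\mu=\bigcup_{\alpha<\mu}J_\alpha$ at limit ordinals. Each successor factor $J_{\alpha+1}/J_\alpha$ embeds into $I_{\alpha+1}/I_\alpha$, so when $I_{\alpha+1}/I_\alpha$ has prime order the factor is trivial or of prime order, and when $(I_{\alpha+1}/I_\alpha,+)$ is infinite cyclic the additive group of $J_{\alpha+1}/J_\alpha$ is trivial or infinite cyclic. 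In the latter case one must also check $J_{\alpha+1}/J_\alpha\leq\operatorname{Soc}(S/J_\alpha)$, which follows from the elementary observation that $\operatorname{Soc}(\bar B)\cap\bar S\subseteq\operatorname{Soc}(\bar S)$ for a subbrace $\bar S$ of a quotient $\bar B=B/I_\alpha$: membership in $\operatorname{Ker}\lambda\cap Z(-,+)$ simply restricts from $\bar B$ to $\bar S$, since $x\ast b=0$ for all $b\in\bar B$ forces $x\ast s=0$ for all $s\in\bar S$, and additive centrality is likewise inherited. Discarding the trivial factors and reindexing leaves a genuine hypercyclic chain for $S$, so $S$ is hypercyclic. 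Taking $S=\partial(B)$ then completes part~(2).
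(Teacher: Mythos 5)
Your proposal is correct, and it is worth separating the two parts when comparing it with the paper. For part (1) you take a genuinely different (though parallel) route: the paper's proof is a one-line localization of the \emph{statement} of Theorem~\ref{derivedisrightnilp} at the brace level --- every finite subset of $\partial(B)$ lies in $\partial(F)$ for some finitely generated subbrace $F$ of $B$, the supersolubility of $F$ gives $\partial(F)=\operatorname{Soc}_n\big(\partial(F)\big)$, and finitely generated subbraces of $\partial(B)$, being subbraces of such $\partial(F)$, have finite multipermutational level. You instead localize the \emph{proof} of Theorem~\ref{derivedisrightnilp} at the level of the semidirect product: $G=(B,+)\rtimes_\lambda(B,\bigcdot)$ is locally supersoluble by Lemma~\ref{lsupersolublebraceisgroup}, its derived subgroup is locally nilpotent (any finitely generated subgroup of $[G,G]$ lies in $[F,F]$ for a finitely generated, hence supersoluble, $F\leq G$, and one applies \cite{course}, 5.4.10), so $(\partial(B),+)\leq [G,G]$ is locally nilpotent, and Theorem~\ref{lsupersofinimulti} finishes. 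Both arguments are sound; the paper's reduction is shorter and reuses Theorem~\ref{derivedisrightnilp} as a black box, while yours makes the group-theoretic mechanism explicit and needs Theorem~\ref{lsupersofinimulti} instead. For part (2) your argument is exactly the paper's intended one: part (1) supplies condition (5) of Theorem~\ref{incrediblenotcited}, and the implication (5)$\Rightarrow$(3) applied to $\partial(B)$ concludes. The only point the paper leaves implicit is that $\partial(B)$ is itself hypercyclic; the paper declares at the start of Section~\ref{sectsuper} that all the properties it considers pass to subbraces and quotients, whereas you verify this by intersecting a hypercyclic chain with the subbrace, the key (and correctly proved) observation being that $\operatorname{Soc}(\bar{B})\cap\bar{S}\leq\operatorname{Soc}(\bar{S})$ for a subbrace $\bar{S}$ of $\bar{B}=B/I_\alpha$, so that both the infinite cyclic factors and their socle containments survive the intersection. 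Recording that verification is a useful addition, since Theorem~\ref{incrediblenotcited} genuinely requires hypercyclicity of the brace to which it is applied.
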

\begin{proof}
In order to prove (1), apply Theorem \ref{derivedisrightnilp} (and recall that every finite subset of $\partial(B)$ is contained in the derived ideal of some finitely generated subbrace of $B$). In order to prove (2), apply~The\-o\-rem~\ref{incrediblenotcited}.~\end{proof}

\medskip

We now turn to left-nilpotency. In this respect, we start proving the following analogs of Theorem \ref{leftnilptheo}, the first of which is not as obvious as one may believe. Recall that the hypercentre $\overline{Z}(G)$ of a group $G$ is the last term of the upper central series of $G$.

\begin{theo}\label{theoprec}
Let $B$ be a locally supersoluble brace that is locally left-nilpotent. If either
\begin{itemize}
    \item $\operatorname{Ker}(\lambda)\leq\overline{Z}(B,\bigcdot)$, or
    \item $\operatorname{Ker}(\lambda)\leq\overline{\operatorname{Soc}}(B)$,
\end{itemize}
then $(B,\bigcdot)$ is locally nilpotent.
\end{theo}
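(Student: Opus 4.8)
The plan is to push everything down to finitely generated subbraces, where the already-established machinery applies, and to isolate the one genuinely delicate point: the difference between the kernel of the $\lambda$-map of a subbrace and the global $\operatorname{Ker}(\lambda)$. First I would note that $(B,\bigcdot)$ is locally nilpotent as soon as $(F,\bigcdot)$ is nilpotent for every finitely generated subbrace $F$ of $B$, since any finitely generated subgroup of $(B,\bigcdot)$ sits inside $(F,\bigcdot)$ for the subbrace $F$ generated by the same finite set, and subgroups of nilpotent groups are nilpotent. So I fix such an $F$: it is supersoluble and left-nilpotent, hence $(F,+)$ and $(F,\bigcdot)$ are polycyclic and $F$ satisfies the maximal condition on subbraces. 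Exactly as in the proof of Theorem \ref{leftnilptheo}, Lemma \ref{3.33} shows that $(F,\bigcdot)$ stabilises a finite chain of $(F,+)$ with trivial factors, so the theorem of Philip Hall used there gives that $(F,\bigcdot)/\operatorname{Ker}(\lambda_F)$ is nilpotent, where $\lambda_F$ is the $\lambda$-map of $F$. The whole problem is thereby reduced to locating $\operatorname{Ker}(\lambda_F)$ inside $(F,\bigcdot)$: it suffices to show that $\operatorname{Ker}(\lambda_F)$ lies in the Hirsch--Plotkin radical (equivalently, the Fitting subgroup) of the polycyclic group $(F,\bigcdot)$, for then the quotient being nilpotent forces $(F,\bigcdot)=\operatorname{Fitt}(F,\bigcdot)$ to be nilpotent. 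This is the local incarnation of Corollary \ref{corchepoidicoserve}.

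Next I would bring in the hypotheses through the elementary observation that on $\operatorname{Ker}(\lambda)$ the two operations coincide: for $a,a'\in\operatorname{Ker}(\lambda)$ we have $a\ast a'=\lambda_a(a')-a'=0$, hence $a\bigcdot a'=a+a\ast a'+a'=a+a'$, so $(\operatorname{Ker}(\lambda),\bigcdot)=(\operatorname{Ker}(\lambda),+)$. In the second case, where $\operatorname{Ker}(\lambda)\leq\overline{\operatorname{Soc}}(B)$, each socle factor $\operatorname{Soc}_{\alpha+1}(B)/\operatorname{Soc}_{\alpha}(B)=\operatorname{Soc}\big(B/\operatorname{Soc}_{\alpha}(B)\big)$ is additively central in $B/\operatorname{Soc}_{\alpha}(B)$, so the additive group $(\overline{\operatorname{Soc}}(B),+)$ is hypercentral in $(B,+)$; combined with $\bigcdot=+$ this makes $(\operatorname{Ker}(\lambda),\bigcdot)$ a locally nilpotent normal subgroup of $(B,\bigcdot)$. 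In the first case $\operatorname{Ker}(\lambda)\leq\overline{Z}(B,\bigcdot)$ lies in the multiplicative hypercentre, which is likewise a locally nilpotent normal subgroup. In either case $\operatorname{Ker}(\lambda)$ lies in the Hirsch--Plotkin radical of $(B,\bigcdot)$, and hence $F\cap\operatorname{Ker}(\lambda)$ lies in the Hirsch--Plotkin radical of $(F,\bigcdot)$.

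The main obstacle — and the reason the first statement is ``not as obvious as one may believe'' — is that the local kernel $\operatorname{Ker}(\lambda_F)$ can be strictly larger than $F\cap\operatorname{Ker}(\lambda)$: an element of $F$ may act trivially on $F$ and yet non-trivially on $B$, so it is entirely invisible to the global hypothesis, while the nilpotency criterion above needs \emph{all} of $\operatorname{Ker}(\lambda_F)$ to be well placed. (Note that $\operatorname{Ker}(\lambda_F)\leq\overline{\operatorname{Soc}}(F)$ cannot hold unconditionally, for otherwise Theorem \ref{leftnilptheo} would carry no hypothesis on $\operatorname{Ker}(\lambda)$.) To close this gap I would not keep $F$ fixed but enlarge it: whenever $a\in\operatorname{Ker}(\lambda_F)\setminus\operatorname{Ker}(\lambda)$ there is $b\in B$ with $a\ast b\neq0$, and adjoining such witnesses $b$ — finitely many at a time, using that $\operatorname{Ker}(\lambda_F)$ is finitely generated by the maximal condition on subbraces — removes $a$ from the kernel of the enlarged subbrace. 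Running this along an ascending chain of finitely generated subbraces and passing to the countable union, precisely the limiting device already exploited in the countable-subbrace arguments of Section \ref{sectls}, one obtains a subbrace on which the kernel of the $\lambda$-map has been driven down to the trace of the global $\operatorname{Ker}(\lambda)$; the paragraph above then places that trace in the Hirsch--Plotkin radical and the Hall mechanism yields nilpotency, which descends to $(F,\bigcdot)$ because enlarging a subbrace only enlarges its multiplicative group.

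I expect the genuinely technical step to be reconciling this enlargement with finite generation: Hall's theorem requires the finite additive series of a \emph{finitely generated} $F$, whereas the witness-adjoining procedure naturally terminates only in a countable limit. The heart of the argument is therefore to organise the enlargement so that, at the finitely generated level, $\operatorname{Ker}(\lambda_F)$ is effectively controlled by $F\cap\operatorname{Ker}(\lambda)$ without leaving the class of finitely generated subbraces — invoking the polycyclicity of $(F,\bigcdot)$ and the maximal condition to keep the data finite — and it is here that the first hypothesis, via the multiplicative hypercentre $\overline{Z}(B,\bigcdot)$, is subtler than the second, where the coincidence $\bigcdot=+$ on $\operatorname{Ker}(\lambda)$ and the additive hypercentrality of $\overline{\operatorname{Soc}}(B)$ give a more transparent grip on the kernel.
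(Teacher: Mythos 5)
Your skeleton --- reduction to a finitely generated subbrace $F$, Hall's theorem giving that $(F,\bigcdot)/\operatorname{Ker}(\lambda_F)$ is nilpotent, and the recognition that the real issue is the gap between $\operatorname{Ker}(\lambda_F)$ and $F\cap\operatorname{Ker}(\lambda)$ --- agrees with the paper's, but the step you hang everything on is false. You claim that $\operatorname{Ker}(\lambda_F)$ lying in the Hirsch--Plotkin radical (Fitting subgroup) of the polycyclic group $(F,\bigcdot)$, together with nilpotency of the quotient, forces $(F,\bigcdot)$ to be nilpotent. That group-theoretic inference is invalid: $\operatorname{Sym}(3)$ modulo its Fitting subgroup $\operatorname{Alt}(3)$ is nilpotent, yet $\operatorname{Sym}(3)$ is not. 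Worse, the paper's own Example \ref{badbehderideal} exhibits both of your premises with a false conclusion in the exact setting at hand: there $(\operatorname{Ker}(\lambda),\bigcdot)\simeq C_3$ lies in the Fitting subgroup of $(B,\bigcdot)\simeq \operatorname{D}_{12}$ and the quotient is $C_2\times C_2$, yet $(B,\bigcdot)$ is not nilpotent. (That brace is not left-nilpotent --- but your argument never re-uses left-nilpotency after the Hall step, so it cannot distinguish that brace from yours.) The criterion actually used in Theorem \ref{leftnilptheo} and Corollary \ref{corchepoidicoserve} is containment in a finite term $Z_\ell(F,\bigcdot)$ of the upper central series, and ``quotient nilpotent plus kernel in the hypercentre'' is valid where ``kernel in the Fitting subgroup'' is not. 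The same confusion undoes your treatment of the second hypothesis: the facts that $+$ and $\bigcdot$ coincide on $\operatorname{Ker}(\lambda)$ and that $\overline{\operatorname{Soc}}(B)$ is additively hypercentral only make $(\operatorname{Ker}(\lambda),\bigcdot)$ a locally nilpotent normal subgroup of $(B,\bigcdot)$, i.e. place it in the Hirsch--Plotkin radical, which we just saw is useless. For $a\in\operatorname{Ker}(\lambda)\cap Z(B,+)$, multiplicative centrality of $a$ amounts to $b\ast a=0$ for all $b$, and to obtain that (or the hypercentre placement $\operatorname{Ker}(\lambda)\cap F\leq\overline{Z}(F,\bigcdot)$) one must invoke left-nilpotency a second time, through Lemma \ref{3.33}; this is precisely what the paper does, and what your second paragraph omits.

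The other gap is the one you flag but never close: your witness-adjoining procedure terminates only at a countable union, which is merely locally supersoluble, so the finite chain of ideals needed for Lemma \ref{3.33} and Hall's theorem is no longer available there; ``organise the enlargement so that it stays finitely generated'' is the missing content of the proof, not a technicality to be deferred. The paper's missing idea is Theorem \ref{schurtheor}: from Theorem \ref{exth3.4} and Lemma \ref{3.33} one gets that $F/\zeta_m(F)$ is finite, so $I=\Gamma_{m+1}(F)$ is a \emph{finite} ideal of $F$ with $(F/I,\bigcdot)$ nilpotent; hence only the finitely many elements $a\in I\setminus\operatorname{Ker}(\lambda)$ need witnesses at all. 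Each enlargement $E_a=\langle F,b\rangle$ is still finitely generated, hence supersoluble and left-nilpotent, so the Hall argument applies to it and gives that $(E_a/R_a,\bigcdot)$ is nilpotent, where $R_a=\operatorname{Ker}(\lambda|_{E_a})$; the finite intersection $R=I\cap\bigcap_{a\in I\setminus\operatorname{Ker}(\lambda)}R_a$ then satisfies $R\leq\operatorname{Ker}(\lambda)\cap F$ while $(F/R,\bigcdot)$ embeds into a finite direct product of nilpotent groups. Combining this with $\operatorname{Ker}(\lambda)\cap F\leq\overline{Z}(F,\bigcdot)=Z_\ell(F,\bigcdot)$ (valid in both cases, by the hypercentre placement above) yields nilpotency of $(F,\bigcdot)$. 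Both ingredients --- the finiteness reduction via $\Gamma_{m+1}(F)$ and the hypercentre rather than Fitting placement --- are indispensable, and both are absent from your outline.
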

\begin{proof}
Let $F$ be any finitely generated subbrace of $B$; so in particular $F$ is supersoluble. It is enough to prove that~$(F,\bigcdot)$ is nilpotent. Since $F$ is an almost polycyclic brace, it follows from Theorem \ref{exth3.4} that there is $n\in\mathbb{N}$ such that $F/\operatorname{Soc}_n(F)$ is finite. On the other hand, $F$ is~\hbox{left-nil}\-po\-tent, so Lemma \ref{3.33} yields that $F/\zeta_m(B)$ is finite for some positive integer $m$. Then~The\-o\-rem~\ref{schurtheor} shows that $F/I$ is centrally nilpotent for some finite ideal $I$ of $F$. In particular,~$(F/I,\bigcdot)$ is nilpotent, and so, if $I\leq K=\operatorname{Ker}(\lambda)$, then~$(F,\bigcdot)$ is hypercentral (by Lemma \ref{3.33}) and even nilpotent by the maximal condition on subgroups.

Now, let $a$ be any element of $I$ that is not contained in $K$. Then there is a finitely generated subbrace $E_a$ of $B$ such that $F\leq E_a$ and $\lambda_a(b)\neq0$ for some $b\in E_a$. Using the argument we employed in the proof of Theorem \ref{leftnilptheo}, we see that $\big(E_a/R_a,\bigcdot\big)$ is nilpotent, where $R_a=\operatorname{Ker}(\lambda|_{E_a})$. Then $\big(F/(I\cap R_a),\bigcdot\big)$ is nilpotent and obvious\-ly~\hbox{$a\not\in I\cap R_a$.} It follows that $R=I\cap\bigcap_{a\in I\setminus K} R_a$ is a normal subgroup of~$(F,\bigcdot)$ such that $(F/R,\bigcdot)$ is nilpotent. Clearly, $R\leq K\cap F\leq\overline{Z}(F,\bigcdot)=Z_\ell(F,\bigcdot)$ for some $\ell\in\mathbb{N}$, since~$(F,\bigcdot)$ satisfies the maximal condition on subgroups. Therefore $(F,\bigcdot)$ is nilpotent and the statement is proved.
\end{proof}

\medskip

Also, direct applications of Corollary \ref{3.35} and \cite{tutti23-2}, Theorem 3.30, make it possible to prove the following result.

\begin{theo}
Let $B$ be a locally supersoluble brace of locally nilpotent type. The following conditions are equivalent:
\begin{itemize}
    \item[\textnormal{(1)}] $B$ is locally left-nilpotent.
    \item[\textnormal{(2)}] $(B,\bigcdot)$ is locally nilpotent.
    \item[\textnormal{(3)}] $B$ is locally centrally-nilpotent.
\end{itemize}
\end{theo}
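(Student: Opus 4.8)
The plan is to reduce everything to the behaviour of finitely generated subbraces, where the machinery already developed for supersoluble braces applies verbatim. First I would observe that if $F$ is any finitely generated subbrace of $B$, then $F$ is supersoluble (because $B$ is locally supersoluble), and hence almost polycyclic; in particular $(F,+)$ is polycyclic-by-finite, so it is a finitely generated subgroup of the locally nilpotent group $(B,+)$ and is therefore nilpotent. Thus $F$ is a supersoluble brace of nilpotent type. Consequently Corollary~\ref{3.35} applies and gives that $F$ is left-nilpotent if and only if it is centrally nilpotent, while Theorem~3.30 of~\cite{tutti23-2}, applicable since $F$ is almost polycyclic of nilpotent type, gives that $F$ is left-nilpotent if and only if $(F,\bigcdot)$ is nilpotent. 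Summarising, for every finitely generated subbrace $F$ of $B$ the three conditions ``$F$ is left-nilpotent'', ``$(F,\bigcdot)$ is nilpotent'', and ``$F$ is centrally nilpotent'' are pairwise equivalent.

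The equivalence of (1) and (3) is then immediate: each of them is, by definition, the assertion that the corresponding property holds for every finitely generated subbrace $F$ of $B$, and we have just seen that left-nilpotency and central nilpotency of $F$ coincide.

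For the equivalence of (1) and (2) I would argue the two implications separately, since (2) concerns finitely generated subgroups of $(B,\bigcdot)$ rather than finitely generated subbraces. To prove (1)$\Rightarrow$(2), take a finitely generated subgroup $H=\langle h_1,\dots,h_k\rangle_{\bigcdot}$ of $(B,\bigcdot)$ and let $E=\langle h_1,\dots,h_k\rangle$ be the subbrace it generates; then $E$ is a finitely generated subbrace, hence left-nilpotent by (1), so $(E,\bigcdot)$ is nilpotent by the equivalence established above, and therefore its subgroup $H$ is nilpotent as well. Conversely, for (2)$\Rightarrow$(1), let $F$ be any finitely generated subbrace; since $F$ is supersoluble it is almost polycyclic, so $(F,\bigcdot)$ is polycyclic-by-finite and in particular a finitely generated subgroup of $(B,\bigcdot)$. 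Hence $(F,\bigcdot)$ is nilpotent by (2), and the equivalence above forces $F$ to be left-nilpotent. This closes the cycle and shows that (1), (2), and (3) are equivalent.

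The only genuinely delicate point is the bookkeeping in the equivalence of (1) and (2): these conditions localise over different families, namely finitely generated subbraces on one side and finitely generated multiplicative subgroups on the other. Matching them requires two facts pulling in opposite directions, that every finitely generated multiplicative subgroup sits inside a finitely generated subbrace, and that the multiplicative group of a finitely generated (hence almost polycyclic) subbrace is itself finitely generated; both are available from the almost polycyclicity of supersoluble braces, so no further difficulty arises.
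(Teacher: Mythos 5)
Your proof is correct and takes essentially the same route as the paper, which gives no written-out argument but states that the theorem follows by direct applications of Corollary 3.35 and Theorem 3.30 of the almost-polycyclic-braces paper to finitely generated subbraces. Your explicit bookkeeping (checking that a finitely generated subbrace is of nilpotent type, and matching finitely generated multiplicative subgroups with the finitely generated subbraces containing them) is precisely the detail the paper leaves implicit.
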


In the hypercyclic context, we can prove the following result.

\begin{theo}\label{4.40}
Let $B$ be a hypercyclic brace that is locally left-nilpotent.
If either
\begin{itemize}
    \item $\operatorname{Ker}(\lambda)\leq\overline{Z}(B,\bigcdot)$, or
    \item $\operatorname{Ker}(\lambda)\leq\overline{\operatorname{Soc}}(B)$,
\end{itemize}
then $(B,\bigcdot)$ is hypercentral.
\end{theo}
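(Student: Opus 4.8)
The plan is to reduce the statement to a purely group-theoretic fact about $(B,\bigcdot)$, in the same spirit in which Theorem~\ref{theoprec} reduced local nilpotency of $(B,\bigcdot)$ to the finitely generated case. First I would note that every hypercyclic brace is locally supersoluble, so the hypotheses of Theorem~\ref{theoprec} are all met: $B$ is locally supersoluble, locally left-nilpotent, and satisfies one of the two conditions on $\operatorname{Ker}(\lambda)$. Hence $(B,\bigcdot)$ is locally nilpotent. On the other hand, since $B$ is hypercyclic, Lemma~\ref{lsupersolublebraceisgroup} guarantees that the multiplicative group $(B,\bigcdot)$ is a hypercyclic group. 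Thus the theorem follows at once from the general fact that \emph{a locally nilpotent hypercyclic group is hypercentral}, applied to $H=(B,\bigcdot)$.

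To prove this group-theoretic fact I would argue by transfinite induction on the upper central series, the crux being to show that a non-trivial locally nilpotent hypercyclic group $H$ has non-trivial centre. Let $C$ be the first non-trivial term of an ascending cyclic normal series of $H$, so that $C$ is a cyclic normal subgroup of $H$. If $C$ is finite, its unique subgroup $P$ of prime order $p$ is characteristic in $C$, hence normal in $H$; for each $g\in H$ the finitely generated subgroup $\langle P,g\rangle$ is nilpotent and contains $P$ as a non-trivial normal subgroup, so $P\leq Z(\langle P,g\rangle)$ (a non-trivial normal subgroup of a nilpotent group meets its centre, and $P$ has prime order). Letting $g$ vary over $H$ yields $P\leq Z(H)$. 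If instead $C=\langle c\rangle$ is infinite cyclic, conjugation induces a homomorphism $H\to\operatorname{Aut}(C)\cong\{\pm1\}$; if some $g$ acted by inversion, the lower central series of the nilpotent group $\langle c,g\rangle$ would run through $c^{\pm2^{n}}$ and never reach the identity, a contradiction, so every $g$ centralises $c$ and $C\leq Z(H)$. In either case $Z(H)$ is non-trivial.

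Granting this centre statement, the induction is routine: both local nilpotency and hypercyclicity are inherited by quotient groups, so applying the statement to $H/Z_\alpha(H)$ at each stage gives $Z_{\alpha+1}(H)>Z_\alpha(H)$ whenever $Z_\alpha(H)\neq H$, with unions taken at limit ordinals; by a cardinality argument the upper central series must reach $H$, i.e.\ $H$ is hypercentral. Specialising to $H=(B,\bigcdot)$ completes the argument.

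The main obstacle, and precisely the reason local nilpotency alone does not suffice, is the centre statement: locally nilpotent groups may well have trivial centre (McLain-type groups), so hypercyclicity is genuinely needed in order to pin down a central cyclic subgroup at the bottom of the series. If this fact is available in the literature (for instance in~\cite{Rob72}), one may simply cite it in place of the second and third paragraphs; otherwise the short argument above supplies what is needed.
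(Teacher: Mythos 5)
Your proposal is correct and follows the paper's own proof exactly: Theorem~\ref{theoprec} yields that $(B,\bigcdot)$ is locally nilpotent, Lemma~\ref{lsupersolublebraceisgroup} yields that $(B,\bigcdot)$ is hypercyclic, and the conclusion follows from the fact that a locally nilpotent hypercyclic group is hypercentral. The only difference is that the paper invokes this last group-theoretic fact as known (as it also does implicitly in Theorem~\ref{incrediblenotcited}, with \cite{Rob72} as the background reference), whereas you supply a correct self-contained proof of it; the brace-theoretic content is identical.
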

\begin{proof}
It follows from Theorem \ref{theoprec} that $(B,\bigcdot)$ is locally nilpotent. On the other hand, since $B$ is hypercyclic, we have that $(B,\bigcdot)$ is hypercyclic by Lemma \ref{lsupersolublebraceisgroup}. Therefore,~$(B,\bigcdot)$ is hypercentral.
\end{proof}

\begin{cor}
Let $B$ be a hypercyclic brace of locally nilpotent type. Then $B$ is locally left-nilpotent if and only if $B$ is hypercentral.
\end{cor}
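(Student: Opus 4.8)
The plan is to establish the two implications separately; the reverse implication is immediate, while the direct one carries the real content.

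\emph{Hypercentral implies locally left-nilpotent.} This direction does not even require hypercyclicity or the local nilpotency of $(B,+)$. Recall from the preliminaries that a hypercentral brace is locally centrally-nilpotent (see \cite{Tr23} and \cite{tutti23}), and that central nilpotency implies left-nilpotency. Hence every finitely generated subbrace of $B$ is centrally nilpotent, and therefore left-nilpotent; that is, $B$ is locally left-nilpotent. I would dispose of this in a line or two.

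\emph{Locally left-nilpotent implies hypercentral.} Here I would first transfer the problem to the multiplicative group. Since $(B,+)$ is locally nilpotent, Theorem \ref{incrediblenotcited} gives $B=\overline{\operatorname{Soc}}(B)$; in particular $\operatorname{Ker}(\lambda)\leq B=\overline{\operatorname{Soc}}(B)$, so Theorem \ref{4.40} applies and yields that $(B,\bigcdot)$ is hypercentral. It then remains to upgrade the combination ``$(B,\bigcdot)$ hypercentral and $B=\overline{\operatorname{Soc}}(B)$'' to ``$B$ hypercentral as a brace''. The key observation is that the three standing hypotheses --- hypercyclic, locally of nilpotent type, and locally left-nilpotent --- are all inherited by quotients (hypercyclicity via Theorem \ref{charhypercyclic}, the other two because local nilpotency of groups and left-nilpotency of braces pass to quotients). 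Thus it suffices to show that every non-zero brace $B$ satisfying these hypotheses has non-zero brace-centre $\zeta(B)$: for then, were $\overline{\zeta}(B)\neq B$, the quotient $B/\overline{\zeta}(B)$ would contradict this by having non-zero centre. To see $\zeta(B)\neq\{0\}$, note that since $B=\overline{\operatorname{Soc}}(B)\neq\{0\}$ we have $\operatorname{Soc}(B)\neq\{0\}$, so $(\operatorname{Soc}(B),\bigcdot)$ is a non-trivial normal subgroup of the hypercentral group $(B,\bigcdot)$ and therefore meets its centre non-trivially. Any $0\neq z\in\operatorname{Soc}(B)\cap Z(B,\bigcdot)=\zeta(B)$ does the job.

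The main obstacle is precisely this last bridging step: neither the ``hyper-right-nilpotency'' packaged in $B=\overline{\operatorname{Soc}}(B)$ nor the multiplicative hypercentrality alone suffices, and one must combine them. What makes the combination tractable is the criterion that a brace is hypercentral exactly when every non-zero quotient has non-zero centre, and this criterion is unlocked by the quotient-closure of the hypotheses. Consequently, the two points requiring genuine care are verifying that quotient-closure (especially for hypercyclicity, where I would invoke Theorem \ref{charhypercyclic}) and recording the standard fact that a non-trivial normal subgroup of a hypercentral group meets the centre (the proof being the familiar first-non-trivial-layer argument, cf. \cite{Rob72}).
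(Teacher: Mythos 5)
Your proof is correct, and its skeleton matches the paper's: the paper's entire proof is the one-line citation of Lemma \ref{3.33}, Theorem \ref{incrediblenotcited} and Theorem \ref{4.40}, and you use the latter two results exactly as intended (locally nilpotent type plus Theorem \ref{incrediblenotcited} gives $B=\overline{\operatorname{Soc}}(B)$, so $\operatorname{Ker}(\lambda)\leq\overline{\operatorname{Soc}}(B)$ holds trivially and Theorem \ref{4.40} makes $(B,\bigcdot)$ hypercentral), while the easy direction is handled, as the paper would, by ``hypercentral $\Rightarrow$ locally centrally-nilpotent $\Rightarrow$ locally left-nilpotent''. Where you genuinely diverge is the third ingredient, the step that upgrades ``$B=\overline{\operatorname{Soc}}(B)$ and $(B,\bigcdot)$ hypercentral'' to hypercentrality of the brace. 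The paper delegates this to Lemma \ref{3.33}, whose mechanism is to push socle elements into $\zeta$ via iterated star products; note that this lemma is stated for (globally) left-nilpotent braces, so invoking it under the corollary's merely local hypothesis requires applying it inside finitely generated subbraces (e.g.\ via part (2), working in $\langle a,b\rangle$ to show a prime-order ideal is fixed by every $\lambda_a$) and then some extra manipulation to land in $\zeta(B)$. Your replacement avoids this subtlety entirely: you observe that all three standing hypotheses are quotient-closed (hypercyclicity via Theorem \ref{charhypercyclic}), reduce to showing $\zeta(B)\neq\{0\}$ for non-zero $B$, and get this from the standard group-theoretic fact that a non-trivial normal subgroup of a hypercentral group meets its centre, applied to $(\operatorname{Soc}(B),\bigcdot)\trianglelefteq(B,\bigcdot)$, which yields $\zeta(B)=\operatorname{Soc}(B)\cap Z(B,\bigcdot)\neq\{0\}$ in one stroke. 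Both routes reach the same end; yours is cleaner and imports only classical group theory at the bridging step, whereas the paper's recycles an existing brace-theoretic lemma at the cost of a local-to-global adjustment it leaves implicit.
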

\begin{proof}
This follows from Lem\-ma~\ref{3.33}, Theorem \ref{incrediblenotcited} and Theorem~\ref{4.40}.
\end{proof}

\medskip

We have already scatteredly dealt with (local) central nilpotency and hypercentrality in the previous results. This was unavoidable due to the many connections they have with all kinds of properties of a brace. But there is more to be said. For example, we saw in Theorem \ref{blocally} that for a locally supersoluble brace $B$, the concepts of locally centrally-nilpotent ideal and locally $B$-nilpotent ideal coincide. Of course, this also holds for hypercyclic braces, but in this latter case we can add something more.

\begin{theo}\label{bhypercentral}
Let $B$ be a hypercyclic brace, and let $I$ be a \textnormal(non-zero\textnormal) locally centrally-nilpotent ideal of $B$. Then $I$ is $B$-hypercentral.
\end{theo}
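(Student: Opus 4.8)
The plan is to reduce the statement to the single assertion that $\zeta(I)_B\neq\{0\}$ whenever $I$ is a \emph{non-zero} locally centrally-nilpotent ideal of a hypercyclic brace $B$, and then to build the upper $B$-central series $\{\zeta_\alpha(I)_B\}$ by transfinite recursion. Both the class of hypercyclic braces and the property of being a locally centrally-nilpotent ideal are inherited by quotients (for hypercyclicity this is immediate from Theorem \ref{charhypercyclic}, and for the ideal it follows since finitely generated subbraces of a quotient lift to finitely generated subbraces of $I$, whose central nilpotency is preserved under quotients). Granting the assertion, for every ordinal $\alpha$ with $K:=\zeta_\alpha(I)_B\neq I$ one applies it to the hypercyclic brace $B/K$ and its non-zero locally centrally-nilpotent ideal $I/K$, obtaining $\zeta(I/K)_{B/K}\neq\{0\}$, that is $\zeta_{\alpha+1}(I)_B>\zeta_\alpha(I)_B$. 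The chain $\{\zeta_\alpha(I)_B\}$ of ideals of $B$ inside $I$ is therefore strictly increasing until it reaches $I$; being a chain of subsets of the set $I$ it must stabilise, and stabilisation strictly below $I$ is ruled out by the assertion, so $I=\zeta_\mu(I)_B$ for some ordinal $\mu$, which is exactly $B$-hypercentrality.

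To prove the assertion I would first extract a small ideal of $B$ inside $I$. Intersecting a defining ascending chain $\{0\}=I_0\leq I_1\leq\ldots\leq I_\lambda=B$ with $I$ and taking the least $\beta$ with $I_\beta\cap I\neq\{0\}$, one checks that $\beta=\alpha+1$ is a successor (at a limit stage $I_\beta\cap I=\bigcup_{\gamma<\beta}(I_\gamma\cap I)=\{0\}$), and sets $J:=I_{\alpha+1}\cap I$, a non-zero ideal of $B$ contained in $I$. Since $I_\alpha\cap I=\{0\}$ we have $J\cong(J+I_\alpha)/I_\alpha\leq I_{\alpha+1}/I_\alpha$, so $J$ is either of prime order $p$, or an abelian (trivial) brace with $(J,+)$ infinite cyclic; in the latter (socle) case $+$ and $\bigcdot$ coincide, so $(J,\bigcdot)$ is infinite cyclic as well.

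The heart of the matter is to show $J\leq\zeta(I)$, and here I would localise. Fix $a\in I$ and $0\neq j\in J$ and put $F=\langle a,j\rangle\leq I$; as a finitely generated subbrace of the locally centrally-nilpotent ideal $I$, $F$ is centrally nilpotent, hence supersoluble by Theorem \ref{fgnilp}, and $(F,+)$, $(F,\bigcdot)$ are nilpotent. The set $D:=J\cap F$ is a non-zero abelian ideal of $F$ which is either of order $p$ (and then central in $(F,+)$, being a minimal normal subgroup of a nilpotent group) or has infinite cyclic additive group; in either case Lemma \ref{lemequivalenza} yields $D\leq\operatorname{Soc}(F)$, while nilpotency of $(F,\bigcdot)$ together with the fact that $(D,\bigcdot)$ is of prime order or infinite cyclic gives $D\leq Z(F,\bigcdot)$, so $D\leq\zeta(F)$. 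In particular $j\in\zeta(F)$, whence $[j,a]_+=[j,a]_{\bigcdot}=0$ and $j\ast a=0$. Letting $a$ range over $I$ gives $j\in\operatorname{Soc}(I)\cap Z(I,\bigcdot)=\zeta(I)$, and letting $j$ range over $J$ gives $J\leq\zeta(I)$; since $J\trianglelefteq B$ this yields $\zeta(I)_B\neq\{0\}$.

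The main obstacle I anticipate is the infinite cyclic case: one must verify that the \emph{directional} relation actually needed, namely $j\ast a=0$ (that is $j\in\operatorname{Ker}\lambda$, not $a\ast j=0$), is the one delivered by membership in $\operatorname{Soc}(F)$, and that the three global conditions defining $\zeta(I)$ can genuinely be recovered from the local facts at each $F=\langle a,j\rangle$. The bookkeeping that $D$ is simultaneously additively central, multiplicatively central, and socle-contained inside every such $F$ is precisely what Lemma \ref{lemequivalenza} together with the nilpotency of the two groups of $F$ is designed to supply, so the argument should go through once these directions are tracked carefully.
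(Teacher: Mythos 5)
Your proof is correct, and its skeleton is the same as the paper's: extract from the hypercyclic series a non-zero ideal $J\leq I$ of $B$ which either has prime order or is a trivial brace with infinite cyclic additive group, prove $J\leq\zeta(I)$ (so that $\zeta_1(I)_B\neq\{0\}$), and then climb the upper $B$-central series by transfinite recursion, using the quotient-closure of both hypotheses. Where you genuinely diverge is in the key step $J\leq\zeta(I)$. The paper settles the prime-order case in one line by invoking the external result that chief factors of a locally centrally-nilpotent brace are central (\cite{tutti23}, Theorem 4.6), and handles the infinite cyclic case by a residual argument: $J/J^{p,+}$ has prime order for every prime $p$, so the prime-order case applies modulo $J^{p,+}$, and one concludes by intersecting, since $\bigcap_{p}J^{p,+}=\{0\}$. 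You instead localise: each two-generator subbrace $F=\langle a,j\rangle\leq I$ is centrally nilpotent, hence supersoluble by Theorem \ref{fgnilp}, so Lemma \ref{lemequivalenza} applies to $D=J\cap F$ and gives $D\leq\operatorname{Soc}(F)$, while the elementary facts that a normal subgroup of prime order, or an infinite cyclic normal subgroup, of a nilpotent group is central give $D\leq Z(F,\bigcdot)$; letting $a$ run over $I$ then yields $J\leq\zeta(I)$. Your route treats the finite and infinite cases uniformly, stays inside the results proved in this paper plus standard nilpotent group theory (no appeal to \cite{tutti23} and no mod-$p$ reduction), and makes explicit two points the paper leaves implicit, namely the successor-stage extraction of $J$ and the quotient-stability needed for the recursion; the paper's route is shorter precisely because the chief-factor theorem absorbs the work your localisation performs. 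The worry you flag at the end is resolved correctly in your own argument: membership in $\operatorname{Soc}(F)$ delivers exactly $j\ast a=0$ (the needed direction) together with $[j,a]_+=0$, and $D\leq Z(F,\bigcdot)$ supplies the multiplicative commutation, which together, as $a$ varies over $I$, are precisely the three conditions defining $j\in\zeta(I)$.
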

\begin{proof}
Since $B$ is hypercyclic, we can find a non-zero ideal $J$ of $B$ such that
\begin{itemize}
    \item $J\leq I$, and
    \item either $J\leq\operatorname{Soc}(B)$ and $(J,+)$ is an infinite cyclic group, or $|J|$ is prime.
\end{itemize}
We claim that $J\!\leq\!\zeta(I)$. Since $I$ is locally centrally-nilpotent, we have that \hbox{$L/M\leq\zeta(I/M)$} whenever $L/M$ is a chief factor of $I$ (see \cite{tutti23}, Theorem 4.6). Thus, if $|J|$ is prime, then~\hbox{$J\leq\zeta(I)$.} On the other hand, if $J$ is infinite, then $J/J^{p,+}\leq\zeta(B/J^{p,+})$ for every prime~$p$, so clearly $J\leq\zeta(B)$ also in this case. This shows that $\zeta_1(I)_B\neq\{0\}$. Finally, an easy transfinite induction shows that $I=\zeta_\alpha(I)_B$ for some ordinal $\alpha$, and completes the proof of the statement.
\end{proof}

\begin{cor}
Let $B$ be a hypercyclic brace. If $I$ is a hypercentral ideal of $B$, then $I$ is~\hbox{$B$-hyper}\-central.
\end{cor}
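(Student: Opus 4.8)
The plan is to obtain this as a direct consequence of Theorem~\ref{bhypercentral}, whose hypothesis is \emph{local} central nilpotency of the ideal. The only gap to bridge is therefore the passage from the stronger assumption ``$I$ is hypercentral'' to the weaker assumption ``$I$ is locally centrally-nilpotent'' that Theorem~\ref{bhypercentral} actually requires. Fortunately, this implication is already available: as recalled in the preliminary section (see \cite{Tr23} and \cite{tutti23}), every hypercentral brace is locally centrally-nilpotent.

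Concretely, I would first dispose of the degenerate case $I=\{0\}$, in which the one-term chain $\{0\}=\zeta_0(I)_B$ already witnesses that $I$ is $B$-hypercentral, so there is nothing to prove. Assume then $I\neq\{0\}$. By hypothesis $I$, viewed as a brace in its own right, is hypercentral, and hence locally centrally-nilpotent by the cited fact. Thus $I$ is a non-zero locally centrally-nilpotent ideal of the hypercyclic brace~$B$, which is precisely the situation covered by Theorem~\ref{bhypercentral}; applying that theorem yields at once that $I$ is $B$-hypercentral.

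The main (indeed only) point requiring care is the recognition that hypercentrality of the \emph{brace} $I$ entails its local central nilpotency, so that the genuinely substantive work---the transfinite construction of the upper $B$-central series via the socle/prime-order ideal supplied by hypercyclicity---can be delegated wholesale to Theorem~\ref{bhypercentral}. No separate argument, and in particular no re-examination of the interaction between $\zeta(I)$ and the ideal lattice of $B$, is needed here, since that interaction was already handled in the proof of Theorem~\ref{bhypercentral}. I expect no obstacle beyond correctly invoking the cited characterisation of hypercentral braces.
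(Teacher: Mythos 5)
Your proof is correct and follows exactly the route the paper intends: the corollary is an immediate consequence of Theorem~\ref{bhypercentral} once one recalls (as in the preliminaries, citing \cite{Tr23} and \cite{tutti23}) that every hypercentral brace is locally centrally-nilpotent, and your separate treatment of the degenerate case $I=\{0\}$ is a careful touch given the ``non-zero'' hypothesis in that theorem. Nothing further is needed.
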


\begin{cor}
Let $B$ be a hypercyclic brace. If $B$ is locally a finite brace of prime power order, then $B$ is hypercentral.
\end{cor}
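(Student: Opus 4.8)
The plan is to reduce the statement to Theorem~\ref{bhypercentral} by verifying that $B$ is itself a locally centrally-nilpotent ideal of $B$. We may of course assume $B\neq\{0\}$, since the zero brace is trivially hypercentral. The starting observation is that a hypercyclic brace is locally supersoluble, so every finitely generated subbrace $F$ of $B$ is supersoluble; by hypothesis $F$ is moreover a finite brace of prime power order. Hence Theorem~\ref{pbracecentrallynilp} applies and gives that $F$ is centrally nilpotent. As this holds for every finitely generated subbrace, $B$ is locally centrally-nilpotent, and in particular $B$ is a non-zero locally centrally-nilpotent ideal of itself.

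At this point I would invoke Theorem~\ref{bhypercentral} with $I=B$, which immediately yields that $B$ is $B$-hypercentral, that is, $B=\zeta_\mu(B)_B$ for some ordinal $\mu$. The only slightly delicate point is to convert this $B$-hypercentrality of the whole brace into plain hypercentrality, since the two are defined through a priori different ascending series.

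For this last step I would use that for any brace $C$ the centre $\zeta(C)=\operatorname{Soc}(C)\cap Z(C,\bigcdot)$ is already an ideal of $C$. Consequently the largest ideal of $B/\zeta_\alpha(B)_B$ contained in $\zeta(B/\zeta_\alpha(B)_B)$ is the whole of $\zeta(B/\zeta_\alpha(B)_B)$, so the recursion defining the upper $B$-central series of $B$ collapses to the recursion defining the upper central series of $B$. A routine transfinite induction, taking unions at limit ordinals and using $\zeta_0(B)_B=\zeta_0(B)=\{0\}$ as the base case, then shows $\zeta_\alpha(B)_B=\zeta_\alpha(B)$ for every ordinal $\alpha$. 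Therefore $B=\zeta_\mu(B)$ and $B$ is hypercentral. I do not expect a genuine obstacle here: the whole content is carried by Theorem~\ref{pbracecentrallynilp} and Theorem~\ref{bhypercentral}, and the one thing to get right is precisely this identification of the two central series in the case $I=B$.
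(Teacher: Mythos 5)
Your proposal is correct and follows essentially the same route as the paper, whose proof is precisely the combination of Theorem~\ref{pbracecentrallynilp} (giving local central nilpotency of $B$ via local supersolubility of hypercyclic braces) and Theorem~\ref{bhypercentral} applied with $I=B$. Your extra care in identifying the upper $B$-central series of $B$ with its upper central series --- using that $\zeta(C)$ is always an ideal of $C$ --- is a valid and worthwhile spelling-out of a step the paper leaves implicit.
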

\begin{proof}
This follows from Theorem \ref{bhypercentral} and  Theorem \ref{pbracecentrallynilp}.
\end{proof}

\medskip

It has been proved in \cite{tutti23} that the join of finitely many $B$-hypercentral ideals of a brace~$B$ is still $B$-hypercentral. As a consequence of the above results, we can say that in a hypercyclic brace, the join of arbitrarily many hypercentral ideals is hypercentral.

\begin{cor}
Let $B$ be a hypercyclic brace. Then the join of all hypercentral ideals of $B$ is hypercentral, and coincides with the Hirsch--Plotkin ideal of $B$.
\end{cor}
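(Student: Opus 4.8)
The plan is to identify the join $H$ of all hypercentral ideals of $B$ with the Hirsch--Plotkin ideal $R$ of $B$, and then to deduce that the join is hypercentral from the fact that $R$ itself turns out to be hypercentral. Throughout I would use that a hypercyclic brace is locally supersoluble (Corollary 3.3 of \cite{Tr23}), so that both Corollary \ref{locallycentrallynilp} and Theorem \ref{bhypercentral} are available for $B$.

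First I would show $H\le R$. Every ideal $I$ of $B$ that is hypercentral is, as a brace, locally centrally-nilpotent (this is recorded in the preliminaries for hypercentral braces). By Corollary \ref{locallycentrallynilp}, the Hirsch--Plotkin ideal $R$ is precisely the join of all locally centrally-nilpotent ideals of $B$; hence $I\le R$, and by the arbitrariness of the hypercentral ideal $I$ we obtain $H\le R$.

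Next I would prove that $R$ is itself a hypercentral ideal. If $R=\{0\}$ there is nothing to do (and $H=R$), so assume $R\neq\{0\}$. Again by Corollary \ref{locallycentrallynilp}, $R$ is a non-zero locally centrally-nilpotent ideal, so Theorem \ref{bhypercentral} applies and yields that $R$ is $B$-hypercentral. The one point requiring care---though in the end a short one---will be to pass from $B$-hypercentrality to genuine hypercentrality of $R$: by definition $R$ admits an ascending chain of ideals of $B$ with $R$-central factors, and since each such ideal of $B$ contained in $R$ is in particular an ideal of $R$, this chain is an ascending central series of the brace $R$. A transfinite induction along it (using that central elements are carried to central elements under the relevant quotient maps) shows that the $B$-central series is termwise below the upper central series of $R$, whence $R=\zeta_\mu(R)$ for some ordinal $\mu$; that is, $R$ is hypercentral.

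Finally, since $R$ is a hypercentral ideal, it is one of the ideals whose join defines $H$, so $R\le H$. Combined with $H\le R$ this gives $H=R$, and as $R$ is hypercentral the join $H$ is hypercentral as well, which is exactly the assertion. I expect the only non-immediate step to be the implication ``$B$-hypercentral $\Rightarrow$ hypercentral'' isolated above; everything else is a direct bookkeeping application of Corollary \ref{locallycentrallynilp} and Theorem \ref{bhypercentral}.
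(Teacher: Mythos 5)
Your proof is correct and follows essentially the same route as the paper's: the paper's proof is a one-line citation of Corollary \ref{locallycentrallynilp}, Theorem \ref{bhypercentral}, and the fact that every hypercentral brace is locally centrally-nilpotent, which are exactly the three ingredients you assemble in the same order. The only material you add is the explicit verification that $B$-hypercentrality of the Hirsch--Plotkin ideal yields genuine hypercentrality (via the termwise comparison $\zeta_\alpha(R)_B\leq\zeta_\alpha(R)$), a step the paper leaves implicit.
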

\begin{proof}
This follows from Corollary \ref{locallycentrallynilp}, Theorem \ref{bhypercentral}, and the fact that every hypercentral brace is locally centrally-nilpotent.
\end{proof}

\medskip

In case of a locally supersoluble brace $B$ whose additive group is torsion-free, we can say something more about the quotient of $B$ by the Hirsch--Plotkin ideal.

\begin{theo}
Let $B$ be a locally supersoluble brace, and let $H$ be the Hirsch--Plotkin ideal of $B$. If $U_2^+(B)=\{0\}$, then $U_2^+(B/H)=\{0\}$.
\end{theo}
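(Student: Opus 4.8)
The plan is to set $U/H = U_2^+(B/H)$ and show that $U = H$. Since $B/H$ is locally supersoluble (being a quotient of $B$), Theorem~\ref{oddprime} applies to it and shows that $U_2^+(B/H) = U_2^{\bigcdot}(B/H)$ is an ideal of $B/H$; hence $U$ is an ideal of $B$ with $H \leq U$. To conclude $U = H$, I would exploit that $H$ is, by definition, the largest locally $B$-nilpotent ideal of $B$: it therefore suffices to prove that $U$ is locally $B$-nilpotent, for then $U \leq H$.

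So I would fix an arbitrary finitely generated subbrace $F$ of $B$; then $F$ is supersoluble and every subbrace of $F$ is finitely generated. The goal becomes to show that the ideal $U \cap F$ of $F$ is centrally nilpotent, since by Theorem~\ref{theobcentrally} it is then $F$-centrally nilpotent, and as $U\cap F$ contains all of its (finitely generated) subbraces, this is exactly what local $B$-nilpotency of $U$ requires. Note first that $H$ is locally centrally-nilpotent by Corollary~\ref{locallycentrallynilp}, so the finitely generated subbrace $H \cap F$ of $H$ is centrally nilpotent; being also an ideal of $F$, it lies in $\operatorname{Fit}(F)$ by Corollary~\ref{fittingideal}.

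The key step will be a coprimality argument at the level of $F$. The additive section $(U \cap F)/(H \cap F)$ embeds into $U_2^+(B/H)$, so all its elements have odd additive order; being in addition a finitely generated section of the polycyclic-by-finite group $(B,+)$, it is finite of odd order. On the other hand $U_2^+(F) \leq U_2^+(B) = \{0\}$, so Theorem~\ref{fittingpowerof2} gives that $|F/\operatorname{Fit}(F)|$ is a power of $2$, whence $(F/\operatorname{Fit}(F),+)$ is a finite $2$-group. The image of $U \cap F$ in this $2$-group is a quotient of $(U \cap F)/(H \cap F)$, because $H \cap F \leq \operatorname{Fit}(F)$, and so has odd order; an odd-order subgroup of a $2$-group is trivial, forcing $U \cap F \leq \operatorname{Fit}(F)$. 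Since $\operatorname{Fit}(F)$ is centrally nilpotent by Corollary~\ref{fittingideal} and central nilpotency is inherited by subbraces, $U \cap F$ is centrally nilpotent, as wanted. The arbitrariness of $F$ then yields that $U$ is locally $B$-nilpotent, hence $U \leq H$ and therefore $U = H$, i.e. $U_2^+(B/H) = \{0\}$.

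The main obstacle, I expect, is the bookkeeping of the reduction in the second paragraph rather than the final trick: one must verify carefully that $U \cap F$ really is an ideal of $F$, that $H \cap F$ is centrally nilpotent (this is where Corollary~\ref{locallycentrallynilp}, giving that $H$ is locally centrally-nilpotent, is essential), and that central nilpotency descends to the subbrace $U\cap F \leq \operatorname{Fit}(F)$. Once these are in place, the proof is driven entirely by the numerical observation that an odd-order section cannot survive inside the $2$-group $F/\operatorname{Fit}(F)$, with Theorem~\ref{fittingpowerof2} doing the essential work inside each finitely generated subbrace.
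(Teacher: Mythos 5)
Your proof is correct and takes essentially the same route as the paper: both reduce the statement to showing that $U$ is locally ($B$-)centrally nilpotent, hence contained in $H$, and both do so by the same coprimality trick — the section over $H$ has odd order while $F/\operatorname{Fit}(F)$ is a $2$-group by Theorem \ref{fittingpowerof2}, together with $H\cap F\leq\operatorname{Fit}(F)$ — the only difference being that the paper runs it as a contradiction on finitely generated subbraces $F\leq U$, while you argue directly with the intersections $U\cap F$ for finitely generated $F\leq B$. One minor correction: $(B,+)$ need not be polycyclic-by-finite (it is only locally so), so the finiteness of $(U\cap F)/(H\cap F)$ should be deduced from the polycyclicity of $(F,+)$, which holds because $F$ is supersoluble, rather than from any global property of $(B,+)$.
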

\begin{proof}
Suppose $U$ is not locally centrally-nilpotent, where $U/H:=U_2^+(B/H)$. Then there is a finitely generated subbrace $F$ of $U$ that is not locally centrally-nilpotent. Consequently~$|F/F\cap H|$ is odd, so Theorem \ref{fittingpowerof2} yields that $F$ is centrally nilpotent, a contradiction.
\end{proof}

\medskip

If a locally supersoluble brace $B$ contains a large $B$-hypercentral ideal, then the brace is bound to be hypercyclic.

\begin{theo}
Let $B$ be a locally supersoluble brace, and let $I$ be a $B$-hypercentral ideal of $B$ such that $B/I$ is finitely generated. Then $B$ is hypercyclic.
\end{theo}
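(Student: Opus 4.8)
The plan is to reduce, via the characterisation of hypercyclic braces in Theorem \ref{charhypercyclic}, to producing a single ``small'' ideal in $B$ (and, automatically, in every quotient). Precisely, I would first observe that the three defining hypotheses pass to every non-zero quotient $B/K$: local supersolubility is quotient-closed; the image $(I+K)/K$ is $(B/K)$-hypercentral, since the upper $B$-central series of $I$ maps onto an ascending chain of ideals of $B/K$ whose factors remain central in $(I+K)/K$ (centres only grow under quotients); and $(B/K)\big/\big((I+K)/K\big)\cong B/(I+K)$ is a quotient of the finitely generated brace $B/I$, hence finitely generated. Thus, by Theorem \ref{charhypercyclic}, it suffices to prove the following claim: every non-zero brace $B$ satisfying the hypotheses possesses a non-zero ideal $J$ with $|J|$ prime, or with $(J,+)$ infinite cyclic and $J\leq\operatorname{Soc}(B)$.

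To prove the claim, I would split on whether $I=\{0\}$. If $I=\{0\}$, then $B=B/I$ is finitely generated and locally supersoluble, hence supersoluble, and the bottom non-zero term of a defining chain for $B$ is the desired ideal $J$. Assume $I\neq\{0\}$. Since $I$ is $B$-hypercentral, $Z:=\zeta_1(I)_B$ is a non-zero ideal of $B$ contained in $\zeta(I)$; as $\zeta(I)\ast\zeta(I)=\{0\}$, the brace $Z$ is trivial and $(Z,+)$ is abelian. The key computation is that $I$ acts trivially on $Z$ under all three relevant actions: for $w\in I$ and $z\in Z\leq\zeta(I)$ one has $-w+z+w=z$ and $wzw^{-1}=z$ because $z\in Z(I,+)\cap Z(I,\bigcdot)$, while $z\ast w=\lambda_z(w)-w=0$ gives $wz=z+w$ and therefore $\lambda_w(z)=-w+wz=z$. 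Consequently the additive-conjugation, multiplicative-conjugation and $\lambda$-actions of $B$ on $Z$ all factor through $B/I$.

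Now I would exploit finite generation. Choose $0\neq a\in Z$ and elements $b_1,\ldots,b_k\in B$ whose images generate $B/I$, and set $F=\langle a,b_1,\ldots,b_k\rangle$, a finitely generated, hence supersoluble, subbrace of $B$. Since the actions of $B$ on $Z$ factor through $B/I$, and $B/I$ is generated by the images of the $b_i$, the subgroup of $\operatorname{Aut}(Z,+)$ generated by the $B$-actions coincides with that generated by the $F$-actions; hence a subgroup of $Z$ is an ideal of $B$ if and only if it is an ideal of $F$. Because $F$ is supersoluble, fix a finite chain $\{0\}=F_0\leq F_1\leq\ldots\leq F_m=F$ of ideals of $F$ whose factors are of prime order or are infinite cyclic and contained in the socle of the corresponding quotient. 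The intersection $Z\cap F$ is a non-zero ideal of $F$ (it contains $a$); letting $i$ be minimal with $F_i\cap Z\neq\{0\}$, the group $F_i\cap Z$ embeds into $F_i/F_{i-1}$, so $J:=F_i\cap Z$ has prime order or has infinite cyclic additive group, and it is an ideal of $F$ contained in $Z$, hence an ideal of $B$. If $(J,+)$ is infinite cyclic, Lemma \ref{lemequivalenza2}(2) yields $J\leq\operatorname{Soc}(B)$. This proves the claim and, with the reduction above, the theorem.

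I expect the main obstacle to be exactly the step showing that ideals of $F$ and ideals of $B$ inside $Z$ coincide: a priori the central factors of a $B$-hypercentral ideal are only central in $I$ and refine into cyclic pieces that are ideals of $I$, not of $B$, and they need not lie in $\operatorname{Soc}(B)$. What rescues the argument is the triviality of the $I$-action on $Z=\zeta_1(I)_B$, which collapses the whole $B$-action through the finitely generated quotient $B/I$ and lets a single finitely generated supersoluble subbrace $F$ capture all of the $B$-ideal structure inside $Z$; verifying this action-factorisation carefully for each of the three actions is where the real work lies.
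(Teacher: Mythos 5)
Your proposal is correct and takes essentially the same approach as the paper: the paper's proof likewise sets $Z=\zeta(I)_B$, forms the finitely generated (hence supersoluble) subbrace $C=\langle a,x_1,\ldots,x_n\rangle$ with $a\in Z$ and the $x_i$ generating $B$ modulo $I$, extracts from it a non-zero ideal $J$ of prime order or with infinite cyclic additive group, promotes $J$ to an ideal of $B$ via the factorisation $B=IC$ (which is exactly your trivial-action computation on $\zeta(I)$), and concludes with Theorem \ref{charhypercyclic}. Your write-up simply makes explicit the steps the paper leaves implicit, namely the passage of the hypotheses to quotients, the degenerate case $I=\{0\}$, and the appeal to Lemma \ref{lemequivalenza2} for the socle containment.
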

\begin{proof}
Let $Z=\zeta(I)_B$, and let $x_1,\ldots,x_n$ generating $B$ modulo $I$. If~\hbox{$a\in Z$}, then $C=\langle a,x_1,\ldots,x_n\rangle$ is supersoluble, which means that $\langle a\rangle^C\leq I$ contains a non-zero ideal $J$ of~$C$ such that either $J\leq\operatorname{Soc}(C)$ and $(J,+)$ infinite cyclic, or $|J|$ is a prime. Since~\hbox{$B=IC$,} we have that $J\trianglelefteq B$. Then the result follows by Theorem \ref{charhypercyclic}.
\end{proof}

\medskip

We end this paper by noting the following analog of Lemma \ref{torsionfreesupersocatena} (that could be of an independent interest), the proof being the same.

\begin{theo}
Let $B$ be a hypercyclic brace, and let $I$ be a hypercentral ideal such that $(I,+)$ is torsion-free. Then there is an ascending chain $$\{0\}=I_0\leq I_1\leq\ldots I_\alpha\leq I_{\alpha+1}\leq\ldots I_\lambda=I$$ of ideals of $B$ such that, for all $\beta<\lambda$, $I_{\beta+1}/I_\beta\leq\operatorname{Soc}(B/I_\beta)\cap\zeta(I/I_\beta)$ and $(I_{\beta+1}/I_\beta,+)$ is infinite cyclic.
\end{theo}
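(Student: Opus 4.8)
The plan is to lift the finite induction in the proof of Lemma \ref{torsionfreesupersocatena} to a transfinite recursion, constructing the chain $\{I_\beta\}$ from the bottom while preserving at every stage the crucial property that $(I/I_\beta,+)$ is torsion-free. Suppose the ideals $I_\beta$ of $B$ have been built for all $\beta<\gamma$, forming an ascending chain inside $I$ with factors of the prescribed type and with $(I/I_\beta,+)$ torsion-free. If $\gamma$ is a limit ordinal, I set $I_\gamma=\bigcup_{\beta<\gamma}I_\beta$, which is an ideal of $B$ contained in $I$; here $(I/I_\gamma,+)$ is again torsion-free, since an element of finite order modulo $I_\gamma$ would already have finite order modulo some $I_\beta$ with $\beta<\gamma$. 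Thus everything reduces to the successor step, and after passing to $B/I_\beta$ (which is again hypercyclic, with $I/I_\beta$ a hypercentral ideal whose additive group is torsion-free) it suffices to prove: if $I\neq\{0\}$, there is an ideal $J$ of $B$ with $\{0\}\neq J\leq I$, $J\leq\operatorname{Soc}(B)\cap\zeta(I)$, $(J,+)$ infinite cyclic, and $(I/J,+)$ torsion-free.

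Since Theorem \ref{theoincredible} is no longer available, I would produce the infinite cyclic ideal directly from a defining series $\{0\}=K_0\leq K_1\leq\ldots\leq K_\mu=B$ witnessing that $B$ is hypercyclic. Let $\delta$ be least with $K_\delta\cap I\neq\{0\}$; then $\delta$ is not a limit, say $\delta=\epsilon+1$, and $K_\epsilon\cap I=\{0\}$, so $J:=K_\delta\cap I$ is a non-zero ideal of $B$ that embeds via $x\mapsto x+K_\epsilon$ into the factor $K_\delta/K_\epsilon$. As $(I,+)$ is torsion-free, $J$ contains no non-zero finite subgroup, which rules out the prime-order case; hence $K_\delta/K_\epsilon\leq\operatorname{Soc}(B/K_\epsilon)$ is infinite cyclic and $(J,+)$ is infinite cyclic. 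Using $I\ast B\leq I$ and $[I,B]_+\leq I$ together with $J\cap K_\epsilon=\{0\}$, one gets $x\ast b\in I\cap K_\epsilon=\{0\}$ and $[x,b]_+\in I\cap K_\epsilon=\{0\}$ for all $x\in J$, $b\in B$, so $J\leq\operatorname{Soc}(B)$. Finally, since $I$ is hypercentral it is locally centrally-nilpotent, so $(I,\bigcdot)$ is locally nilpotent (each finitely generated subbrace of $I$ is centrally nilpotent, hence has nilpotent multiplicative group by \cite{tutti23-2}, Theorem 3.30); as $(J,\bigcdot)=(J,+)$ (note $J\leq\operatorname{Ker}\lambda$) is a normal infinite cyclic subgroup of $(I,\bigcdot)$, it must be central exactly as in Lemma \ref{torsionfreesupersocatena}, whence $J\leq Z(I,\bigcdot)$ and therefore $J\leq\zeta(I)$. (The non-vanishing of such a $J$ is also consistent with $\zeta(I)_B\neq\{0\}$, which holds because $I$ is $B$-hypercentral by the corollary following Theorem \ref{bhypercentral}.)

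It remains to arrange $(I/J,+)$ torsion-free, which is what keeps the recursion going. Mirroring the lemma, I would replace $J$ by the set $T$ of all $x\in I$ a positive multiple of which lies in $J$. Since $I/J$ is hypercentral, the periodic elements of $(I/J,+)$ form an ideal $T/J$ coinciding with the periodic elements of $(I/J,\bigcdot)$ (see \cite{Tr23} and \cite{tutti23}), so $T/J$ is an ideal of $B/J$ and hence $T\trianglelefteq B$; moreover $(T,+)$ is torsion-free of Hirsch length $1$, and the computation $m(x\ast b)=(x^m)\ast b=0$ from Lemma \ref{torsionfreesupersocatena} gives $T\leq\operatorname{Soc}(B)\cap\zeta(I)$, while one argues, as in that lemma, that $(T,+)$ is infinite cyclic. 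Taking $I_{\beta+1}=T$ produces a factor of the required type with $(I/I_{\beta+1},+)$ torsion-free. Because the $I_\beta$ form a strictly increasing chain of ideals of the set $B$, the recursion terminates, and it can only terminate at $I$: for $I_\lambda\subsetneq I$ the non-zero torsion-free ideal $I/I_\lambda$ would permit one more successor step.

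The main obstacle, and the genuinely new ingredient compared with Lemma \ref{torsionfreesupersocatena}, is the bookkeeping that forces the recursion to be well-founded: one must check that torsion-freeness of $(I/I_\beta,+)$ is preserved through both successor and limit stages (so the production of $J$ can be re-run in every quotient $B/I_\beta$), and that the isolator $T$ of $J$ is again \emph{infinite cyclic} rather than a non-finitely-generated rank-$1$ group. This last point is precisely where the hypercyclicity of $B$ — not merely the hypercentrality of $I$ — must be invoked, through the finiteness of the periodic section $T/J$; it is the analogue of the central-by-finite step of the supersoluble proof and is the place I would expect to demand the most care.
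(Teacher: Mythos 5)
Your overall scaffolding follows exactly the route the paper intends (the paper's ``proof'' is nothing more than the remark that the argument of Lemma \ref{torsionfreesupersocatena} carries over verbatim), and your replacement of Theorem \ref{theoincredible}, which is unavailable here, by an intersection with the defining ascending series of $B$ is correct: the first term meeting $I$ non-trivially occurs at a successor stage, torsion-freeness of $(I,+)$ excludes the prime-order case, and the computation $x\ast b,\,[x,b]_+\in I\cap K_\epsilon=\{0\}$ gives $J\leq\operatorname{Soc}(B)$. The genuine gap is precisely the point you flag in your last paragraph and then leave open: nothing in your argument shows that the periodic section $T/J$ is finite, and without that finiteness $T$ need not be infinite cyclic (nor does the step $m(x\ast b)=(x^m)\ast b=0$ even parse, since $m=|T/J|$). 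In Lemma \ref{torsionfreesupersocatena} this finiteness is not a delicate point needing ``care'': a supersoluble brace is almost polycyclic, so $(I/J,+)$ is polycyclic-by-finite and its torsion subgroup is automatically finite. Hypercyclicity of $B$ supplies no analogous finiteness, and your hope that it can be invoked here is unfounded.

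In fact the gap cannot be closed, because the statement fails at this level of generality. Let $B=I$ be the trivial brace on the additive group $\mathbb{Q}$ of the rationals: every subgroup is an ideal, $\operatorname{Soc}(B)=\zeta(B)=B$, so $I$ is hypercentral and $(I,+)$ is torsion-free; and $B$ is hypercyclic, since the chain $\{0\}\leq\mathbb{Z}\leq\tfrac{1}{2!}\mathbb{Z}\leq\tfrac{1}{3!}\mathbb{Z}\leq\cdots$ refines to an ascending series whose first factor is infinite cyclic (and lies in the socle) and whose remaining factors have prime order. Running your construction gives $J\simeq\mathbb{Z}$ and $T=\mathbb{Q}$, so $T/J\simeq\mathbb{Q}/\mathbb{Z}$ is infinite and $T$ is not infinite cyclic. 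Worse, no chain of the required kind exists at all: every non-zero subgroup of $\mathbb{Q}$ has torsion-free rank one, so as soon as $I_\beta\neq\{0\}$ every factor $I_{\beta+1}/I_\beta$ is periodic and hence never infinite cyclic; since limit terms are unions, any such chain freezes at its first non-zero term, which is $\simeq\mathbb{Z}$, and never reaches $\mathbb{Q}$. So the failure mode you anticipated (a non-finitely-generated rank-one isolator) is not an obstacle to be overcome but a counterexample: it defeats your proposal, the paper's proof-by-reference, and the statement itself, which needs an additional hypothesis --- for instance that $(I,+)$ be finitely generated, which restores the finiteness of every torsion section and lets your transfinite (in that case actually finite) recursion run to completion.
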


\section{Acknowledgements}

We wish to thank the referee for their useful comments and remarks that really improved the paper.

\begin{flushleft}
\rule{8cm}{0.4pt}\\
\end{flushleft}

{
\sloppy
\noindent
Adolfo Ballester-Bolinches, Ramon Esteban-Romero, Vicent P\'erez-Calabuig

\noindent
Departament de Matem\`atiques

\noindent
Universitat de Val\`encia, Dr.\ Moliner, 50, 46100 Burjassot, Val\`encia (Spain)

\noindent
Adolfo.Ballester@uv.es;\;\; Ramon.Esteban@uv.es;\;\; Vicent.Perez-Calabuig@uv.es

}

\bigskip
\bigskip

{
\sloppy
\noindent
Maria Ferrara

\noindent
Dipartimento di Matematica e Fisica

\noindent
Università degli Studi della Campania  ``Luigi Vanvitelli''

\noindent
viale Lincoln 5, Caserta (Italy)

\noindent
e-mail: maria.ferrara1@unicampania.it
}

\bigskip
\bigskip

{
\sloppy
\noindent
Marco Trombetti

\noindent 
Dipartimento di Matematica e Applicazioni ``Renato Caccioppoli''

\noindent
Università degli Studi di Napoli Federico II

\noindent
Complesso Universitario Monte S. Angelo

\noindent
Via Cintia, Napoli (Italy)

\noindent
e-mail: marco.trombetti@unina.it 

}

\end{document}